\newtheorem{theorem}{Theorem}[section]
\newtheorem{lemma}[theorem]{Lemma}
\newtheorem{proposition}[theorem]{Proposition}
\theoremstyle{definition}
\newtheorem{definition}[theorem]{Definition}
\theoremstyle{remark}
\newtheorem{remark}[theorem]{Remark}
\numberwithin{equation}{section}
\begin{document}
\title[Carleson measures and the characterization of BMO type spaces]
{Regularity of fractional heat semigroup  associated with  Schr\"odinger operators}

\author[P. Li]{Pengtao\ Li} 
\address{School of Mathematics and Statistics, Qingdao University, Qingdao, Shandong 266071, China}
\email{ptli@qdu.edu.cn}

\author[Z. Wang]{Ziyong\ Wang}
\address{School of Mathematics and Statistics, Qingdao University, Qingdao, Shandong 266071, China}
\email{1206856960@qq.com}

\author[T. Qian]{Tao\ Qian}
\address{Department of Mathematics, FST, University of Macau, PO Box 3001, Macau, China}
\email{fsttq@umac.mo}

\author[C. Zhang]{Chao\ Zhang}
\address{School of Statistics and Mathematics, Zhejiang Gongshang University, Hangzhou, Zhejiang 310018, China}
\email{zaoyangzhangchao@163.com
}

\begin{abstract}
Let
$L=-\Delta+V$
 be a  Schr\"odinger operator, where
the potential $V$ belongs to the reverse H\"older class. By the subordinative formula, we introduce the fractional heat semigroup $\{e^{-t{L}^{\alpha}}\}_{t>0}, \alpha>0$, associated with ${L}$.  By the aid of the fundamental solution of the heat equation:
 $$\partial_{t}u+L  u=\partial_{t}u -\Delta u+V u=0,$$
 we estimate the gradient and the time-fractional derivatives of the fractional heat kernel $K^{{L}}_{\alpha,t}(\cdot, \cdot)$, respectively. This method is independent of the Fourier transform, and can be applied to the second order differential operators whose heat kernels satisfying Gaussian upper bounds. As an application,  we establish a Carleson measure characterization of the Campanato type space $BMO^{\gamma}_{{L}}(\mathbb{R}^{n})$ via $\{e^{-t{L}^{\alpha}}\}_{t>0}$.
\end{abstract}

\thanks{P. Li was supported by the National Natural Science Foundation of China of
China (Grant Nos.11871293,  12071272); Shandong Natural Science
Foundation of China (Grant No. ZR2017JL008). C. Zhang was supported by the National Natural Science Foundation of China(Grant Nos. 11971431, 11401525), the Zhejiang Provincial Natural Science Foundation of China(Grant No. LY18A010006) and the first Class Discipline of Zhejiang-A(Zhejiang Gongshang University-Statistics).}
\keywords{ Schr\"odinger operator; Carleson measure; $BMO$ type space; regularity of semigroup}
\subjclass[2010]{35J10, 42B20, 42B30}
\maketitle
\tableofcontents
 \pagenumbering{arabic}
\section{Introduction }

The aim of this paper is to investigate the fractional heat semigroup of Schr\"odinger operators
   $$L:=-\Delta +V(x),$$
where $-\Delta$ denotes the Laplace operator: $\Delta=\sum\limits^{n}_{i=1}{\partial^{2}}/{\partial x_{i}^{2}}$ and $V$ is a nonnegative potential belonging to the reverse H\"older class $B_{q}$:
\begin{definition}\label{def-1.1} A nonnegative locally $L^{q}$ integrable function $V$ on $\mathbb R^{n}$
is said to belong to $B_{q}, 1<q<\infty,$ if there exists $C>0$
such that the reverse H\"{o}lder inequality
\begin{equation}\label{eq-2.1}
\Big(\frac{1}{|B|}\int_{B}V^{q}(x)dx\Big)^{1/q}\leq  \frac{C}{|B|}\int_{B}V(x)dx
\end{equation}
holds for every ball $B$ in $\mathbb R^{n}.$
\end{definition}

 For the special case: $V=0$, $L=-\Delta$, the fractional heat semigroup can be defined via the Fourier transform:
\begin{equation}\label{eq-1.3}
\big({e^{-t(-\Delta)^{\alpha}}(f)}\big)^\wedge(\xi):=e^{-t|\xi|^{2\alpha}}\widehat{f}(\xi),\ \alpha\in (0,1].
\end{equation}
In the literature, the fractional heat semigroup $\{e^{-t(-\Delta)^{\alpha}}\}_{t>0}$ has been widely used in the study of
partial differential equations, harmonic analysis, potential theory and modern probability theory. For example, the semigroup $\{e^{-t(-\Delta)^{\alpha}}\}_{t>0}$ is
usually applied to construct  the linear part of solutions to fluid equations in
the mathematic physics, e.g. the generalized
Naiver-Stokes equation, the quasi-geostrophic equation, the generalized MHD equations. In the field of probability theory, the researchers use
$\{e^{-t(-\Delta)^{\alpha}}\}_{t>0}$ to describe some kind of Markov processes with jumps. For further information and the related applications of fractional heat  semigroups $\{e^{-t(-\Delta)^{\alpha}}\}_{t>0}$, we refer the reader to \cite{chang, Gri, jiangrenjin2, Zhai}.

Denote by $K_{\alpha, t}(\cdot, \cdot)$ the integral kernel of $e^{-t(-\Delta)^{\alpha}}$ , i.e.,
\begin{equation}\label{eq-1.7}
K_{\alpha, t}(x)=(2\pi)^{-n/2}\int_{\mathbb R^n}e^{ix\cdot \xi-t|\xi|^{2\alpha}}d\xi
\end{equation}
and denote by $K^{\theta}_{\alpha, t}(\cdot)$ the integral kernel of
$(-\Delta)^{\theta/2}e^{-t(-\Delta)^{\alpha}}$. In \cite{miao}, by an
invariant derivative technique and the Fourier analysis  method, Miao-Yuan-Zhang concluded that
 the kernels $K_{\alpha, t}$ and $K^{\theta}_{\alpha, t}$ satisfy  the following pointwise estimates, respectively (cf. \cite[Lemmas 2.1\  and \
 2.2]{miao}),
\begin{equation*}\label{eq-miao-result}
\left\{ \begin{aligned}
K_{\alpha, t}(x)&\lesssim \frac{t}{(t^{{1}/{2\alpha}}+|x|)^{n+2\alpha}}\quad \forall\ (x,t)\in\mathbb R_+^{n+1};\\
K_{\alpha, t}^{\theta}(x)&\lesssim
\frac{1}{(t^{1/2\alpha}+|x|)^{n+\theta}}\quad \forall\
(x,t)\in\mathbb R_+^{n+1}.
\end{aligned} \right.
\end{equation*}

Compared with $-\Delta$, for arbitrary Schr\"odinger operator $L$ with the non-negative potential $V$, the fractional heat
 semigroup $\{e^{-tL^{\alpha}}\}_{t>0}, \alpha\in(0,1)$, can not be defined via (\ref{eq-1.3}). Also, it is obvious that the methods in \cite{miao} are
 invalid for the estimation of the integral kernels of
 $\{e^{-t{L}^{\alpha}}\}_{t>0}$. In this paper, by the functional calculus, we observe that the integral kernel of the Poisson semigroup associated with $L$
can be defined as
\begin{equation}\label{eq-1.4-1}
P^{L}_{t}(x,y)=\int^{\infty}_{0}\frac{e^{-u}}{\sqrt{u}}K^{L}_{t^{2}/4u}(x,y)du=\int^{\infty}_{0}\frac{te^{-t^{2}/4s}}{2s^{3/2}}K^{L}_{s}(x,y)ds,
\end{equation}
where $K^{L}_{s}(\cdot,\cdot)$ denotes the integral kernel of $e^{-sL}$, i.e.,
$$e^{-sL}(f)(x):=\int_{\mathbb{R}^{n}}K^{L}_{s}(x,y)f(y)dy.$$
Recall that $K^{L}_{t}(\cdot,\cdot)$ is a positive, symmetric function on $\mathbb{R}^{n}\times\mathbb{R}^{n}$, and
satisfies $\int_{\mathbb{R}^{n}}K^{L}_{t}(x,y)dy\leq 1$. Generally, for $\alpha>0$, the subordinative formula (cf. \cite{Gri})
indicates that there exists a continuous function $\eta^{\alpha}_{t}(\cdot)$ on $(0,\infty)$ such that
 \begin{equation}\label{eq-sub-for-1}
K^{L}_{\alpha,t}(x,y)=\int^{\infty}_{0}\eta^{\alpha}_{t}(s)K^{L}_{s}(x,y)ds.
\end{equation}
The identity (\ref{eq-sub-for-1}) enables us to estimate $K^{L}_{\alpha, t}(\cdot,\cdot)$ via the heat kernel $K^{L}_{t}(\cdot,\cdot)$. Let
 $\rho(\cdot)$ be the auxiliary function defined by (\ref{eq-2.3}) below. In Propositions \ref{prop-3.1} and \ref{prop-3.2},  we can obtain the following pointwise estimates of $K^{L}_{\alpha,t}(\cdot,\cdot)$:
 for every $N>0$, there exists a constant $C_{N}$ such that
\begin{equation}\label{eq-1.6}
\Big|K^{{L}}_{\alpha,t}(x,y)\Big|\leq \frac{C_{N}t}{(t^{1/2\alpha}+|x-y|)^{n+2\alpha}}\Big(1+\frac{t^{1/2\alpha}}{\rho(x)}+\frac{t^{1/2\alpha}}{\rho(y)}\Big)^{-N},
\end{equation}
and for every $N>0$, $0<\delta'<\min\{1, 2-n/q\}$ and all $|h|\leq t^{1/\alpha}$, there exists a constant $C_{N}$ such that
\begin{eqnarray}\label{eq-1.8}
&&\Big|K^{{L}}_{\alpha,t}(x+h,y)-K^{{L}}_{\alpha,t}(x,y)\Big|\leq \frac{C_{N}t ({|h|}/{t^{1/2\alpha}})^{\delta'}}{(t^{1/2\alpha}+|x-y|)^{n+2\alpha}}\Big(1+\frac{t^{1/2\alpha}}{\rho(x)}+\frac{t^{1/2\alpha}}{\rho(y)}\Big)^{-N}.
\end{eqnarray}

Based on the estimates (\ref{eq-1.6}) and  (\ref{eq-1.8}), we consider the regularity properties of $K^{L}_{\alpha,t}(\cdot,\cdot)$.
Let $\nabla_{x,t}$ denote the gradient operator on $\mathbb R^{n}$, that is, $\nabla_{x,t}=(\nabla_{x}, {\partial}/{\partial t})$, where
$\nabla_{x}=(\partial/\partial x_{1},\partial/\partial x_{2},\ldots, \partial/\partial x_{n})$. We obtain an energy estimate of the solution to the equation:
\begin{equation}\label{eq-1.9}
\partial_{t}u+L u=\partial_{t}u-\Delta u+Vu=0.
\end{equation}
By the fundamental solution of $-\Delta$, we prove that, for any $N>0$, there exists a constant $C_{N}$ such that
$$|\nabla_{x}K^{L}_{t}(x,y)|\leq\left\{
\begin{aligned}
&\frac{C_{N}}{t^{(n+1)/2}}e^{-c|x-y|^{2}/t}\Big(1+\frac{\sqrt{t}}{\rho(x)}+\frac{\sqrt{t}}{\rho(y)}\Big)^{-N},&\quad \sqrt{t}\leq |x-y|;\\
&\frac{C_{N}}{|x-y|t^{n/2}}e^{-c|x-y|^{2}/t}\Big(1+\frac{\sqrt{t}}{\rho(x)}+\frac{\sqrt{t}}{\rho(y)}\Big)^{-N},&\quad \sqrt{t}\geq |x-y|;\\
&\frac{C_{N}}{t^{(n+1)/2}}\Big(1+\frac{\sqrt{t}}{\rho(x)}+\frac{\sqrt{t}}{\rho(y)}\Big)^{-N},&\quad \forall\ t, x,y.
\end{aligned}\right.$$
A direct computation, together with the subordinative formula, gives
$$|\nabla_{x}K^{L}_{\alpha,t}(x,y)|\leq \frac{C_{N}t^{1-1/2\alpha}}{(t^{1/2\alpha}+|x-y|)^{n+2\alpha}}\Big(1+\frac{t^{1/2\alpha}}{\rho(x)}\Big)^{-N}
\Big(1+\frac{t^{1/2\alpha}}{\rho(y)}\Big)^{-N},$$
see Proposition \ref{prop-3.3-1}.
By a similar method, we obtain the H\"older regularity of $\nabla_{x}K^{L}_{\alpha, t}(\cdot, \cdot)$, i.e., for $|h|<|x-y|/4$ and $\delta'=1-n/q$,
$$|\nabla_{x}K^{L}_{\alpha,t}(x+h,y)-\nabla_{x}K^{L}_{\alpha,t}(x,y)|
\leq C_{N}\Big(\frac{|h|}{t^{1/2\alpha}}\Big)^{\delta'}\frac{1}{t^{1/2\alpha}}\frac{t}{(t^{1/2\alpha}+|x-y|)^{n+2\alpha}},$$
see Proposition \ref{prop-3.8}.

In Section \ref{sec-3.3}, we focus on the time-fractional derivatives of $K^{L}_{\alpha,t}(\cdot,\cdot)$. Recently, there has been an increasing interest in fractional calculus since time-fractional operators are proved to be very useful for modeling purpose. For example, the following fractional heat equations
\begin{equation}\label{eq-1.1}
\partial^{\beta}_{t}u(x,t)=\Delta u(x,t)
\end{equation}
are used to describe heat propagation  in inhomogeneous media. It is known that as opposed to the classical heat equation, the equation (\ref{eq-1.1}) is known to exhibit sub diffusive behaviour and is  related with anomalous diffusions or diffusions in non-homogeneous media, with random fractal structures. Recall that the fractional derivative of $K^{L}_{\alpha,t}(\cdot,\cdot)$ is defined as
\begin{equation}\label{eq-1.4}
\partial_{t}^{\beta}K^{L}_{\alpha,t}(x,y):=\frac{e^{-i\pi(m-\beta)}}{\Gamma(m-\beta)}\int^{\infty}_{0}\partial^{m}_{t}K^{L}_{\alpha,t+u}(x,y)
u^{m-\beta}\frac{du}{u},\ \beta>0\text{ and } m=[\beta]+1.
\end{equation}
In Section \ref{sec-3.1}, we first obtain the regularity estimates of  $t^{m}\partial^{m}_{t}K^{L}_{\alpha,t}(\cdot,\cdot)$
denoted by $\widetilde{D}_{\alpha,t}^{L,m}(\cdot,\cdot)$, see Proposition \ref{pro2.6}.
Then the desired  estimates of $\partial_{t}^{\beta}K^{L}_{\alpha,t}(\cdot,\cdot)$ can be deduced from  (\ref{eq-1.4}) and Proposition \ref{pro2.6}, see Propositions \ref{prop-3.6}-\ref{prop-3.5}, respectively.


As an application, in Section \ref{sec-4},  we characterize the Camapnato type spaces associated with $L$, denoted by $BMO^{\gamma}_{L}(\mathbb R^{n})$, via the fractional heat semigroup $\{e^{-tL^{\alpha}}\}_{t>0}$. In the last decades, the characterizations of function spaces associated with Schr\"odinger operators via semigroups and Carleson measures have attracted the attentions of many authors. Let $V\in B_{q}$, $q>n/2$. Using the family of operators $\{t\partial_{t}e^{-tL}\}_{t>0}$, the Carleson measure characterization of $BMO_{L}(\mathbb R^{n})$
 was obtained by Dziuba\'nski-Garrig\'os-Mart\'inez-Torrea-Zienkiewicz \cite{DGMTZ}. Replacing the potential $V$ by a general Radon measure $\mu$, in \cite{WY},
Wu-Yan extended the result of  \cite{DGMTZ} to generalized Schr\"odinger operators. The analogue in the  setting of Heisenberg groups was obtained by Lin-Liu \cite{LL}.   Ma-Stinga-Torrea-Zhang \cite{MSTZ} characterized the Campanato type spaces associated with $L$ via the fractional derivatives of the Poisson semigroup. For further information on this topic, we refer to \cite{DY2, DYZ, HZ, jiangrenjin, Songliang, yyz1, YZhang} and the references therein. Assume that $L=-\Delta+V$ with $V\in B_{q}, q>n$. By the regularity estimates obtained in Section \ref{sec-3}, we establish the following equivalent characterizations:  for $0<\gamma<\min\{2\alpha,\ 2\alpha\beta\}$,
\begin{eqnarray*}
 f\in BMO^{\gamma}_{L}(\mathbb R^{n})
&\sim&\sup_{B}  \frac{1}{|B|^{1+2\gamma/n}}\int^{r_{B}^{2\alpha}}_{0}\int_{B}|t^{\beta}\partial_{t}^{\beta}e^{-tL^{\alpha}}(f)(x)|^{2}\frac{dxdt}{t}<\infty\\
&\sim& \sup_{B}\frac{1}{|B|^{1+2\gamma/n}}\int^{r^{2\alpha}_{B}}_{0}\int_{B}|t^{1/2\alpha}\nabla_{\alpha}e^{-tL^{\alpha}}(f)(x)|^{2}
 \frac{dxdt}{t}<\infty,\nonumber
\end{eqnarray*}
where $\nabla_{\alpha}:=(\nabla_{x}, \partial_{t}^{1/2\alpha}).$ See Theorems  \ref{th-4.2} and \ref{th-4.3}, respectively.

\begin{remark}
 The regularity estimates obtained in this paper generalize several results on the regularities of Schr\"odinger operators. Letting $\alpha=1/2$, $K^{L}_{1/2,t}(\cdot,\cdot)=P^{L}_{t}(\cdot,\cdot)$, the Poisson kernel associated with the Schr\"odinger operator. For this case,  Propositions \ref{prop-3.3-1} and \ref{prop-3.8} come back to \cite[Lemma 3.9]{DYZ}. Also, the regularities of  $\partial_{t}^{\beta}K^{L}_{\alpha,t}(\cdot,\cdot)$ obtained in Section \ref{sec-3.3} generalize \cite[Proposition 3.6, (b), (c), (d)]{MSTZ}.
\end{remark}
\begin{remark}
 For the case of $\alpha=1/2$, the regularities of $\partial^{\beta}_{t}K_{1/2,t}^{L}$ have been studied by  Ma-Stinga-Torrea-Zhang \cite{MSTZ}.
We point out that our method is slightly different from that of \cite{MSTZ}. In \cite{MSTZ}, via the Hermite polynomials $H_m(\cdot)$, the authors convert the estimate of $\partial_{t}^{\beta}P_{t}^{L}(\cdot,\cdot)$ to the estimate of $\partial_{t}K^{L}_{t}(\cdot,\cdot)$, see \cite[(3.12)]{MSTZ}. In Section \ref{sec-3.3}, we estimate the time-fractional derivatives of $K^{L}_{\alpha,t}(\cdot,\cdot)$ via $\partial_{t}^{m}K^{L}_{\alpha, t}(\cdot,\cdot)$ instead of the Hermite polynomials.
\end{remark}

{\it Some notations}:
\begin{itemize}
\item ${\mathsf U}\approx{\mathsf V}$ represents that
there is a constant $c>0$ such that $c^{-1}{\mathsf V}\le{\mathsf
U}\le c{\mathsf V}$ whose right inequality is also written as
${\mathsf U}\lesssim{\mathsf V}$. Similarly, one writes ${\mathsf V}\gtrsim{\mathsf U}$ for
${\mathsf V}\ge c{\mathsf U}$.

\item For convenience, the positive constants $C$
may change from one line to another and usually depend
on the dimension $n,$ $\alpha,$ $\beta$
and other fixed parameters.

\item Let $B$ be a ball with the radius $r$. In the rest of this paper, for $c>0$, we denote by $B_{cr}$ the ball with the same center and radius $cr$.
\end{itemize}

\section{Preliminaries}\label{sec2}
\subsection{The Schr\"odinger operator}
Let $L=-\Delta+V$ be the Schr\"{o}dinger differential operator
on $\mathbb R^{n}, n\geq 3.$ Throughout the paper, we will assume that
$V$ is a nonzero, nonnegative potential, and belongs to the reverse H\"older class $B_{q}, q>n/2,$ which is defined in Definition \ref{def-1.1}.
By H\"{o}lder's inequality, we can get $B_{q_{1}}\subset B_{q_{2}}$
for $q_{1}\geq q_{2}>1$. One remarkable feature about the class $B_{q}$
 is that if $V\in B_{q}$ for some $q>1$, then there exists
$\varepsilon>0$, which depends only on $n$ and the constant $C$ in
(\ref{eq-2.1}), such that $V\in B_{q+\varepsilon}$. It is also well known that
if $V\in B_{q}, q>1$ then $V(x)dx$ is a doubling measure. Namely,
for any $r>0,x\in \mathbb R^{n}$,
\begin{equation}\label{eq-2.4}
\int_{B(x,2r)}V(y)dy\leq C_{0}\int_{B(x,r)}V(y)dy.
\end{equation}


The auxiliary function $m(x,V)$ is defined by
\begin{equation}\label{eq-2.3}
\frac{1}{m(x,V)}:=\sup\Big\{r>0:\ \frac{1}{r^{n-2}}\int_{B(x,r)}V(y)dy\leq 1\Big\}.
\end{equation}
Clearly, $0<m(x,V)<\infty$ for every $x\in \mathbb R^{n}$ and if
$r={1}/{m(x,V)}$, then
$\frac{1}{r^{n-2}}\int_{B(x,r)}V(y)dy=1.$ For simplicity, we
denote ${1}/{m(x,V)}$ by $\rho(x)$ in the proof sometime.
We state some
properties of $m(x,V)$ which will be used in the proofs of main results.

\begin{lemma} \label{le-2.2}{\rm (\cite[Lemma 1.2]{Shen-2})} There exists a
constant $C>0$ such that for every $0<r<R<\infty$ and $ y\in
\mathbb{R}^{n}$, we have
$$\frac{1}{r^{n-2}}\int_{B(y,r)}V(x)dx\leq C\Big(\frac{r}{R}\Big)^{2-n/q}\frac{1}{R^{n-2}}\int_{B(y,R)}^{}V(x)dx.$$
\end{lemma}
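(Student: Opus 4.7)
The plan is to combine Hölder's inequality on the inner ball $B(y,r)$ with the reverse Hölder condition (\ref{eq-2.1}) applied on the outer ball $B(y,R)$. The reverse Hölder inequality is naturally a statement about a single ball, so the right pairing is to use Hölder to raise $V$ to the power $q$ on the small ball, enlarge the domain of integration to $B(y,R)$, and only then invoke $V\in B_{q}$ to convert the $L^{q}$ norm back into an $L^{1}$ average.

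Concretely, first I would apply Hölder's inequality with exponents $q$ and $q/(q-1)$ to obtain
$$\int_{B(y,r)}V(x)\,dx \leq |B(y,r)|^{1-1/q}\Big(\int_{B(y,r)}V(x)^{q}\,dx\Big)^{1/q}.$$
Using the containment $B(y,r)\subset B(y,R)$ to enlarge the domain of integration and then applying (\ref{eq-2.1}) on $B(y,R)$ yields
$$\Big(\int_{B(y,r)}V(x)^{q}\,dx\Big)^{1/q} \leq \Big(\int_{B(y,R)}V(x)^{q}\,dx\Big)^{1/q} \leq \frac{C}{|B(y,R)|^{1-1/q}}\int_{B(y,R)}V(x)\,dx.$$
Combining these two displays, and using $|B(y,r)|/|B(y,R)|=(r/R)^{n}$, gives
$$\int_{B(y,r)}V(x)\,dx \leq C\Big(\frac{r}{R}\Big)^{n-n/q}\int_{B(y,R)}V(x)\,dx.$$

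To recover the statement of the lemma, I would divide both sides by $r^{n-2}$ and rewrite the right-hand side as $(R/r)^{n-2}$ times $R^{-(n-2)}\int_{B(y,R)}V\,dx$; the total power of $r/R$ then collects to $(n-n/q)-(n-2)=2-n/q$, which is strictly positive thanks to the standing hypothesis $q>n/2$. This positivity is the reason the estimate is genuinely informative when $r<R$. There is no real obstacle here beyond bookkeeping: the only conceptual point is to recognize that $B_{q}$ is used by enlarging the ball, not shrinking it. One can view the lemma as saying that the quantity $r\mapsto r^{-(n-2)}\int_{B(y,r)}V$ grows no faster than $r^{2-n/q}$, which is precisely the scale-invariant form in which the estimate controls the auxiliary function $\rho(\cdot)$ defined in (\ref{eq-2.3}) throughout the rest of the paper.
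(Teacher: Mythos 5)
Your proof is correct: the Hölder inequality on $B(y,r)$, enlargement to $B(y,R)$, and the reverse Hölder condition on the large ball combine exactly as you compute, and the exponent bookkeeping $(n-n/q)-(n-2)=2-n/q>0$ (using $q>n/2$) is right. The paper does not prove this lemma but simply cites \cite[Lemma 1.2]{Shen-2}, and the argument you give is precisely the standard one from that source, so there is nothing to add.
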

\begin{lemma} {\rm(\cite[Lemma 3]{Kurata})}  For every constant $C_{1}>1$, there exists a constant
$C_{2}>1$ such that if
$$\frac{1}{C_{1}}\leq\frac{1}{r^{n-2}}\int_{B(x,r)}^{}V(y)dy\leq
C_{1},$$ then $C^{-1}_{2}\leq rm(x,V)\leq C_{2}$.
\end{lemma}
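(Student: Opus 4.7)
The proof hinges on a single structural fact: by the defining supremum of $\rho(x) = 1/m(x,V)$ combined with continuity of $r \mapsto \int_{B(x,r)}V(y)\,dy$ (valid since $V$ is locally integrable), the function $\psi(s) := s^{-(n-2)}\int_{B(x,s)}V(y)\,dy$ is continuous on $(0,\infty)$ and satisfies $\psi(\rho(x)) = 1$ --- this is the identity already noted right after (\ref{eq-2.3}). Consequently the task reduces to showing that the hypothesis $C_1^{-1} \leq \psi(r) \leq C_1$ forces $r$ to be comparable to $\rho(x)$. The main tool for this is Lemma \ref{le-2.2}, which is available precisely because $V \in B_q$ with $q > n/2$, making the exponent $\tau := 2 - n/q$ strictly positive.

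The argument splits into two symmetric cases according to whether $r \leq \rho(x)$ or $r \geq \rho(x)$. Writing $C$ for the constant supplied by Lemma \ref{le-2.2}, if $r \leq \rho(x)$ I would apply that lemma with radii $r < \rho(x)$ to get
$$\frac{1}{C_1} \leq \psi(r) \leq C\Bigl(\frac{r}{\rho(x)}\Bigr)^{\tau}\psi(\rho(x)) = C\Bigl(\frac{r}{\rho(x)}\Bigr)^{\tau},$$
which immediately yields the lower bound $r/\rho(x) \geq (CC_1)^{-1/\tau}$. If instead $r \geq \rho(x)$, the same lemma applied with radii $\rho(x) < r$ gives
$$1 = \psi(\rho(x)) \leq C\Bigl(\frac{\rho(x)}{r}\Bigr)^{\tau}\psi(r) \leq CC_1\Bigl(\frac{\rho(x)}{r}\Bigr)^{\tau},$$
so $r/\rho(x) \leq (CC_1)^{1/\tau}$. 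Combining both bounds, one may take $C_2 := (CC_1)^{1/\tau}$, and the conclusion $C_2^{-1} \leq rm(x,V) \leq C_2$ follows since $rm(x,V) = r/\rho(x)$.

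There is no serious obstacle here; the content of the lemma is essentially a calibration statement that converts the qualitative supremum in (\ref{eq-2.3}) into a quantitative two-sided comparison. The only subtle input is the strict positivity of $\tau = 2 - n/q$, which is precisely the point at which the reverse Hölder hypothesis $q > n/2$ is invoked. Without $\tau > 0$, the factor $(r/R)^{\tau}$ in Lemma \ref{le-2.2} would no longer decay as $r/R \to 0$, and the lower bound in Case 1 (respectively the upper bound in Case 2) would become vacuous. In summary, the lemma is a one-line consequence of $\psi(\rho(x)) = 1$ and the scale-invariant Hölder-type decay provided by Shen's estimate.
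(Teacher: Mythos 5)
Your proof is correct. Note that the paper does not prove this lemma at all --- it is quoted verbatim from Kurata (Lemma 3 there) as a known result --- so there is no internal argument to compare against; what you give is a legitimate self-contained reconstruction. Both of your inputs are available in the paper: the normalization $\rho(x)^{-(n-2)}\int_{B(x,\rho(x))}V(y)\,dy=1$ is asserted immediately after (\ref{eq-2.3}) (and is justified by continuity of $s\mapsto s^{-(n-2)}\int_{B(x,s)}V$, together with the fact that this quantity exceeds $1$ for every $s>\rho(x)$ by the definition of the supremum), and Lemma \ref{le-2.2} supplies the factor $(r/R)^{2-n/q}$ with strictly positive exponent precisely because $V\in B_q$ with $q>n/2$. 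Your two-case comparison (applying Lemma \ref{le-2.2} with the pair $r<\rho(x)$, respectively $\rho(x)<r$) then gives $C_2=(CC_1)^{1/(2-n/q)}$; since the constant $C$ in Lemma \ref{le-2.2} is necessarily at least $1$ (take $r\to R$ in that estimate), one indeed has $C_2>1$, and the degenerate case $r=\rho(x)$ is trivial. This is essentially the standard calibration argument from Shen's and Kurata's papers, so your route coincides with how the cited source proves it.
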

\begin{lemma}\label{lem2.4}
 {\rm(\cite[Lemma 1.4]{Shen-2})}\quad For every constant $C_{1}\geq 1$, there is a constant
$C_{2}\geq 1$ such that
$$\frac{1}{C_{2}}\leq\frac{m(x,V)}{m(y,V)}\leq C_{2}$$ for $|x-y|\leq
C_{1}\rho(x).$ Moreover, there exist constants $k_{0}, C, c>0$ such
that
$$\left\{\begin{aligned}
&m(y,V)\leq C(1+|x-y|m(x,V))^{k_{0}}m(x,V);\\
&m(y,V)\geq cm(x,V)(1+|x-y|m(x,V))^{-k_{0}/(1+k_{0})}.
\end{aligned}\right.
$$
\end{lemma}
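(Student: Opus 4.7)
The plan is to prove the result in two stages: first the local comparison $m(y,V)\approx m(x,V)$ when $|x-y|\le C_{1}\rho(x)$, and then bootstrap to the two global inequalities by chaining the local estimate along the segment $[x,y]$.

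For the local part, the key input is Lemma~\ref{le-2.2}, which effectively says that the average $r^{2-n}\int_{B(z,r)}V$ is, modulo constants, increasing in $r$ with rate at least $(R/r)^{2-n/q}$; note $2-n/q>0$ since $q>n/2$. To bound $\rho(y)$ from above, observe that $|x-y|\le C_{1}\rho(x)$ forces $B(y,(C_{1}+1)\rho(x))\supset B(x,\rho(x))$, hence
\[
\frac{1}{((C_{1}+1)\rho(x))^{n-2}}\int_{B(y,(C_{1}+1)\rho(x))}V(z)\,dz \ \ge\ \frac{1}{(C_{1}+1)^{n-2}}.
\]
Combining with Lemma~\ref{le-2.2} applied at $y$, this average must reach $1$ at some $R\le C\rho(x)$, forcing $\rho(y)\le C\rho(x)$. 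For the reverse inequality $\rho(y)\ge c\rho(x)$, the doubling property (\ref{eq-2.4}) applied a bounded number of times gives $\int_{B(y,(C_{1}+1)\rho(x))}V\lesssim\int_{B(x,\rho(x))}V=\rho(x)^{n-2}$; then Lemma~\ref{le-2.2} at $y$ yields $s^{2-n}\int_{B(y,s)}V\lesssim(s/\rho(x))^{2-n/q}$ for $s\le\rho(x)$, which is strictly less than $1$ once $s\le c\rho(x)$, so $\rho(y)\ge c\rho(x)$.

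For the global bounds I would iterate the local comparison along a chain of points from $x$ to $y$. Since $\rho$ varies only by a bounded factor over a step of length $C_{1}\rho$(current point), I would subdivide $[x,y]$ into consecutive pieces whose lengths are comparable to $C_{1}\rho$ at the left endpoint. The number of steps needed to reach $y$ is then controlled by $|x-y|m(x,V)$, and the product of the per-step factors yields a polynomial bound. The exponent $k_{0}$ in the upper bound comes from the worst-case rate at which $\rho$ can shrink as one moves. The sharper lower-bound exponent $k_{0}/(1+k_{0})$ arises from a self-referential coupling: if $\rho$ drops by a factor $\lambda$, then the step length drops too, so the number of steps required to cover $|x-y|$ grows like $\lambda^{-1}$; solving this coupling produces the ratio $k_{0}/(1+k_{0})$.

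The main obstacle is precisely this bookkeeping in the iterative argument: one must set up the chain so that the local lemma applies at each step, and then accurately estimate the accumulated multiplicative factor. I would induct on the integer $k$ with $2^{k}\le 1+|x-y|m(x,V)<2^{k+1}$, using the local comparison at each dyadic scale and verifying that the exponents $k_{0}$ and $k_{0}/(1+k_{0})$ emerge correctly. The asymmetry between the two bounds reflects that the iteration runs most efficiently starting from the endpoint where $\rho$ is larger, so one cannot simply swap the roles of $x$ and $y$.
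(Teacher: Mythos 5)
The paper offers no proof of this lemma; it is quoted from Shen \cite{Shen-2}, so your proposal must be judged against the standard argument there. Your local step is essentially that argument and is sound: the inclusion $B(x,\rho(x))\subset B(y,(C_{1}+1)\rho(x))$ together with Lemma \ref{le-2.2} forces $\rho(y)\le C\rho(x)$, and the doubling property (\ref{eq-2.4}) together with Lemma \ref{le-2.2} gives $\rho(y)\ge c\rho(x)$.

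The two global inequalities are where there is a genuine gap: chaining the local comparison along $[x,y]$ cannot produce polynomial bounds. Each step may cost the fixed factor $C_{2}>1$, so $N$ steps give the factor $C_{2}^{N}$; if $\rho$ stays comparable to $\rho(x)$ along the segment you need $N\approx |x-y|\,m(x,V)$ steps and obtain a bound exponential in $|x-y|\,m(x,V)$ rather than $(1+|x-y|m(x,V))^{k_{0}}$, while in the worst case consistent with the local statement alone $\rho$ shrinks by the factor $1/C_{2}$ at every step, the step lengths form a convergent geometric series of total length $\lesssim\rho(x)$, and the chain never reaches $y$ at all; ruling out that collapse is precisely what the first global inequality asserts, so the chaining scheme is circular. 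The missing ingredient is the quantitative growth estimate of Lemma \ref{le-2.5}: for $R\ge\rho(x)$ one has $R^{2-n}\int_{B(x,R)}V\le C(Rm(x,V))^{k_{0}}$. Applying Lemma \ref{le-2.2} centered at $y$ with outer radius $R\approx|x-y|+\rho(x)$ and inserting this bound shows that the averages around $y$ stay below $1$ up to a radius of size $\rho(x)(1+|x-y|m(x,V))^{-k_{0}'}$, with $k_{0}'$ determined by the exponent in Lemma \ref{le-2.5}; this is the first inequality, and the exponent does not come from a worst-case shrink rate along a chain. The second inequality then follows from the first by interchanging $x$ and $y$ and elementary algebra: with $t=|x-y|m(y,V)$ and $A=|x-y|m(x,V)$, the swapped inequality gives $A\le Ct(1+t)^{k_{0}}\le C(1+t)^{1+k_{0}}$, hence $1+t\gtrsim(1+A)^{1/(1+k_{0})}$ and $m(y,V)/m(x,V)=t/A\gtrsim(1+A)^{-k_{0}/(1+k_{0})}$; the exponent $k_{0}/(1+k_{0})$ has this algebraic origin, not the step-counting fixed point you describe.
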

\begin{lemma}\label{le-2.5}
 {\rm(\cite[Lemma 1.8]{Shen-2})}  There exist constants $k_{0}, C>0$
such that for $R\geq m(x,V)^{-1}$,
$$\frac{1}{R^{n-2}}\int_{B(x,R)}V(y)dy\leq C(Rm(x,V))^{k_{0}}.$$
\end{lemma}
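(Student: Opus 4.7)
The plan is to deduce this growth estimate by iterating the doubling property (\ref{eq-2.4}) of $V(y)\,dy$ outward from the critical radius $\rho(x)=1/m(x,V)$, at which the defining normalization of $m(x,V)$ holds.

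First I would pin down the base scale: as observed just after (\ref{eq-2.3}), for $r=\rho(x)$ the supremum in the definition of $m(x,V)$ is achieved, so
$$\int_{B(x,\rho(x))}V(y)\,dy=\rho(x)^{n-2}.$$
Given any $R\geq \rho(x)$, I would then pick the unique integer $k\geq 0$ with $2^{k-1}\rho(x)<R\leq 2^{k}\rho(x)$, so that in particular $2^{k}\leq 2R/\rho(x)$. Applying the doubling inequality (\ref{eq-2.4}) exactly $k$ times to pass from radius $\rho(x)$ up to $2^{k}\rho(x)\supseteq B(x,R)$ yields
$$\int_{B(x,R)}V(y)\,dy\leq \int_{B(x,2^{k}\rho(x))}V(y)\,dy\leq C_{0}^{k}\int_{B(x,\rho(x))}V(y)\,dy=C_{0}^{k}\rho(x)^{n-2}.$$

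Next I would convert the exponential factor $C_{0}^{k}$ into a power of $R/\rho(x)=Rm(x,V)$. Writing $C_{0}^{k}=(2^{k})^{\log_{2}C_{0}}\leq \bigl(2R/\rho(x)\bigr)^{\log_{2}C_{0}}$ and dividing by $R^{n-2}$, one obtains
$$\frac{1}{R^{n-2}}\int_{B(x,R)}V(y)\,dy\leq C\Big(\frac{R}{\rho(x)}\Big)^{\log_{2}C_{0}-(n-2)}=C\bigl(Rm(x,V)\bigr)^{\log_{2}C_{0}-(n-2)}.$$
Since $Rm(x,V)\geq 1$ by hypothesis, setting $k_{0}:=\max\{1,\,\log_{2}C_{0}-(n-2)\}$ yields the claim, because enlarging a nonnegative exponent on a quantity that is at least $1$ only enlarges the right-hand side.

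Honestly there is no serious obstacle here; the whole argument is a one-line consequence of iterated doubling. The only real bookkeeping is the choice of $k_{0}$ to cover the case in which $C_{0}$ is small enough that the natural exponent $\log_{2}C_{0}-(n-2)$ is nonpositive, which is absorbed thanks to the assumption $R\geq \rho(x)$. I would not expect to need the reverse Hölder strength of $V\in B_{q}$ beyond what is already encoded in the doubling constant $C_{0}$ appearing in (\ref{eq-2.4}).
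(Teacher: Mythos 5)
The paper does not prove this lemma at all; it is imported verbatim from Shen (\cite[Lemma 1.8]{Shen-2}), so there is no in-paper argument to compare against. Your proof is correct and is essentially the standard (and Shen's original) argument: anchor at the critical radius $r=\rho(x)$, where the paper's remark after (\ref{eq-2.3}) gives $\int_{B(x,\rho(x))}V(y)\,dy=\rho(x)^{n-2}$, then iterate the doubling property (\ref{eq-2.4}) roughly $\log_{2}(R/\rho(x))$ times and convert $C_{0}^{k}$ into a power of $Rm(x,V)$; your bookkeeping with $k_{0}=\max\{1,\log_{2}C_{0}-(n-2)\}$ is sound since $Rm(x,V)\geq1$ (and one could even more simply drop the factor $(\rho(x)/R)^{n-2}\leq1$ and take $k_{0}=\log_{2}C_{0}$, noting $C_{0}\geq1$). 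You are also right that nothing beyond doubling and the normalization at $\rho(x)$ is needed; the reverse H\"older hypothesis enters only through those two facts, which the paper states.
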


\begin{lemma}\label{le-2.6} {\rm \cite[Lemma 1]{GLP}}
Suppose $V\in B_{q}$, $q>n/2$. Let $m_{0}>\log_{2}C_{0}+1$, where $C_{0}$ is the constant in (\ref{eq-2.4}). Then for any $x_{0}\in\mathbb R^{n}$, $R>0$,
$$\frac{1}{\{1+Rm(x_{0},V)\}^{m_{0}}}\int_{B(x_{0}, R)}V(x)dx\leq CR^{n-2}.$$
\end{lemma}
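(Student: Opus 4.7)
The plan is to split into two regimes depending on how the radius $R$ compares with the critical scale $\rho(x_0)=1/m(x_0,V)$, since the auxiliary function is precisely tuned to give $\frac{1}{\rho(x_0)^{n-2}}\int_{B(x_0,\rho(x_0))}V\,dx=1$.

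First, in the \emph{small-radius regime} $R\le \rho(x_0)$ (equivalently $Rm(x_0,V)\le 1$), I would apply Lemma \ref{le-2.2} with $r=R$ and $R$ replaced by $\rho(x_0)$, so that
\[
\frac{1}{R^{n-2}}\int_{B(x_0,R)}V(x)\,dx\le C\Bigl(\frac{R}{\rho(x_0)}\Bigr)^{2-n/q}\cdot\frac{1}{\rho(x_0)^{n-2}}\int_{B(x_0,\rho(x_0))}V(x)\,dx=C(Rm(x_0,V))^{2-n/q}\le C.
\]
Since $(1+Rm(x_0,V))^{m_0}\le 2^{m_0}$ in this regime, the desired inequality follows at once.

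Second, in the \emph{large-radius regime} $R>\rho(x_0)$, I would choose the nonnegative integer $k$ satisfying $2^{k-1}<Rm(x_0,V)\le 2^k$, so that $B(x_0,R)\subset B(x_0,2^k\rho(x_0))$. Iterating the doubling inequality (\ref{eq-2.4}) exactly $k$ times starting from the ball of radius $\rho(x_0)$, and using the defining identity for $m(x_0,V)$, I obtain
\[
\int_{B(x_0,R)}V(x)\,dx\le C_0^k\int_{B(x_0,\rho(x_0))}V(x)\,dx=C_0^k\rho(x_0)^{n-2}\le C\,(Rm(x_0,V))^{\log_2 C_0}\,\rho(x_0)^{n-2}.
\]
Rewriting $\rho(x_0)^{n-2}=R^{n-2}(Rm(x_0,V))^{-(n-2)}$ and dividing by $(1+Rm(x_0,V))^{m_0}\ge (Rm(x_0,V))^{m_0}$ reduces the claim to
\[
(Rm(x_0,V))^{\log_2 C_0-(n-2)-m_0}\le C,
\]
which is immediate because $Rm(x_0,V)\ge 1$ in this regime and the exponent is nonpositive by the standing hypothesis $m_0>\log_2 C_0+1$ together with $n\ge 3$.

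The only delicate point, and what I expect to be the main bookkeeping obstacle, is verifying that the exponent condition on $m_0$ is exactly what the doubling iteration produces: the geometric factor $C_0^k\approx(Rm(x_0,V))^{\log_2 C_0}$ must be absorbed by $(1+Rm(x_0,V))^{m_0}$ after the $R^{n-2}$ normalization is extracted, and the inequality $m_0>\log_2 C_0+1\ge \log_2 C_0-(n-2)$ is precisely the threshold that makes this absorption work. Everything else is the standard dyadic-doubling argument combined with Shen's comparison Lemma \ref{le-2.2} at the critical scale.
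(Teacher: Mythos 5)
Your proof is correct: the small-radius case via Lemma \ref{le-2.2} together with the normalization $\frac{1}{\rho(x_0)^{n-2}}\int_{B(x_0,\rho(x_0))}V\,dx=1$, and the large-radius case via $k$-fold iteration of the doubling inequality (\ref{eq-2.4}) with $2^{k}\approx Rm(x_0,V)$, give exactly the stated bound, and the exponent bookkeeping $\log_2 C_0-(n-2)-m_0\le 0$ does follow from $m_0>\log_2 C_0+1$ and $n\ge 3$. The paper offers no proof of its own here — it quotes the lemma from \cite[Lemma 1]{GLP} — and your argument is essentially the standard dyadic-doubling proof used in that reference, so the approaches coincide.
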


As a corollary of \cite[Corollary 4.8]{Dz2}, we have
\begin{lemma}\label{lem2.6}
 There exist constants $C$, $\delta$ and $l$ such that
$$\frac{1}{t^{n/2}}\int_{\mathbb{R}^{n}}e^{-c|x-y|^{2}/t}V(y)dy\leq
\left\{\begin{aligned}
 \frac{C}{t}\Big(\frac{\sqrt{t}}{\rho(x)}\Big)^{\delta},\ \sqrt{t}< m(x,V)^{-1};\\
 \frac{C}{t}\Big(\frac{\sqrt{t}}{\rho(x)}\Big)^{l},\ \sqrt{t}\geq m(x, V)^{-1}.
 \end{aligned}\right.$$
\end{lemma}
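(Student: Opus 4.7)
The plan is to bound the integral by dyadically decomposing $\mathbb R^{n}$ into annuli around $x$ and then invoking the two complementary Shen-type estimates on $\int_{B}V$ recalled in Lemmas \ref{le-2.2} and \ref{le-2.5}. First I would write $\mathbb R^{n}=B(x,\sqrt t)\cup\bigcup_{k\geq 1}\bigl(B(x,2^{k}\sqrt t)\setminus B(x,2^{k-1}\sqrt t)\bigr)$, use $e^{-c|x-y|^{2}/t}\leq e^{-c\,4^{k-1}}$ on the $k$-th annulus, and absorb the annular integrals into ball integrals to obtain
$$\int_{\mathbb R^{n}}e^{-c|x-y|^{2}/t}V(y)\,dy\leq \sum_{k\geq 0}e^{-c'4^{k}}\int_{B(x,2^{k}\sqrt t)}V(y)\,dy.$$

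Next I would apply two regime-dependent bounds on $\int_{B(x,R)}V$. For $R\leq \rho(x)$, Lemma \ref{le-2.2} together with the defining relation $\rho(x)^{-(n-2)}\int_{B(x,\rho(x))}V\,dy=1$ produces
$$\int_{B(x,R)}V(y)\,dy\leq CR^{n-2}\Big(\frac{R}{\rho(x)}\Big)^{2-n/q},$$
which supplies the essential smallness factor. For $R\geq \rho(x)$, Lemma \ref{le-2.5} (or equivalently Lemma \ref{le-2.6}) gives
$$\int_{B(x,R)}V(y)\,dy\leq CR^{n-2}\Big(\frac{R}{\rho(x)}\Big)^{k_{0}}.$$

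In the easy case $\sqrt t\geq \rho(x)$, every ball in the dyadic sum has radius $\geq \rho(x)$, so the second bound applies throughout; pulling out $(\sqrt t/\rho(x))^{k_{0}}$ and summing the geometric-times-Gaussian series $\sum_{k}2^{k(n-2+k_{0})}e^{-c'4^{k}}$ against $t^{-n/2}$ yields the desired bound with $l=k_{0}$. In the case $\sqrt t<\rho(x)$, I would split the sum at $k^{\ast}=\lfloor\log_{2}(\rho(x)/\sqrt t)\rfloor$: for $k\leq k^{\ast}$ the first bound contributes $\tfrac{C}{t}(\sqrt t/\rho(x))^{2-n/q}$ times a convergent series, whereas for $k>k^{\ast}$ the super-Gaussian factor $e^{-c'4^{k}}$ overwhelms the polynomial growth $(2^{k}\sqrt t/\rho(x))^{k_{0}}$ and, because $\sqrt t/\rho(x)<1$, still leaves a bound of the form $\tfrac{C}{t}(\sqrt t/\rho(x))^{\delta}$ for any chosen $\delta>0$. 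Taking $\delta=\min\{2-n/q,\,k_{0}\}$ unifies the two subsums.

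The only genuine obstacle is the matching of exponents across the transition radius $R\approx \rho(x)$ in the small-$t$ regime; the key observation that makes the bookkeeping routine is that $\sqrt t/\rho(x)<1$ there, so one may always replace a larger exponent by a smaller one at the cost of a harmless constant, allowing a single clean exponent $\delta$ to do double duty. Everything else is the standard combination of dyadic Gaussian tail control with the two Shen estimates that have already been recorded.
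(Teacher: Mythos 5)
Your argument is correct, and it is worth noting that the paper does not actually prove this lemma: it is simply quoted as a corollary of \cite[Corollary 4.8]{Dz2}. What you have written is therefore a genuine, self-contained alternative built only from ingredients the paper already records. The dyadic annulus decomposition with the crude bound $e^{-c|x-y|^{2}/t}\le e^{-c4^{k-1}}$ on the $k$-th annulus is sound; the bound $\int_{B(x,r)}V\le Cr^{n-2}(r/\rho(x))^{2-n/q}$ for $r\le\rho(x)$ follows exactly as you say from Lemma \ref{le-2.2} and the normalization $\rho(x)^{-(n-2)}\int_{B(x,\rho(x))}V\,dy=1$, and the bound with exponent $k_{0}$ for $r\ge\rho(x)$ is Lemma \ref{le-2.5}; the Gaussian factor makes every dyadic series converge, giving $l=k_{0}$ in the regime $\sqrt t\ge\rho(x)$ and $\delta=\min\{2-n/q,\,k_{0}\}$ in the regime $\sqrt t<\rho(x)$. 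The only spot where the write-up is looser than the mathematics is the tail $k>k^{\ast}$: the clean justification is either the trivial one you hint at (since $\sqrt t/\rho(x)<1$ you may lower the exponent from $k_{0}$ to $\delta\le k_{0}$), or the sharper one that $\sum_{k>k^{\ast}}2^{k(n-2+k_{0})}e^{-c'4^{k}}\le C_{M}2^{-Mk^{\ast}}\lesssim C_{M}(\sqrt t/\rho(x))^{M}$ for every $M$, which shows the tail never constrains $\delta$; with the latter observation your argument in fact yields $\delta=2-n/q$, the exponent the paper implicitly uses later (e.g.\ in the proof of Proposition \ref{prop-3.3-2}). So the proposal is a complete and even slightly more informative proof than the citation the paper relies on.
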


 Since the potential $V$ is nonnegative, it follows from the Feynman-Kac formula that the kernels $K^{L}_{t}(\cdot,\cdot)$ have a Gaussian upper bound:
$$0\leq K^{L}_{t}(x,y)\leq \frac{1}{(4\pi t)^{n/2}}e^{-|x-y|^{2}/4t}.$$
Furthermore,
\begin{proposition}\label{lem2.1} {\rm (\cite[Theorem 4.10]{dz2})} For
every $N>0$, there exist  constants $C_{N}$ and $c$ such that for all $x,y\in\mathbb{R}^n,$
\begin{equation}\label{eq-gauss-bdd}
0\leq K^{L}_{t}(x,y)\leq
\frac{C_{N}}{t^{n/2}}e^{-c|x-y|^{2}/t}\Big(1+\frac{\sqrt{t}}{\rho(x)}+\frac{\sqrt{t}}{\rho(y)}\Big)^{-N}.
\end{equation}
\end{proposition}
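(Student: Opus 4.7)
The plan is to combine the Feynman--Kac representation with an iterative Duhamel argument, using the $V$-integration estimates of Lemma \ref{lem2.6} to extract the auxiliary-function decay beyond the plain Gaussian bound. The Feynman--Kac formula gives
$$K^{L}_{t}(x,y)=K_{t}(x,y)\,\mathbb{E}^{x,y}_{t}\!\left[\exp\!\left(-\int_{0}^{t}V(\omega_{s})\,ds\right)\right],$$
where $K_{t}(x,y)=(4\pi t)^{-n/2}e^{-|x-y|^{2}/4t}$ is the free Gaussian kernel and the expectation is taken over the Brownian bridge from $x$ to $y$ of duration $t$. Since $V\ge 0$, the expectation is bounded by $1$, which yields the elementary bound $K^{L}_{t}(x,y)\le Ct^{-n/2}e^{-|x-y|^{2}/4t}$ and, in particular, settles the claim when $\sqrt t\le\min\{\rho(x),\rho(y)\}$, since in that regime $(1+\sqrt t/\rho(x)+\sqrt t/\rho(y))^{-N}\asymp 1$.

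The substantial case is $\sqrt t\gg\rho(x)$; by symmetry of $K^{L}_{t}$ in $(x,y)$ the case $\sqrt t\gg\rho(y)$ is analogous. For this case I would iterate the perturbation (Duhamel) identity
$$K^{L}_{t}(x,y)=K_{t}(x,y)-\int_{0}^{t}\!\!\int_{\mathbb{R}^{n}}K_{t-s}(x,z)V(z)K^{L}_{s}(z,y)\,dz\,ds.$$
Inserting the elementary Gaussian bound for $K^{L}_{s}(z,y)$ on the right, I would first separate the two Gaussians by writing $|x-z|^{2}/(t-s)+|z-y|^{2}/s\ge |x-y|^{2}/t$ with a constant to spare, so as to pull out an overall factor $e^{-c'|x-y|^{2}/t}$ while retaining $e^{-c''|x-z|^{2}/(t-s)}$ for the $z$-integration. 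Lemma \ref{lem2.6} then bounds the residual space-time integral $\int_{0}^{t}(t-s)^{-n/2}\!\int e^{-c''|x-z|^{2}/(t-s)}V(z)\,dz\,ds$ and, in either of its two regimes, yields a gain of either $(\sqrt t/\rho(x))^{\delta}$ (small times) or $(\sqrt t/\rho(x))^{l}$ (large times). Substituting this improved bound back into Duhamel and repeating $k$ times collects a factor of order $(\sqrt t/\rho(x))^{-k\delta}$; taking $k$ with $k\delta\ge N$ produces the desired decay in $x$. Running the same argument in the $y$ variable by symmetry and combining yields the full factor $(1+\sqrt t/\rho(x)+\sqrt t/\rho(y))^{-N}$.

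The main obstacle is the bookkeeping across the iteration and across the two regimes of Lemma \ref{lem2.6}. After each Duhamel step one must (i) preserve the Gaussian factor $e^{-c|x-y|^{2}/t}$ with a slightly smaller constant $c$, (ii) not lose any power of $t$ in the prefactor, and (iii) accumulate the correct power of $\sqrt t/\rho(x)$ when crossing between the two regimes of Lemma \ref{lem2.6}. A clean way to organise this is to split the $s$-integral at $s\asymp\rho(x)^{2}$: on $[\rho(x)^{2},t]$ the large-time bound gives uniform gain, while on $[0,\rho(x)^{2}]$ Lemma \ref{lem2.4} ensures that $\rho(z)\asymp\rho(x)$ throughout the effective support of the integrand, so that the induction hypothesis can be reapplied with the same value of the auxiliary function. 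Once this decomposition is in place a straightforward induction on $k$ closes the estimate, with Lemma \ref{le-2.5} providing the polynomial control needed to sum the resulting geometric series in the parameter $\sqrt t/\rho(x)$.
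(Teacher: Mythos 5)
This proposition is not proved in the paper at all: it is quoted verbatim from \cite[Theorem 4.10]{dz2}, so there is no internal argument to compare yours against, and I have to judge your sketch on its own terms. Your Feynman--Kac step (hence $0\le K^L_t\le$ the free Gaussian kernel, which settles the case $\sqrt t\le\min\{\rho(x),\rho(y)\}$) is fine, but the mechanism you propose for the substantial regime $\sqrt t\gg\rho(x)$ does not work. In the Duhamel identity the term $\int_0^t\int K_{t-s}(x,z)V(z)K^L_s(z,y)\,dz\,ds$ is \emph{subtracted}; inserting the Gaussian bound for $K^L_s$ and invoking Lemma \ref{lem2.6} only gives an \emph{upper} bound on this correction, which yields the useless lower bound $K^L_t\ge K_t-(\hbox{bound})$ and cannot improve the upper bound $K^L_t\le K_t$. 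Iterating does not repair this: the resulting Neumann-type expansion is alternating, and term-by-term upper bounds say nothing about the smallness of the sum. Worse, in the regime you need, the factors Lemma \ref{lem2.6} produces are $(\sqrt t/\rho(x))^{\delta}$ and $(\sqrt t/\rho(x))^{l}$ with \emph{positive} exponents, i.e.\ quantities that are large when $\sqrt t\gg\rho(x)$; the lemma gains only when $\sqrt t<\rho(x)$, exactly where no gain is required. Your sentence claiming that $k$ iterations ``collect a factor of order $(\sqrt t/\rho(x))^{-k\delta}$'' silently flips the sign of the exponent, and that is precisely where the argument breaks.

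There is also a structural reason the plan cannot close: every input you use (Lemmas \ref{lem2.6}, \ref{lem2.4}, \ref{le-2.5}) is an \emph{upper} bound on local integrals of $V$, whereas decay of $K^L_t$ below the free Gaussian at scales $\sqrt t\gtrsim\rho(x)$ must exploit the opposite information, namely the normalization $\rho(x)^{2-n}\int_{B(x,\rho(x))}V(y)\,dy=1$, which guarantees a definite amount of potential mass near $x$. The proofs in the literature do exactly this: one first shows a mass-decay estimate of the type $\int_{\mathbb R^n}K^L_t(x,y)\,dy\le C\,(1+\sqrt t/\rho(x))^{-\epsilon}$ for some $\epsilon>0$ (using the normalization of $\rho$ to bound the relevant contribution from below, or, as in \cite{Kurata}, estimating the Feynman--Kac bridge expectation), and then bootstraps to arbitrary $N$ via the semigroup identity $K^L_t(x,y)=\int K^L_{t/2}(x,z)K^L_{t/2}(z,y)\,dz$ together with Lemma \ref{lem2.4}, splitting according to whether $|x-z|\ge|x-y|/2$ or $|z-y|\ge|x-y|/2$ so as to retain the Gaussian factor. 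If you want to produce a self-contained proof, that is the route to follow; as written, your iteration has no source of decay.
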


\begin{proposition}\label{prop2}{\rm(\cite[Proposition 4.11]{dz2})}  For every $0<\delta'<\delta_{0}=\min\{1,
2-n/q\}$ and every
$N>0$, there exist constants $C_{N}>0$ and $c$ such that for $|h|<\sqrt{t}$,
$$|K^{L}_{t}(x+h,y)-K^{L}_{t}(x,y)|\leq \frac{C_{N}}{t^{n/2}}\Big(\frac{|h|}{\sqrt{t}}\Big)^{\delta'}e^{-c|x-y|^{2}/t}
\Big(1+\frac{\sqrt{t}}{\rho(x)}+\frac{\sqrt{t}}{\rho(y)}\Big)^{-N}.$$
\end{proposition}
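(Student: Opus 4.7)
The plan is to use the Duhamel perturbation formula to reduce the H\"older regularity of $K^{L}_{t}(\cdot,\cdot)$ to that of the classical heat kernel $K_{t}(\cdot,\cdot)$ and then absorb the potential $V$ using the reverse H\"older property. Starting from
$$K^{L}_{t}(x,y)=K_{t}(x,y)-\int_{0}^{t}\int_{\mathbb{R}^{n}} K_{t-s}(x,z)V(z)K^{L}_{s}(z,y)\,dz\,ds,$$
which comes from the semigroup identity $e^{-tL}=e^{t\Delta}-\int_{0}^{t}e^{(t-s)\Delta}Ve^{-sL}\,ds$, I would take the $h$-difference in the first variable and split the estimate into a free contribution $K_{t}(x+h,y)-K_{t}(x,y)$ and an integral remainder involving $K_{t-s}(x+h,z)-K_{t-s}(x,z)$.

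For the free contribution I would interpolate between the standard Gaussian bound and the gradient estimate $|\nabla_{x}K_{t}(x,y)|\lesssim t^{-(n+1)/2}e^{-c|x-y|^{2}/t}$ to obtain, for $|h|\le\sqrt{t}$ and any $\delta'\in(0,1)$, the bound $|K_{t}(x+h,y)-K_{t}(x,y)|\le C(|h|/\sqrt{t})^{\delta'}t^{-n/2}e^{-c|x-y|^{2}/t}$. Applying this same H\"older estimate to the $h$-difference inside the remainder and invoking the Gaussian bound of Proposition \ref{lem2.1} on $K^{L}_{s}(z,y)$, one is left with controlling a triple integral of the form $\int_{0}^{t}\int e^{-c|x-z|^{2}/(t-s)}V(z)e^{-c|z-y|^{2}/s}(\cdots)\,dz\,ds$. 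The pointwise inequality $e^{-c|x-z|^{2}/(t-s)}e^{-c|z-y|^{2}/s}\le C e^{-c'|x-y|^{2}/t}$ isolates the Gaussian in $|x-y|$, after which the integration of $V$ against a Gaussian density is handled by Lemma \ref{lem2.6}. This is precisely the step where the reverse H\"older assumption $V\in B_{q}$, $q>n/2$, enters and produces the threshold $\delta_{0}=\min\{1,2-n/q\}$ on the admissible H\"older exponent.

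The principal obstacle is producing the prefactor $\bigl(1+\sqrt{t}/\rho(x)+\sqrt{t}/\rho(y)\bigr)^{-N}$ for arbitrary $N>0$, since a single application of the Duhamel identity only yields a fixed power $(\sqrt{t}/\rho)^{\delta}$. To overcome this I would iterate the perturbation identity finitely many times, at each step substituting the previously established bound on $K^{L}_{s}(z,y)$ back into the integral remainder and using Lemmas \ref{lem2.4}--\ref{le-2.6} to compare the auxiliary functions $\rho(x)$, $\rho(z)$, $\rho(y)$ along the integration range. After on the order of $N/\delta$ iterations the decay in $\rho$ reaches the desired order, and the Gaussian bound of Proposition \ref{lem2.1} together with a trivial estimate when $|h|$ is comparable to $\sqrt{t}$ closes the argument.
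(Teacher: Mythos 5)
First, a remark on context: the paper does not prove this proposition at all — it is quoted directly from Dziuba\'nski--Zienkiewicz \cite[Proposition 4.11]{dz2} — so there is no internal proof to compare against, and your argument has to stand on its own. The Duhamel part of your plan is sound as a way to get the H\"older continuity together with the Gaussian factor: taking the $h$-difference in the identity $K^{L}_{t}=K_{t}-\int_{0}^{t}e^{(t-s)\Delta}Ve^{-sL}\,ds$, using the H\"older bound for the free kernel difference, extracting $e^{-c'|x-y|^{2}/t}$, and integrating $V$ against the surviving Gaussian via Lemma \ref{lem2.6} is essentially the standard perturbation computation. One point you gloss over and must check: near $s=t$ the factor $(|h|/\sqrt{t-s})^{\delta'}$ is large, and the $s$-integral of $(t-s)^{(\delta-\delta')/2-1}$ converges only because $\delta'$ is taken strictly below the exponent $\delta$ coming from Lemma \ref{lem2.6}; with the Lipschitz bound ($\delta'=1$) it diverges whenever $\delta_{0}<1$. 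This is exactly where the strict inequality $\delta'<\delta_{0}$ in the statement is used, so it deserves an explicit verification rather than a parenthesis.

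The genuine gap is in your final paragraph. Iterating the perturbation identity cannot produce the factor $\bigl(1+\sqrt{t}/\rho(x)+\sqrt{t}/\rho(y)\bigr)^{-N}$: every iteration of Duhamel leaves the free term $K_{t}(x+h,y)-K_{t}(x,y)$ untouched, and this term, while small in $(|h|/\sqrt{t})^{\delta'}$, has no decay in $\rho$ whatsoever. Since you estimate the series term by term (no cancellation is exploited), the resulting bound is always at least as large as the free difference, so no number of iterations — "$N/\delta$" or otherwise — can generate the $\rho$-decay; substituting improved bounds for $K^{L}_{s}$ into the remainder only improves the remainder, not the leading term. The decay in $\rho$ must be imported from the already-available kernel bound rather than generated by the expansion. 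The standard repair: (1) use your Duhamel argument to prove $|K^{L}_{t}(x+h,y)-K^{L}_{t}(x,y)|\lesssim (|h|/\sqrt{t})^{\delta''}t^{-n/2}e^{-c|x-y|^{2}/t}$ for any $\delta''<\delta_{0}$, with no $\rho$-factor; (2) bound the same difference by the triangle inequality and Proposition \ref{lem2.1}, which gives $C_{N'}t^{-n/2}e^{-c|x-y|^{2}/t}\bigl(1+\sqrt{t}/\rho(x)+\sqrt{t}/\rho(y)\bigr)^{-N'}$ (using $|h|<\sqrt{t}$ and Lemma \ref{lem2.4} to compare $\rho(x+h)$ with $\rho(x)$ up to a harmless power); (3) take a geometric mean of the two bounds, writing the difference as its $\theta$-th power times its $(1-\theta)$-th power. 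Choosing $\theta$ close to $1$ recovers any H\"older exponent $\delta'<\delta_{0}$, and taking $N'$ large recovers any $N$. This interpolation step, which consumes exactly the strict inequality $\delta'<\delta_{0}$ you already have, is what is missing from your plan; with it, the rest of your outline goes through.
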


\begin{remark}
By Proposition \ref{lem2.1}, it is easy to see that  the condition $|h|<\sqrt{t}$ in  Proposition \ref{prop2} can be replaced by $|h|<|x-y|/2$.
\end{remark}

Let $Q_{t,m}^{L}(x,y):=t^{m}\partial_{t}^{m}K^{L}_{t}(x,y)$, $m\in\mathbb Z_{+}$. Then
\begin{proposition}\label{prop-2.1} {\rm (\cite[Proposition 3.3]{HLL-2})}
Let $m\in\mathbb Z_{+}.$
\item{\rm (i)} For every $N>0$, there exist constants $C_{N}>0$ and $c>0$ such that
$$|Q^{L}_{t,m}(x,y)|\leq \frac{C_{N}}{t^{n/2}}e^{-c|x-y|^{2}/t}\Big(1+\frac{\sqrt{t}}{\rho(x)}+\frac{\sqrt t}{\rho(y)}\Big)^{-N}.$$
\item{\rm (ii)} Let $0<\delta'\leq\delta_{0}$, where $\delta_0$ appears in Proposition \ref{prop2}. For every $N>0$,
there exist constants $C_{N}>0$ and $c$ such that  for $|h|<t$,
$$|Q^{L}_{t,m}(x+h,y)-Q^{L}_{t,m}(x,y)|\leq\frac{C_{N}}{t^{n/2}}e^{-c|x-y|^{2}/t}\Big(\frac{|h|}{\sqrt t}\Big)^{\delta'}
\Big(1+\frac{\sqrt t}{\rho(x)}+\frac{\sqrt t}{\rho(y)}\Big)^{-N}.$$
\item{\rm (iii)} For every $N>0$ and $0<\delta'\leq\delta_{0}$, there exists a constant $C_{N}>0$ such that
$$\Big|\int_{\mathbb{R}^{n}}{}Q^{L}_{t,m}(x,y)d\mu(y)\Big|\leq C_{N}\Big(\frac{\sqrt t}{\rho(x)}\Big)^{\delta'}\Big(1+\frac{\sqrt t}{\rho(x)}\Big)^{-N}.$$
\end{proposition}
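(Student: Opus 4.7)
The three parts of Proposition \ref{prop-2.1} will follow from Propositions \ref{lem2.1}--\ref{prop2} and Lemma \ref{lem2.6}. My plan is to treat (i) and (ii) jointly by a single complex-analytic trick, and then to reduce (iii) to a Gaussian-versus-$V$ computation via the identity $L\mathbf{1}=V$.

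For (i), since $L=-\Delta+V$ is non-negative and self-adjoint on $L^{2}(\mathbb R^{n})$, $\{e^{-zL}\}$ extends holomorphically to the right half-plane, and the Gaussian bound (\ref{eq-gauss-bdd}) continues to hold for $K^{L}_{\zeta}$ on a sector $|\arg\zeta|<\theta_{0}$ with constants uniform in $\zeta$. Given this, I would apply Cauchy's integral formula on the circle $|\zeta-t|=t/2$:
\[
\partial_{t}^{m}K^{L}_{t}(x,y)=\frac{m!}{2\pi i}\oint_{|\zeta-t|=t/2}\frac{K^{L}_{\zeta}(x,y)}{(\zeta-t)^{m+1}}\,d\zeta.
\]
Inserting the sectorial Gaussian bound inside the contour produces the factor $t^{-n/2-m}e^{-c|x-y|^{2}/t}$ together with the $\rho$-weight, and multiplying by $t^{m}$ yields (i). For (ii), the same formula applied to the H\"older increment $K^{L}_{\zeta}(x+h,y)-K^{L}_{\zeta}(x,y)$, combined with the sectorial analogue of Proposition \ref{prop2}, delivers the extra factor $(|h|/\sqrt t)^{\delta'}$ uniformly on the contour; here one uses $|h|<\sqrt t\lesssim\sqrt{|\zeta|}$ on the circle, so that the smallness hypothesis of Proposition \ref{prop2} is maintained.

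For (iii), I would interchange derivative and integral,
\[
\int_{\mathbb R^{n}} Q^{L}_{t,m}(x,y)\,d\mu(y)=t^{m}\partial_{t}^{m}\Big[\int K^{L}_{t}(x,y)\,d\mu(y)\Big]=t^{m}\partial_{t}^{m}\bigl(e^{-tL}\mathbf{1}\bigr)(x),
\]
and use the pointwise identity $L\mathbf{1}=V$ (since $\Delta\mathbf{1}=0$) together with $\partial_{t}e^{-tL}=-e^{-tL}L$ to get $\partial_{t}(e^{-tL}\mathbf{1})=-e^{-tL}V$. Differentiating under the integral sign once more, and iterating, then gives
\[
\partial_{t}^{m}(e^{-tL}\mathbf{1})(x)=(-1)^{m}\int\partial_{t}^{m-1}K^{L}_{t}(x,y)\,V(y)\,dy.
\]
Applying part (i) at level $m-1$ reduces the task to bounding $t\int t^{-n/2}e^{-c|x-y|^{2}/t}V(y)\,dy$, which is exactly the content of Lemma \ref{lem2.6}: its small-$t$ case yields $(\sqrt t/\rho(x))^{\delta'}$ directly, while the large-$t$ excess $(\sqrt t/\rho(x))^{l}$ is absorbed by the weight $(1+\sqrt t/\rho(x))^{-N}$ already present from part (i) (after enlarging $N$ if necessary).

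The principal obstacle will be the complex extension required in (i)--(ii): one must verify that $K^{L}_{\zeta}$ obeys a Gaussian bound with the same $\rho$-weights as (\ref{eq-gauss-bdd}) on the disc $|\zeta-t|<t/2$, with constants uniform in $t$. This is classical (a Davies-type perturbation applied to $e^{-zL/2}$, using the self-adjointness of $L$), but carrying the Schr\"odinger weights $(1+\sqrt{|\zeta|}/\rho(x)+\sqrt{|\zeta|}/\rho(y))^{-N}$ through the holomorphic extension requires some care, since Proposition \ref{lem2.1} is formulated only for real $t>0$. Once that extension is in hand, the remainder of the argument is routine bookkeeping.
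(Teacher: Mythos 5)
The paper does not actually prove Proposition \ref{prop-2.1}: it is quoted from \cite[Proposition 3.3]{HLL-2}, so your argument is necessarily an independent reconstruction rather than a variant of a proof given in this text. As a reconstruction, the outline is sound, and part (iii) is essentially correct: reading $d\mu(y)$ as $dy$ (the intended meaning here), the identity $t^{m}\partial_{t}^{m}\big(e^{-tL}\mathbf{1}\big)(x)=-t^{m}\int_{\mathbb R^{n}}\partial_{t}^{m-1}K^{L}_{t}(x,y)V(y)\,dy$, part (i) at order $m-1$, and Lemma \ref{lem2.6} give the stated bound, with the large-time exponent $l$ absorbed by enlarging $N$. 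Your sign $(-1)^{m}$ should simply be $-1$ (harmless), and the formal use of $L\mathbf{1}=V$ with $\mathbf{1}\notin L^{2}$ needs the usual justification — e.g.\ the Duhamel formula $h_{t}(x-y)-K^{L}_{t}(x,y)=\int_{0}^{t}\int_{\mathbb R^{n}}h_{s}(x-z)V(z)K^{L}_{t-s}(z,y)\,dz\,ds$, which this paper itself invokes in the proof of Proposition \ref{prop-3.3-2}, or an integration by parts killing $\int\Delta_{y}K^{L}_{t}(x,y)\,dy$ — but that is routine given the gradient bounds.

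The one genuine gap is the ingredient you defer in (i)--(ii): the sectorial Gaussian bound \emph{with the weights} $\big(1+\sqrt{|\zeta|}/\rho(x)+\sqrt{|\zeta|}/\rho(y)\big)^{-N}$. Davies-type perturbation gives the complex-time Gaussian factor, but it does not by itself carry the $\rho$-weights, and Propositions \ref{lem2.1} and \ref{prop2} are stated only for real $t$; calling the weighted sectorial estimate ``classical'' leaves precisely the nonstandard part of the claim unproved. The fix is short and should be made explicit: transfer the weights through the semigroup law, e.g.\ on the circle $|\zeta-t|=t/2$ write $K^{L}_{\zeta}(x,y)=\iint K^{L}_{t/8}(x,z)K^{L}_{\zeta-t/4}(z,w)K^{L}_{t/8}(w,y)\,dz\,dw$ (note $\operatorname{Re}(\zeta-t/4)\geq t/4$), use the \emph{unweighted} sectorial Gaussian bound on the middle factor and the weighted real-time bounds of Proposition \ref{lem2.1} (respectively Proposition \ref{prop2} for the increment in $x$) on the outer factors; alternatively, avoid complex weights altogether by first proving the unweighted bound for $\partial_{t}^{m}K^{L}_{t}$ via Cauchy's formula and then writing $\partial_{t}^{m}K^{L}_{t}(x,y)=\int\partial_{s}^{m}K^{L}_{s}(x,z)\big|_{s=t/2}K^{L}_{t/2}(z,y)\,dz$, its mirror image, and a geometric mean to reinstate both weights (legitimate since the statement holds for every $N$). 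With that supplement your plan does deliver the proposition; note also that the hypothesis ``$|h|<t$'' in (ii) should be read as $|h|<\sqrt t$, as you in fact do, and that for $|h|\sim\sqrt t$ the Hölder factor can be recovered trivially from part (i).
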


\subsection{Fractional heat kernels associated with $L$}
In this section,  we first state some backgrounds on the fractional heat semigroup and the fractional heat kernel associated with $L$.
For the case $V\neq0$, the fractional heat semigroup associated with $L$ can not be defined via the Fourier multiplier method (\ref{eq-1.3}) as
 the Laplace operator. We strike out on a new path and introduce the fractional heat semigroup via the subordinative formula.

The Schr\"odinger operator $L$ can be seen as the generator of the semigroup
 $\{e^{-tL}\}_{t>0}$, i.e.,
 $$L(f):=\lim_{t\rightarrow 0}\frac{f-e^{-tL}f}{t},$$
where the limit is in $L^{2}(\mathbb{R}^{n})$.
$L$ is a self-adjoint, positive definite operator. The integral kernels of the semigroups $\{e^{-tL}\}_{t>0}$ are denoted by $K^{L}_{t}(\cdot,\cdot)$.
It is easy to verify that the kernel $K^{L}_{t}(\cdot,\cdot)$ satisfies
$$\left\{\begin{aligned}
&{\rm (i)}\ K^{L}_{t}(x,y)\geq 0,\ x,y\in\mathbb{R}^{n};\\
&{\rm (ii)}\ K^{L}_{t}(x,y)=K^{L}_{t}(y,x);\\
&{\rm (iii)}\ K^{L}_{s+t}(x,y)=\int_{\mathbb{R}^{n}}K^{L}_{s}(x,z)K^{L}_{t}(z,y)dz;\\
&{\rm (iv)}\ \lim\limits_{t\rightarrow 0+}\int_{\mathbb{R}^{n}}K^{L}_{t}(x,y)f(y)dy=f(x)\quad \forall\ f\in L^{2}(\mathbb{R}^{n}).
\end{aligned}\right.$$
For $\alpha\in(0,1)$, the fractional power of $L$, denoted by $L^{\alpha}$, is defined as
$$L^{\alpha}(f)=\frac{1}{\Gamma(-\alpha)}\int^{\infty}_{0}\Big(e^{-t\sqrt{L}}f(x)-f(x)\Big)\frac{dt}{t^{1+2\alpha}}\quad \forall\ f\in L^{2}(\mathbb{R}^{n}).$$
Here $\{e^{-t\sqrt{L}}\}_{t>0}$ denotes the Poisson semigroup related to $L$ with the kernel $P^{L}_{t}(\cdot,\cdot)$ defined as
\begin{eqnarray*}
P^{L}_{t}(x,y)&=&\int^{\infty}_{0}\frac{e^{-u}}{\sqrt{u}}K^{L}_{t^{2}/4u}(x,y)du=\int^{\infty}_{0}\frac{e^{-t^{2}/4s}}{2s^{3/2}}K^{L}_{s}(x,y)ds.
\end{eqnarray*}

By the subordinative formula, we know that there exists a non-negative continuous function $\eta^{\alpha}_{t}(\cdot)$ satisfying (cf. \cite{Gri})
\begin{equation}\label{eq-heat}
\left\{\begin{aligned}
&\eta^{\alpha}_{t}(s)=\frac{1}{t^{1/\alpha}}\eta^{\alpha}_{1}(s/t^{1/\alpha});\\
&\eta^{\alpha}_{t}(s)\lesssim \frac{t}{s^{1+\alpha}}  \ \forall\ s,t>0;\\
&\int^{\infty}_{0}s^{-\gamma}\eta^{\alpha}_{1}(s)ds<\infty,\ \gamma>0;\\
&\eta^{\alpha}_{t}(s)\simeq\frac{t}{s^{1+\alpha}} \,\ \forall\,\
s\geq t^{1/\alpha}>0,
\end{aligned}\right.
\end{equation}
such that $K^{L}_{\alpha,t}(\cdot,\cdot)$ can be expressed as
\begin{equation}\label{eq-sub-for}
K^{L}_{\alpha,t}(x,y)=\int^{\infty}_{0}\eta^{\alpha}_{t}(s)K^{{L}}_{s}(x,y)ds,
\end{equation}
 see \cite{Gri} for some examples of $\eta^{\alpha}_{t}(\cdot)$. The function $\eta^{\alpha}_{t}(\cdot)$ plays an important role in the estimate of the fractional heat kernel $K^{L}_{\alpha,t}(\cdot,\cdot)$.
  Take $\alpha=1/2$ for example: by (\ref{eq-1.4-1}), we can see that $\eta^{1/2}_{t}(s)=\frac{t}{2s^{3/2}}e^{-t^{2}/4s}\ \forall\ s, t>0.$ It is
  easy to verify that such $\eta^{1/2}_{t}(\cdot)$ satisfies conditions in (\ref{eq-heat}). For the special case $L=-\Delta$, a direct computation gives
$$P^{-\Delta}_{t}(x,y)=\int^{\infty}_{0}\frac{e^{-t^{2}/4s}t}{2s^{3/2}}s^{-n/2}e^{-|x-y|^{2}/s}ds=\frac{c_{n}t}{(t^{2}+|x-y|^{2})^{(n+1)/2}},$$
which coincides with the classical Poisson kernel obtained via (\ref{eq-1.7}).

\subsection{Campanato type  spaces associated with $L$}

The Campanato type space associated with $L$
is defined as follows.
\begin{definition}
 The space $BMO^{\gamma}_{L}(\mathbb R^{n})$, $0< \gamma\leq 1$, is defined as the set of all locally integrable functions $f$ satisfying
there exists a constant $C$ such that
\begin{equation}\label{eq-2.2}
\sup_{B}\frac{1}{|B|^{1+\gamma/n}}\int_{B}|f(x)-f(B,V)|dx\leq C<\infty,
\end{equation}
where the supremum is taken over all balls $B$ centered at $x_{B}$ with radius $r_{B}$, and
$$f(B,V):=
\begin{cases}
f_{B},\ r_{B}<\rho(x_{B});\\
0,\ r_{B}\geq\rho(x_{B}).
\end{cases}$$
 The norm $\|f\|_{BMO^{\gamma}_{L}}$ is defined as the infimum of the constants $C$ such that (\ref{eq-2.2}) above
holds.
\end{definition}

\begin{proposition}{\rm (\cite[Proposition 4.3]{MSTZ})}
Let $B=B(x,r)$ with $r<\rho(x)$. If $f\in BMO^{\gamma}_{L}(\mathbb R^{n}), 0<\gamma\leq1$, then there exists a constant $C_{\gamma}$ such that
$|f_{B}|\leq C_{\gamma}(\rho(x))^{\gamma}\|f\|_{BMO^{\gamma}_{L}}$.
\end{proposition}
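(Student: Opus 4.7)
The strategy is a standard dyadic telescoping argument anchored on the ball at scale $\rho(x)$, where the Campanato normalization $f(B,V)$ switches from $f_{B}$ to $0$. Concretely, I would set $B_{k}:=B(x,2^{k}r)$ for $k=0,1,2,\dots$ and let $k_{0}\geq 1$ be the smallest integer with $2^{k_{0}}r\geq\rho(x)$; this integer exists and is finite because $r<\rho(x)$. By the definition of $f(B_{k},V)$, one has $f(B_{k},V)=f_{B_{k}}$ for every $k<k_{0}$, whereas $f(B_{k_{0}},V)=0$. The decisive feature is that (\ref{eq-2.2}) applied to $B_{k_{0}}$ then controls the \emph{full} mean of $|f|$ on $B_{k_{0}}$:
$$\frac{1}{|B_{k_{0}}|}\int_{B_{k_{0}}}|f(y)|\,dy\leq |B_{k_{0}}|^{\gamma/n}\|f\|_{BMO^{\gamma}_{L}}\lesssim \rho(x)^{\gamma}\|f\|_{BMO^{\gamma}_{L}}.$$

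Next I would write the telescoping decomposition
$$f_{B}=f_{B_{0}}=\sum_{k=0}^{k_{0}-1}\bigl(f_{B_{k}}-f_{B_{k+1}}\bigr)+f_{B_{k_{0}}},$$
and estimate each consecutive difference by using $B_{k}\subset B_{k+1}$ and $|B_{k+1}|=2^{n}|B_{k}|$ to obtain
$$|f_{B_{k}}-f_{B_{k+1}}|\leq 2^{n}\cdot\frac{1}{|B_{k+1}|}\int_{B_{k+1}}|f(y)-f_{B_{k+1}}|\,dy.$$
For $k+1<k_{0}$, the integrand equals $|f(y)-f(B_{k+1},V)|$, so (\ref{eq-2.2}) applied to $B_{k+1}$ directly yields a bound of order $(2^{k+1}r)^{\gamma}\|f\|_{BMO^{\gamma}_{L}}$. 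At the junction step $k+1=k_{0}$, where $f_{B_{k_{0}}}\neq f(B_{k_{0}},V)=0$ in general, I would split $|f(y)-f_{B_{k_{0}}}|\leq|f(y)|+|f_{B_{k_{0}}}|$ and control both contributions by the $L^{1}$ mean bound on $B_{k_{0}}$ described in the previous paragraph, again producing an estimate of order $(2^{k_{0}}r)^{\gamma}\|f\|_{BMO^{\gamma}_{L}}$.

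Finally I would sum the dyadic pieces. Since $\gamma>0$, the geometric series $\sum_{k=0}^{k_{0}-1}(2^{k+1}r)^{\gamma}$ is dominated by its last term, which, by the minimality of $k_{0}$, satisfies $(2^{k_{0}}r)^{\gamma}\leq(2\rho(x))^{\gamma}$. Combining this with $|f_{B_{k_{0}}}|\lesssim \rho(x)^{\gamma}\|f\|_{BMO^{\gamma}_{L}}$ yields $|f_{B}|\leq C_{\gamma}\rho(x)^{\gamma}\|f\|_{BMO^{\gamma}_{L}}$, as required. I do not foresee any real obstacle: the only mildly delicate point is the bookkeeping at the junction step where the Campanato normalization changes from $f_{B_{k_{0}-1}}$ to $0$, but the triangle inequality handles this cleanly. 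The argument is the standard Campanato telescoping with the "absolute" reference scale played by $\rho(x)$ instead of $\infty$.
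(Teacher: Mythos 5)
Your argument is correct: the dyadic telescoping to the first scale $2^{k_0}r\geq\rho(x)$, where the normalization $f(B_{k_0},V)$ switches to $0$ and (\ref{eq-2.2}) controls the full mean of $|f|$ on $B_{k_0}$, together with the junction-step triangle inequality and the geometric summation using $\gamma>0$ and $2^{k_0}r<2\rho(x)$, yields exactly the stated bound $|f_B|\leq C_\gamma\rho(x)^\gamma\|f\|_{BMO^\gamma_L}$. Note that the paper itself offers no proof of this proposition — it is quoted from \cite[Proposition 4.3]{MSTZ} — so there is nothing internal to compare against; your proof is the standard Campanato-type argument and is complete as outlined.
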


The space $BMO^{\gamma}_{L}(\mathbb R^{n})$ is equivalent to the following Lipschitz type space related to $L$.
\begin{definition}
For $0<\gamma \leq 1$, a continuous function $f$ defined on $\mathbb R^{n}$ belongs to the space $C^{0,\gamma}_{L}(\mathbb R^{n})$ if
$$\sup_{x, y\in\mathbb R^{n}}\frac{|f(x)-f(y)|}{|x-y|^{\gamma}}<\infty\text{ and }
\sup_{x\in\mathbb R^{n}}\frac{|f(x)|}{\rho(x)^{\gamma}}<\infty.$$
\end{definition}

\begin{proposition}{\rm (\cite[Proposition 4.6]{MSTZ})}
 If\, $0<\gamma\leq 1$,  then the spaces $BMO^\gamma_L(\mathbb R^{n})$ and $C^{0,\gamma}_{L}(\mathbb R^{n})$ are equal and their norms are
equivalent.
\end{proposition}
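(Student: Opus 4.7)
The plan is to prove the two inclusions separately; the inclusion $C^{0,\gamma}_{L}\subset BMO^{\gamma}_{L}$ is a direct verification, while the reverse requires a Campanato-type telescoping argument combined with the properties of $\rho(\cdot)$ collected in Section \ref{sec2}.

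For the forward direction, fix $f\in C^{0,\gamma}_{L}$ and a ball $B=B(x_{B},r_{B})$. If $r_{B}<\rho(x_{B})$, then $f(B,V)=f_{B}$ and the pointwise H\"older bound gives
$$|f(x)-f_{B}|\le\frac{1}{|B|}\int_{B}|f(x)-f(y)|\,dy\le C\|f\|_{C^{0,\gamma}_{L}}r_{B}^{\gamma}.$$
If $r_{B}\ge\rho(x_{B})$, then $f(B,V)=0$, and for every $x\in B$,
$$|f(x)|\le|f(x)-f(x_{B})|+|f(x_{B})|\le Cr_{B}^{\gamma}+C\rho(x_{B})^{\gamma}\le Cr_{B}^{\gamma}.$$
Dividing by $|B|^{\gamma/n}$ in each case yields $\|f\|_{BMO^{\gamma}_{L}}\lesssim\|f\|_{C^{0,\gamma}_{L}}$.

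For the reverse inclusion, take $f\in BMO^{\gamma}_{L}$. On balls with $r_{B}<\rho(x_{B})$ the defining inequality reduces to the ordinary Campanato condition, so the classical telescoping along nested balls $B_{k}=B(x,r/2^{k})$ with $r<\rho(x)$ gives
$$|f_{B_{k}}-f_{B_{k+1}}|\le\frac{|B_{k}|}{|B_{k+1}|}\cdot\frac{1}{|B_{k}|}\int_{B_{k}}|f-f_{B_{k}}|\,dx\le C\|f\|_{BMO^{\gamma}_{L}}(r/2^{k})^{\gamma}.$$
Summing, $\{f_{B_{k}}\}$ is Cauchy, $f$ agrees a.e.\ with a continuous function, and $|f(x)-f_{B}|\le Cr_{B}^{\gamma}$ holds for every $x\in B$ with $r_{B}<\rho(x_{B})$. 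To obtain the pointwise bound $|f(x)|\le C\rho(x)^{\gamma}$, I apply the $BMO^{\gamma}_{L}$ condition to the critical ball $B_{*}=B(x,\rho(x))$, for which $r_{B_{*}}=\rho(x_{B_{*}})$ forces $f(B_{*},V)=0$; this yields $|f_{B_{*}}|\le|B_{*}|^{-1}\int_{B_{*}}|f|\,dx\le C\rho(x)^{\gamma}$. Combined with the telescoping bound $|f(x)-f_{B_{*}}|\le C\rho(x)^{\gamma}$ along the balls $B(x,\rho(x)/2^{k})$, this gives $|f(x)|\le C\rho(x)^{\gamma}$.

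The main obstacle is the global H\"older estimate $|f(x)-f(y)|\le C|x-y|^{\gamma}$. Set $r=|x-y|$. In the near regime $r\le c\rho(x)$ for a suitable small constant $c$, Lemma \ref{lem2.4} gives $\rho(y)\sim\rho(x)$, and the ball $B^{\dagger}=B(\tfrac{x+y}{2},r)$ has radius below $\rho(x_{B^{\dagger}})$; applying the small-ball estimate to the continuous representative at both $x$ and $y$ yields
$$|f(x)-f(y)|\le|f(x)-f_{B^{\dagger}}|+|f(y)-f_{B^{\dagger}}|\le Cr^{\gamma}.$$
In the far regime $r>c\rho(x)$, Lemma \ref{lem2.4} gives
$$\rho(y)\le C\rho(x)\Bigl(1+\tfrac{r}{\rho(x)}\Bigr)^{k_{0}/(1+k_{0})}\le C\rho(x)^{1/(1+k_{0})}r^{k_{0}/(1+k_{0})}\le Cr,$$
using $\rho(x)\le Cr$. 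Hence $\rho(x)^{\gamma}+\rho(y)^{\gamma}\lesssim r^{\gamma}$, and the pointwise bound just established yields $|f(x)-f(y)|\le|f(x)|+|f(y)|\le Cr^{\gamma}$. Combining the two regimes completes both inclusions and the norm equivalence.
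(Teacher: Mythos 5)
The paper offers no proof of this proposition at all—it is quoted directly from \cite[Proposition 4.6]{MSTZ}—so there is no in-paper argument to compare against; judged on its own, your proof is correct and is the standard Campanato-plus-auxiliary-function argument, in the same spirit as the cited source. The forward inclusion, the critical-ball device $B_{*}=B(x,\rho(x))$ (where $r_{B_{*}}=\rho(x_{B_{*}})$ forces $f(B_{*},V)=0$ and hence $|B_{*}|^{-1}\int_{B_{*}}|f|\lesssim\rho(x)^{\gamma}$), and the two-regime H\"older estimate via Lemma \ref{lem2.4} ($\rho(y)\sim\rho(x)$ when $|x-y|\lesssim\rho(x)$, and $\rho(y)\lesssim|x-y|$ in the far regime) are all sound. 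Two details you gloss, each a one-line fix: first, $B^{\dagger}=B(\tfrac{x+y}{2},r)$ is not centered at $x$ or $y$, so the pointwise bound $|f(x)-f_{B^{\dagger}}|\lesssim r^{\gamma}$ coming from telescoping concentric balls needs one extra comparison, e.g. with $B(x,2r)\supset B^{\dagger}$ after choosing $c$ small enough that $2r<\rho(x)$ and $r<\rho(\tfrac{x+y}{2})$; second, in the first telescoping step away from $B_{*}$ the definition of $BMO^{\gamma}_{L}$ controls $\int_{B_{*}}|f|$ rather than $\int_{B_{*}}|f-f_{B_{*}}|$, so you should either use $\int_{B_{*}}|f-f_{B_{*}}|\le 2\int_{B_{*}}|f|$ or start the telescope at $B(x,\rho(x)/2)$ and bound $|f_{B(x,\rho(x)/2)}|$ directly. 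With these remarks the argument is complete.
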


It is well known that Hardy spaces $H^{p}(\mathbb R^{n}), 0<p\leq 1$, are the predual spaces of Campanato spaces (cf. \cite{FS1}). In 2000s, such dual relationship was extended to function spaces associated with operators, see \cite{DY1, DZ1,DGMTZ, hofmann,LTZ,yyz2}. For a Schr\"odinger operator $L$, the following Hardy type spaces $H^{p}_{L}(\mathbb R^{n}), 0<p\leq 1$, were introduced in \cite{Dz2,dz2}.
\begin{definition}
For $0<p\leq 1$, an integrable function $f$ is an element of the Hardy type space $H^{p}_{L}(\mathbb R^{n})$  if the maximal function
$$T^{\ast}(f)(x):=\sup_{s>0}|T^{L}_{s}(f)(x)|$$
belongs to $L^{p}(\mathbb R^{n})$. The quasi-norm in $H^{p}_{L}(\mathbb R^{n})$ is defined by
$$\|f\|_{H^{p}_{L}}:=\|T^{\ast}(f)\|_{L^{p}}.$$
\end{definition}

Let ${\delta_0}=\min\{1, 2-n/q\}$ and $n/(n+\delta_{0})<p\leq 1$. An atom of $H^{p}_{L}(\mathbb R^{n})$ associated with a ball $B(x_{B}, r_{B})$
is a function $a$ such that
$$\left\{\begin{aligned}
&\text{ supp }a\subseteq B(x_{B}, r_{B}),\ r_{B}\leq \rho(x_{B});\\
&\|a\|_{L^{\infty}}\leq |B(x_{B}, r_{B})|^{-1/p};\\
&\int_{\mathbb R^{n}}a(x)dx=0,\ r_{B}<\rho(x_{B})/4.
\end{aligned}\right.$$

In \cite{dz2}, Dziuba\'nski and Zienkiewicz obtained the following atomic characterization of $H^{p}_{L}(\mathbb R^{n})$.
\begin{proposition} {\rm (\cite[Theorem 1.13]{dz2})}
Let $n/(n+\delta_0)<p\leq 1$. $f\in H^{p}_{L}(\mathbb R^{n})$ if and only if $f=\sum_{j}\lambda_{j}a_{j}$, where $\{a_{j}\}$ are $H^{p}_{L}$-atoms and $\sum_{j}|\lambda_{j}|^{p}<\infty$.
\end{proposition}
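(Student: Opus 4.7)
The plan is to prove the two directions of this equivalence separately. For the ``atomic implies maximal'' direction, I would exploit the quasi-triangle inequality for the $L^{p}$ norm with $p\le 1$, namely
$$\|T^{\ast}(f)\|^{p}_{L^{p}} \le \sum_{j} |\lambda_{j}|^{p}\, \|T^{\ast}(a_{j})\|^{p}_{L^{p}},$$
which reduces matters to a uniform bound $\|T^{\ast}(a)\|_{L^{p}}\le C$ over all $H^{p}_{L}$-atoms $a$ supported in a ball $B=B(x_{B},r_{B})$. I would split $\mathbb{R}^{n}$ as $2B\cup(2B)^{c}$. On $2B$, the estimate follows from H\"older's inequality (with exponent $2/p$, picking up a factor of $|2B|^{1-p/2}$) together with the $L^{2}$-boundedness of $T^{\ast}$, which itself is a consequence of the Feynman--Kac representation and the Gaussian upper bound of Proposition \ref{lem2.1}. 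On $(2B)^{c}$, I would split further into the subcases $r_{B}<\rho(x_{B})/4$ and $r_{B}\in[\rho(x_{B})/4,\rho(x_{B})]$: in the first, the cancellation $\int a=0$ lets one replace $K^{L}_{t}(x,y)$ by $K^{L}_{t}(x,y)-K^{L}_{t}(x,x_{B})$ and then invoke the H\"older regularity in Proposition \ref{prop2}; in the second, the decay factor $(1+\sqrt{t}/\rho(x))^{-N}$ from Proposition \ref{lem2.1} already delivers sufficient decay at infinity, so no cancellation is required.

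For the harder ``maximal implies atomic'' direction I would follow a Fefferman--Stein-type stopping-time argument adapted to the Schr\"odinger setting. Set $\Omega_{k}=\{x\in\mathbb{R}^{n}:T^{\ast}(f)(x)>2^{k}\}$ and take a Whitney decomposition $\Omega_{k}=\bigcup_{i}Q_{k,i}$. Using a smooth partition of unity subordinate to this decomposition, write $f=g_{k}+b_{k}$ with $\|g_{k}\|_{L^{\infty}}\le C 2^{k}$ and $b_{k}=\sum_{i}b_{k,i}$, each $b_{k,i}$ supported in a fixed dilate of $Q_{k,i}$. Letting $k\to-\infty$ one obtains the telescoping identity $f=\sum_{k}(g_{k+1}-g_{k})$ and identifies each difference with a sum $\sum_{i}\lambda_{k,i}a_{k,i}$ of $H^{p}_{L}$-atoms. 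Crucially, when the Whitney cube $Q_{k,i}$ has sidelength $<\rho(x_{Q_{k,i}})/4$, one imposes the cancellation condition $\int a_{k,i}=0$; when its sidelength is comparable to or exceeds $\rho(x_{Q_{k,i}})$, the localization alone together with the $L^{\infty}$-bound suffices. This is precisely the dichotomy built into the definition of an $H^{p}_{L}$-atom.

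The main obstacle will be verifying the atomic summability $\sum_{k,i}|\lambda_{k,i}|^{p}\le C\|f\|^{p}_{H^{p}_{L}}$, which in turn rests on bounding $\|a_{k,i}\|_{L^{\infty}}$ by a constant multiple of $|Q_{k,i}|^{-1/p}$. This step requires a pointwise comparison between $T^{\ast}(f)$ at boundary points of $\Omega_{k}$ near $Q_{k,i}$ and the mean of $f$ over a controlled enlargement of $Q_{k,i}$, and is exactly where the condition $p>n/(n+\delta_{0})$ enters: the regularity exponent $\delta_{0}=\min\{1,2-n/q\}$ from Proposition \ref{prop2} must exceed $n(1/p-1)$ in order for the off-diagonal contribution to be summable. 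The threshold $\rho(x_{B})/4$ separating the two atom types is forced by the localization rate of the semigroup encoded in the $(1+\sqrt{t}/\rho)^{-N}$ factor: beyond that scale the operator ``sees'' the potential strongly enough that the vanishing-mean condition can be dropped.
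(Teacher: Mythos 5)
First, note that the paper does not prove this proposition at all: it is imported verbatim from Dziuba\'nski--Zienkiewicz \cite[Theorem 1.13]{dz2}, so there is no in-paper argument to compare with. Measured against the known proof, your route is genuinely different: \cite{dz2} does not run a Fefferman--Stein stopping-time argument on the Schr\"odinger maximal function directly; it covers $\mathbb R^{n}$ by balls (cubes) of diameter comparable to $\rho(x)$, uses a partition of unity at that scale, compares $K^{L}_{t}$ with the free Gauss--Weierstrass kernel for $t\lesssim\rho(x)^{2}$ via a perturbation (Duhamel) formula, and thereby reduces the statement to Goldberg's atomic decomposition of the local Hardy spaces $h^{p}$ on each piece. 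Your ``atomic $\Rightarrow$ maximal'' direction is essentially correct in outline (one small correction: the restriction $p>n/(n+\delta_{0})$ is already needed there, since the cancellation estimate for small atoms produces the decay $r_{B}^{\delta'}|x-x_{B}|^{-n-\delta'}$, whose $p$-th power is integrable off $2B$ only when $(n+\delta')p>n$; it is not a constraint that appears only in the converse).

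The converse direction as you sketch it has a genuine gap. Whitney cubes of $\Omega_{k}=\{T^{\ast}f>2^{k}\}$ have sidelengths dictated by the distance to $\Omega_{k}^{c}$, which bears no relation to $\rho$; but an $H^{p}_{L}$-atom must by definition be supported in a ball $B(x_{B},r_{B})$ with $r_{B}\leq\rho(x_{B})$. So when a Whitney cube is much larger than $\rho$ at its center you cannot ``drop the cancellation and use localization alone'': the resulting piece is not an admissible atom, and it must be further subdivided into pieces of size $\sim\rho$, with the attendant bookkeeping for the coefficients --- this rescaling to the $\rho$-scale is exactly the content that \cite{dz2} supplies through the covering and the reduction to $h^{p}$. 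Moreover, to verify $\|g_{k}\|_{\infty}\lesssim 2^{k}$ and to control $T^{\ast}$ of the bad parts $b_{k,i}$ outside the cubes, the classical argument tests against arbitrary bump functions, i.e.\ it uses a grand-maximal (or nontangential) characterization; with only the semigroup maximal function $T^{\ast}$ at hand one needs a comparison between $T^{\ast}$ and the classical local maximal function at scale $\rho$, which again is the substantive kernel-perturbation step of \cite{dz2} and is nowhere addressed in your sketch. As written, the ``maximal $\Rightarrow$ atomic'' half is therefore a plan rather than a proof: the telescoping identity, the summability $\sum_{k,i}|\lambda_{k,i}|^{p}\lesssim\|f\|^{p}_{H^{p}_{L}}$, and the compatibility of the Whitney geometry with the $\rho$-constraint in the atom definition all hinge on machinery you have not supplied.
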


\begin{theorem} {\rm (\cite[Theorem 4.5]{MSTZ})}
Let $0\leq \gamma<1$. Then the dual space of $H^{n/(n+\gamma)}_{L}(\mathbb R^{n})$ is $BMO^{\gamma}_{L}(\mathbb R^{n})$.
 More precisely, any continuous linear functional $\Phi$ over $H^{n/(n+\gamma)}_{L}(\mathbb R^{n})$ can be represented as
 $$\Phi(a)=\int_{\mathbb R^{n}}f(x)a(x)dx$$
 for some function $f\in BMO^{\gamma}_{L}(\mathbb R^{n})$ and all $H^{n/(n+\gamma)}_{L}$-atoms $a$. Moreover, the operator norm $\|\Phi\|_{op}\sim
 \|f\|_{BMO^{\gamma}_{L}}$.
\end{theorem}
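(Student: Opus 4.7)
The plan is to establish both halves of the duality $\bigl(H^{n/(n+\gamma)}_{L}(\mathbb{R}^{n})\bigr)^{*} = BMO^{\gamma}_{L}(\mathbb{R}^{n})$; throughout set $p = n/(n+\gamma)$, so $1/p = 1+\gamma/n$. For the easy direction I would verify that every $f \in BMO^{\gamma}_{L}$ induces a bounded linear functional via $\Phi_{f}(a) = \int f a\, dx$. By the atomic characterization of $H^{p}_{L}$ recalled just above, it suffices to control $\bigl|\int f a\, dx\bigr|$ uniformly over all $H^{p}_{L}$-atoms $a$ supported in $B = B(x_{B}, r_{B})$ with $r_{B} \leq \rho(x_{B})$. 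If $r_{B} < \rho(x_{B})/4$ the atom has mean zero, so replace $f$ by $f - f_{B}$, apply $\|a\|_{\infty} \leq |B|^{-1/p}$ and the Campanato condition (\ref{eq-2.2}); the powers of $|B|$ cancel exactly. If $\rho(x_{B})/4 \leq r_{B} \leq \rho(x_{B})$, no cancellation is available: split $\int_{B} |f| \leq \int_{B} |f-f_{B}| + |B||f_{B}|$, invoke the pointwise bound $|f_{B}| \leq C_{\gamma}\rho(x_{B})^{\gamma}\|f\|_{BMO^{\gamma}_{L}}$ from the preceding proposition, and note $|B|\rho(x_{B})^{\gamma} \sim |B|^{1+\gamma/n}$ since $r_{B} \sim \rho(x_{B})$.

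For the harder direction, fix any admissible ball $B$ and consider the subspace $\mathcal{L}_{B} \subset L^{2}(B)$ of functions supported in $B$ that satisfy the corresponding atom cancellation condition (mean zero when $r_{B} < \rho(x_{B})/4$, otherwise unconstrained). A short scaling argument shows that each $g \in \mathcal{L}_{B}$, renormalized by $\|g\|_{L^{2}}|B|^{1/p-1/2}$, becomes an $H^{p}_{L}$-atom; hence $\mathcal{L}_{B} \hookrightarrow H^{p}_{L}$ continuously with operator norm $\lesssim |B|^{1/p-1/2}$. A given $\Phi \in (H^{p}_{L})^{*}$ therefore restricts to a bounded functional on $\mathcal{L}_{B}$, and Riesz representation yields an $f_{B} \in L^{2}(B)$ (unique up to an additive constant when cancellation is imposed) with $\Phi(g) = \int_{B} f_{B}\, g\, dx$. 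Exhausting $\mathbb{R}^{n}$ by an increasing family of admissible balls and using compatibility of the $f_{B}$ on overlaps, the local representatives patch into a single locally integrable $f$ defined globally on $\mathbb{R}^{n}$.

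To confirm $f \in BMO^{\gamma}_{L}$ with $\|f\|_{BMO^{\gamma}_{L}} \lesssim \|\Phi\|_{\mathrm{op}}$, I would test $\Phi$ against the explicit atom $a = c|B|^{-1/p}\,\mathrm{sgn}(f - f(B,V))\,\chi_{B}$ (with the appropriate mean-zero variant when $r_{B} < \rho(x_{B})/4$); this test recovers (\ref{eq-2.2}) with constant proportional to $\|\Phi\|_{\mathrm{op}}$. The main obstacle is the patching step: coherently assembling the local $L^{2}$-representatives despite the constant-ambiguity on small balls. The Schr\"odinger setting rescues this because atoms on balls with $r_{B} \geq \rho(x_{B})/4$ need no mean-zero condition, so on that scale the representative is genuinely pinned down and the growth $|f(x)| \lesssim \rho(x)^{\gamma}$ is forced; the remaining additive ambiguity on smaller balls is absorbed into the mean subtraction appearing in (\ref{eq-2.2}). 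Once $f$ is constructed, the reverse operator-norm bound $\|\Phi\|_{\mathrm{op}} \lesssim \|f\|_{BMO^{\gamma}_{L}}$ is exactly what the first direction already gives.
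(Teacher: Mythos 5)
First, a point of reference: the paper does not prove this statement at all — it is quoted verbatim as \cite[Theorem 4.5]{MSTZ} and used as a black box — so there is no in-paper argument to compare yours against; I can only judge your outline on its own. Your forward direction (every $f\in BMO^{\gamma}_{L}$ gives a bounded functional on atoms) is correct and routine: the mean-zero case cancels the powers of $|B|$ exactly, and the critical case $\rho(x_{B})/4\leq r_{B}\leq\rho(x_{B})$ is handled by the cited bound $|f_{B}|\leq C_{\gamma}\rho(x_{B})^{\gamma}\|f\|_{BMO^{\gamma}_{L}}$ together with $r_{B}\sim\rho(x_{B})$.

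The converse direction, however, has a genuine gap at its pivot. You claim that ``a short scaling argument'' shows that $g\in\mathcal{L}_{B}\subset L^{2}(B)$, renormalized by $\|g\|_{L^{2}}|B|^{1/p-1/2}$, becomes an $H^{p}_{L}$-atom. It does not: the atoms in this paper (Dziuba\'nski--Zienkiewicz) are $L^{\infty}$-normalized, $\|a\|_{L^{\infty}}\leq|B|^{-1/p}$, and an $L^{2}$-normalized function supported in $B$ need not satisfy any $L^{\infty}$ bound. What your renormalization produces is an $L^{2}$-atom, and the continuity of the embedding $\mathcal{L}_{B}\hookrightarrow H^{p}_{L}$ with norm $\lesssim|B|^{1/p-1/2}$ — the very fact that lets you invoke Riesz representation in $L^{2}(B)$ — requires knowing that $L^{2}$-atoms lie uniformly in $H^{p}_{L}$, i.e. a maximal-function (or molecular) estimate $\|T^{*}g\|_{L^{p}}\lesssim1$ for such $g$. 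This is true in this setting but is a substantive step (it is where the kernel estimates enter), not a scaling identity; without it the whole representation scheme collapses, since a functional bounded only on $L^{\infty}$-normalized atoms restricted to $L^{\infty}_{0}(B)$ would a priori be represented by a finitely additive measure rather than an $L^{2}$ (or even $L^{1}_{\mathrm{loc}}$) density. A secondary, fixable slip: in the verification that $f\in BMO^{\gamma}_{L}$ you test against the single atom $c|B|^{-1/p}\,\mathrm{sgn}(f-f(B,V))\chi_{B}$, which in the small-ball case does not have mean zero; the standard repair is to test against all mean-zero $g\in L^{\infty}(B)$ with $\|g\|_{\infty}\leq1$ and use $\int_{B}|f-f_{B}|\leq 2\inf_{c}\int_{B}|f-c|=2\sup_{g}\int_{B}fg$. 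The patching discussion (constants pinned down by the cancellation-free atoms at the critical scale $r_{B}\sim\rho(x_{B})$) is the right idea, but it too ultimately rests on the unproved $L^{2}$-atom embedding.
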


\begin{lemma}\label{le-4.1}{\rm (\cite[Lemma 7]{DGMTZ})}
Let $q_{t}(x,y)$ be a function of $x,y\in\mathbb R^{n}$, $t>0$. Assume that for every $N>0$, there exists a constant $C_{N}$ such that for some $\gamma'\geq\gamma$,
$$|q_{t}(x,y)|\leq C_{N}(1+t/\rho(x)+t/\rho(y))^{-N}t^{-n}(1+|x-y|/t)^{-(n+\gamma')}.$$
Then for every $H^{n/(n+\gamma)}_{L}$-atom $g$ supported on $B(x_{0}, r)$, there exists $C_{N, x_{0}, r}>0$ such that
$$\sup_{t>0}\Big|\int_{\mathbb R^{n}}q_{t}(x,y)g(y)dy\Big|\leq C_{N, x_{0}, r}(1+|x|)^{-(n+\gamma')},\ x\in\mathbb R^{n}.$$
\end{lemma}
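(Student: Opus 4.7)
The hypothesis gives only a pointwise size bound on $q_t$ with no regularity in $y$, so the cancellation property of the atom $g$ cannot be exploited and the argument proceeds entirely via size estimates, splitting according to the distance $d:=|x-x_0|$.

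In the near region $d\le 2r$, I would bound $\left|\int q_t(x,y)g(y)\,dy\right|\le\|g\|_\infty\int_{\mathbb R^n}|q_t(x,y)|\,dy\le Cr^{-(n+\gamma)}$, using $\|g\|_\infty\le Cr^{-(n+\gamma)}$ and the fact that $y\mapsto t^{-n}(1+|x-y|/t)^{-(n+\gamma')}$ has $L^1$-norm uniformly bounded in $t,x$ (by the change of variables $u=(y-x)/t$). Since $|x|\le|x_0|+2r$ here, the factor $(1+|x|)^{n+\gamma'}$ is bounded by a constant depending only on $x_0,r$, which is absorbed into $C_{N,x_0,r}$.

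In the far region $d>2r$ one has $|x-y|\ge d/2$ for all $y\in B(x_0,r)$, and since $r\le\rho(x_0)$, Lemma \ref{lem2.4} gives $\rho(y)\sim\rho(x_0)$ on the support of $g$. The hypothesis then yields
\[
|q_t(x,y)|\le C_N\, t^{-n}\bigl(1+d/(2t)\bigr)^{-(n+\gamma')}\bigl(1+t/\rho(x_0)\bigr)^{-N}.
\]
Combining with $\|g\|_\infty|B(x_0,r)|\le Cr^{-\gamma}$ and the elementary identity $t^{-n}(1+d/(2t))^{-(n+\gamma')}\le C\,t^{\gamma'}/(t+d)^{n+\gamma'}\le C\,t^{\gamma'}/d^{n+\gamma'}$, one obtains $\bigl|\int q_t g\bigr|\le Cr^{-\gamma}d^{-(n+\gamma')}t^{\gamma'}(1+t/\rho(x_0))^{-N}$. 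Choosing $N>\gamma'$, a one-line calculus exercise gives $\sup_{t>0}t^{\gamma'}(1+t/\rho(x_0))^{-N}\le C_N\rho(x_0)^{\gamma'}$, so that
\[
\sup_{t>0}\Bigl|\int q_t(x,y)g(y)\,dy\Bigr|\le C_{N,x_0,r}\,d^{-(n+\gamma')}.
\]
Finally, $d\ge|x|/2$ whenever $|x|\ge 2|x_0|$, and $|x|$ is bounded by $2|x_0|$ otherwise; in either case $d^{-(n+\gamma')}\lesssim C_{x_0,r}(1+|x|)^{-(n+\gamma')}$, which completes both regions.

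The only nontrivial step is the supremum in $t$: the growing factor $t^{\gamma'}$ must be balanced against the decay $(1+t/\rho(x_0))^{-N}$ with $N$ taken sufficiently large, producing the critical power $\rho(x_0)^{\gamma'}$ rather than a divergent quantity. All other steps are routine consequences of the size estimate on $q_t$ and the local quasi-equivalence of $\rho$ from Lemma \ref{lem2.4}.
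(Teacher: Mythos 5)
Your proof is correct and follows essentially the same route the paper takes: Lemma \ref{le-4.1} itself is quoted from \cite{DGMTZ} without proof, but your argument (split at $|x-x_0|\lesssim r$ versus $|x-x_0|>2r$, use only the size bound with $\rho(y)\sim\rho(x_0)$ from Lemma \ref{lem2.4}, and balance $t^{\gamma'}$ against $(1+t/\rho(x_0))^{-N}$) is exactly the scheme the paper uses for its fractional analogue, Lemma \ref{le-4.6-add}, where the same balancing appears as peeling off $(t^{1/2\alpha}/\rho(x_0))^{-\theta}$. No cancellation of the atom is needed, just as in the paper, so there is nothing to add.
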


\section{Regularities on fractional heat semigroups associated with $L$}\label{sec-3}
The aim of this section is to estimate the regularities of the fractional heat kernel $K_{\alpha,t}^{L}(\cdot,\cdot)$. By use of (\ref{eq-sub-for-1}), we first estimate $\partial_{t}^{m}K_{\alpha,t}^{L}(\cdot,\cdot), m\geq 1.$ Then, via the solution to (\ref{eq-1.9}), we investigate the spatial gradient of $K_{\alpha,t}^{L}(\cdot,\cdot)$. At last, we obtain the estimation of the time-fractional derivatives of  $K_{\alpha,t}^{L}(\cdot,\cdot)$.
\subsection{Regularities of the fractional heat kernel}\label{sec-3.1}
We first investigate the regularities of $K^{L}_{\alpha,t}(\cdot,\cdot)$.
\begin{proposition}\label{prop-3.1}
Let $\alpha\in(0,1)$. For every $N>0$, there exists a constant $C_{N}$ such that
$$\Big|K^{L}_{\alpha,t}(x,y)\Big|\leq \frac{C_{N}t}{(t^{1/2\alpha}+|x-y|)^{n+2\alpha}}\Big(1+\frac{t^{1/2\alpha}}{\rho(x)}+\frac{t^{1/2\alpha}}{\rho(y)}\Big)^{-N}.$$
\end{proposition}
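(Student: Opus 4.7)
The plan is to insert the Gaussian upper bound from Proposition \ref{lem2.1} into the subordinative formula (\ref{eq-sub-for}) and reduce the estimate, via the scaling law in (\ref{eq-heat}), to a dimensionless integral against $\eta^\alpha_1$. Applying Proposition \ref{lem2.1} with an auxiliary exponent $N' \geq N$ (to be chosen) and rescaling $s = t^{1/\alpha} u$ (so that $\eta^\alpha_t(s)\,ds = \eta^\alpha_1(u)\,du$) yields
$$K^L_{\alpha,t}(x,y) \leq C_{N'}\, t^{-n/2\alpha}\, J(a, r_x, r_y),$$
where $a := |x-y|/t^{1/2\alpha}$, $r_x := t^{1/2\alpha}/\rho(x)$, $r_y := t^{1/2\alpha}/\rho(y)$, and
$$J(a, r_x, r_y) := \int_0^\infty \eta^\alpha_1(u)\, u^{-n/2}\, e^{-ca^2/u} \bigl(1 + r_x\sqrt{u} + r_y\sqrt{u}\bigr)^{-N'}\, du.$$
A short power count shows that the target inequality is equivalent to $J \lesssim (1+a)^{-(n+2\alpha)}(1 + r_x + r_y)^{-N}$.

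I would bound $J$ by splitting at $u=1$. On $u \geq 1$: the tail estimate $\eta^\alpha_1(u) \lesssim u^{-1-\alpha}$ from (\ref{eq-heat}) together with the substitution $v = a^2/u$ yields $\int_1^\infty u^{-n/2-1-\alpha} e^{-ca^2/u}\, du \lesssim (1+a)^{-(n+2\alpha)}$, while $\sqrt{u} \geq 1$ makes $(1+r_x\sqrt{u}+r_y\sqrt{u})^{-N'} \leq (1+r_x+r_y)^{-N'}$ immediate. On $u \leq 1$, for $a \geq 1$ I use $e^{-ca^2/u} \leq C_M (u/a^2)^M$ with $M = (n+2\alpha)/2$ to produce $a^{-(n+2\alpha)}$ (the case $a \leq 1$ needs no spatial decay), and I extract the $\rho$-factor by treating $r_x$ and $r_y$ separately: if $r_x \leq 1$ there is nothing to do, and if $r_x > 1$, I split further at $u = 1/r_x^2$, using $(1+r_x\sqrt{u})^{-N'} \leq (r_x\sqrt{u})^{-N'} = r_x^{-N'} u^{-N'/2}$ on $(1/r_x^2, 1)$ to pull out $r_x^{-N'}$, and on $(0, 1/r_x^2)$ writing $u^{-n/2} = u^{N/2}\cdot u^{-n/2 - N/2}$ and bounding $u^{N/2} \leq r_x^{-N}$. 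The remaining integrals, of various negative powers of $u$ against $\eta^\alpha_1$, are all finite by property (iii) of (\ref{eq-heat}), namely $\int_0^\infty u^{-\gamma}\eta^\alpha_1(u)\,du < \infty$ for every $\gamma > 0$.

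The principal difficulty lies in the region $u \leq 1$ combined with large $r_x$ or $r_y$: there the $\rho$-factor in the heat-kernel bound is essentially trivial, and all of the spatial decay in $a$ and the decay in $r_x, r_y$ must be squeezed out of the exponential $e^{-ca^2/u}$ and the density $\eta^\alpha_1$ alone. The decisive lever is precisely property (iii) of (\ref{eq-heat}), which encodes the rapid vanishing of $\eta^\alpha_1$ at $0$ and permits the accumulation of arbitrarily many negative powers of $u$ inside the integral. Once the splitting above is carried out and $N'$ is chosen large enough in terms of $N$, $n$, and $\alpha$, the dimensionless estimate $J \lesssim (1+a)^{-(n+2\alpha)}(1+r_x+r_y)^{-N}$ follows, and unwinding the scaling produces the proposition.
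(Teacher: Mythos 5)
Your argument is correct and follows essentially the same route as the paper: substitute the weighted Gaussian bound of Proposition \ref{lem2.1} into the subordinative formula (\ref{eq-sub-for}), rescale $s=t^{1/\alpha}u$, and use the tail bound $\eta^{\alpha}_{1}(u)\lesssim u^{-1-\alpha}$ together with the moment condition $\int_{0}^{\infty}u^{-\gamma}\eta^{\alpha}_{1}(u)\,du<\infty$ from (\ref{eq-heat}). The only difference is bookkeeping: you split the rescaled integral at $u=1$ (and at $u=1/r^{2}$) to extract the factor $\big(1+t^{1/2\alpha}/\rho(x)+t^{1/2\alpha}/\rho(y)\big)^{-N}$ directly, whereas the paper derives the two global bounds (\ref{eq-3.1-11}) and (\ref{eq-3.2-11}) with pure power weights and then combines them by taking a minimum and invoking the arbitrariness of $N$.
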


\begin{proof}
By Proposition \ref{lem2.1}, we use (\ref{eq-gauss-bdd}), (\ref{eq-heat}) and (\ref{eq-sub-for}) to obtain
\begin{eqnarray*}
\Big|K^{L}_{\alpha,t}(x,y)\Big|&\leq&\int^{\infty}_{0}\frac{t}{s^{1+\alpha}}\Big|K^{L}_{s}(x,y)\Big|ds\\
&\leq& C_{N}\int^{\infty}_{0}\frac{te^{-c|x-y|^{2}/s}}{s^{1+\alpha+n/2}}\Big(1+\frac{\sqrt{s}}{\rho(x)}\Big)^{-N}\Big(1+\frac{\sqrt{s}}{\rho(y)}\Big)^{-N}ds.
\end{eqnarray*}
Taking the change of variable $s=t^{1/\alpha}u$, we have
\begin{eqnarray*}
\Big|K^{L}_{\alpha,t}(x,y)\Big|&\leq&C_{N}\int^{\infty}_{0}\frac{t^{1+1/\alpha}}{(t^{1/\alpha}u)^{1+\alpha}}(t^{1/\alpha}u)^{-n/2}e^{-\frac{c|x-y|^{2}}{t^{1/\alpha}u}}
\Big(1+\frac{t^{1/2\alpha}\sqrt{u}}{\rho(x)}\Big)^{-N}\Big(1+\frac{t^{1/2\alpha}\sqrt{u}}{\rho(y)}\Big)^{-N}du\\
&\leq&\frac{C_{N}}{t^{n/2\alpha}}\Big(\frac{t^{1/2\alpha}}{\rho(x)}\Big)^{-N}\Big(\frac{t^{1/2\alpha}}{\rho(y)}\Big)^{-N}\int^{\infty}_{0}
\frac{1}{u^{1+\alpha}}u^{-n/2-N}e^{-\frac{c|x-y|^{2}}{t^{1/\alpha}u}}du.
\end{eqnarray*}
Let $\frac{|x-y|^{2}}{t^{1/\alpha }u}=r^{2}$. Then
\begin{eqnarray*}
\Big|K^{L}_{\alpha,t}(x,y)\Big|&\leq&C_{N}t^{-n/2\alpha}
\Big(\frac{t^{1/2\alpha}}{\rho(x)}\Big)^{-N}\Big(\frac{t^{1/2\alpha}}{\rho(y)}\Big)^{-N}
\int^{\infty}_{0}\Big(\frac{|x-y|^{2}}{t^{1/\alpha}r^{2}}\Big)^{-1-\alpha-n/2-N}e^{-cr^{2}}\frac{|x-y|^{2}}{t^{1/\alpha}r^{3}}dr\\
&\leq&C_{N}\Big(\frac{t^{1/2\alpha}}{\rho(x)}\Big)^{-N}\Big(\frac{t^{1/2\alpha}}{\rho(y)}\Big)^{-N}
\frac{t^{1+N/\alpha}}{|x-y|^{2\alpha+n+2N}}
\int^{\infty}_{0}r^{2\alpha+n+2N-1}e^{-cr^{2}}dr\\
&\leq&C_{N}\Big(\frac{t^{1/2\alpha}}{\rho(x)}\Big)^{-N}\Big(\frac{t^{1/2\alpha}}{\rho(y)}\Big)^{-N}\frac{t^{1+N/\alpha}}{|x-y|^{n+2\alpha+2N}},
\end{eqnarray*}
which gives
\begin{equation}\label{eq-3.1-11}
\Big(\frac{t^{1/2\alpha}}{\rho(x)}\Big)^{N}\Big(\frac{t^{1/2\alpha}}{\rho(y)}\Big)^{N}\Big|K^{L}_{\alpha,t}(x,y)\Big|\leq
\frac{C_{N}t^{1+N/\alpha}}{|x-y|^{n+2\alpha+2N}}.
\end{equation}

On the other hand, using the change of variables again, we obtain
\begin{eqnarray*}
\Big|K^{L}_{\alpha,t}(x,y)\Big|&\leq&C_{N} \int^{\infty}_{0}s^{-n/2}\eta^{\alpha}_{1}\Big(\frac{s}{t^{1/\alpha}}\Big)\Big(1+\frac{\sqrt{s}}{\rho(x)}\Big)^{-N}
\Big(1+\frac{\sqrt{s}}{\rho(y)}\Big)^{-N}ds\\
&\leq&C_{N}\int^{\infty}_{0}(t^{1/\alpha}\tau)^{-n/2}\frac{1}{t^{1/\alpha}}\eta^{\alpha}_{1}(\tau)
\Big(1+\frac{\sqrt{\tau}t^{1/2\alpha}}{\rho(x)}\Big)^{-N}
\Big(1+\frac{\sqrt{\tau}t^{1/2\alpha}}{\rho(y)}\Big)^{-N}t^{1/\alpha}d\tau\\
&\leq&C_{N}t^{-n/2\alpha}\Big(\frac{t^{1/2\alpha}}{\rho(x)}\Big)^{-N}\Big(\frac{t^{1/2\alpha}}{\rho(y)}\Big)^{-N}
\int^{\infty}_{0}\tau^{-n/2-N}\eta^{\alpha}_{1}(\tau)d\tau.
\end{eqnarray*}
The above estimate implies that
\begin{equation}\label{eq-3.2-11}
\Big(\frac{t^{1/2\alpha}}{\rho(x)}\Big)^{N}\Big(\frac{t^{1/2\alpha}}{\rho(y)}\Big)^{N}\Big|K^{L}_{\alpha,t}(x,y)\Big|\leq \frac{C_{N}}{t^{n/2\alpha}}.
\end{equation}
Now, combining (\ref{eq-3.1-11})\ and \ (\ref{eq-3.2-11}),  we have
$$\Big(\frac{t^{1/2\alpha}}{\rho(x)}\Big)^{N}\Big(\frac{t^{1/2\alpha}}{\rho(y)}\Big)^{N}\Big|K^{L}_{\alpha,t}(x,y)\Big|\leq C_{N}\min\Big\{
\frac{t^{1+N/\alpha}}{|x-y|^{n+2\alpha+2N}},\ t^{-n/2\alpha}\Big\},$$
which, together with the arbitrariness of $N$, indicates that
$$\Big|K^{L}_{\alpha,t}(x,y)\Big|\leq \frac{C_{N}t}{(t^{1/2\alpha}+|x-y|)^{n+2\alpha}}\Big(1+\frac{t^{1/2\alpha}}{\rho(x)}+\frac{t^{1/2\alpha}}{\rho(y)}\Big)^{-N}.$$
This completes the proof of Proposition \ref{prop-3.1}.
\end{proof}

\begin{proposition}\label{prop-3.2}
Let $\alpha\in (0,1)$. For any $N>0$, there exists  a constant $C_{N}>0$ such that for every $0<\delta'<\delta_{0}=\min\{1,
2-n/q\}$ and all $|h|\leq t^{1/2\alpha}$,
\begin{eqnarray*}
|K^{L}_{\alpha,t}(x+h,y)-K^{L}_{\alpha,t}(x,y)|
&\leq&\frac{C_{N}(|h|/t^{1/2\alpha})^{\delta}t}{(t^{1/2\alpha}+|x-y|)^{n+2\alpha}}
\Big(1+\frac{t^{1/2\alpha}}{\rho(x)}+\frac{t^{1/2\alpha}}{\rho(x)}\Big)^{-N}.
\end{eqnarray*}
\end{proposition}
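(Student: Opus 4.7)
The plan is to imitate the proof of Proposition \ref{prop-3.1} using the subordinative formula (\ref{eq-sub-for}), but with the pointwise bound on $K^{L}_{s}$ replaced by a pointwise bound on the difference $K^{L}_{s}(x+h,y)-K^{L}_{s}(x,y)$ that is valid uniformly in $s>0$. Starting from
\begin{equation*}
K^{L}_{\alpha,t}(x+h,y)-K^{L}_{\alpha,t}(x,y)=\int_{0}^{\infty}\eta^{\alpha}_{t}(s)\bigl[K^{L}_{s}(x+h,y)-K^{L}_{s}(x,y)\bigr]\,ds,
\end{equation*}
I would split the $s$-integral at $s=|h|^{2}$ and treat the two ranges with different kernel inputs.

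The first step is to establish the unified pointwise estimate
\begin{equation*}
\bigl|K^{L}_{s}(x+h,y)-K^{L}_{s}(x,y)\bigr|\lesssim \Bigl(\frac{|h|}{\sqrt{s}}\Bigr)^{\delta'}\frac{1}{s^{n/2}}e^{-c|x-y|^{2}/s}\Bigl(1+\frac{\sqrt{s}}{\rho(x)}+\frac{\sqrt{s}}{\rho(y)}\Bigr)^{-N}
\end{equation*}
for all $s>0$. For $s>|h|^{2}$, this is exactly Proposition \ref{prop2}. For $s\leq|h|^{2}$, I would bound the difference by the triangle inequality and apply Proposition \ref{lem2.1} to each of $K^{L}_{s}(x+h,y)$ and $K^{L}_{s}(x,y)$; since $|h|/\sqrt{s}\geq 1$ on this range, inserting the factor $(|h|/\sqrt{s})^{\delta'}$ is free. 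When the Gaussian tail of $K^{L}_{s}(x+h,y)$ features $|x+h-y|$ instead of $|x-y|$, I would split the cases $|x-y|\geq 2|h|$ (so $|x+h-y|\geq |x-y|/2$) and $|x-y|<2|h|$; in the latter the outer factor $(t^{1/2\alpha}+|x-y|)^{-(n+2\alpha)}$ is comparable to its value at $h=0$ by virtue of the hypothesis $|h|\leq t^{1/\alpha}$.

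With this uniform estimate in hand, the remainder of the argument is a faithful parallel of the proof of Proposition \ref{prop-3.1}, now carrying the extra factor $(|h|/\sqrt{s})^{\delta'}$ through each computation. Using $\eta^{\alpha}_{t}(s)\lesssim t/s^{1+\alpha}$ and the change of variables $s=t^{1/\alpha}u$ pulls out the Hölder factor as $(|h|/t^{1/2\alpha})^{\delta'}$ times $u^{-\delta'/2}$; combined with the argument that produced (\ref{eq-3.1-11}) this yields an $|x-y|^{-n-2\alpha-2N}$ type bound with the desired extra Hölder factor. Applying instead the scaling identity $\eta^{\alpha}_{t}(s)=t^{-1/\alpha}\eta^{\alpha}_{1}(s/t^{1/\alpha})$ together with the moment bound $\int_{0}^{\infty}u^{-n/2-N-\delta'/2}\eta^{\alpha}_{1}(u)\,du<\infty$ from (\ref{eq-heat}) reproduces the analogue of (\ref{eq-3.2-11}). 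Taking the minimum of the two resulting bounds and exploiting the arbitrariness of $N$ gives the combined denominator $(t^{1/2\alpha}+|x-y|)^{n+2\alpha}$.

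The main obstacle I anticipate is the bookkeeping on the range $s\leq|h|^{2}$, where Proposition \ref{prop2} is inapplicable: I must be careful to verify that the trivial substitution via Proposition \ref{lem2.1}, combined with the case split $|x-y|\gtrless 2|h|$, genuinely produces the same pointwise bound with a Hölder factor rather than merely a uniform bound, and that the subsequent $s=t^{1/\alpha}u$ scaling does not force any extra restriction beyond $|h|\leq t^{1/\alpha}$. A secondary point is to check that the moment integrals picked up by the $(|h|/\sqrt{s})^{\delta'}$ factor remain convergent on $(0,\infty)$; this is automatic because (\ref{eq-heat}) provides negative moments of $\eta^{\alpha}_{1}$ of every order and $\delta'<\delta_{0}\leq 1$. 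Beyond these two technical points, the scheme is a direct extension of the proof of Proposition \ref{prop-3.1}.
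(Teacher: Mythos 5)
Your overall route is the same as the paper's: insert the H\"older estimate of Proposition \ref{prop2} into the subordination formula (\ref{eq-sub-for}), run the two computations of Proposition \ref{prop-3.1} (one with $\eta^{\alpha}_{t}(s)\lesssim t/s^{1+\alpha}$, one with the scaling identity and the finite negative moments of $\eta^{\alpha}_{1}$ from (\ref{eq-heat})), and finish by taking the minimum of the two bounds and using the arbitrariness of $N$. The paper in fact applies Proposition \ref{prop2} under the integral for \emph{all} $s>0$ without comment; your splitting at $s=|h|^{2}$ is an attempt to justify precisely the step the paper glosses over, which is to your credit. On the range $s\le|h|^{2}$ with $|x-y|\ge 2|h|$ your patch works (triangle inequality, Proposition \ref{lem2.1}, the free factor $(|h|/\sqrt{s})^{\delta'}\ge 1$, and $|x+h-y|\ge|x-y|/2$), except that the Gaussian bound at the shifted point produces $\rho(x+h)$ rather than $\rho(x)$; you must convert via Lemma \ref{lem2.4}, at the cost of a factor of the form $(1+|h|/\rho(x))^{Nk_{0}/(1+k_{0})}$, absorbed by starting from a larger $N$ since $1+|h|/\rho(x)\le(1+t^{1/2\alpha}/\rho(x))^{2}$. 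You do not mention this conversion.

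The genuine gap is the subcase $s\le|h|^{2}$, $|x-y|<2|h|$. There the ``unified estimate'' you state as your first step is false: if $x+h$ is at or near $y$ and $s\ll|h|^{2}$, the term $K^{L}_{s}(x+h,y)$ can be of size $s^{-n/2}$, whereas $(|h|/\sqrt{s})^{\delta'}s^{-n/2}e^{-c|x-y|^{2}/s}$ is exponentially small because $|x-y|\sim|h|\gg\sqrt{s}$; no choice of $c$ repairs this, and upper bounds on the two kernels separately can never manufacture the factor $e^{-c|x-y|^{2}/s}$. Your fallback, that $t^{1/2\alpha}+|x-y|$ is comparable to $t^{1/2\alpha}+|x+h-y|$ ``by virtue of $|h|\le t^{1/\alpha}$,'' only holds when $t^{1/\alpha}\lesssim t^{1/2\alpha}$, i.e.\ $t\lesssim1$, or under the stronger hypothesis $|h|\le t^{1/2\alpha}$ (the normalization used elsewhere in the paper, cf.\ Proposition \ref{pro2.6}(ii)); for $t>1$ and $|h|$ comparable to $t^{1/\alpha}$ it fails (take $x+h=y$, $|x-y|=t^{1/\alpha}$). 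The way to close this subcase is not to force a Gaussian in $|x-y|$ at all: when $|x-y|\lesssim\max\{|h|,t^{1/2\alpha}\}$ the target is comparable to the on-diagonal bound $t^{-n/2\alpha}(|h|/t^{1/2\alpha})^{\delta'}$ times the $\rho$-factors, and the contribution of $s\le|h|^{2}$ obeys it via the second (scaling) computation: after $s=t^{1/\alpha}\tau$ the range is $\tau\le|h|^{2}/t^{1/\alpha}$, so one may insert $1\le(|h|/t^{1/2\alpha})^{\delta'}\tau^{-\delta'/2}$ and use the finite negative moments of $\eta^{\alpha}_{1}$, as in (\ref{eq-3.4}). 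Note also that on $s\le|h|^{2}$ you cannot ``carry the H\"older factor through'' the first computation once the Gaussian is unavailable, since $\eta^{\alpha}_{t}(s)\lesssim t/s^{1+\alpha}$ alone makes the integral diverge at $s=0$; the negative-moment route is forced there. With these repairs (and, strictly, with $|h|\le t^{1/2\alpha}$, or $t\le1$, in this subcase) your plan closes; as written it does not.
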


\begin{proof}
The proof is similar to that of Proposition \ref{prop-3.1}. We first assume that $|h|<|x-y|/2$. By the subordinative formula (\ref{eq-sub-for}), we can use Proposition \ref{prop2} to get
\begin{eqnarray*}
\Big|K^{L}_{\alpha,t}(x+h,y)-K^{L}_{\alpha,t}(x,y)\Big|
&\leq&C_{N} \int^{\infty}_{0}\frac{t}{s^{1+\alpha}}s^{-n/2}e^{-c|x-y|^{2}/s}
\Big(|h|/\sqrt{s}\Big)^{\delta'}\Big(1+\frac{\sqrt{s}}{\rho(x)}\Big)^{-N}\Big(1+\frac{\sqrt{s}}{\rho(y)}\Big)^{-N}ds\\
&\leq&C_{N}\Big(\frac{|h|}{t^{1/2\alpha}}\Big)^{\delta'}\Big(\frac{t^{1/2\alpha}}{\rho(x)}\Big)^{-N}\Big(\frac{t^{1/2\alpha}}{\rho(y)}\Big)^{-N}
\frac{t^{1+N/\alpha+\delta'/2\alpha}}{|x-y|^{2\alpha+n+2N+\delta'}}\\
&\leq&C_{N}\Big(\frac{t^{1/2\alpha}}{\rho(x)}\Big)^{-N}\Big(\frac{t^{1/2\alpha}}{\rho(y)}\Big)^{-N}\frac{t^{1+N/\alpha}|h|^{\delta'}}{|x-y|^{2\alpha+n+2N+\delta'}},
\end{eqnarray*}
which implies
\begin{equation}\label{eq-3.3-1}
\Big(\frac{t^{1/2\alpha}}{\rho(x)}\Big)^{N}\Big(\frac{t^{1/2\alpha}}{\rho(y)}\Big)^{N}\Big|K^{L}_{\alpha,t}(x+h,y)-K^{L}_{\alpha,t}(x,y)\Big|
\leq \frac{C_{N}t^{1+N/\alpha}|h|^{\delta'}}{|x-y|^{2\alpha+n+2N+\delta'}}.
\end{equation}

On the other hand, letting $\tau=s/t^{1/\alpha}$, we have
\begin{eqnarray*}
&&\Big|K^{L}_{\alpha,t}(x+h,y)-K^{L}_{\alpha,t}(x,y)\Big|\\
&&\leq C_{N}\int^{\infty}_{0}s^{-n/2}\frac{1}{t^{1/\alpha}}\eta^{\alpha}_{1}(s/t^{1/\alpha})
\Big(\frac{|h|}{\sqrt{s}}\Big)^{\delta'}\Big(1+\frac{\sqrt{s}}{\rho(x)}\Big)^{-N}\Big(1+\frac{\sqrt{s}}{\rho(y)}\Big)^{-N}ds\\
&&\leq C_{N}\int^{\infty}_{0}(t^{1/\alpha}\tau)^{-n/2}\eta^{\alpha}_{1}(\tau)\Big(\frac{|h|}{\sqrt{\tau}t^{1/2\alpha}}\Big)^{\delta'}
\Big(1+\frac{\sqrt{\tau}t^{1/2\alpha}}{\rho(x)}\Big)^{-N}\Big(1+\frac{\sqrt{\tau}t^{1/2\alpha}}{\rho(y)}\Big)^{-N}d\tau\\
&&\leq C_{N}\Big(\frac{t^{1/2\alpha}}{\rho(x)}\Big)^{-N}\Big(\frac{t^{1/2\alpha}}{\rho(y)}\Big)^{-N}t^{-n/2\alpha-\delta'/2\alpha}|h|^{\delta'}
\int^{\infty}_{0}\tau^{-n/2-N-\delta'/2}\eta^{\alpha}_{1}(\tau)d\tau.
\end{eqnarray*}
This gives
\begin{equation}\label{eq-3.4}
\Big(\frac{t^{1/2\alpha}}{\rho(x)}\Big)^{N}\Big(\frac{t^{1/2\alpha}}{\rho(y)}\Big)^{N}
\Big|K^{L}_{\alpha,t}(x+h,y)-K^{L}_{\alpha,t}(x,y)\Big|\leq C_{N}t^{-n/2\alpha}\Big(\frac{|h|}{t^{1/2\alpha}}\Big)^{\delta'}.
\end{equation}

The estimates (\ref{eq-3.3-1})\  and \ (\ref{eq-3.4}) indicate that
$$\Big(\frac{t^{1/2\alpha}}{\rho(x)}\Big)^{N}\Big(\frac{t^{1/2\alpha}}{\rho(y)}\Big)^{N}
\Big|K^{L}_{\alpha,t}(x+h,y)-K^{L}_{\alpha,t}(x,y)\Big|\leq C_{N}\min\Bigg\{\frac{t^{1+N/\alpha}|h|^{\delta'}}{|x-y|^{2\alpha+n+2N+\delta'}},\ t^{-n/2\alpha}\Big(\frac{|h|}{t^{1/2\alpha}}\Big)^{\delta'}\Bigg\}.$$
Due to the arbitrariness of $N$, we have
$$\Big|K^{L}_{\alpha,t}(x+h,y)-K^{L}_{\alpha,t}(x,y)\Big|\leq \frac{C_{N}t}{(t^{1/2\alpha}+|x-y|)^{n+2\alpha}}\Big(\frac{|h|}{t^{1/2\alpha}}\Big)^{\delta'}
\Big(1+\frac{t^{1/2\alpha}}{\rho(x)}+\frac{t^{1/2\alpha}}{\rho(x)}\Big)^{-N}.$$
This proves Proposition \ref{prop-3.2} under the assumption $|h|<|x-y|/2$.

Now we prove this proposition  for the case $|h|<t^{1/2\alpha}$. For $|h|<|x-y|/2<t^{1/2\alpha}$ or $|h|<t^{1/2\alpha}<|x-y|/2$, the desired estimate can be deduced from (\ref{eq-3.3-1})\  and \ (\ref{eq-3.4}). It remains to consider the case $|x-y|/2<|h|<t^{1/2\alpha}$. We split
\begin{eqnarray*}
\Big|K^{L}_{\alpha,t}(x+h,y)-K^{L}_{\alpha,t}(x,y)\Big|&\leq&S_{1}+S_{2},
\end{eqnarray*}
where
$$\left\{\begin{aligned}
S_{1}&:=\int_{|h|<\sqrt{s}}\eta^{\alpha}_{t}(s)\Big|K_{\alpha,s}^{L}(x+h,y)-K_{\alpha,s}^{L}(x,y)\Big|ds;\\
S_{2}&:=\int_{|h|\geq \sqrt{s}}\eta^{\alpha}_{t}(s)\Big|K_{\alpha,s}^{L}(x+h,y)-K_{\alpha,s}^{L}(x,y)\Big|ds.
\end{aligned}\right.$$
For $S_{1}$, since $|h|<\sqrt{s}$, we can follow the procedure of (\ref{eq-3.4}) to deduce that
\begin{eqnarray*}
S_{1}&\lesssim&\int_{|h|<\sqrt{s}}s^{-n/2}\frac{1}{t^{1/\alpha}}\eta^{\alpha}_{1}(s/t^{1/\alpha})
\Big(\frac{|h|}{\sqrt{s}}\Big)^{\delta'}\Big(1+\frac{\sqrt{s}}{\rho(x)}\Big)^{-N}\Big(1+\frac{\sqrt{s}}{\rho(y)}\Big)^{-N}ds\\
&\lesssim& \int^{\infty}_{0}(t^{1/\alpha}\tau)^{-n/2}\eta^{\alpha}_{1}(\tau)\Big(\frac{|h|}{\sqrt{\tau}t^{1/2\alpha}}\Big)^{\delta'}
\Big(1+\frac{\sqrt{\tau}t^{1/2\alpha}}{\rho(x)}\Big)^{-N}\Big(1+\frac{\sqrt{\tau}t^{1/2\alpha}}{\rho(y)}\Big)^{-N}d\tau\\
&\lesssim& \Big(\frac{t^{1/2\alpha}}{\rho(x)}\Big)^{-N}\Big(\frac{t^{1/2\alpha}}{\rho(y)}\Big)^{-N}t^{-n/2\alpha}\Big(\frac{|h|}{t^{1/2\alpha}}\Big)^{\delta'}
\int^{\infty}_{0}\tau^{-n/2-N-\delta'/2}\eta^{\alpha}_{1}(\tau)d\tau\\
&\lesssim& \Big(\frac{t^{1/2\alpha}}{\rho(x)}\Big)^{-N}\Big(\frac{t^{1/2\alpha}}{\rho(y)}\Big)^{-N}t^{-n/2\alpha}\Big(\frac{|h|}{t^{1/2\alpha}}\Big)^{\delta'}.
\end{eqnarray*}
We further divide $S_{2}$ into $S_{2}=S_{2,1}+S_{2,2}$, where
$$\left\{\begin{aligned}
S_{2,1}&:=\int_{|h|\geq\sqrt{s}}\eta^{\alpha}_{t}(s)\Big|K_{\alpha,s}^{L}(x,y)\Big|ds;\\
S_{2,2}&:=\int_{|h|\geq \sqrt{s}}\eta^{\alpha}_{t}(s)\Big|K_{\alpha,s}^{L}(x+h,y)\Big|ds.
\end{aligned}\right.$$
Noticing $|h|>\sqrt{s}$, for $\delta'>0$, it follows from Proposition \ref{prop-3.1} that
\begin{eqnarray*}
S_{2,1}&\lesssim&\int_{|h|\geq\sqrt{s}}s^{-n/2}\frac{1}{t^{1/\alpha}}\eta^{\alpha}_{1}(s/t^{1/\alpha})
\Big(1+\frac{\sqrt{s}}{\rho(x)}\Big)^{-N}\Big(1+\frac{\sqrt{s}}{\rho(y)}\Big)^{-N}ds\\
&\lesssim&\int_{0}^{\infty}s^{-n/2}\frac{1}{t^{1/\alpha}}\eta^{\alpha}_{1}(s/t^{1/\alpha})
\Big(\frac{|h|}{\sqrt{s}}\Big)^{\delta'}\Big(1+\frac{\sqrt{s}}{\rho(x)}\Big)^{-N}\Big(1+\frac{\sqrt{s}}{\rho(y)}\Big)^{-N}ds\\
&\lesssim& \int^{\infty}_{0}(t^{1/\alpha}\tau)^{-n/2}\eta^{\alpha}_{1}(\tau)\Big(\frac{|h|}{\sqrt{\tau}t^{1/2\alpha}}\Big)^{\delta'}
\Big(1+\frac{\sqrt{\tau}t^{1/2\alpha}}{\rho(x)}\Big)^{-N}\Big(1+\frac{\sqrt{\tau}t^{1/2\alpha}}{\rho(y)}\Big)^{-N}d\tau\\
&\lesssim& \Big(\frac{t^{1/2\alpha}}{\rho(x)}\Big)^{-N}\Big(\frac{t^{1/2\alpha}}{\rho(y)}\Big)^{-N}t^{-n/2\alpha}\Big(\frac{|h|}{t^{1/2\alpha}}\Big)^{\delta'}.
\end{eqnarray*}
For $S_{2,2}$, similarly, we use Proposition \ref{prop-3.1} again to deduce that
\begin{eqnarray*}
S_{2,2}&\lesssim&\int_{|h|\geq\sqrt{s}}s^{-n/2}\frac{1}{t^{1/\alpha}}\eta^{\alpha}_{1}(s/t^{1/\alpha})
\Big(1+\frac{\sqrt{s}}{\rho(x+h)}\Big)^{-2N}\Big(1+\frac{\sqrt{s}}{\rho(y)}\Big)^{-2N}ds\\
&\lesssim&\int_{0}^{\infty}s^{-n/2}\frac{1}{t^{1/\alpha}}\eta^{\alpha}_{1}(s/t^{1/\alpha})
\Big(\frac{|h|}{\sqrt{s}}\Big)^{\delta'}\Big(1+\frac{\sqrt{s}}{\rho(y)}\Big)^{-2N}ds\\
&\lesssim& \int^{\infty}_{0}(t^{1/\alpha}\tau)^{-n/2}\eta^{\alpha}_{1}(\tau)\Big(\frac{|h|}{\sqrt{\tau}t^{1/2\alpha}}\Big)^{\delta'}
\Big(1+\frac{\sqrt{\tau}t^{1/2\alpha}}{\rho(y)}\Big)^{-2N}d\tau\\
&\lesssim& \Big(\frac{t^{1/2\alpha}}{\rho(y)}\Big)^{-2N}t^{-n/2\alpha}\Big(\frac{|h|}{t^{1/2\alpha}}\Big)^{\delta'},
\end{eqnarray*}
which, together with the arbitrariness of $N$, indicates that
$$S_{2,2}\lesssim \Big(1+\frac{t^{1/2\alpha}}{\rho(y)}\Big)^{-2N}t^{-n/2\alpha}\Big(\frac{|h|}{t^{1/2\alpha}}\Big)^{\delta'}.$$
Because $|x-y|/2<t^{1/2\alpha}$, by Lemma \ref{lem2.4}, it holds 
$$m(y,V)\geq c\frac{m(x,V)}{(1+|x-y|m(x,V))^{k_{0}/(1+k_{0})}}\gtrsim \frac{m(x,V)}{(1+t^{1/2\alpha}m(x,V))^{k_{0}/(1+k_{0})}},$$
which gives
\begin{eqnarray*}
 \Big(1+\frac{t^{1/2\alpha}}{\rho(y)}\Big)^{-N}&=&{\Big(1+{t^{1/2\alpha}}{m(y,V)}\Big)^{-N}}\\
 &\lesssim&\frac{(1+t^{1/2\alpha}m(x,V))^{k_{0}N/(1+k_{0})}}{\Big((1+t^{1/2\alpha}m(x,V))^{k_{0}/(1+k_{0})}+t^{1/2\alpha}{{m(x,V)}}\Big)^{N}}\\
 &\lesssim&\Big(1+\frac{t^{1/2\alpha}}{\rho(y)}\Big)^{-N'}.
\end{eqnarray*}
The estimates for $S_{1}$ and $S_{2}$, together with $|x-y|/2<t^{1/2\alpha}$, imply that
\begin{eqnarray*}
\Big|K^{L}_{\alpha,t}(x+h,y)-K^{L}_{\alpha,t}(x,y)\Big|&\lesssim& t^{-n/2\alpha}\Big(\frac{|h|}{t^{1/2\alpha}}\Big)^{\delta'}\Big(1+\frac{t^{1/2\alpha}}{\rho(x)}\Big)^{-N}\Big(1+\frac{t^{1/2\alpha}}{\rho(y)}\Big)^{-N}\\
&\lesssim&\frac{(|h|/t^{1/2\alpha})^{\delta}t}{(t^{1/2\alpha}+|x-y|)^{n+2\alpha}}
\Big(1+\frac{t^{1/2\alpha}}{\rho(x)}+\frac{t^{1/2\alpha}}{\rho(x)}\Big)^{-N}.
\end{eqnarray*}
\end{proof}

For $m\in\mathbb{Z}^{+}$ and $t>0$, define
$\widetilde{D}_{\alpha,t}^{L,m}(\cdot,\cdot)=t^{m}\partial^{m}_{t}K^{L}_{\alpha,t}(\cdot,\cdot).$
We can obtain the following  estimates about the kernel $\widetilde{D}_{\alpha,t}^{L,m}(\cdot,\cdot)$.

\begin{proposition}\label{pro2.6} Let $\alpha\in(0,1)$, $m\in\mathbb Z_{+}$ and $\delta=\min\{2\alpha,\delta_{0}\}$, where $\delta_0$ appears in Proposition \ref{prop2}.
\item{\rm (i)} For any $N>0$, there exists a constant $C_{N}>0$ such that
$$|\widetilde{D}_{\alpha,t}^{L,m}(x,y)|\leq \frac{C_{N}t^{m}}{(t^{1/2\alpha}+|x-y|)^{n+2\alpha m}}\Big(1+\frac{{t^{1/2\alpha}}}{\rho(x)}+\frac{{t^{1/2\alpha}}}{\rho(y)}\Big)^{-N}.$$
\item{\rm (ii)} Let $0<\delta'\leq\delta$. For any $N>0$, there exists a constant $C_{N}>0$ such that for all $|h|\leq t^{1/2\alpha}$,
$$|\widetilde{D}_{\alpha,t}^{L,m}(x+h,y)-\widetilde{D}_{\alpha,t}^{L,m}(x,y)|\leq C_{N}\Big(\frac{|h|}{{t^{1/2\alpha}}}\Big)^{\delta'}\frac{t^{m}}{(t^{1/2\alpha}+|x-y|)^{n+2\alpha m}}\Big(1+\frac{{t^{1/2\alpha}}}{\rho(x)}+\frac{{t^{1/2\alpha}}}{\rho(y)}\Big)^{-N}.$$
\item{\rm (iii)} Let $0<\delta'\leq\delta$. For any $N>0$, there exists a constant $C_{N}>0$ such that
$$\Big|\int_{\mathbb{R}^{n}}\widetilde{D}_{\alpha,t}^{L,m}(x,y)dy\Big|\leq C_{N}\frac{({t^{1/2\alpha}}/\rho(x))^{\delta'}}{(1+{t^{1/2\alpha}}/\rho(x))^{N}}.$$
\end{proposition}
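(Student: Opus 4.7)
The plan is to reduce Proposition \ref{pro2.6} to Proposition \ref{prop-2.1} through the subordination formula, in the same spirit as Propositions \ref{prop-3.1} and \ref{prop-3.2}, but now accommodating the time derivative $t^{m}\partial_{t}^{m}$.

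First, I would substitute $u=s/t^{1/\alpha}$ in the subordination formula (\ref{eq-sub-for}) to obtain the scale-normalized representation
$$K^{L}_{\alpha,t}(x,y)=\int_{0}^{\infty}\eta^{\alpha}_{1}(u)\,K^{L}_{t^{1/\alpha}u}(x,y)\,du.$$
Since the Euler operator $t\partial_{t}$ acts on $K^{L}_{t^{1/\alpha}u}$ as $(1/\alpha)(s\partial_{s}K^{L}_{s})\big|_{s=t^{1/\alpha}u}$, iterating and expanding $t^{m}\partial_{t}^{m}$ into powers of $t\partial_{t}$ via the Stirling numbers yields
$$\widetilde{D}_{\alpha,t}^{L,m}(x,y)=\sum_{k=1}^{m}c_{m,k}\int_{0}^{\infty}\eta^{\alpha}_{1}(u)\,Q^{L}_{t^{1/\alpha}u,k}(x,y)\,du$$
for constants $c_{m,k}=c_{m,k}(\alpha)$. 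This identity is the backbone of the proof: it transfers each of the three claimed estimates to a corresponding quantitative statement about $Q^{L}_{s,k}$ for $1\leq k\leq m$, which Proposition \ref{prop-2.1} supplies.

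For part (i), I would insert Proposition \ref{prop-2.1}(i) into the representation and derive two complementary bounds by splitting the weight $(1+\sqrt{s}/\rho(x)+\sqrt{s}/\rho(y))^{-N}$ with $\sqrt{s}=t^{1/2\alpha}\sqrt{u}$, following the two-step recipe of Proposition \ref{prop-3.1}: one bound exploits the Gaussian decay $e^{-c|x-y|^{2}/(t^{1/\alpha}u)}$ via the substitution $r^{2}=|x-y|^{2}/(t^{1/\alpha}u)$, producing the $|x-y|^{-(n+2\alpha m+2N)}$ factor after distributing powers of $u$; the other drops the exponential and relies on the finite moments $\int_{0}^{\infty}u^{-\gamma}\eta^{\alpha}_{1}(u)\,du<\infty$ to yield the off-diagonal scaling $t^{m}/(t^{1/2\alpha})^{n+2\alpha m}$. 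Taking the minimum and using the arbitrariness of $N$ produces the stated bound.

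For part (ii), the same scheme applies after replacing Proposition \ref{prop-2.1}(i) by Proposition \ref{prop-2.1}(ii); the subtlety is that the H\"older estimate for $Q^{L}_{s,k}$ requires $|h|<\sqrt{s}=t^{1/2\alpha}\sqrt{u}$, whereas we only assume $|h|\leq t^{1/2\alpha}$. I would therefore split the $u$-integral at $u=1$: for $u\geq 1$, Proposition \ref{prop-2.1}(ii) applies directly; for $0<u<1$, I would instead use the triangle inequality with Proposition \ref{prop-2.1}(i) and insert the factor $(|h|/t^{1/2\alpha})^{\delta'}\leq 1$ for free. This dichotomy is the main technical obstacle; once it is handled, the same two-way minimization as in (i) concludes. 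For part (iii), I would swap the order of integration (justified by (i) via Fubini) to get
$$\Big|\int_{\mathbb{R}^{n}}\widetilde{D}_{\alpha,t}^{L,m}(x,y)\,dy\Big|\leq \sum_{k=1}^{m}|c_{m,k}|\int_{0}^{\infty}\eta^{\alpha}_{1}(u)\,\Big|\int_{\mathbb{R}^{n}}Q^{L}_{t^{1/\alpha}u,k}(x,y)\,dy\Big|\,du,$$
apply Proposition \ref{prop-2.1}(iii) to the inner integral with $\sqrt{s}=t^{1/2\alpha}\sqrt{u}$, and split the remaining $u$-integral at $u\sim(\rho(x)/t^{1/2\alpha})^{2}$ to match the two regimes of $(1+\sqrt{s}/\rho(x))^{-N}$. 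Convergence of the moments of $\eta^{\alpha}_{1}$ is guaranteed by $\eta^{\alpha}_{1}(u)\lesssim u^{-1-\alpha}$ together with the assumption $\delta'\leq 2\alpha$. Apart from the case split in (ii), every step is a straightforward variant of the arguments already carried out for Propositions \ref{prop-3.1} and \ref{prop-3.2}.
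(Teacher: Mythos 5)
Your overall scheme is the paper's own: you pass to the normalized subordination formula, write $t^{m}\partial_{t}^{m}K^{L}_{\alpha,t}(x,y)=\sum_{k=1}^{m}c_{m,k}\int_{0}^{\infty}\eta^{\alpha}_{1}(u)\,Q^{L}_{s,k}(x,y)\,du$ with $s=t^{1/\alpha}u$ (the paper gets the same identity from the higher-order chain rule), and then run the two complementary estimates of Proposition \ref{prop-3.1} — the Gaussian substitution $r^{2}=|x-y|^{2}/(t^{1/\alpha}u)$ on one side, the finite negative moments of $\eta^{\alpha}_{1}$ on the other — before minimizing and using the arbitrariness of $N$. Parts (i) and (iii) of your argument coincide with the paper's proof (in (iii) the paper splits into the cases $t^{1/2\alpha}>\rho(x)$ and $t^{1/2\alpha}\le\rho(x)$ rather than splitting the $u$-integral, but this is the same computation), and your observation that $\delta'\le 2\alpha$ is what makes the relevant moment of $\eta^{\alpha}_{1}$ finite is exactly the role of $\delta=\min\{2\alpha,\delta_{0}\}$.

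The gap is in part (ii), precisely at the point you flag as the main technical obstacle. You split the $u$-integral at $u=1$ and, on $0<u<1$, use the triangle inequality with Proposition \ref{prop-2.1}(i) and then "insert the factor $(|h|/t^{1/2\alpha})^{\delta'}\le 1$ for free". That step is backwards: into an upper bound you may only insert factors that are $\ge 1$; multiplying a valid bound by a quantity $\le 1$ does not yield a valid bound. Concretely, when $|h|\ll t^{1/2\alpha}$ the contribution of $u\in(0,1)$ estimated by the triangle inequality is comparable to the full bound of part (i) and is not $\lesssim (|h|/t^{1/2\alpha})^{\delta'}$ times the target, so as written your argument only reproves (i) and loses the H\"older gain. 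The repair is to split instead at $u=(|h|/t^{1/2\alpha})^{2}$, i.e.\ at $\sqrt{s}=|h|$: for $u$ above this threshold one has $|h|\le\sqrt{s}$ and Proposition \ref{prop-2.1}(ii) supplies the factor $(|h|/\sqrt{s})^{\delta'}=(|h|/t^{1/2\alpha})^{\delta'}u^{-\delta'/2}$; for $u$ below it one has $\sqrt{s}\le|h|$, hence $1\le(|h|/\sqrt{s})^{\delta'}$, and after the triangle inequality this same factor may legitimately be inserted. Both regimes then carry the identical integrand, the extra $u^{-\delta'/2}$ is absorbed by $\int_{0}^{\infty}u^{-\gamma}\eta^{\alpha}_{1}(u)\,du<\infty$ near zero and by $\eta^{\alpha}_{1}(u)\lesssim u^{-1-\alpha}$ together with $\delta'\le 2\alpha$ at infinity, and the two-way minimization from (i) concludes. (The paper omits the details of (ii), but this is the argument it intends.)
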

\begin{proof}
For (i), since $\eta^{\alpha}_{t}(s)=\frac{1}{t^{1/\alpha}}\eta_{1}^{\alpha}(s/t^{1/\alpha})$,
\begin{eqnarray*}
  K^{L}_{\alpha,t}(x,y) &=& \int^{\infty}_{0}\frac{1}{t^{1/\alpha}}\eta_{1}^{\alpha}(s/t^{1/\alpha})K^{L}_{s}(x,y)ds =  \int^{\infty}_{0}\eta^{\alpha}_{1}(\tau)K^{L}_{t^{1/\alpha}\tau}(x,y)d\tau.
\end{eqnarray*}
Hence
\begin{eqnarray*}
  \Big|\frac{\partial^{m}}{\partial t^{m}}K^{L}_{\alpha,t}(x,y)\Big| &=& \Big|\frac{\partial^{m}}{\partial t^{m}}\Big(\int^{\infty}_{0}\eta^{\alpha}_{1}(\tau)K^{L}_{t^{1/\alpha}\tau}(x,y)d\tau\Big)\Big|
   = \Big|\int^{\infty}_{0}\eta^{\alpha}_{1}(\tau)\frac{\partial^{m}}{\partial t^{m}}K^{L}_{t^{1/\alpha}\tau}(x,y)d\tau\Big|.
\end{eqnarray*}
By (i) of Proposition \ref{prop-2.1} and the higher-order derivative formula of the composite function, we can get
\begin{eqnarray*}
  \Big|\frac{\partial^{m}}{\partial t^{m}}K^{L}_{\alpha,t}(x,y)\Big| &\lesssim&\sum^{m}_{i=1}\Big|\int^{\infty}_{0}\eta^{\alpha}_{1}(\tau)t^{i/\alpha-m}\tau^{i}
  \frac{\partial^{i}}{\partial s^{i}}K^{L}_{s}(x,y)\Big|_{s=t^{1/\alpha}\tau}d\tau\Big|  \\
   &\leq&  C_{N}t^{-m}\int^{\infty}_{0}\eta^{\alpha}_{1}(\tau)(t^{1/\alpha}\tau)^{-n/2}e^{-c|x-y|^{2}/t^{1/\alpha}\tau}
   \Big(1+\frac{\sqrt{\tau}t^{1/2\alpha}}{\rho(x)}\Big)^{-N}\Big(1+\frac{\sqrt{\tau}t^{1/2\alpha}}{\rho(y)}\Big)^{-N}d\tau.
\end{eqnarray*}
Notice that $\eta_{1}^{\alpha}(\tau)\leq {C}/{\tau^{1+\alpha}}$. By  changing of variables, we obtain
\begin{eqnarray*}
\Big|\frac{\partial^{m}}{\partial t^{m}}K^{L}_{\alpha,t}(x,y)\Big|&\leq& \frac{C_{N}}{t^{m+n/2\alpha}}\Big(\frac{t^{1/2\alpha}}{\rho(x)}\Big)^{-N}\Big(\frac{t^{1/2\alpha}}{\rho(y)}\Big)^{-N}
\int^{\infty}_{0}\tau^{-1-\alpha-n/2-N}e^{-c|x-y|^{2}/t^{1/\alpha}\tau}d\tau\\
&\leq& \frac{C_{N}t^{1+N/\alpha-m}}{|x-y|^{2\alpha+n+2M}}\Big(\frac{t^{1/2\alpha}}{\rho(x)}\Big)^{-N}\Big(\frac{t^{1/2\alpha}}{\rho(y)}\Big)^{-N}.
\end{eqnarray*}
On the other hand,
\begin{eqnarray*}
  \Big|\frac{\partial^{m}}{\partial t^{m}}K^{L}_{\alpha,t}(x,y)\Big| &=& \Big|\int^{\infty}_{0}\eta^{\alpha}_{1}(\tau)\frac{\partial^{m}}{\partial t^{m}}K^{L}_{t^{1/\alpha}\tau}(x,y)d\tau\Big| \\
   &\leq& C_{N}t^{-m}\Big(\frac{t^{1/2\alpha}}{\rho(x)}\Big)^{-N}\Big(\frac{t^{1/2\alpha}}{\rho(y)}\Big)^{-N}\int^{\infty}_{0}\eta^{\alpha}_{1}(\tau)
   (\tau t^{1/\alpha})^{-n/2}\tau^{-N}d\tau \\
   &\leq& \frac{C_{N}}{t^{m+n/2\alpha}}\Big(\frac{t^{1/2\alpha}}{\rho(x)}\Big)^{-N}\Big(\frac{t^{1/2\alpha}}{\rho(y)}\Big)^{-N}.
\end{eqnarray*}
Finally, we have proved that for arbitrary $N>0$,
$$\Big(\frac{t^{1/2\alpha}}{\rho(x)}\Big)^{N}\Big(\frac{t^{1/2\alpha}}{\rho(y)}\Big)^{N}\Big|\frac{\partial^{m}}{\partial t^{m}}K^{L}_{\alpha,t}(x,y)\Big|\leq C_{N}
\min\Big\{\frac{t^{1+N/\alpha-m}}{|x-y|^{2\alpha+n+2N}},\ \frac{1}{t^{m+n/2\alpha}}\Big\},$$
which gives
$$|\widetilde{D}_{\alpha,t}^{L,m}(x,y)|\leq \frac{C_{N}t^{m}}{(t^{1/2\alpha}+|x-y|)^{n+2\alpha m}}\Big(1+\frac{{t^{1/2\alpha}}}{\rho(x)}+\frac{{t^{1/2\alpha}}}{\rho(y)}\Big)^{-N}.$$

For (ii), via the subordinative formula (\ref{eq-sub-for}), we can complete the proof by using (ii) of Proposition \ref{prop-2.1}.   We omit the details.

For (iii), it is easy to see that $e^{-sL^{\alpha}}(f)(x)=\int^{\infty}_{0}\eta^{\alpha}_{s}(\tau)e^{-\tau L}(f)(x)d\tau.$ Hence
\begin{eqnarray*}
  \frac{\partial^{m}}{\partial t^{m}}K^{L}_{\alpha,t}(x,y) = \frac{\partial^{m}}{\partial t^{m}}\Big(\int^{\infty}_{0}\eta^{\alpha}_{1}(\tau)K^{L}_{\tau t^{1/\alpha}}(x,y)d\tau\Big)
   = C_{m}\sum^{m}_{i=1}\int^{\infty}_{0}\eta^{\alpha}_{1}(\tau)Q^{L}_{\sqrt{t^{1/\alpha}\tau},i}(x,y)d\tau.
\end{eqnarray*}
It follows from (iii) of Proposition \ref{prop-2.1} that
\begin{eqnarray*}
  \Big|\int_{\mathbb{R}^{n}}\widetilde{D}_{\alpha,t}^{L,m}(x,y)dy\Big| &\lesssim&\int^{\infty}_{0}\eta^{\alpha}_{1}(\tau)\Big|\int_{\mathbb{R}^{n}}Q^{L}_{\sqrt{t^{1/\alpha}\tau},i}(x,y)dy\Big|d\tau  \\
   &\leq& C_{N}\int^{\infty}_{0}\eta^{\alpha}_{1}(\tau)\frac{(\sqrt{t^{1/\alpha}\tau}/\rho(x))^{\delta'}}{(1+\sqrt{t^{1/\alpha}\tau}/\rho(x))^{N}}d\tau.
\end{eqnarray*}
If $t^{1/2\alpha}>\rho(x)$, then
\begin{eqnarray*}
   \Big|\int_{\mathbb{R}^{n}}\widetilde{D}_{\alpha,t}^{L,m}(x,y)dy\Big|&\leq& C_{N}\rho(x)^{N-\delta'}t^{(\delta'-N)/2\alpha}\int^{\infty}_{0}\eta_{1}^{\alpha}(\tau)\tau^{(\delta'-N)/2}d\tau  \\
   &\leq&  \frac{C_{N}({t^{1/2\alpha}}/\rho(x))^{\delta'}}{(1+{t^{1/2\alpha}}/\rho(x))^{N}}.
\end{eqnarray*}
If $t^{1/2\alpha}\leq\rho(x)$, then
\begin{eqnarray*}
 \Big|\int_{\mathbb{R}^{n}}\widetilde{D}_{\alpha,t}^{L,m}(x,y)dy\Big|  \leq C_{N}\int^{\infty}_{0}\eta_{1}^{\alpha}(\tau)(\sqrt{t^{1/\alpha}\tau}/\rho(x))^{\delta'}d\tau
     \leq \frac{C_{N}({t^{1/2\alpha}}/\rho(x))^{\delta'}}{(1+{t^{1/2\alpha}}/\rho(x))^{N}}.
\end{eqnarray*}
\end{proof}

\subsection{Estimation on the spatial gradient}
In this section, we investigate the spatial gradient of $K^{L}_{\alpha, t}(\cdot,\cdot)$, $\alpha>0$. For the special case $\alpha=1/2$, i.e., the Poisson kernel, the regularity estimates have been obtained in \cite[Lemma 3.9]{DYZ}
\begin{lemma}\label{le3.1}
Suppose that $V\in B_{q}$ for some $q>n$.  For every $N>0$, there exist constants $C_{N}>0$  and $c>0$ such that
for all $x, y\in\mathbb R^{n}$ and $t>0$, the kernels $K_{t}^{L}(\cdot,\cdot)$  satisfy the following estimates:
$$|\nabla_{x}K^{L}_{t}(x,y)|\leq
\left\{\begin{aligned}
\frac{C_{N}}{t^{(n+1)/2}}e^{-c|x-y|^{2}/t}\Big(1+\frac{\sqrt{t}}{\rho(x)}+\frac{\sqrt{t}}{\rho(y)}\Big)^{-N},\ \sqrt{t}\leq |x-y|;\\
\frac{C_{N}}{|x-y|t^{n/2}}e^{-c|x-y|^{2}/t}\Big(1+\frac{\sqrt{t}}{\rho(x)}+\frac{\sqrt{t}}{\rho(y)}\Big)^{-N},\ \sqrt{t}> |x-y|.
\end{aligned}\right.$$
\end{lemma}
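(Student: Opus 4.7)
The plan is to regard $u(z) := K^{L}_{t}(z, y)$, with $y$ and $t$ held fixed, as a solution of the elliptic equation $-\Delta u = -\partial_{t} u - Vu$ in $\mathbb{R}^{n}$, and to recover $\nabla_{x} u(x, t)$ via a local integral representation on a ball $B = B(x, R)$. Letting $P_{R}$ and $G_{R}$ denote the Poisson kernel and the Dirichlet Green's function of $-\Delta$ on $B$, I would write
$$u(z) = \int_{\partial B} P_{R}(z, w) u(w, t) \, dS(w) + \int_{B} G_{R}(z, w)\bigl[\partial_{t} u + Vu\bigr](w, t) \, dw,$$
differentiate at $z = x$, and use the classical bounds $|\nabla_{x} P_{R}(x, w)| \lesssim R^{-n}$ on $\partial B$ and $|\nabla_{x} G_{R}(x, w)| \lesssim |x - w|^{-(n-1)}$ in $B$, thereby decomposing $\nabla_{x} u(x, t) = I_{1} + I_{2} + I_{3}$ with $I_{1}$ the boundary term, $I_{2}$ the $\partial_{t}u$ term, and $I_{3}$ the $Vu$ term.

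The natural choice is $R = |x - y|/2$. For any $w \in B$, the triangle inequality gives $|w - y| \geq R$, and Lemma \ref{lem2.4} lets us replace $\rho(w)$ by $\rho(x)$ up to a constant. Proposition \ref{lem2.1} then yields $|u(w, t)| \lesssim t^{-n/2} e^{-cR^{2}/t}(1 + \sqrt{t}/\rho(x) + \sqrt{t}/\rho(y))^{-N}$, while Proposition \ref{prop-2.1}(i) with $m = 1$ supplies the analogue for $|\partial_{t} u(w, t)|$ with an extra factor $t^{-1}$. Combining these with $\int_{B} dw/|x-w|^{n-1} \lesssim R$ and the elementary inequality $(R/\sqrt{t})\, e^{-cR^{2}/(2t)} \lesssim 1$, both $|I_{1}|$ and $|I_{2}|$ are bounded by $R^{-1} t^{-n/2} e^{-c'R^{2}/t}(1+\sqrt{t}/\rho(x)+\sqrt{t}/\rho(y))^{-N}$.

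For $I_{3}$, I would factor out the $L^{\infty}$ Gaussian bound for $u$ on $B$ and reduce to estimating $\int_{B} V(w)|x-w|^{-(n-1)}\, dw$. Applying Hölder's inequality with exponents $q$ and $q'$, the hypothesis $q > n$ (hence $q' < n/(n-1)$) is exactly what keeps $|x-w|^{-(n-1)}$ in $L^{q'}(B)$ and contributes a factor $R^{1-n/q}$. The remaining $L^{q}$ norm of $V$ on $B$ is controlled by the reverse Hölder inequality combined with $\int_{B} V \lesssim R^{n-2}(1 + R/\rho(x))^{k_{0}}$, as provided by Lemmas \ref{le-2.5} and \ref{le-2.6}. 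The outcome is a bound of the form $R^{-1}(1 + R/\rho(x))^{k_{0}}$ for the Riesz-type integral, and the polynomial factor in $R/\rho(x)$ is absorbed into the $(1 + \sqrt{t}/\rho(x))^{-N}$ tail by taking $N$ large, since $N$ is arbitrary.

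Assembling the three contributions gives $|\nabla_{x} K^{L}_{t}(x, y)| \lesssim R^{-1} t^{-n/2} e^{-cR^{2}/t}(1+\sqrt{t}/\rho(x)+\sqrt{t}/\rho(y))^{-N}$ with $R = |x-y|/2$. In the regime $\sqrt{t} > |x-y|$ this already matches the second stated estimate, while for $\sqrt{t} \leq |x-y|$ the inequality $R^{-1} \leq 2 t^{-1/2}$ converts it to the first. The principal obstacle is the $V$-integral in $I_{3}$: extracting a clean $R^{-1}$ scaling while preserving the $\rho$-decay requires carefully orchestrating the reverse Hölder inequality with the auxiliary-function estimates of Section \ref{sec2}, and this is exactly where the hypothesis $q > n$ (rather than $q > n/2$) is essential.
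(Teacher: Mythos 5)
Your overall architecture coincides with the paper's: represent $u=K^{L}_{t}(\cdot,y)$ locally against the Laplacian, feed in the Gaussian bounds of Propositions \ref{lem2.1} and \ref{prop-2.1} for the $u$- and $\partial_{t}u$-terms, and control the potential term through the reverse H\"older inequality together with the auxiliary-function bound of Lemma \ref{le-2.6} (your H\"older-with-$q>n$ computation reproduces exactly the $R^{-1}(1+R/\rho(x))^{k_{0}}$ bound the paper gets for $\int_{B}V(w)|x-w|^{1-n}dw$). Using the Poisson kernel and Green function of the ball instead of a cutoff against the fundamental solution $\Gamma_{0}$ is a cosmetic difference. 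The genuine difference is your fixed choice $R=|x-y|/2$, versus the paper's restriction $R<\min\{|x_{0}-y_{0}|/8,\rho(x_{0})\}$ followed by optimization in $R$ and the Cases 1--3 analysis.

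This is where the gap lies. With $R=|x-y|/2$ possibly much larger than $\rho(x)$, your claim that ``Lemma \ref{lem2.4} lets us replace $\rho(w)$ by $\rho(x)$ up to a constant'' for $w\in B(x,R)$ is false: the constant comparability in Lemma \ref{lem2.4} holds only for $|x-w|\leq C_{1}\rho(x)$. On a large ball one only has the polynomial comparison $\rho(w)\lesssim\rho(x)(1+R/\rho(x))^{k_{0}/(1+k_{0})}$, so the factors $\big(1+\sqrt{t}/\rho(w)\big)^{-N}$ in the sup-bounds for $u$ and $\partial_{t}u$ — i.e. in all three of $I_{1},I_{2},I_{3}$, not only in the $V$-integral — degrade by powers of $\big(1+R/\rho(x)\big)$. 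The loss is absorbable, by the same mechanism you invoke for $I_{3}$: when $R\geq\sqrt{t}$ the Gaussian $e^{-cR^{2}/t}$ kills any power of $R/\sqrt{t}$, and when $R<\sqrt{t}$ one writes $1+R/\rho(x)\leq 1+\sqrt{t}/\rho(x)$ and uses the arbitrariness of $N$ (the effective decay exponent drops from $N$ to a fixed fraction of $N$, which is harmless). But this bookkeeping must actually be carried out; as written, the step asserting $\rho(w)\sim\rho(x)$ is unjustified precisely in the regime $|x-y|\gg\rho(x)$, which the statement of the lemma must cover. This is exactly the issue the paper's proof is structured to avoid: by keeping $R<\rho(x_{0})$ it may use constant comparability, and it pays with the infimum over admissible $R$ and the case distinctions (e.g.\ when $\rho(x_{0})<\sqrt{t}$ the leftover factor $\sqrt{t}/\rho(x_{0})+\rho(x_{0})/\sqrt{t}$ is absorbed using the arbitrariness of $N$). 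So your route is viable and lighter on case analysis, but it is not complete until the $\rho(w)$-versus-$\rho(x)$ replacement is repaired along the lines above.
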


\begin{proof}
Let $\Gamma_{0}(\cdot,\cdot)$ be the fundamental solution of $-\Delta$ in $\mathbb R^{n}$, i.e.,
$$\Gamma_{0}(x,y)=-\frac{1}{n(n-2)\omega(n)}\frac{1}{|x-y|^{n-2}},\ n\geq 3,$$
where $\omega(n)$ denotes  the area of the unit sphere in $\mathbb R^{n}$. Fix $t>0$ and $x_{0}, y_{0}\in\mathbb{R}^{n}$. Assume that $u(\cdot,\cdot)$ is a solution to the equation
$$\partial_{t}u+L u=\partial_{t}u+(-\Delta)u+Vu=0.$$
Let $\eta\in C^{\infty}_{0}(B(x_{0}, 2R))$ with some $R>0$ such that
$\eta=1\text{ on }B(x_{0}, 3R/2)$,
$|\nabla\eta|\leq C/R$ and $|\nabla^{2}\eta|\leq C/ R^{2}.$
Noticing that $\partial_{t} u+Lu=0$, we can obtain
\begin{eqnarray*}
-\Delta(u\eta)&=&-\sum^{n}_{i=1}\Big(\frac{\partial^{2} u}{\partial x_{i}^{2}}\cdot\eta+2\frac{\partial u}{\partial x_{i}}\frac{\partial \eta}{\partial x_{i}}
+u\cdot\frac{\partial^{2}\eta}{\partial x_{i}^{2}}\Big)\\
&=&-\Delta u\cdot\eta-2\nabla u\cdot\nabla\eta-u\cdot\Delta\eta\\
&=&-(\partial_{t}u)\eta-Vu\eta-2\nabla u\nabla\eta-u\nabla\eta,
\end{eqnarray*}
which, together with integration by parts, gives
\begin{eqnarray*}
&&-\int_{\mathbb R^{n}}\Gamma_{0}(x,y)\nabla u(y,t)\cdot\nabla\eta(y) dy
=-\sum^{n}_{i=1}\int_{\mathbb{R}^{n}}\Gamma_{0}(x,y)
\frac{\partial u(y,t)}{\partial y_{i}}\frac{\partial \eta(y)}{\partial y_{i}}dy\\
&&\quad=\sum^{n}_{i=1}\int_{\mathbb{R}^{n}}\Big(\frac{\partial}{\partial y_{i}}\Gamma_{0}(x,y)\Big)\frac{\partial \eta(y)}{\partial x_{i}}
u(y,t)dy+\sum^{n}_{i=1}\int_{\mathbb{R}^{n}}\Gamma_{0}(x,y)\Big(\frac{\partial^{2}\eta(y)}{\partial x_{i}^{2}}\Big)
u(y,t)dy\\
&&\quad=\int_{\mathbb R^{n}}\nabla_{y}\Gamma_{0}(x,y)\cdot\nabla \eta(y)u(y,t)dy+\int_{\mathbb R^{n}}\Gamma_{0}(x,y)\cdot\Delta \eta(y)u(y,t)dy.
\end{eqnarray*}
Then we can get
\begin{eqnarray*}
u(x,t)\eta(x)&=&\int_{\mathbb{R}^{n}}\Gamma_{0}(x,y)\Big\{-V(y)u(y,t)\eta(y)-\eta(y)\partial_{t}u(y,t)-2\nabla u(y,t)\cdot\nabla\eta(y)-u(y,t)\Delta\eta(y)\Big\}dy\\
&=&\int_{\mathbb R^{n}}\Gamma_{0}(x,y)\Big\{-V(y)u(y,t)\eta(y)-\eta(y)\partial_{t}u(y,t)+\Delta\eta(y)\cdot u(y,t)\Big\}dy\\
&&+2\int_{\mathbb{R}^{n}}\nabla_{y}\Gamma_{0}(x,y)\nabla\eta(y)u(y,t)dy.
\end{eqnarray*}
Notice that it follows from Lemma \ref{le-2.6} that
$$\int_{B(x_{0}, 2R)}\frac{V(y)}{|x-y|^{n-1}}dy\leq \frac{C}{R^{n-1}}\int_{B(x_{0}, 2R)}V(y)dy
\leq\frac{C}{R}\Big(1+\frac{R}{\rho(x_{0})}\Big)^{m_{0}},\ m_{0}>1.$$
Thus for $x\in B(x_{0}, R)$, it holds
\begin{eqnarray*}
|\nabla_{x} u(x,t)|&=&|\nabla_{x}(u(x,t)\eta(x))|\\
&\leq&\int_{B(x_{0},2R)}\frac{V(y)|u(y,t)||\eta(y)|}{|x-y|^{n-1}}dy+\int_{B(x_{0}, 2R)}\frac{|\partial_{t}u(y,t)||\eta(y)|}{|x-y|^{n-1}}dy
+\frac{C}{R^{n+1}}\int_{B(x_{0}, 2R)}|u(y,t)|dy\\
&\leq&\frac{C}{R}\sup_{B(x_{0}, 2R)}|u(y,t)|\Big\{\Big(1+\frac{R}{\rho(x_{0})}\Big)^{m_{0}}+1\Big\}+CR\sup_{B(x_{0}, R)}|\partial_{t}u(y,t)|.
\end{eqnarray*}

Take $u(x,t)=K^{L}_{t}(x, y_{0})$ and $R<\min\{|x_{0}-y_{0}|/8,\ \rho(x_{0})\}$. We obtain that
$$|\nabla_{x}K^{L}_{t}(x_{0}, y_{0})|\leq\frac{C}{R}\sup_{B(x_{0}, 2R)}|K^{L}_{t}(x, y_{0})|\Big\{\Big(1+\frac{R}{\rho(x_{0})}\Big)^{m_{0}}+1\Big\}
+R\sup_{B(x_{0}, 2R)}|\partial_{t}K_{t}^{L}(x, y_{0})|.$$

If $x\in B(x_{0}, 2R)$, then $|x-y_{0}|\sim |x_{0}-y_{0}|$. Also $\rho(x)\sim \rho(x_{0})$ for $|x-x_{0}|<2R<2\rho(x_{0})$.
It follows from Propositions \ref{lem2.1}\  and \ \ref{prop-2.1} that for any $N>0$ there exists a constant $C_{N}$ such that
\begin{equation}\label{eq-3.8}
\left\{\begin{aligned}
&\sup_{x\in B(x_{0}, 2R)}|K^{L}_{t}(x, y_{0})|\leq\frac{C_{N}}{t^{n/2}}e^{-c|x_{0}-y_{0}|^{2}/t}\Big(1+\frac{\sqrt{t}}{\rho(x_{0})}+\frac{\sqrt{t}}{\rho(y_{0})}\Big)^{-N};\\
&\sup_{x\in B(x_{0}, 2R)}|t\partial_{t}K^{L}_{t}(x, y_{0})|\leq\frac{C_{N}}{t^{n/2}}e^{-c|x_{0}-y_{0}|^{2}/t}\Big(1+\frac{\sqrt{t}}{\rho(x_{0})}+\frac{\sqrt{t}}{\rho(y_{0})}\Big)^{-N}.
\end{aligned}\right.
\end{equation}
Finally, it can be deduced from (\ref{eq-3.8}) that
\begin{eqnarray*}
|\nabla_{x}K^{L}_{t}(x_{0}, y_{0})|&\lesssim&\frac{C_{N}}{R}t^{-n/2}e^{-c|x_{0}-y_{0}|^{2}/t}
\Big(1+\frac{\sqrt{t}}{\rho(x_{0})}+\frac{\sqrt{t}}{\rho(y_{0})}\Big)^{-N}\Big\{1+\frac{R^{2}}{t}\Big\}.
\end{eqnarray*}

The rest of the proof is divided into three cases.

{\it Case 1: $R>\sqrt{t}$}. For this case, $\sqrt{t}<R<\min\{|x_{0}-y_{0}|/8,\ \rho(x_{0})\}$.
We split $$|\nabla_{x}K^{L}_{t}(x_{0}, y_{0})|\leq C_{N}(M_{1}+M_{2}),$$ where
$$\left\{\begin{aligned}
&M_{1}:=\frac{\sqrt{t}}{R}\frac{1}{t^{(n+1)/2}}e^{-c|x_{0}-y_{0}|^{2}/t}
\Big(1+\frac{\sqrt{t}}{\rho(x_{0})}+\frac{\sqrt{t}}{\rho(y_{0})}\Big)^{-N};\\
&M_{2}:=\frac{\sqrt{t}}{R}\frac{1}{t^{(n+1)/2}}e^{-c|x_{0}-y_{0}|^{2}/t}
\Big(1+\frac{\sqrt{t}}{\rho(x_{0})}+\frac{\sqrt{t}}{\rho(y_{0})}\Big)^{-N}\frac{R^{2}}{t}.
\end{aligned}\right.$$
It is obvious that
$$M_{1}\lesssim \frac{1}{t^{(n+1)/2}}e^{-c|x_{0}-y_{0}|^{2}/t}
\Big(1+\frac{\sqrt{t}}{\rho(x_{0})}+\frac{\sqrt{t}}{\rho(y_{0})}\Big)^{-N}.$$
Similarly, for the term $M_{3}$, we can also get
\begin{eqnarray*}
M_{2}&\lesssim&\frac{1}{t^{(n+1)/2}}e^{-c|x_{0}-y_{0}|^{2}/t}
\Big(1+\frac{\sqrt{t}}{\rho(x_{0})}+\frac{\sqrt{t}}{\rho(y_{0})}\Big)^{-N}\frac{R^{2}}{t}\\
&\lesssim&\frac{1}{t^{(n+1)/2}}e^{-c|x_{0}-y_{0}|^{2}/t}
\Big(1+\frac{\sqrt{t}}{\rho(x_{0})}+\frac{\sqrt{t}}{\rho(y_{0})}\Big)^{-N}\frac{|x_{0}-y_{0}|^{2}}{t}\\
&\lesssim&\frac{1}{t^{(n+1)/2}}e^{-c|x_{0}-y_{0}|^{2}/t}
\Big(1+\frac{\sqrt{t}}{\rho(x_{0})}+\frac{\sqrt{t}}{\rho(y_{0})}\Big)^{-N}.
\end{eqnarray*}

{\it Case 2: $0<R\leq \sqrt{t}<\min\{|x_{0}-y_{0}|/8,\ \rho(x_{0})\}$}. We write
$$|\nabla_{x}K^{L}_{t}(x_{0}, y_{0})|\leq \frac{C_{N}}{t^{(n+1)/2}}e^{-c|x_{0}-y_{0}|^{2}/t}
\Big(1+\frac{\sqrt{t}}{\rho(x_{0})}+\frac{\sqrt{t}}{\rho(y_{0})}\Big)^{-N}
\Big\{\frac{\sqrt{t}}{R}+\frac{R}{\sqrt{t}}\Big\}.$$
Because $R<\sqrt{t}<\min\{|x_{0}-y_{0}|/8,\ \rho(x_{0})\}$, taking the infimum for $R$ yields
$$|\nabla_{x}K^{L}_{t}(x_{0}, y_{0})|\leq \frac{C_{N}}{t^{(n+1)/2}}e^{-c|x_{0}-y_{0}|^{2}/t}
\Big(1+\frac{\sqrt{t}}{\rho(x_{0})}+\frac{\sqrt{t}}{\rho(y_{0})}\Big)^{-N}.$$

{\it Case 3: $0<R<\min\{|x_{0}-y_{0}|/8,\ \rho(x_{0})\}< \sqrt{t}$.} Similarly, we can see that

\begin{eqnarray*}
|\nabla_{x}K^{L}_{t}(x_{0}, y_{0})|&\leq&
\Big(1+\frac{\sqrt{t}}{\rho(x_{0})}+\frac{\sqrt{t}}{\rho(y_{0})}\Big)^{-N}\Big(\frac{\sqrt{t}}{R}+\frac{R}{\sqrt{t}}\Big).
\end{eqnarray*}
Since $0<R<\min\{|x_{0}-y_{0}|/8,\ \rho(x_{0})\}< \sqrt{t}$, the function $\sqrt{t}/R+R/\sqrt{t}$ is decreasing and with the infimum at
$R=\min\{|x_{0}-y_{0}|/8,\ \rho(x_{0})\}$. Then
\begin{eqnarray}\label{eq-3.1}
|\nabla_{x}K^{L}_{t}(x_{0}, y_{0})|&\leq&\frac{C_{N}}{t^{(n+1)/2}}e^{-c|x_{0}-y_{0}|^{2}/t}
\Big(1+\frac{\sqrt{t}}{\rho(x_{0})}+\frac{\sqrt{t}}{\rho(y_{0})}\Big)^{-N}\\
&&\times\Bigg\{\frac{\sqrt{t}}{\min\{|x_{0}-y_{0}|/8,\ \rho(x_{0})\}}
+\frac{\min\{|x_{0}-y_{0}|/8,\ \rho(x_{0})\}}{\sqrt{t}}\Bigg\}.\nonumber
\end{eqnarray}

{\it Case 3.1: $\rho(x_{0})\leq |x_{0}-y_{0}|/8$.} Since $N$ is  arbitrary, we can deduce from (\ref{eq-3.1}) that
\begin{eqnarray*}
|\nabla_{x}K^{L}_{t}(x_{0}, y_{0})|&\leq&\frac{C_{N}}{t^{(n+1)/2}}e^{-c|x_{0}-y_{0}|^{2}/t}
\Big(1+\frac{\sqrt{t}}{\rho(x_{0})}+\frac{\sqrt{t}}{\rho(y_{0})}\Big)^{-N}\Big(\frac{\sqrt{t}}{\rho(x_{0})}
+\frac{\rho(x_{0})}{\sqrt{t}}\Big)\\
&\leq& \frac{C_{N}}{t^{(n+1)/2}}e^{-c|x_{0}-y_{0}|^{2}/t}
\Big(1+\frac{\sqrt{t}}{\rho(x_{0})}+\frac{\sqrt{t}}{\rho(y_{0})}\Big)^{-N}.
\end{eqnarray*}

{\it Case 3.2: $\rho(x_{0})> |x_{0}-y_{0}|/8$.} For this case, by (\ref{eq-3.1}) again, it holds
\begin{eqnarray*}
|\nabla_{x}K^{L}_{t}(x_{0}, y_{0})|&\leq&\frac{C_{N}}{t^{(n+1)/2}}e^{-c|x_{0}-y_{0}|^{2}/t}
\Big(1+\frac{\sqrt{t}}{\rho(x_{0})}+\frac{\sqrt{t}}{\rho(y_{0})}\Big)^{-N}\Big(\frac{\sqrt{t}}{|x_{0}-y_{0}|}
+\frac{|x_{0}-y_{0}|}{\sqrt{t}}\Big)\\
&\leq&\frac{C_{N}}{t^{(n+1)/2}}e^{-c|x_{0}-y_{0}|^{2}/t}
\Big(1+\frac{\sqrt{t}}{\rho(x_{0})}+\frac{\sqrt{t}}{\rho(y_{0})}\Big)^{-N}\\
&&+\frac{C_{N}}{|x_{0}-y_{0}|t^{n/2}}e^{-c|x_{0}-y_{0}|^{2}/t}
\Big(1+\frac{\sqrt{t}}{\rho(x_{0})}+\frac{\sqrt{t}}{\rho(y_{0})}\Big)^{-N}.
\end{eqnarray*}

Finally, we obtain the following estimates:
\begin{eqnarray*}
&&|\nabla_{x}K^{L}_{t}(x_{0}, y_{0})|\\
&&\leq\left\{
\begin{aligned}
&\frac{C_{N}}{t^{(n+1)/2}}e^{-c|x_{0}-y_{0}|^{2}/t}
\Big(1+\frac{\sqrt{t}}{\rho(x_{0})}+\frac{\sqrt{t}}{\rho(y_{0})}\Big)^{-N},\  \sqrt{t}\leq\min\Big\{|x_{0}-y_{0}|/8,\ \rho(x_{0})\Big\};\\
& \frac{C_{N}}{t^{n/2}}e^{-c|x_{0}-y_{0}|^{2}/t}(\frac{1}{\sqrt{t}}+\frac{1}{|x_{0}-y_{0}|})
\Big(1+\frac{\sqrt{t}}{\rho(x_{0})}+\frac{\sqrt{t}}{\rho(y_{0})}\Big)^{-N},\ \sqrt{t}>\min\Big\{|x_{0}-y_{0}|/8,\ \rho(x_{0})\Big\}.
\end{aligned}\nonumber
\right.
\end{eqnarray*}
Then if $\sqrt{t}\geq |x_{0}-y_{0}|/8$,
$$|\nabla_{x}K^{L}_{t}(x_{0}, y_{0})|\leq \frac{C_{N}}{|x_{0}-y_{0}|t^{n/2}}e^{-c|x_{0}-y_{0}|^{2}/t}
\Big(1+\frac{\sqrt{t}}{\rho(x_{0})}+\frac{\sqrt{t}}{\rho(y_{0})}\Big)^{-N}.$$
This proves Lemma \ref{le3.1}.
\end{proof}

\begin{lemma}\label{le-3.2}
Suppose that $V\in B_{q}$ for some $q>n$. For every $N>0$, there exists a constant $C_{N}>0$ such that
for all $x, y\in\mathbb R^{n}$ and $t>0$, the semigroup kernels $K^{L}_{t}(\cdot,\cdot)$ satisfy the following estimate:
$$|\nabla_{x}K^{L}_{t}(x,y)|\leq \frac{C_{N}}{t^{(n+1)/2}}\Big(1+\frac{\sqrt{t}}{\rho(x)}+\frac{\sqrt{t}}{\rho(x)}\Big)^{-N}.$$
\end{lemma}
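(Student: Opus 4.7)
The plan is to revisit the integration-by-parts/fundamental solution argument from the proof of Lemma \ref{le3.1}, but to drop the constraint $R<|x_{0}-y_{0}|/8$. That constraint was imposed there only so the Gaussian factor $e^{-c|x_{0}-y_{0}|^{2}/t}$ could be pulled out cleanly; since Lemma \ref{le-3.2} asks for a bound with no Gaussian decay in $|x-y|$, we are free to let $R$ range up to (a fraction of) $\rho(x_{0})$ alone. Concretely, for any $R<\rho(x_{0})$, choosing the same cutoff $\eta\in C_{0}^{\infty}(B(x_{0},2R))$, $\eta\equiv 1$ on $B(x_{0},3R/2)$, and applying the identity
\[
u(x,t)\eta(x)=\int_{\mathbb R^{n}}\Gamma_{0}(x,y)\{-V u\eta-\eta\,\partial_{t}u+(\Delta\eta)u\}\,dy+2\int_{\mathbb R^{n}}\nabla_{y}\Gamma_{0}(x,y)\cdot\nabla\eta(y)u(y,t)\,dy
\]
with $u(\cdot,t)=K_{t}^{L}(\cdot,y_{0})$ (which solves $\partial_{t}u+Lu=0$ globally for $t>0$), exactly as in Lemma \ref{le3.1} I obtain
\[
|\nabla_{x}K_{t}^{L}(x_{0},y_{0})|\leq\frac{C}{R}\sup_{B(x_{0},2R)}|K_{t}^{L}(\cdot,y_{0})|\bigl\{(1+R/\rho(x_{0}))^{m_{0}}+1\bigr\}+R\sup_{B(x_{0},2R)}|\partial_{t}K_{t}^{L}(\cdot,y_{0})|.
\]

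Next, since $R<\rho(x_{0})$, Lemma \ref{lem2.4} gives $\rho(x)\sim\rho(x_{0})$ for all $x\in B(x_{0},2R)$. Discarding the Gaussian factor from Propositions \ref{lem2.1} and \ref{prop-2.1}(i) (the $m=1$ case), I obtain the uniform estimates
\[
\sup_{B(x_{0},2R)}|K_{t}^{L}(\cdot,y_{0})|\leq\frac{C_{N}}{t^{n/2}}\,E,\qquad \sup_{B(x_{0},2R)}|\partial_{t}K_{t}^{L}(\cdot,y_{0})|\leq\frac{C_{N}}{t^{(n+2)/2}}\,E,
\]
where $E:=\bigl(1+\sqrt t/\rho(x_{0})+\sqrt t/\rho(y_{0})\bigr)^{-N}$. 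Inserting these into the displayed bound yields
\[
|\nabla_{x}K_{t}^{L}(x_{0},y_{0})|\leq C_{N}\,E\left\{\frac{1}{R t^{n/2}}(1+R/\rho(x_{0}))^{m_{0}}+\frac{R}{t^{(n+2)/2}}\right\}.
\]

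The final step is to choose $R$ optimally and split into two cases. If $\sqrt t<\rho(x_{0})$, take $R=\sqrt t/4$; then $(1+R/\rho(x_{0}))^{m_{0}}\leq 2^{m_{0}}$ and both terms give $C_{N} t^{-(n+1)/2}E$, which is the desired estimate. If $\sqrt t\geq\rho(x_{0})$, take $R=\rho(x_{0})/4$. The second term is $\frac{\rho(x_{0})}{t^{(n+2)/2}}=\frac{1}{t^{(n+1)/2}}\cdot\frac{\rho(x_{0})}{\sqrt t}\leq t^{-(n+1)/2}$, which is harmless. The first term gives
\[
\frac{1}{\rho(x_{0})\,t^{n/2}}\,E=\frac{1}{t^{(n+1)/2}}\cdot\frac{\sqrt t}{\rho(x_{0})}\cdot E,
\]
and the extra factor $\sqrt t/\rho(x_{0})\geq 1$ is absorbed into the polynomial tail $(1+\sqrt t/\rho(x_{0}))^{-N}$ by shifting $N\mapsto N+1$; since $N$ is arbitrary this is again bounded by $C_{N}\,t^{-(n+1)/2}E$. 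Combining the two cases establishes Lemma \ref{le-3.2}. The only delicate point is the bookkeeping in the second case (absorbing $\sqrt t/\rho(x_{0})$ into the $N$-th order decay), but this is standard and relies exclusively on the arbitrariness of $N>0$; no new estimate on $K_{t}^{L}$ or $V$ is required.
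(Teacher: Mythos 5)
Your proposal is correct and follows essentially the same route as the paper: the same cutoff/fundamental-solution identity from Lemma \ref{le3.1} with only the restriction $R<\rho(x_{0})$, the same uniform bounds on $K^{L}_{t}$ and $t\partial_{t}K^{L}_{t}$ from Propositions \ref{lem2.1} and \ref{prop-2.1} with the Gaussian factor discarded, and the same two-case optimization in $R$ (the paper phrases it as taking the infimum of $\sqrt{t}/R+R/\sqrt{t}$ over $R<\min\{\rho(x_{0}),\sqrt{t}\}$, while you pick $R=\sqrt{t}/4$ or $\rho(x_{0})/4$, which is immaterial), ending with the same absorption of the factor $\sqrt{t}/\rho(x_{0})$ via the arbitrariness of $N$.
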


\begin{proof}
Assume that $u(\cdot,\cdot)$ is a solution of the
equation
$$\partial_{t}u+L u=\partial_{t}u+(-\Delta)u+Vu=0.$$
Similar to Lemma \ref{le3.1}, we can prove that
$$|\nabla_{x}u(x,t)|\leq\frac{C}{R}\sup_{B(x_{0},2R)}|u(y,t)|\Big\{\Big(1+\frac{R}{\rho(x_{0})}\Big)^{m_{0}}+1\Big\}+CR\sup_{B(x_{0}, R)}|\partial_{t}u(y,t)|.$$
Take $u(x,t)=K^{L}_{t}(x, y_{0})$ for fixed $y_{0}$, and let $R\in (0, \min\{\rho(x_{0}), \sqrt{t}\})$. It can be deduced from Propositions \ref{lem2.1}\ and \ \ref{prop-2.1}  that
$$\sup\limits_{x\in B(x_{0}, 2R)}\Big\{|K^{L}_{t}(x, y_{0})|+|t\partial_{t}K^{L}_{t}(x, y_{0})|\Big\}\leq \frac{C_{N}}{t^{n/2}}\Big(1+\frac{\sqrt{t}}{\rho(x_{0})}+\frac{\sqrt{t}}{\rho(y_{0})}\Big)^{-N}.$$
This, together with $R<\rho(x_{0})$, implies that
\begin{eqnarray}\label{eq-3.2}
|\nabla_{x}K^{L}_{t}(x_{0}, y_{0})|&\leq&\frac{C_{N}}{R}\frac{1}{t^{n/2}}\Big(1+\frac{\sqrt{t}}{\rho(x_{0})}+\frac{\sqrt{t}}{\rho(y_{0})}\Big)^{-N}
\Big\{1+\frac{R^{2}}{t}\Big\}\\
&\leq&\frac{C_{N}}{t^{(n+1)/2}}\Big(1+\frac{\sqrt{t}}{\rho(x_{0})}+\frac{\sqrt{t}}{\rho(y_{0})}\Big)^{-N}
\Big(\frac{\sqrt{t}}{R}+\frac{R}{\sqrt{t}}\Big).\nonumber
\end{eqnarray}

If $\sqrt{t}\leq\rho(x_{0})$, taking the infimum for $R$ on both sides of (\ref{eq-3.2}) reaches
\begin{eqnarray*}
|\nabla_{x}K^{L}_{t}(x_{0}, y_{0})|&\leq&\frac{C_{N}}{t^{(n+1)/2}}\Big(1+\frac{\sqrt{t}}{\rho(x_{0})}+\frac{\sqrt{t}}{\rho(y_{0})}\Big)^{-N}.
\end{eqnarray*}

If $\sqrt{t}>\rho(x_{0})$,  note that the function $h(t):=R/\sqrt{t}+\sqrt{t}/R$ is decreasing on $R\in (0, \rho(x_{0}))$. Taking the infimum again, we get
\begin{eqnarray*}
|\nabla_{x}K^{L}_{t}(x_{0}, y_{0})|&\leq&\frac{C_{N}}{t^{(n+1)/2}}\Big(1+\frac{\sqrt{t}}{\rho(x_{0})}+\frac{\sqrt{t}}{\rho(y_{0})}\Big)^{-N}
\Big\{\frac{\sqrt{t}}{\rho(x_{0})}+\frac{\rho(x_{0})}{\sqrt{t}}\Big\}\\
&\leq&\frac{C_{N}}{t^{(n+1)/2}}\Big(1+\frac{\sqrt{t}}{\rho(x_{0})}+\frac{\sqrt{t}}{\rho(y_{0})}\Big)^{-N}.
\end{eqnarray*}
This completes the proof of Lemma \ref{le-3.2}.
\end{proof}

Now we give the gradient estimate of $K^{L}_{\alpha,t}(\cdot,\cdot)$.
\begin{proposition}\label{prop-3.3-1}
Suppose $\alpha>0$ and $V\in B_{q}$ for some $q>n$. For every $N>0$, there exists a constant $C_{N}>0$  such that for all $x, y\in\mathbb R^{n}$ and $t>0$,
$$|t^{1/2\alpha}\nabla_{x}K^{L}_{\alpha,t}(x,y)|\leq \frac{C_{N}t}{(t^{1/2\alpha}+|x-y|)^{n+2\alpha}}\Big(1+\frac{t^{1/2\alpha}}{\rho(x)}+\frac{t^{1/2\alpha}}{\rho(y)}\Big)^{-N}.$$
\end{proposition}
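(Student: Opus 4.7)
The proof proceeds by mimicking the template of Proposition \ref{prop-3.1}, built on the subordinative formula (\ref{eq-sub-for}). Differentiating under the integral sign gives
$$\nabla_{x}K^{L}_{\alpha,t}(x,y)=\int_{0}^{\infty}\eta^{\alpha}_{t}(s)\nabla_{x}K^{L}_{s}(x,y)\,ds.$$
The preliminary step is to consolidate Lemmas \ref{le3.1} and \ref{le-3.2} into a single Gaussian-type bound valid for every $s>0$, namely
$$|\nabla_{x}K^{L}_{s}(x,y)|\leq\frac{C_{N}}{s^{(n+1)/2}}e^{-c|x-y|^{2}/s}\Big(1+\frac{\sqrt{s}}{\rho(x)}+\frac{\sqrt{s}}{\rho(y)}\Big)^{-N}.$$
When $\sqrt{s}\leq|x-y|$ this is immediate from Lemma \ref{le3.1}; when $\sqrt{s}>|x-y|$, Lemma \ref{le-3.2} supplies the same bound without the Gaussian factor, and since $e^{-c|x-y|^{2}/s}\geq e^{-c}$ in that range the Gaussian can be reinstated at the cost of a harmless constant.

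With this unified bound in hand, one runs the two-step strategy that proved Propositions \ref{prop-3.1} and \ref{prop-3.2}. First, using $\eta^{\alpha}_{t}(s)\lesssim t/s^{1+\alpha}$ together with the substitutions $s=t^{1/\alpha}u$ and then $r^{2}=|x-y|^{2}/(t^{1/\alpha}u)$, and pulling out the $\rho$-weights via $(1+\sqrt{u}\,t^{1/2\alpha}/\rho(\cdot))^{-N}\leq u^{-N/2}(t^{1/2\alpha}/\rho(\cdot))^{-N}$, a routine reduction yields
$$\Big(\frac{t^{1/2\alpha}}{\rho(x)}\Big)^{N}\Big(\frac{t^{1/2\alpha}}{\rho(y)}\Big)^{N}|\nabla_{x}K^{L}_{\alpha,t}(x,y)|\leq \frac{C_{N}\,t^{1+N/\alpha}}{|x-y|^{n+1+2\alpha+2N}}.$$
Second, using the scaling identity $\eta^{\alpha}_{t}(s)=t^{-1/\alpha}\eta^{\alpha}_{1}(s/t^{1/\alpha})$ with $\tau=s/t^{1/\alpha}$, pulling out the $\rho$-weights, dropping the exponential, and invoking the convergent integral $\int_{0}^{\infty}\tau^{-(n+1)/2-N}\eta^{\alpha}_{1}(\tau)\,d\tau<\infty$, one obtains
$$\Big(\frac{t^{1/2\alpha}}{\rho(x)}\Big)^{N}\Big(\frac{t^{1/2\alpha}}{\rho(y)}\Big)^{N}|\nabla_{x}K^{L}_{\alpha,t}(x,y)|\leq \frac{C_{N}}{t^{(n+1)/(2\alpha)}}.$$

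Multiplying both estimates by $t^{1/2\alpha}$ and taking the minimum closes the argument. When $|x-y|\geq t^{1/2\alpha}$, the first estimate rearranges to $\frac{t}{|x-y|^{n+2\alpha}}\bigl(t^{1/2\alpha}/|x-y|\bigr)^{1+2N}\lesssim \frac{t}{(t^{1/2\alpha}+|x-y|)^{n+2\alpha}}$, while when $|x-y|<t^{1/2\alpha}$ the second yields $t^{1/2\alpha}\cdot t^{-(n+1)/(2\alpha)}=t/(t^{1/2\alpha})^{n+2\alpha}\lesssim \frac{t}{(t^{1/2\alpha}+|x-y|)^{n+2\alpha}}$; the arbitrariness of $N$ then produces the stated weight. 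The main obstacle I expect is precisely the preliminary consolidation: the $|x-y|^{-1}$ singularity appearing in the short-distance regime of Lemma \ref{le3.1} does not pair cleanly with the subordinator $\eta^{\alpha}_{t}$, and one must invoke Lemma \ref{le-3.2} together with the triviality of the Gaussian factor in that range to pass to a uniform Gaussian bound. Once this reduction is performed, the remainder is routine bookkeeping parallel to Proposition \ref{prop-3.1}.
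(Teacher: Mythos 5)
Your proposal is correct and follows essentially the same route as the paper: the subordinative formula, the gradient bounds of Lemmas \ref{le3.1} and \ref{le-3.2}, the same pair of scaling estimates $t^{1+N/\alpha}|x-y|^{-(n+1+2\alpha+2N)}$ and $t^{-(n+1)/2\alpha}$, and the minimum/arbitrary-$N$ conclusion exactly as in Proposition \ref{prop-3.1}. The only cosmetic difference is that you merge the two lemmas into a single uniform Gaussian bound before integrating (legitimate, since $e^{-c|x-y|^{2}/s}\gtrsim 1$ when $\sqrt{s}>|x-y|$), whereas the paper instead splits the $s$-integral at $s=|x-y|^{2}$ and uses the $|x-y|^{-1}s^{-n/2}$ branch of Lemma \ref{le3.1} on the long-time piece; both yield the same estimates.
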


\begin{proof}
The subordinate formula gives
\begin{eqnarray*}
\nabla_{x}K^{L}_{\alpha,t}(x,y)&=&\int^{\infty}_{0}\eta^{\alpha}_{t}(s)\nabla_{x}K^{L}_{s}(x,y)ds,
\end{eqnarray*}
which, together with Lemma \ref{le3.1}, implies that
$\big|\nabla_{x}K^{L}_{\alpha,t}(x,y)\big|\leq C_{N}(L_{1}+L_{2}),$
where
$$\left\{\begin{aligned}
& L_{1}:=\int^{|x-y|^{2}}_{0}\eta^{\alpha}_{t}(s)\frac{1}{s^{(n+1)/2}}e^{-c|x-y|^{2}/s}
\Big(1+\frac{\sqrt{s}}{\rho(x)}+\frac{\sqrt{s}}{\rho(y)}\Big)^{-N}ds;\\
&L_{2}:=\int^{\infty}_{|x-y|^{2}}\eta^{\alpha}_{t}(s)\frac{1}{s^{n/2}|x-y|}e^{-c|x-y|^{2}/s}
\Big(1+\frac{\sqrt{s}}{\rho(x)}+\frac{\sqrt{s}}{\rho(y)}\Big)^{-N}ds.
\end{aligned}\right.$$

For $L_{1}$, letting $s=t^{1/\alpha}u$, we can get
\begin{eqnarray*}
L_{1}&\leq& \int^{\infty}_{0}\frac{t}{(t^{1/\alpha}u)^{1+\alpha}}(t^{1/\alpha}u)^{-(n+1)/2}e^{-\frac{c|x-y|^{2}}{t^{1/\alpha}u}}
\Big(1+\frac{t^{1/2\alpha}\sqrt{u}}{\rho(x)}\Big)^{-N}\Big(1+\frac{t^{1/2\alpha}\sqrt{u}}{\rho(y)}\Big)^{-N}t^{1/\alpha}du\\
&\leq& \Big(\frac{t^{1/2\alpha}}{\rho(x)}\Big)^{-N}\Big(\frac{t^{1/2\alpha}}{\rho(y)}\Big)^{-N}t^{-(n+1)/2\alpha}\int^{\infty}_{0}u^{-(n+1)/2-(1+\alpha+N)}
e^{-\frac{c|x-y|^{2}}{t^{1/\alpha}u}}du\\
&\leq& \Big(\frac{t^{1/2\alpha}}{\rho(x)}\Big)^{-N}\Big(\frac{t^{1/2\alpha}}{\rho(y)}\Big)^{-N}\frac{t^{1+N/\alpha}}{|x-y|^{2\alpha+2N+n+1}}.
\end{eqnarray*}

Similarly, for the term $L_{2}$, a change of variables yields
\begin{eqnarray*}
L_{2}&\leq&\frac{1}{|x-y|}\int^{\infty}_{0}\frac{t}{(t^{1/\alpha}u)^{1+\alpha}}(t^{1/\alpha}u)^{-n/2}e^{-\frac{c|x-y|^{2}}{t^{1/\alpha}u}}
\Big(1+\frac{t^{1/2\alpha}\sqrt{u}}{\rho(x)}\Big)^{-N}\Big(1+\frac{t^{1/2\alpha}\sqrt{u}}{\rho(y)}\Big)^{-N}t^{1/\alpha}du\\
&\leq&\frac{1}{t^{n/2\alpha}|x-y|}\Big(\frac{t^{1/2\alpha}}{\rho(x)}\Big)^{-N}\Big(\frac{t^{1/2\alpha}}{\rho(y)}\Big)^{-N}
\int^{\infty}_{0}u^{-n/2-(1+\alpha+N)}e^{-\frac{c|x-y|^{2}}{t^{1/\alpha}u}}du\\
&\leq&\frac{1}{|x-y|t^{n/2\alpha}}\Big(\frac{t^{1/2\alpha}}{\rho(x)}\Big)^{-N}\Big(\frac{t^{1/2\alpha}}{\rho(y)}\Big)^{-N}
\int^{\infty}_{0}\Big(\frac{t^{1/\alpha}r^{2}}{|x-y|^{2}}\Big)^{1+\alpha+N+n/2}e^{-cr^{2}}\frac{|x-y|^{2}}{t^{1/\alpha}r^{3}}dr\\
&\leq&\Big(\frac{t^{1/2\alpha}}{\rho(x)}\Big)^{-N}\Big(\frac{t^{1/2\alpha}}{\rho(y)}\Big)^{-N}\frac{t^{1+N/\alpha}}{|x-y|^{2\alpha+2N+n+1}}.
\end{eqnarray*}
The estimates for $L_{1}$ and $L_{2}$ indicate that
$$\Big(\frac{t^{1/2\alpha}}{\rho(x)}\Big)^{N}\Big(\frac{t^{1/2\alpha}}{\rho(y)}\Big)^{N}|\nabla K^{L}_{\alpha,t}(x,y)|
\leq  \frac{C_{N}t^{1+N/\alpha}}{|x-y|^{2\alpha+2N+n+1}}.$$

On the other hand, by Lemma \ref{le-3.2} and  changing variable $\tau=s/t^{1/\alpha}$, we obtain
\begin{eqnarray*}
|\nabla_{x}K^{L}_{\alpha,t}(x,y)|&\leq& C_{N}\int^{\infty}_{0}s^{-(n+1)/2}\frac{1}{t^{1/\alpha}}\eta^{\alpha}_{1}(\frac{s}{t^{1/\alpha}})
\Big(1+\frac{\sqrt{s}}{\rho(x)}\Big)^{-N}\Big(1+\frac{\sqrt{s}}{\rho(y)}\Big)^{-N}ds\\
&\leq&C_{N}\int^{\infty}_{0}(t^{1/\alpha}\tau)^{-(n+1)/2}\eta^{\alpha}_{1}(\tau)
\Big(1+\frac{t^{1/2\alpha}\sqrt{\tau}}{\rho(x)}\Big)^{-N}\Big(1+\frac{t^{1/2\alpha}\sqrt{\tau}}{\rho(y)}\Big)^{-N}d\tau\\
&\leq&\frac{C_{N}}{t^{(n+1)/2\alpha}}\Big(\frac{t^{1/2\alpha}}{\rho(x)}\Big)^{-N}\Big(\frac{t^{1/2\alpha}}{\rho(y)}\Big)^{-N}.
\end{eqnarray*}

Finally, we obtain
$$\Big(\frac{t^{1/2\alpha}}{\rho(x)}\Big)^{N}\Big(\frac{t^{1/2\alpha}}{\rho(y)}\Big)^{N}|\nabla_{x}K^{L}_{\alpha,t}(x,y)|\leq
C_{N} \min\Big\{t^{-(n+1)/2\alpha},\ \frac{t^{1+N/\alpha}}{|x-y|^{n+1+2N+2\alpha}}\Big\}.$$
The arbitrariness of $N$ indicates that
$$|\nabla_{x}K^{L}_{\alpha,t}(x,y)|\leq
\frac{C_{N}}{t^{1/2\alpha}}\frac{t}{(t^{1/2\alpha}+|x-y|)^{n+2\alpha}}\Big(1+\frac{t^{1/2\alpha}}{\rho(x)}\Big)^{-N}
\Big(1+\frac{t^{1/2\alpha}}{\rho(y)}\Big)^{-N}.$$
\end{proof}

Below we estimate the Lipschitz continuity of $|\nabla_{x}K^{L}_{t}(\cdot,\cdot)|$.

\begin{lemma}\label{le-3.3}
Suppose that $\alpha>0$ and $V\in B_{q}$ for some $q>n$. Let $\delta'=1-n/q$. For every $N>0$, there exist  constants $C_{N}>0$ and $c>0$ such that for all $x,y\in\mathbb{R}^{n}$,
$t>0$ and  $|h|<|x-y|/4$,
\begin{eqnarray*}
&&|\nabla_{x}K^{L}_{t}(x+h,y)-\nabla_{x}K^{L}_{t}(x,y)|\\
&&\quad \leq\left\{\begin{aligned}
&\frac{C_{N}}{t^{(n+1)/2}}\Big(\frac{|h|}{\sqrt{t}}\Big)^{\delta'}e^{-c|x-y|^{2}/t}\Big(1+\frac{\sqrt{t}}{\rho(x)}+\frac{\sqrt{t}}{\rho(y)}\Big)^{-N},\
\sqrt{t}\leq |x-y|;\\
&\frac{C_{N}}{t^{n/2}|x-y|}\Big(\frac{|h|}{|x-y|}\Big)^{\delta'}e^{-c|x-y|^{2}/t}\Big(1+\frac{\sqrt{t}}{\rho(x)}+\frac{\sqrt{t}}{\rho(y)}\Big)^{-N},\
\sqrt{t}\geq |x-y|.
\end{aligned}\right.
\end{eqnarray*}
\end{lemma}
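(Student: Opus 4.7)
\textbf{Proof proposal for Lemma~\ref{le-3.3}.} My plan is to adapt the fundamental solution argument that produced Lemma~\ref{le3.1}, now tracking the H\"older increment in the $x$ variable. Fix $y_{0}\in\mathbb R^{n}$ and set $u(x,t):=K^{L}_{t}(x,y_{0})$; by definition $\partial_{t}u+Lu=0$ in $x$. Choose $\eta\in C^{\infty}_{0}(B(x_{0},2R))$ with $\eta\equiv 1$ on $B(x_{0},3R/2)$, $|\nabla\eta|\lesssim R^{-1}$, $|\nabla^{2}\eta|\lesssim R^{-2}$, and impose $|h|\leq R/2$ so that $x_{0}$ and $x_{0}+h$ both lie in the core region. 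The same integration by parts used in Lemma~\ref{le3.1} delivers, on the core,
\begin{align*}
\nabla_{x}u(x,t)
&=\int_{\mathbb R^{n}}\nabla_{x}\Gamma_{0}(x,y)\bigl[-V(y)u(y,t)\eta(y)-\eta(y)\partial_{t}u(y,t)+u(y,t)\Delta\eta(y)\bigr]dy \\
&\quad+2\int_{\mathbb R^{n}}\nabla_{x}\nabla_{y}\Gamma_{0}(x,y)\nabla\eta(y)\cdot u(y,t)\,dy.
\end{align*}
Subtracting this identity at $x_{0}+h$ and at $x_{0}$, the $y$-integrands are inherited unchanged, so the entire H\"older increment is carried by the kernels $\nabla_{x}\Gamma_{0}(\cdot,y)$ and $\nabla_{x}\nabla_{y}\Gamma_{0}(\cdot,y)$.

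I would then invoke the explicit H\"older regularity of the derivatives of the Newtonian kernel: for any $\delta'\in(0,1)$ and $|h|<|x_{0}-y|/2$,
\[|\nabla_{x}\Gamma_{0}(x_{0}+h,y)-\nabla_{x}\Gamma_{0}(x_{0},y)|\lesssim \frac{|h|^{\delta'}}{|x_{0}-y|^{n-1+\delta'}},\]
and similarly with exponent $n+\delta'$ in the denominator for $\nabla_{x}\nabla_{y}\Gamma_{0}$. Inserting these estimates and taking absolute values gives
\[|\nabla_{x}u(x_{0}+h,t)-\nabla_{x}u(x_{0},t)|\lesssim |h|^{\delta'}\Bigl(J_{V}\sup_{B}|u|+R^{1-\delta'}\sup_{B}|\partial_{t}u|+R^{-1-\delta'}\sup_{B}|u|\Bigr),\]
where $B=B(x_{0},2R)$, $J_{V}:=\int_{B}V(y)|x_{0}-y|^{-(n-1+\delta')}dy$, and the middle term uses the elementary bound $\int_{B}|x_{0}-y|^{-(n-1+\delta')}dy\lesssim R^{1-\delta'}$. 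The sup norms of $u$ and $t\partial_{t}u$ on $B$ are furnished by Propositions~\ref{lem2.1} and \ref{prop-2.1}, retaining the Gaussian factor $e^{-c|x_{0}-y_{0}|^{2}/t}$ and the decay $(1+\sqrt{t}/\rho(x_{0})+\sqrt{t}/\rho(y_{0}))^{-N}$ provided $R\lesssim \min\{|x_{0}-y_{0}|,\rho(x_{0})\}$.

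The main obstacle is controlling $J_{V}$, and this is exactly where the hypothesis $V\in B_{q}$ with $q>n$, together with the choice $\delta'=1-n/q$, intervenes. A naive dyadic decomposition of $B$ into annuli around $x_{0}$ fails because the resulting geometric series $\sum 2^{k(1+\delta')}$ diverges, so I would instead use the self-improving property $V\in B_{q+\varepsilon}$: for this slightly larger exponent $\delta'<1-n/(q+\varepsilon)$, so $|x_{0}-y|^{-(n-1+\delta')}\in L^{(q+\varepsilon)'}(B)$ with norm $\lesssim R^{1-\delta'-n/(q+\varepsilon)}$. Combining H\"older's inequality with $B_{q+\varepsilon}$ and the $L^{1}$ bound $\int_{B}V\lesssim R^{n-2}(1+R/\rho(x_{0}))^{m_{0}}$ from Lemma~\ref{le-2.6} then gives
\[J_{V}\lesssim R^{-1-\delta'}\Bigl(1+R/\rho(x_{0})\Bigr)^{m_{0}}.\]
Restricting $R\leq\rho(x_{0})$ absorbs the $(1+R/\rho(x_{0}))^{m_{0}}$ factor into a constant. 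I would then close the proof by optimizing $R$ exactly as in the case split of Lemma~\ref{le3.1}: when $\sqrt{t}\leq|x_{0}-y_{0}|$, the choice $R=\min\{|x_{0}-y_{0}|/8,\rho(x_{0}),\sqrt{t}\}$ balances the three terms at the scale $R\sim\sqrt{t}$ and produces the factor $(|h|/\sqrt{t})^{\delta'}t^{-(n+1)/2}$, while when $\sqrt{t}\geq|x_{0}-y_{0}|$ the choice $R\sim\min\{|x_{0}-y_{0}|/8,\rho(x_{0})\}$ yields $(|h|/|x_{0}-y_{0}|)^{\delta'}t^{-n/2}|x_{0}-y_{0}|^{-1}$. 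The arbitrariness of $N$ in Propositions~\ref{lem2.1} and \ref{prop-2.1} then supplies the decay $(1+\sqrt{t}/\rho(x)+\sqrt{t}/\rho(y))^{-N}$ in both regimes.
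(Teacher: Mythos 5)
Your route is genuinely different from the paper's. The paper never uses pointwise H\"older bounds for $\nabla_{x}\Gamma_{0}$: it differentiates the representation formula twice, estimates $\|\nabla^{2}_{x}u(\cdot,t)\|_{L^{q}(B(x_{0},R))}$ by splitting into four terms $S_{1},\dots,S_{4}$, using that $\nabla^{2}_{x}\Gamma_{0}$ is a Calder\'on--Zygmund kernel bounded on $L^{q}$ together with the $B_{q}$ condition to handle $\|Vu\eta\|_{q}$, and then gets the increment bound $|\nabla_{x}K^{L}_{t}(x_{0}+h,y_{0})-\nabla_{x}K^{L}_{t}(x_{0},y_{0})|\lesssim |h|^{1-n/q}\bigl(\int_{B(x_{0},R)}|\nabla^{2}_{x}K^{L}_{t}(x,y_{0})|^{q}dx\bigr)^{1/q}$ from Morrey's embedding; that is exactly where $\delta'=1-n/q$ enters. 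You instead difference the first-derivative representation and pay with the borderline weighted integral $J_{V}=\int_{B}V(y)|x_{0}-y|^{-(n-1+\delta')}dy$, which you correctly recognize cannot be summed dyadically at the critical exponent $\delta'=1-n/q$ and resolve via the self-improvement $B_{q}\subset B_{q+\varepsilon}$ combined with Lemma~\ref{le-2.6}; that computation is right and yields $J_{V}\lesssim R^{-1-\delta'}(1+R/\rho(x_{0}))^{m_{0}}$, and your optimization in $R$, with the arbitrariness of $N$ absorbing powers of $\sqrt{t}/\rho(x_{0})$, reproduces the paper's case analysis and both stated regimes. What your approach buys is the avoidance of Calder\'on--Zygmund theory and the Sobolev--Morrey step; what the paper's buys is that all singular-integral work sits in one $L^{q}$ estimate where the diagonal needs no special treatment.

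There is one concrete gap as written: the pointwise estimate $|\nabla_{x}\Gamma_{0}(x_{0}+h,y)-\nabla_{x}\Gamma_{0}(x_{0},y)|\lesssim |h|^{\delta'}|x_{0}-y|^{-(n-1+\delta')}$ is valid only for $|x_{0}-y|\geq 2|h|$ (for $y$ near $x_{0}+h$ the left side blows up while the right side stays finite), yet you integrate it over all of $B(x_{0},2R)$, and the $V$- and $\partial_{t}u$-terms do see the region $|x_{0}-y|\leq 2|h|$ because $\eta\equiv 1$ there. You need to split off this near-diagonal region and estimate the two kernels separately: $\int_{|x_{0}-y|\leq 2|h|}\bigl(|x_{0}+h-y|^{1-n}+|x_{0}-y|^{1-n}\bigr)|\partial_{t}u(y,t)|dy\lesssim |h|\sup_{B}|\partial_{t}u|\leq |h|^{\delta'}R^{1-\delta'}\sup_{B}|\partial_{t}u|$ since $|h|\leq R$, while for the potential term H\"older's inequality with $V\in L^{q}(B)$, $q>n$, gives $\||x_{0}-\cdot|^{1-n}\|_{L^{q'}(|x_{0}-y|\leq 2|h|)}\lesssim |h|^{1-n/q}=|h|^{\delta'}$ and hence a contribution $\lesssim |h|^{\delta'}R^{n/q-2}(1+R/\rho(x_{0}))^{m_{0}}\sup_{B}|u|$, which is of the same size as $|h|^{\delta'}J_{V}\sup_{B}|u|$. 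With that insertion your argument closes; the annulus terms involving $\Delta\eta$ and $\nabla\eta$ are unaffected, since there $|x_{0}-y|\sim R\geq 2|h|$.
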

\begin{proof}
The proof  is similar to that of Lemma \ref{le3.1}. Let $\Gamma_{0}(\cdot,\cdot)$ be the fundamental solution of $-\Delta$ in $\mathbb R^{n}$.
 Assume that $\partial_{t}u+(-\Delta)u+Vu=0$. Let
$\eta\in C^{\infty}_{0}(B(x_{0}, 2R))$ such that $\eta=1$ on $B(x_{0}, 3R/2)$, $|\nabla\eta|\leq C/R$ and $|\nabla^{2}\eta|\leq C/R^{2}$. It is easy to see that
$$(-\Delta)(u\eta)=(-\Delta u)\eta-2\nabla u\cdot\nabla\eta+u\cdot(-\Delta\eta).$$
Similar to the proof of Lemma \ref{le3.1}, an integration by parts implies that
\begin{eqnarray*}
-\int_{\mathbb{R}^{n}}\Gamma_{0}(x,y)\nabla u(y,t)\cdot\nabla\eta(y) dy=\int_{\mathbb{R}^{n}}\nabla_{y}\Gamma_{0}(x,y)\nabla\eta(y)u(y,t)dy+\int_{\mathbb{R}^{n}}\Gamma_{0}(x,y)\Delta\eta(y)u(y,t)dy,
\end{eqnarray*}
which yields
\begin{eqnarray*}
u(x,t)\eta(x)
&=&\int_{\mathbb{R}^{n}}\Gamma_{0}(x,y)\Big\{(-\partial_{t} u(y,t))\eta(y)-V(y)u(y,t)\eta(y)+u(y,t)\Delta\eta(y)\Big\}dy\\
&&+ 2\int_{\mathbb{R}^{n}}\nabla_{y}\Gamma_{0}(x,y) u(y,t)\cdot\nabla\eta(y) dy.
\end{eqnarray*}

Then for $x\in B(x_{0}, R)$, $u(x,t)=u(x,t)\eta(x)$ and
\begin{eqnarray*}
\nabla_{x}^{2}u(x,t)&=&\nabla^{2}_{x}\Big\{\int_{\mathbb{R}^{n}}\Gamma_{0}(x,y)\Big\{(-\partial_{t} u(y,t))\eta(y)-V(y)u(y,t)\eta(y)+u(y,t)\Delta\eta(y)\Big\}dy\\
&&+2\int_{\mathbb{R}^{n}}\nabla_{y}\Gamma_{0}(x,y) u(y,t)\cdot\nabla\eta(y) dy\Big\},
\end{eqnarray*}
which gives $\|\nabla^{2}_{x}u(\cdot,t)\|_{q}\leq\sum\limits^{4}_{i=1}\|S_{i}(\cdot,t)\|_{q}$, where
$$\left\{\begin{aligned}
S_{1}(x,t)&:=\int_{\mathbb{R}^{n}}|\nabla^{2}_{x}\Gamma_{0}(x,y)|\cdot|\eta(y)|\cdot|\partial_{t}u(y,t)|dy;\\
S_{2}(x,t)&:=\int_{\mathbb{R}^{n}}|\nabla^{2}_{x}\Gamma_{0}(x,y)|\cdot|\Delta\eta(y)|\cdot|u(y,t)|dy;\\
S_{3}(x,t)&:=\int_{\mathbb{R}^{n}}|\nabla^{2}_{x}\nabla_{y}\Gamma_{0}(x,y)|\cdot|\nabla\eta(y)|\cdot|u(y,t)|dy;\\
S_{4}(x,t)&:=\int_{\mathbb{R}^{n}}|\nabla^{2}_{x}\Gamma_{0}(x,y)|\cdot|\eta(y)|V(y)|\cdot|u(y,t)|dy.
\end{aligned}\right.$$

Now we estimate the terms $\|S_{i}(\cdot,t)\|_{q}, i=1,2,3,4$, separately. For the term $\|S_{1}(\cdot,t)\|_{q}$, because $\nabla^{2}_{x}\Gamma_{0}(\cdot,\cdot)$ is a Calder\'on-Zygmund kernel,
we have
\begin{eqnarray*}
\|S_{1}(\cdot,t)\|_{q}&=&\Big\|\int_{\mathbb{R}^{n}}|\eta(y)||\partial_{t}u(y,t)||\nabla_{x}^{2}\Gamma_{0}(\cdot,y)|dy\Big\|_{q}\\
&\lesssim&\Big\{\sup_{y\in B(x_{0}, 2R)}|\partial_{t}u(y,t)|\Big\}\Big\|\int_{\mathbb{R}^{n}}|\eta(y)||\nabla_{x}^{2}\Gamma_{0}(\cdot,y)|dy\Big\|_{q}\\
&\lesssim&\|\eta\|_{q}\Big\{\sup_{y\in B(x_{0}, 2R)}|\partial_{t}u(y,t)|\Big\}.
\end{eqnarray*}

The estimate of  $S_{2}$ is similar to that of $S_{1}$. Noting that $\eta=1$ on $B(x_{0}, 3R/2)$, we can get
\begin{eqnarray*}
S_{2}(x,t)\lesssim\int_{B(x_{0}, 2R)\setminus B(x_{0}, 3R/2)}\frac{|u(y,t)||\Delta\eta(y)|}{|x-y|^{n}}dy
\lesssim\Big\{\sup\limits_{y\in B(x_{0}, 2R)}|u(y,t)|\Big\}\int_{B(x_{0}, 2R)\setminus B(x_{0}, 3R/2)}\frac{|\Delta\eta(y)|}{|x-y|^{n}}dy.
\end{eqnarray*}
For $3R/2<|y-x_{0}|<2R$ and $x\in B(x_{0}, R)$, a direct computation gives $|x-y|\sim R$ and
\begin{eqnarray*}
S_{2}(x,t)&\lesssim&\frac{\sup_{y\in B(x_{0}, 2R)}|u(y,t)|}{R^{n}}\int_{B(x_{0}, 2R)\setminus B(x_{0}, 3R/2)}|\Delta\eta(y)|dy\\
&\lesssim&\frac{\sup_{y\in B(x_{0}, 2R)}|u(y,t)|}{R^{n+2}}\int_{B(x_{0}, 2R)}dy\lesssim\frac{\sup_{B(x_{0}, 2R)}|u(y,t)|}{R^{2}}.
\end{eqnarray*}

Also
\begin{eqnarray*}
\|S_{2}(\cdot,t)\|_{q}&\lesssim&\Big\{\sup_{y\in B(x_{0}, 2R)}|u(y,t)|\Big\}\|\Delta\eta\|_{q}\\
&\lesssim&\Big\{\sup_{y\in B(x_{0}, 2R)}|u(y,t)|\Big\}\Big(\int_{B(x_{0}, 2R)\setminus B(x_{0}, 3R/2)}|\Delta\eta(y)|^{q}dy\Big)^{1/q}\\
&\lesssim&R^{n/q-2}\Big\{\sup_{y\in B(x_{0}, 2R)}|u(y,t)|\Big\}.
\end{eqnarray*}

Following the same procedure, we apply the Young inequality to obtain
\begin{eqnarray*}
\|S_{3}(\cdot,t)\|_{q}&\lesssim&\Big\{\sup_{y\in B(x_{0}, 2R)}|u(y,t)|\Big\}\Big\|\int_{B(x_{0}, 2R)\setminus B(x_{0}, 3R/2)}|\nabla^{2}_{x}\nabla_{y}\Gamma_{0}(\cdot,y)|\cdot
|\nabla\eta(y)|dy\Big\|_{q}\\
&\lesssim&\Big\{\sup_{B(x_{0}, 2R)}|u(y,t)|\Big\}\|\nabla\eta\|_{q}\cdot\Big\|\nabla^{2}_{x}\nabla_{y}\Gamma_{0}(x,\cdot)\chi_{B(x_{0}, 2R)\setminus B(x_{0}, 3R/2)}\Big\|_{1}\\
&\lesssim&\Big\{\sup_{y\in B(x_{0}, 2R)}|u(y,t)|\Big\}R^{n/q-1}\Big(\int_{B(x_{0}, 2R)\setminus B(x_{0}, 3R/2)}\frac{dy}{|x-y|^{n+1}}\Big)\\
&\lesssim&R^{n/q-2}\Big\{\sup_{y\in B(x_{0}, 2R)}|u(y,t)|\Big\}.
\end{eqnarray*}

At last, for the term $S_{4}$, by Lemma \ref{le-2.6} and the condition $V\in B_{q}$,
 we can obtain, via the $L^{p}$-boundedness of the operator with the kernel $\nabla^{2}_{x}\Gamma_{0}(\cdot,\cdot)$, that
\begin{eqnarray*}
\|S_{4}(\cdot,t)\|_{q}&\lesssim&\Big\|V(\cdot)|u(\cdot,t)||\eta(\cdot)|\Big\|_{q}\\
&\lesssim&\Big\{\sup_{y\in B(x_{0}, 2R)}|u(y,t)|\Big\}\Big(\int_{B(x_{0},2R)}V^{q}(y)dy\Big)^{1/q}\\
&\lesssim&\Big\{\sup_{y\in B(x_{0}, 2R)}|u(y,t)|\Big\}R^{n/q}\Big(\frac{1}{|B(x_{0}, 2R)|}\int_{B(x_{0},2R)}V^{q}(y)dy\Big)^{1/q}\\
&\lesssim&\Big\{\sup_{y\in B(x_{0}, 2R)}|u(y,t)|\Big\}R^{n/q-1}\Big(\frac{1}{R^{n-1}}\int_{B(x_{0},2R)}V(y)dy\Big)\\
&\lesssim&\Big\{\sup_{y\in B(x_{0}, 2R)}|u(y,t)|\Big\}R^{n/q-2}\Big(1+\frac{R}{\rho(x_{0})}\Big)^{m_{0}}.
\end{eqnarray*}

The estimates for $\|S_{i}(\cdot, t)\|_{q}, i=1,2,3,4$, indicate that
$$|\nabla^{2}_{x}u(x,t)|\lesssim R^{n/q-2}\Big\{\Big(1+\frac{R}{\rho(x_{0})}\Big)^{m_{0}}+1\Big\}\Big\{\sup_{y\in B(x_{0}, 2R)}|u(y,t)|\Big\}
+R^{n/q}\Big\{\sup_{y\in B(x_{0}, 2R)}|\partial_{t}u(y,t)|\Big\}.$$

Let $u(x_{0},t)=K^{L}_{t}(x_{0}, y_{0})$. Then
\begin{eqnarray*}
&&|\nabla_{x}K^{L}_{t}(x_{0}+h,y_{0})-\nabla_{x}K^{L}_{t}(x_{0}, y_{0})|\\
&&\lesssim |h|^{1-n/q}\Big(\int_{B(x_{0}, R)}|\nabla^{2}_{x}K_{t}^{L}(x, y_{0})|^{q}dx\Big)^{1/q}\\
&&\lesssim\Big(\frac{|h|}{\sqrt{t}}\Big)^{1-n/q}\Big(\frac{\sqrt{t}}{R}\Big)^{1-n/q}\frac{1}{R}\Big\{\Big(1+\frac{R}{\rho(x_{0})}\Big)^{m_{0}}+1\Big\}
\Big\{\sup_{x\in B(x_{0}, 2R)}|K_{t}^{L}(x, y_{0})|
+\frac{R^{2}}{t}\sup_{x\in B(x_{0}, 2R)}|t\partial_{t}K^{L}_{t}(x, y_{0})|\Big\}\\
&&\leq \frac{C_{N}}{R}\Big(\frac{|h|}{\sqrt{t}}\Big)^{1-n/q}\Big(\frac{\sqrt{t}}{R}\Big)^{1-n/q}\Big\{\Big(1+\frac{R}{\rho(x_{0})}\Big)^{m_{0}}+1\Big\}
\Big\{\sup_{x\in B(x_{0}, 2R)}\frac{1}{t^{n/2}}e^{-c|x-y_{0}|^{2}/t}\Big(1+\frac{\sqrt{t}}{\rho(x)}\Big)^{-N}\Big(1+\frac{\sqrt{t}}{\rho(y_{0})}\Big)^{-N}\Big\}\\
&&\quad+C_{N}\frac{R^{2}}{t}\Big\{\sup_{x\in B(x_{0}, 2R)}\frac{1}{t^{n/2}}e^{-c|x-y_{0}|^{2}/t}\Big(1+\frac{\sqrt{t}}{\rho(x)}\Big)^{-N}\Big(1+\frac{\sqrt{t}}{\rho(y_{0})}\Big)^{-N}\Big\}.
\end{eqnarray*}

Take $0<R<\min\{\rho(x_{0}), |x_{0}-y_{0}|/8\}$. If $x\in B(x_{0}, 2R)$, then $|x-x_{0}|<2\rho(x_{0})$, that is, $\rho(x)\sim\rho(x_{0})$.
Also, if $x\in B(x_{0}, 2R)$, $|x-x_{0}|<2R<|x_{0}-y_{0}|/4$, which means that $|x-y_{0}|\sim |x_{0}-y_{0}|$. We can get
\begin{eqnarray}\label{eq-3.9}
&&|\nabla_{x}K^{L}_{t}(x_{0}+h,y_{0})-\nabla_{x}K^{L}_{t}(x_{0}, y_{0})|\\
&&\quad\leq \frac{C_{N}}{t^{(n+1)/2}}\Big(\frac{|h|}{\sqrt{t}}\Big)^{1-n/q}e^{-c|x_{0}-y_{0}|^{2}/t}
\Big(1+\frac{\sqrt{t}}{\rho(x_{0})}\Big)^{-N}\Big(1+\frac{\sqrt{t}}{\rho(y_{0})}\Big)^{-N}
\Big(\frac{\sqrt{t}}{R}\Big)^{1-n/q}\Big(\frac{\sqrt{t}}{R}+\frac{R}{\sqrt{t}}\Big),\nonumber
\end{eqnarray}
 Define a function $F(x)=x^{1-n/q}(x+{1}/{x}),\ x>0$. Then
we can see that for $x>\sqrt{n/(2q-n)}$, $F'(x)>0$, i.e., $F$ is increasing, which means that  the function $f(R):=F(\sqrt{t}/R)$ is decreasing
for $R\in (0, \sqrt{(2q-n)t/n})$.
Below we divide the rest of the proof into two cases.

{\it Case 1: $0<R<\min\{\rho(x_{0}), |x_{0}-y_{0}|\}<\sqrt{t}$}. We further divide the discussion into two subcases.

{\it Case 1.1: $0<R<\rho(x_{0})<\sqrt{t}<\sqrt{(2q-n)t/n}$, i.e., $\rho(x_{0})\leq|x_{0}-y_{0}|$.}
Taking the infimum on both sides of (\ref{eq-3.9}), we can get
$$\Big(\frac{\sqrt{t}}{R}\Big)^{1-n/q}\Big(\frac{\sqrt{t}}{R}+\frac{R}{\sqrt{t}}\Big)
\lesssim\Big(\frac{\sqrt{t}}{\rho(x_{0})}\Big)^{1-n/q}\Big(\frac{\sqrt{t}}{\rho(x_{0})}+\frac{\rho(x_{0})}{\sqrt{t}}\Big)
\lesssim\Big(\frac{\sqrt{t}}{\rho(x_{0})}\Big)^{1-n/q}\Big(\frac{\sqrt{t}}{\rho(x_{0})}+1\Big),$$
where we have used the fact that $\rho(x_{0})<\sqrt{t}$. This gives
\begin{eqnarray*}
&&|\nabla_{x}K^{L}_{t}(x_{0}+h,y_{0})-\nabla_{x}K^{L}_{t}(x_{0}, y_{0})|\\
&&\leq \frac{C_{N}}{t^{(n+1)/2}}\Big(\frac{|h|}{\sqrt{t}}\Big)^{1-n/q}e^{-c|x_{0}-y_{0}|^{2}/t}\Big(1+\frac{\sqrt{t}}{\rho(x_{0})}\Big)^{-N}
\Big(1+\frac{\sqrt{t}}{\rho(y_{0})}\Big)^{-N}\Big\{\Big(\frac{\sqrt{t}}{\rho(x_{0})}\Big)^{1-n/q}\Big(\frac{\sqrt{t}}{\rho(x_{0})}+1\Big)\Big\}\\
&&\leq \frac{C_{N}}{t^{(n+1)/2}} \Big(\frac{|h|}{\sqrt{t}}\Big)^{1-n/q}e^{-c|x_{0}-y_{0}|^{2}/t}\Big(1+\frac{\sqrt{t}}{\rho(x_{0})}\Big)^{-N}
\Big(1+\frac{\sqrt{t}}{\rho(y_{0})}\Big)^{-N}.
\end{eqnarray*}

{\it Case 1.2: $0<R<|x_{0}-y_{0}|<\sqrt{t}$, i.e., $\rho(x_{0})>|x_{0}-y_{0}|$.} By taking the infimum on $f(R)$, we can get

\begin{eqnarray*}
\Big(\frac{\sqrt{t}}{R}\Big)^{1-n/q}\Big(\frac{\sqrt{t}}{R}+\frac{R}{\sqrt{t}}\Big)&\lesssim& \Big(\frac{\sqrt{t}}{|x_{0}-y_{0}|}\Big)^{1-n/q}\Big(\frac{\sqrt{t}}{|x_{0}-y_{0}|}+\frac{|x_{0}-y_{0}|}{\sqrt{t}}\Big)\\
&\lesssim&\Big(\frac{\sqrt{t}}{|x_{0}-y_{0}|}\Big)^{1-n/q}\Big(\frac{\sqrt{t}}{|x_{0}-y_{0}|}+1\Big).
\end{eqnarray*}

Then we obtain
\begin{eqnarray*}
&&|\nabla_{x}K^{L}_{t}(x_{0}+h,y_{0})-\nabla_{x}K^{L}_{t}(x_{0}, y_{0})|\\
&&\leq\frac{C_{N}}{t^{(n+1)/2}}\Big(\frac{|h|}{\sqrt{t}}\Big)^{1-n/q}\Big(\frac{\sqrt{t}}{|x_{0}-y_{0}|}\Big)^{1-n/q}e^{-c|x_{0}-y_{0}|^{2}/t}
\Big(1+\frac{\sqrt{t}}{\rho(x_{0})}\Big)^{-N}
\Big(1+\frac{\sqrt{t}}{\rho(y_{0})}\Big)^{-N}\Big(\frac{\sqrt{t}}{|x_{0}-y_{0}|}+1\Big).
\end{eqnarray*}
It is easy to see that
\begin{eqnarray*}
&&|\nabla_{x}K^{L}_{t}(x_{0}+h,y_{0})-\nabla_{x}K^{L}_{t}(x_{0}, y_{0})|\\
&&\lesssim\Big(\frac{|h|}{|x_{0}-y_{0}|}\Big)^{1-n/q}\frac{1}{t^{(n+1)/2}}e^{-c|x_{0}-y_{0}|^{2}/t}\Big(1+\frac{\sqrt{t}}{\rho(x_{0})}\Big)^{-N}
\Big(1+\frac{\sqrt{t}}{\rho(y_{0})}\Big)^{-N}\Big(\frac{\sqrt{t}}{|x_{0}-y_{0}|}+1\Big)\\
&&\lesssim\Big(\frac{|h|}{|x_{0}-y_{0}|}\Big)^{1-n/q}\frac{1}{t^{(n+1)/2}}e^{-c|x_{0}-y_{0}|^{2}/t}\Big(1+\frac{\sqrt{t}}{\rho(x_{0})}\Big)^{-N}
\Big(1+\frac{\sqrt{t}}{\rho(y_{0})}\Big)^{-N}\frac{\sqrt{t}}{|x_{0}-y_{0}|}\\
&&\quad+\Big(\frac{|h|}{|x_{0}-y_{0}|}\Big)^{1-n/q}\frac{1}{t^{(n+1)/2}}e^{-c|x_{0}-y_{0}|^{2}/t}(1+\frac{\sqrt{t}}{\rho(x_{0})})^{-N}
\Big(1+\frac{\sqrt{t}}{\rho(y_{0})}\Big)^{-N}\\
&&\lesssim\Big(\frac{|h|}{|x_{0}-y_{0}|}\Big)^{1-n/q}\Big(\frac{1}{t^{(n+1)/2}}
+\frac{1}{t^{n/2}|x_{0}-y_{0}|}\Big)e^{-c|x_{0}-y_{0}|^{2}/t}\Big(1+\frac{\sqrt{t}}{\rho(x_{0})}\Big)^{-N}
\Big(1+\frac{\sqrt{t}}{\rho(y_{0})}\Big)^{-N}\\
&&\lesssim\Big(\frac{|h|}{|x_{0}-y_{0}|}\Big)^{1-n/q}\frac{1}{t^{n/2}|x_{0}-y_{0}|}e^{-c|x_{0}-y_{0}|^{2}/t}\Big(1+\frac{\sqrt{t}}{\rho(x_{0})}\Big)^{-N}
\Big(1+\frac{\sqrt{t}}{\rho(y_{0})}\Big)^{-N}.
\end{eqnarray*}

{\it Case 2: $\sqrt{t}<\min\{\rho(x_{0}), |x_{0}-y_{0}|/8\}$.} Similar to Case 1, we divide the discussion into two subcases again.

{\it Case 2.1: $0<R<\sqrt{t}<\sqrt{(2q-n)t/n}<\min\{\rho(x_{0}), |x_{0}-y_{0}|/8\}$.} It follows from (\ref{eq-3.9}) that
\begin{eqnarray}\label{eq-3.3}
&&|\nabla_{x}K^{L}_{t}(x_{0}+h,y_{0})-\nabla_{x}K^{L}_{t}(x_{0}, y_{0})|\\
&&\leq C_{N}\Big(\frac{|h|}{\sqrt{t}}\Big)^{1-n/q}\frac{e^{-c|x_{0}-y_{0}|^{2}/t}}{t^{(n+1)/2}}\Big(1+\frac{\sqrt{t}}{\rho(x_{0})}\Big)^{-N}
\Big(1+\frac{\sqrt{t}}{\rho(y_{0})}\Big)^{-N}\Big(\frac{\sqrt{t}}{R}\Big)^{1-n/q}
\Big(\frac{\sqrt{t}}{R}+\frac{R}{\sqrt{t}}\Big).\nonumber
\end{eqnarray}
Taking the infimum on both sides (\ref{eq-3.3}) reaches
$$|\nabla_{x}K^{L}_{t}(x_{0}+h,y_{0})-\nabla_{x}K^{L}_{t}(x_{0}, y_{0})|\leq \frac{C_{N}}{t^{(n+1)/2}}
\Big(\frac{|h|}{\sqrt{t}}\Big)^{1-n/q}e^{-c|x_{0}-y_{0}|^{2}/t}\Big(1+\frac{\sqrt{t}}{\rho(x_{0})}\Big)^{-N}
\Big(1+\frac{\sqrt{t}}{\rho(y_{0})}\Big)^{-N}.$$

{\it Case 2.2: $0<R<\sqrt{t}<\min\{\rho(x_{0}), |x_{0}-y_{0}|/8\}<\sqrt{(2q-n)t/n}$.} Similarly, taking the infimum on both sides of (\ref{eq-3.3}), we obtain
\begin{eqnarray*}
&&|\nabla_{x}K^{L}_{t}(x_{0}+h,y_{0})-\nabla_{x}K^{L}_{t}(x_{0}, y_{0})|\\
&&\leq C_{N}\Big(\frac{|h|}{\sqrt{t}}\Big)^{1-n/q}\frac{1}{t^{(n+1)/2}}e^{-c|x_{0}-y_{0}|^{2}/t}\Big(1+\frac{\sqrt{t}}{\rho(x_{0})}\Big)^{-N}
\Big(1+\frac{\sqrt{t}}{\rho(y_{0})}\Big)^{-N}\nonumber\\
&&\times\Bigg\{\Big(\frac{\sqrt{t}}{\min\{\rho(x_{0}), |x_{0}-y_{0}|/8\}}\Big)^{1-n/q}
\Big(\frac{\sqrt{t}}{\min\{\rho(x_{0}), |x_{0}-y_{0}|/8\}}+\frac{\min\{\rho(x_{0}), |x_{0}-y_{0}|/8\}}{\sqrt{t}}\Big)\Bigg\}.
\end{eqnarray*}

If $\rho(x_{0})<|x_{0}-y_{0}|/8$, then
\begin{eqnarray*}
&&|\nabla_{x}K^{L}_{t}(x_{0}+h,y_{0})-\nabla_{x}K^{L}_{t}(x_{0}, y_{0})|\\
&&\leq C_{N}\Big(\frac{|h|}{\sqrt{t}}\Big)^{1-n/q}\frac{1}{t^{(n+1)/2}}e^{-c|x_{0}-y_{0}|^{2}/t}\Big(1+\frac{\sqrt{t}}{\rho(x_{0})}\Big)^{-N}
\Big(1+\frac{\sqrt{t}}{\rho(y_{0})}\Big)^{-N}\\
&&\quad\times\Big\{\Big(\frac{\sqrt{t}}{\rho(x_{0})}\Big)^{1-n/q}
\Big(\frac{\sqrt{t}}{\rho(x_{0})}+\frac{\rho(x_{0})}{\sqrt{t}}\Big)\Big\}\\
&&\leq C_{N} \Big(\frac{|h|}{\sqrt{t}}\Big)^{1-n/q}\frac{1}{t^{(n+1)/2}}e^{-c|x_{0}-y_{0}|^{2}/t}\Big(1+\frac{\sqrt{t}}{\rho(x_{0})}\Big)^{-N}
\Big(1+\frac{\sqrt{t}}{\rho(y_{0})}\Big)^{-N}.
\end{eqnarray*}

If $\rho(x_{0})\geq|x_{0}-y_{0}|/8$, we have
\begin{eqnarray*}
&&|\nabla_{x}K^{L}_{t}(x_{0}+h,y_{0})-\nabla_{x}K^{L}_{t}(x_{0}, y_{0})|\\
&&\leq \frac{C_{N}}{t^{(n+1)/2}}\Big(\frac{|h|}{\sqrt{t}}\Big)^{1-n/q}e^{-c|x_{0}-y_{0}|^{2}/t}\Big(1+\frac{\sqrt{t}}{\rho(x_{0})}\Big)^{-N}
\Big(1+\frac{\sqrt{t}}{\rho(y_{0})}\Big)^{-N}\\
&&\quad\times\Big\{\Big(\frac{\sqrt{t}}{|x_{0}-y_{0}|}\Big)^{1-n/q}
\Big(\frac{\sqrt{t}}{|x_{0}-y_{0}|}+\frac{|x_{0}-y_{0}|}{\sqrt{t}}\Big)\Big\}\\
&&\leq \frac{C_{N}}{t^{(n+1)/2}}\Big(\frac{|h|}{|x_{0}-y_{0}|}\Big)^{1-n/q}e^{-c|x_{0}-y_{0}|^{2}/t}\Big(1+\frac{\sqrt{t}}{\rho(x_{0})}\Big)^{-N}
\Big(1+\frac{\sqrt{t}}{\rho(y_{0})}\Big)^{-N}\Big(\frac{\sqrt{t}}{|x_{0}-y_{0}|}+\frac{|x_{0}-y_{0}|}{\sqrt{t}}\Big)\\
&&\leq\frac{C_{N}}{t^{n/2}|x_{0}-y_{0}|}\Big(\frac{|h|}{|x_{0}-y_{0}|}\Big)^{1-n/q}e^{-c|x_{0}-y_{0}|^{2}/t}\Big(1+\frac{\sqrt{t}}{\rho(x_{0})}\Big)^{-N}
\Big(1+\frac{\sqrt{t}}{\rho(y_{0})}\Big)^{-N}\\
&&\quad+\frac{C_{N}}{t^{(n+1)/2}}\Big(\frac{|h|}{|x_{0}-y_{0}|}\Big)^{1-n/q}e^{-c|x_{0}-y_{0}|^{2}/t}\Big(1+\frac{\sqrt{t}}{\rho(x_{0})}\Big)^{-N}
\Big(1+\frac{\sqrt{t}}{\rho(y_{0})}\Big)^{-N}.
\end{eqnarray*}

If $\sqrt{t}<|x_{0}-y_{0}|$, then
\begin{eqnarray*}
|\nabla_{x}K^{L}_{t}(x_{0}+h,y_{0})-\nabla_{x}K^{L}_{t}(x_{0}, y_{0})|
&\leq& C_{N}\Big(\frac{|h|}{\sqrt{t}}\Big)^{1-n/q}
\frac{e^{-c|x_{0}-y_{0}|^{2}/t}}{t^{(n+1)/2}}\Big(1+\frac{\sqrt{t}}{\rho(x_{0})}+\frac{\sqrt{t}}{\rho(y_{0})}\Big)^{-N}.
\end{eqnarray*}

If $\sqrt{t}\geq |x_{0}-y_{0}|$, then
\begin{eqnarray*}
|\nabla_{x}K^{L}_{t}(x_{0}+h,y_{0})-\nabla_{x}K^{L}_{t}(x_{0}, y_{0})|
&\leq&C_{N}\Big(\frac{|h|}{|x_{0}-y_{0}|}\Big)^{1-n/q}\frac{e^{-c|x_{0}-y_{0}|^{2}/t}}{t^{n/2}|x_{0}-y_{0}|}\Big(1+\frac{\sqrt{t}}{\rho(x_{0})}
+\frac{\sqrt{t}}{\rho(y_{0})}\Big)^{-N}.
\end{eqnarray*}
\end{proof}

\begin{lemma}\label{le-3.4}
Suppose that $V\in B_{q}$ for some $q>n$. Let $\delta'=1-n/q$. For every $N>0$, there exists  a constant $C_{N}>0$ such that for all $x,y\in\mathbb{R}^{n}$ and $t>0$, the semigroup kernels $K_{t}^{L}(\cdot,\cdot)$ satisfy the following estimate: for $|h|<|x-y|/4$,
$$|\nabla_{x}K^{L}_{t}(x+h,y)-\nabla_{x}K^{L}_{t}(x, y)|\leq\frac{C_{N}}{t^{(n+1)/2}}\Big(\frac{|h|}{\sqrt{t}}\Big)^{\delta'}\Big(1+\frac{\sqrt{t}}{\rho(x)}
+\frac{\sqrt{t}}{\rho(y)}\Big)^{-N}.$$
\end{lemma}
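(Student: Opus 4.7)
\textbf{Proof plan for Lemma \ref{le-3.4}.} The statement is the analogue of Lemma \ref{le-3.3} with the Gaussian factor $e^{-c|x-y|^{2}/t}$ removed, and it stands in the same relation to Lemma \ref{le-3.3} as Lemma \ref{le-3.2} does to Lemma \ref{le3.1}. My strategy is to run the interior $L^{q}$ argument from the proof of Lemma \ref{le-3.3} verbatim, but to feed in the (weaker) all-$x,y$ pointwise bounds of Propositions \ref{lem2.1} and \ref{prop-2.1}(i) rather than their Gaussian-decay counterparts. The crucial ingredient I would reuse is the second-order estimate already established inside that proof, namely
\begin{equation*}
\|\nabla_{x}^{2}u(\cdot,t)\|_{L^{q}(B(x_{0},R))} \lesssim R^{n/q-2}\Big\{\Big(1+\tfrac{R}{\rho(x_{0})}\Big)^{m_{0}}+1\Big\}\sup_{B(x_{0},2R)}|u(\cdot,t)| + R^{n/q}\sup_{B(x_{0},2R)}|\partial_{t}u(\cdot,t)|,
\end{equation*}
combined with Morrey's inequality (valid since $q>n$)
\begin{equation*}
|\nabla_{x}u(x_{0}+h,t)-\nabla_{x}u(x_{0},t)| \lesssim |h|^{1-n/q}\|\nabla_{x}^{2}u(\cdot,t)\|_{L^{q}(B(x_{0},R))}.
\end{equation*}

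With $u(x,t):=K^{L}_{t}(x,y_{0})$, I would restrict the radius to $R<\rho(x_{0})$ so that $\rho(x)\sim\rho(x_{0})$ on $B(x_{0},2R)$ by Lemma \ref{lem2.4}, and then control both $\sup_{B(x_{0},2R)}|K^{L}_{t}(\cdot,y_{0})|$ and $\sup_{B(x_{0},2R)}|t\partial_{t}K^{L}_{t}(\cdot,y_{0})|$ by $C_{N}t^{-n/2}(1+\sqrt{t}/\rho(x_{0})+\sqrt{t}/\rho(y_{0}))^{-N}$, discarding the $e^{-c|x_{0}-y_{0}|^{2}/t}$ factor. Substituting these bounds yields
\begin{equation*}
|\nabla_{x}K^{L}_{t}(x_{0}+h,y_{0})-\nabla_{x}K^{L}_{t}(x_{0},y_{0})| \lesssim \frac{C_{N}}{t^{(n+1)/2}}\Big(\frac{|h|}{\sqrt{t}}\Big)^{\delta'}\Big(1+\tfrac{\sqrt{t}}{\rho(x_{0})}+\tfrac{\sqrt{t}}{\rho(y_{0})}\Big)^{-N}\Big(\frac{\sqrt{t}}{R}\Big)^{\delta'}\Big(\frac{\sqrt{t}}{R}+\frac{R}{\sqrt{t}}\Big),
\end{equation*}
and the remaining task reduces to choosing $R$ optimally, exactly as in Lemma \ref{le-3.2}. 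If $\sqrt{t}\leq\rho(x_{0})$, I would take $R=\sqrt{t}$, which makes the last two factors $O(1)$ and yields the claim at once. If $\sqrt{t}>\rho(x_{0})$, I would take $R=\rho(x_{0})/2$; the last two factors together contribute $(\sqrt{t}/\rho(x_{0}))^{1+\delta'}$, which is absorbed by enlarging the exponent in $(1+\sqrt{t}/\rho(x_{0}))^{-N}$ from $N$ to $N+1+\delta'$, using the arbitrariness of $N$.

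The main obstacle I anticipate is the accounting of the $\rho$-dependent factors: one has to verify that the parasitic term $(1+R/\rho(x_{0}))^{m_{0}}$ coming from the potential in the $S_{4}$-estimate of Lemma \ref{le-3.3}, together with the power $(\sqrt{t}/\rho(x_{0}))^{1+\delta'}$ produced in the bad case, can simultaneously be absorbed into the negative power of $(1+\sqrt{t}/\rho(x_{0}))$. Since in that case $\rho(x_{0})<\sqrt{t}$ forces $R<\rho(x_{0})<\sqrt{t}$ and hence $(1+R/\rho(x_{0}))^{m_{0}}\lesssim 1$, this bookkeeping is straightforward and no new analytic idea beyond those already deployed in Lemmas \ref{le-3.2} and \ref{le-3.3} is required.
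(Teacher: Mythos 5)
Your proposal is correct and follows essentially the same route as the paper: it reuses the interior $L^{q}$ estimate for $\nabla_{x}^{2}u$ and the Morrey step from the proof of Lemma \ref{le-3.3}, discards the Gaussian factor via Propositions \ref{lem2.1} and \ref{prop-2.1}, and then optimizes over $R\in(0,\min\{\rho(x_{0}),\sqrt{t}\})$ with the same dichotomy (take $R\sim\sqrt{t}$ when $\sqrt{t}\leq\rho(x_{0})$, take $R\sim\rho(x_{0})$ and absorb the resulting power of $\sqrt{t}/\rho(x_{0})$ by the arbitrariness of $N$ when $\sqrt{t}>\rho(x_{0})$). This is exactly the argument given in the paper, up to the cosmetic choice of $R=\rho(x_{0})/2$ versus letting $R\to\rho(x_{0})$ or $R\to\sqrt{(2q-n)t/n}$.
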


\begin{proof}
Similar to Lemma \ref{le-3.3}, we take $R\in (0, \min\{\rho(x_{0}), \sqrt{t}\})$ and obtain that
\begin{eqnarray}\label{eq-3.11}
&&|\nabla_{x}K^{L}_{t}(x_{0}+h,y_{0})-\nabla_{x}K^{L}_{t}(x_{0}, y_{0})|\\
&&\quad\leq \frac{C_{N}}{t^{(n+1)/2}}\Big(\frac{|h|}{\sqrt{t}}\Big)^{1-n/q}e^{-c|x_{0}-y_{0}|^{2}/t}\Big(1+\frac{\sqrt{t}}{\rho(x_{0})}\Big)^{-N}
\Big(1+\frac{\sqrt{t}}{\rho(y_{0})}\Big)^{-N}\Big\{\Big(\frac{\sqrt{t}}{R}\Big)^{1-n/q}
\Big(\frac{\sqrt{t}}{R}+\frac{R}{\sqrt{t}}\Big)\Big\}\nonumber\\
&&\quad\leq \frac{C_{N}}{t^{(n+1)/2}}\Big(\frac{|h|}{\sqrt{t}}\Big)^{1-n/q}\Big(1+\frac{\sqrt{t}}{\rho(x_{0})}\Big)^{-N}
\Big(1+\frac{\sqrt{t}}{\rho(y_{0})}\Big)^{-N}\Big\{\Big(\frac{\sqrt{t}}{R}\Big)^{1-n/q}
\Big(\frac{\sqrt{t}}{R}+\frac{R}{\sqrt{t}}\Big)\Big\}\nonumber.
\end{eqnarray}

{\it Case 1: $\rho(x_{0})\leq\sqrt{t}$}. This implies $0<R<\rho(x_{0})<\sqrt{t}<\sqrt{(2q-n)t/n}$. We can get
\begin{eqnarray*}
&&|\nabla_{x}K^{L}_{t}(x_{0}+h,y_{0})-\nabla_{x}K^{L}_{t}(x_{0}, y_{0})|\\
&&\quad\leq \frac{C_{N}}{t^{(n+1)/2}}\Big(\frac{|h|}{\sqrt{t}}\Big)^{1-n/q}\Big(1+\frac{\sqrt{t}}{\rho(x_{0})}\Big)^{-N}
\Big(1+\frac{\sqrt{t}}{\rho(y_{0})}\Big)^{-N}\Big\{\Big(\frac{\sqrt{t}}{\rho(x_{0})}\Big)^{1-n/q}
\Big(\frac{\sqrt{t}}{\rho(x_{0})}+\frac{\rho(x_{0})}{\sqrt{t}}\Big)\Big\}\\
&&\quad \leq \frac{C_{N}}{t^{(n+1)/2}}\Big(\frac{|h|}{\sqrt{t}}\Big)^{1-n/q}\Big(1+\frac{\sqrt{t}}{\rho(x_{0})}\Big)^{-N}
\Big(1+\frac{\sqrt{t}}{\rho(y_{0})}\Big)^{-N}.
\end{eqnarray*}

{\it Case 2: $\rho(x_{0})>\sqrt{t}$}. For this case, $0<R<\sqrt{t}$. Then the following two cases are considered.
$$\begin{cases}
{\it Case\ 2.1:}\quad 0<R<\rho(x_{0})<\sqrt{(2q-n)t/n};\\
{\it Case\ 2.2:}\quad 0<R<\sqrt{(2q-n)t/n}<\rho(x_{0}).
\end{cases}$$

It is obvious that Case 2.1 comes back to Case 1. For Case 2.2, letting $R\rightarrow \sqrt{(2q-n)t/n}$ on the right-hand side of (\ref{eq-3.11}), we have
$$|\nabla_{x}K^{L}_{t}(x_{0}+h,y_{0})-\nabla_{x}K^{L}_{t}(x_{0}, y_{0})|\leq \frac{C_{N}}{t^{(n+1)/2}}\Big(\frac{|h|}{\sqrt{t}}\Big)^{1-n/q}\Big(1+\frac{\sqrt{t}}{\rho(x_{0})}\Big)^{-N}
\Big(1+\frac{\sqrt{t}}{\rho(y_{0})}\Big)^{-N}.$$

\end{proof}

\begin{proposition}\label{prop-3.8}
Suppose that $\alpha>0$ and $V\in B_{q}$ for some $q>n$. Let $\delta'=1-n/q$. For every $N>0$, there exists a  constant $C_{N}>0$  such that for all $x,y\in\mathbb{R}^{n}$ and
$t>0$, the fractional heat kernels $K_{\alpha, t}^{L}(\cdot,\cdot)$ satisfy the following estimate: for $|h|<|x-y|/4$,
$$|\nabla_{x}K^{L}_{\alpha,t}(x+h,y)-\nabla_{x}K^{L}_{\alpha,t}(x,y)|
\leq C_{N}\Big(\frac{|h|}{t^{1/2\alpha}}\Big)^{\delta'}\frac{1}{t^{1/2\alpha}}\frac{t}{(t^{1/2\alpha}+|x-y|)^{n+2\alpha}}.$$
\end{proposition}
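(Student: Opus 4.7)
The plan is to mirror the strategy of Propositions \ref{prop-3.1}, \ref{prop-3.2} and \ref{prop-3.3-1}: express the difference via the subordinative formula, insert the two available pointwise bounds on $\nabla_x K^L_s(x+h,y)-\nabla_x K^L_s(x,y)$ (namely Lemmas \ref{le-3.3} and \ref{le-3.4}), perform the change of variable $s=t^{1/\alpha}u$, and then combine the two resulting bounds by taking the minimum and using the arbitrariness of $N$.

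More precisely, I would start from
\begin{equation*}
\nabla_x K^L_{\alpha,t}(x+h,y)-\nabla_x K^L_{\alpha,t}(x,y)=\int_0^\infty\eta^\alpha_t(s)\bigl[\nabla_x K^L_s(x+h,y)-\nabla_x K^L_s(x,y)\bigr]\,ds.
\end{equation*}
To obtain a bound with $|x-y|$-decay, I would split the integral at $s=|x-y|^2$. On $\{0<s\le |x-y|^2\}$ I would use the first branch of Lemma \ref{le-3.3} (so with $(|h|/\sqrt s)^{\delta'}$), and on $\{s>|x-y|^2\}$ the second branch (with $(|h|/|x-y|)^{\delta'}$); in both cases I would bound $\eta^\alpha_t(s)\lesssim t/s^{1+\alpha}$. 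After substituting $s=t^{1/\alpha}u$ and carrying out the same Gaussian/polynomial gymnastics as in the proof of Proposition \ref{prop-3.2}, the two pieces yield
\begin{equation*}
\Big(\tfrac{t^{1/2\alpha}}{\rho(x)}\Big)^N\Big(\tfrac{t^{1/2\alpha}}{\rho(y)}\Big)^N\bigl|\nabla_x K^L_{\alpha,t}(x+h,y)-\nabla_x K^L_{\alpha,t}(x,y)\bigr|\lesssim\frac{t^{1+N/\alpha}|h|^{\delta'}}{|x-y|^{n+1+2\alpha+2N+\delta'}}.
\end{equation*}
To get the complementary bound without $|x-y|$-decay, I would plug Lemma \ref{le-3.4} into the subordinative representation, use $\eta^\alpha_t(s)=t^{-1/\alpha}\eta^\alpha_1(s/t^{1/\alpha})$ and change variable $\tau=s/t^{1/\alpha}$, so that the finiteness of $\int_0^\infty \tau^{-(n+1)/2-\delta'/2-N}\eta^\alpha_1(\tau)\,d\tau$ yields
\begin{equation*}
\Big(\tfrac{t^{1/2\alpha}}{\rho(x)}\Big)^N\Big(\tfrac{t^{1/2\alpha}}{\rho(y)}\Big)^N\bigl|\nabla_x K^L_{\alpha,t}(x+h,y)-\nabla_x K^L_{\alpha,t}(x,y)\bigr|\lesssim t^{-(n+1)/2\alpha}\bigl(|h|/t^{1/2\alpha}\bigr)^{\delta'}.
\end{equation*}

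Finally, taking the minimum of the two estimates, recognizing that
$\min\{A/|x-y|^{n+1+2\alpha+2N+\delta'},\,B\}$
translates into the bound $(t^{1/2\alpha}+|x-y|)^{-(n+2\alpha)}$ up to absorbing the extra $|h|^{\delta'}/t^{(1+\delta')/2\alpha}$ factor, and using the arbitrariness of $N$ (to absorb the extra $2N$ in the polynomial order by increasing $N$), I would conclude the asserted inequality. The main technical nuisance is the bookkeeping in the first step: the two different Hölder forms in Lemma \ref{le-3.3} (one with $(|h|/\sqrt s)^{\delta'}$ and one with $(|h|/|x-y|)^{\delta'}$) must both be converted into the uniform factor $(|h|/t^{1/2\alpha})^{\delta'}$ demanded by the conclusion; this is where the splitting at $s=|x-y|^2$ and the change of variable $s=t^{1/\alpha}u$ have to be coordinated carefully, exactly as in Proposition \ref{prop-3.2}. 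The assumption $|h|<|x-y|/4$ (as opposed to $|h|\le t^{1/\alpha}$ in Proposition \ref{prop-3.2}) means one cannot pull the Hölder factor out through a single uniform bound, so the split-integral argument is essential rather than cosmetic.
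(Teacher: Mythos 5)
Your proposal is correct and follows essentially the same route as the paper's proof: the subordinative formula combined with the two branches of Lemma \ref{le-3.3} (split in $s$, extended to all of $(0,\infty)$ after inserting $\eta^{\alpha}_{t}(s)\lesssim t/s^{1+\alpha}$) gives the bound with $|x-y|$-decay, Lemma \ref{le-3.4} with the scaling of $\eta^{\alpha}_{1}$ gives the complementary bound $t^{-(n+1)/2\alpha}(|h|/t^{1/2\alpha})^{\delta'}$, and the minimum plus the arbitrariness of $N$ yields the stated estimate. The only cosmetic difference is your split at $s=|x-y|^{2}$ (which indeed matches the case distinction in Lemma \ref{le-3.3}; the paper writes $|x-y|$, immaterial since both integrals are then extended to the whole half-line).
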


\begin{proof}
By the subordinative formula and Lemma \ref{le-3.3}, we can get
\begin{eqnarray*}
|\nabla_{x}K^{L}_{\alpha,t}(x+h,y)-\nabla_{x}K^{L}_{\alpha,t}(x,y)|
\leq C_{N}\int^{\infty}_{0}\frac{t}{s^{1+\alpha}}\Big|\nabla_{x}K^{L}_{t}(x+h,y)-\nabla_{x}K^{L}_{t}(x,y)\Big|ds\leq C_{N}(M_{6}+M_{7}),
\end{eqnarray*}
where
$$\left\{\begin{aligned}
M_{6}&:=\int^{|x-y|}_{0}\frac{t}{s^{1+\alpha}}\Big(\frac{|h|}{\sqrt{s}}\Big)^{\delta'}\frac{1}{s^{(n+1)/2}}e^{-c|x-y|^{2}/s}
\Big(1+\frac{\sqrt{s}}{\rho(x)}\Big)^{-N}\Big(1+\frac{\sqrt{s}}{\rho(y)}\Big)^{-N}ds;\\
M_{7}&:=\int_{|x-y|}^{\infty}\frac{t}{s^{1+\alpha}}\Big(\frac{|h|}{|x-y|}\Big)^{\delta'}\frac{1}{s^{n/2}|x-y|}e^{-c|x-y|^{2}/s}
\Big(1+\frac{\sqrt{s}}{\rho(x)}\Big)^{-N}\Big(1+\frac{\sqrt{s}}{\rho(y)}\Big)^{-N}ds.
\end{aligned}\right.$$

We first estimate $M_{6}$ and apply change of variables to obtain
\begin{eqnarray*}
M_{6}&\lesssim&\int^{\infty}_{0}\frac{t}{s^{1+\alpha}}\Big(\frac{|h|}{\sqrt{s}}\Big)^{\delta'}\frac{1}{s^{(n+1)/2}}e^{-c|x-y|^{2}/s}
\Big(1+\frac{\sqrt{s}}{\rho(x)}\Big)^{-N}\Big(1+\frac{\sqrt{s}}{\rho(y)}\Big)^{-N}ds\\
&\lesssim&t^{-(n+1)/2\alpha}\Big(\frac{t^{1/2\alpha}}{\rho(x)}\Big)^{-N}\Big(\frac{t^{1/2\alpha}}{\rho(y)}\Big)^{-N}\Big(\frac{|h|}{t^{1/2\alpha}}\Big)^{\delta'}
\int^{\infty}_{0}u^{-N-1-\alpha-(n+1)/2-\delta'/2}e^{-c|x-y|^{2}/t^{1/\alpha}u}du\\
&\lesssim&t^{-(n+1)/2\alpha}\Big(\frac{t^{1/2\alpha}}{\rho(x)}\Big)^{-N}\Big(\frac{t^{1/2\alpha}}{\rho(y)}\Big)^{-N}
\Big(\frac{|h|}{t^{1/2\alpha}}\Big)^{\delta'}\int^{\infty}_{0}\Big(\frac{r^{2}t^{1/\alpha}}{|x-y|^{2}}\Big)^{N+1+\alpha+(n+1)/2+\delta'/2}e^{-cr^{2}}
\frac{|x-y|^{2}}{t^{1/\alpha}r^{3}}dr\\
&\lesssim&\Big(\frac{t^{1/2\alpha}}{\rho(x)}\Big)^{-N}\Big(\frac{t^{1/2\alpha}}{\rho(y)}\Big)^{-N}\Big(\frac{|h|}{|x-y|}\Big)^{\delta'}
\frac{t^{1+N/\alpha}}{|x-y|^{2\alpha+2N+n+1}}.
\end{eqnarray*}

Similarly, for $M_{7}$, we have
\begin{eqnarray*}
M_{7}&\lesssim&\frac{1}{|x-y|}\int^{\infty}_{0}\frac{t}{s^{1+\alpha}}\Big(\frac{|h|}{\sqrt{s}}\Big)^{\delta'}\frac{1}{s^{n/2}}e^{-c|x-y|^{2}/s}
\Big(1+\frac{\sqrt{s}}{\rho(x)}\Big)^{-N}\Big(1+\frac{\sqrt{s}}{\rho(y)}\Big)^{-N}ds\\
&\lesssim&\Big(\frac{t^{1/2\alpha}}{\rho(x)}\Big)^{-N}\Big(\frac{t^{1/2\alpha}}{\rho(y)}\Big)^{-N}\Big(\frac{|h|}{|x-y|}\Big)^{\delta'}
\frac{t^{1+N/\alpha}}{|x-y|^{2\alpha+2N+n+1}},
\end{eqnarray*}
which gives
$$|\nabla_{x}K^{L}_{\alpha,t}(x+h,y)-\nabla_{x}K^{L}_{\alpha,t}(x,y)|\leq \frac{C_{N}t^{1+N/\alpha}}{|x-y|^{2\alpha+2N+n+1}}\Big(\frac{|h|}{|x-y|}\Big)^{\delta'}
 \Big(\frac{t^{1/2\alpha}}{\rho(x)}\Big)^{-N}\Big(\frac{t^{1/2\alpha}}{\rho(y)}\Big)^{-N}.$$

On the other hand, we can deduce from  Lemma \ref{le-3.4} that
\begin{eqnarray*}
&&|\nabla_{x}K^{L}_{\alpha,t}(x+h,y)-\nabla_{x}K^{L}_{\alpha,t}(x,y)|\\
&&\leq C_{N}\int^{\infty}_{0}s^{-(n+1)/2}\frac{1}{t^{1/\alpha}}\eta^{\alpha}_{1}\Big(\frac{s}{t^{1/\alpha}}\Big)\Big(\frac{|h|}{\sqrt{s}}\Big)^{\beta}
\Big(1+\frac{\sqrt{s}}{\rho(x)}\Big)^{-N}\Big(\frac{\sqrt{s}}{\rho(y)}\Big)^{-N}ds\\
&&\leq C_{N}\Big(\frac{t^{1/2\alpha}}{\rho(x)}\Big)^{-N}\Big(\frac{t^{1/2\alpha}}{\rho(y)}\Big)^{-N}
\Big(\frac{|h|}{t^{1/2\alpha}}\Big)^{\delta'}\frac{1}{t^{(n+1)/2\alpha}}.
\end{eqnarray*}

Finally,  the arbitrariness of $N$ indicates that
\begin{eqnarray*}
&&\Big(1+\frac{t^{1/2\alpha}}{\rho(x)}\Big)^{N}\Big(1+\frac{t^{1/2\alpha}}{\rho(y)}\Big)^{N}|\nabla_{x}K^{L}_{\alpha,t}(x+h,y)-\nabla_{x}K^{L}_{\alpha,t}(x,y)|\\
&&\quad\leq C_{N}\min\Bigg\{\Big(\frac{|h|}{|x-y|}\Big)^{\delta'}\frac{t^{1+N/\alpha}}{|x-y|^{2\alpha+2N+n+1}},\
 \Big(\frac{|h|}{t^{1/2\alpha}}\Big)^{\delta'}\frac{1}{t^{(n+1)/2\alpha}}\Bigg\},
\end{eqnarray*}
which proves Proposition \ref{prop-3.8}.

\end{proof}

\begin{proposition}\label{prop-3.3-2}
Assume that $V\in B_{q}$ for some $q>n$. Let $\alpha\in (0,1/2-n/2q)$. For every $N>0$,
 $$\Big|t^{1/2\alpha}\nabla_{x}e^{-tL^{\alpha}}(1)(x)\Big|\lesssim \min\Bigg\{\Big(\frac{t^{1/2\alpha}}{\rho(x)}\Big)^{1+2\alpha},\
 \Big(\frac{t^{1/2\alpha}}{\rho(x)}\Big)^{-N}\Bigg\}.$$
\end{proposition}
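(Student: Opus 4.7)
The plan is to establish the two bounds separately and then take the minimum. The decay estimate $(t^{1/2\alpha}/\rho(x))^{-N}$ is immediate from Proposition \ref{prop-3.3-1}: using $(1+t^{1/2\alpha}/\rho(y))^{-N}\le 1$ inside $\int|\nabla_{x}K^{L}_{\alpha,t}(x,y)|dy$ and making the change of variable $y\mapsto (y-x)/t^{1/2\alpha}$ in the remaining factor $(t^{1/2\alpha}+|x-y|)^{-n-2\alpha}$ yields
$$|\nabla_{x}e^{-tL^{\alpha}}(1)(x)|\lesssim t^{-1/2\alpha}\bigl(1+t^{1/2\alpha}/\rho(x)\bigr)^{-N},$$
which gives the second bound since $(1+s)^{-N}\lesssim s^{-N}$ for $s\ge 1$ (the only regime where this bound is the smaller of the two).

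The small-scale bound $(t^{1/2\alpha}/\rho(x))^{1+2\alpha}$ is more subtle; the key is to exploit $L\cdot 1=V$. From $\partial_{u}e^{-uL}(1)(x)=-e^{-uL}(V)(x)$ we obtain the Duhamel identity
$$\nabla_{x}e^{-sL}(1)(x)=-\int_{0}^{s}\nabla_{x}e^{-uL}(V)(x)\,du.$$
Inserting this into the subordinative formula and swapping the order of integration gives
$$\nabla_{x}e^{-tL^{\alpha}}(1)(x)=-\int_{0}^{\infty}\Psi_{t}^{\alpha}(u)\,\nabla_{x}e^{-uL}(V)(x)\,du,\qquad \Psi_{t}^{\alpha}(u):=\int_{u}^{\infty}\eta_{t}^{\alpha}(s)\,ds,$$
and the properties in (\ref{eq-heat}) give $\Psi_{t}^{\alpha}(u)\lesssim\min\{1,\,t/u^{\alpha}\}$.

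Next, I would estimate $|\nabla_{x}e^{-uL}(V)(x)|$. Using Lemma \ref{le3.1} in the unified form $|\nabla_{x}K^{L}_{u}(x,y)|\lesssim u^{-(n+1)/2}e^{-c|x-y|^{2}/u}\bigl(1+\sqrt{u}/\rho(x)\bigr)^{-N}$ (the $1/|x-y|$ singularity from the case $\sqrt u>|x-y|$ is absorbed into a factor $\sqrt u/|x-y|\cdot e^{-(c-c')|x-y|^{2}/u}$ and the surplus exponent), combining with Lemma \ref{lem2.6} to control $u^{-n/2}\!\int e^{-c|x-y|^{2}/u}V(y)\,dy$, I expect
$$|\nabla_{x}e^{-uL}(V)(x)|\lesssim u^{-3/2}\bigl(\sqrt{u}/\rho(x)\bigr)^{\delta}\qquad(\sqrt{u}\le\rho(x)),$$
with $\delta=2-n/q$, together with a rapidly decaying bound for $\sqrt{u}>\rho(x)$ obtained by choosing the $(1+\sqrt{u}/\rho(x))^{-N}$-exponent large enough to dominate the $(\sqrt{u}/\rho(x))^{l}$ growth from the second case of Lemma \ref{lem2.6}.

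Plugging these estimates into the Duhamel-subordination representation, the assumption $\alpha<1/2-n/(2q)$ is precisely what yields $\delta-1-2\alpha>0$; this ensures convergence at $u=0$ (through $\int_{0}u^{(\delta-3)/2}du$) and at $u=\infty$ (through the rapid decay combined with $\Psi_{t}^{\alpha}(u)\le t/u^{\alpha}$). A routine computation, splitting the $u$-integral at $u=\rho(x)^{2}$ and balancing the two branches, produces $|\nabla_{x}e^{-tL^{\alpha}}(1)(x)|\lesssim t/\rho(x)^{1+2\alpha}$, and multiplication by $t^{1/2\alpha}$ gives the claimed $(t^{1/2\alpha}/\rho(x))^{1+2\alpha}$. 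The main obstacle is matching the critical Kato-type exponent $\delta=2-n/q$ against the required integrability threshold $1+2\alpha$: this is exactly what the strict upper bound on $\alpha$ enforces, and failure of this strict inequality would reduce the small-scale decay rate to a logarithmically divergent borderline case.
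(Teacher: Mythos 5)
Your argument is correct in outline, and for the decay bound it coincides with the paper's Case 1 (integrate Proposition \ref{prop-3.3-1} in $y$). For the positive-power bound your route is organized genuinely differently: the paper splits the subordination integral at $s=\rho(x)^{2}$, using $\int_{\mathbb R^{n}}|\nabla_{x}K^{L}_{s}(x,y)|dy\lesssim s^{-1/2}$ for $s>\rho(x)^{2}$ and, for $s<\rho(x)^{2}$, the perturbation identity $h_{u}(x-y)-K^{L}_{u}(x,y)=\int_{0}^{u}\int h_{s}(x-z)V(z)K^{L}_{u-s}(z,y)\,dz\,ds$ against the free kernel to get $|\sqrt{s}\,\nabla_{x}e^{-sL}(1)(x)|\lesssim(\sqrt{s}/\rho(x))^{2-n/q}$; you instead differentiate in time via $L1=V$, pass to the kernel $\Psi^{\alpha}_{t}(u)=\int_{u}^{\infty}\eta^{\alpha}_{t}(s)ds\lesssim\min\{1,t/u^{\alpha}\}$ (which does follow from (\ref{eq-heat})), and estimate $\nabla_{x}e^{-uL}(V)$ directly from gradient Gaussian bounds plus Lemma \ref{lem2.6}, splitting the $u$-integral at $\rho(x)^{2}$. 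Both arguments hinge on the same Kato-type exponent $\delta=2-n/q$ — note that Lemma \ref{lem2.6} leaves its $\delta$ unspecified, so in your write-up you should extract $\delta=2-n/q$ from Lemma \ref{le-2.2}, exactly as the paper implicitly does — and both use $\alpha<1/2-n/(2q)$ in the identical way, to make $\int_{0}u^{(\delta-3)/2-\alpha}du$ converge. What your version buys is a single unified representation (the large-$u$ tail is killed by the $(1+\sqrt{u}/\rho(x))^{-N}$ factor rather than by a separate crude estimate); what it costs is having to justify $\partial_{u}e^{-uL}(1)(x)=-e^{-uL}(V)(x)$ and the Fubini step, which the paper's perturbation identity handles for free.

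One step as written is wrong, though easily repaired: the ``unified form'' $|\nabla_{x}K^{L}_{u}(x,y)|\lesssim u^{-(n+1)/2}e^{-c|x-y|^{2}/u}(\cdots)^{-N}$ cannot be obtained from Lemma \ref{le3.1} by ``absorbing'' the $1/|x-y|$ singularity: in the regime $\sqrt{u}>|x-y|$ the factor $\sqrt{u}/|x-y|$ is larger than $1$ and blows up on the diagonal, and lowering the Gaussian constant gives nothing there because $|x-y|^{2}/u$ is small. The correct fix is either to invoke Lemma \ref{le-3.2}, whose bound $u^{-(n+1)/2}(1+\sqrt{u}/\rho(x)+\sqrt{u}/\rho(y))^{-N}$ holds for all $x,y$ and is equivalent to the Gaussian form when $|x-y|<\sqrt{u}$, or to keep the singular bound of Lemma \ref{le3.1} and estimate the near-diagonal piece $u^{-n/2}\int_{|x-y|<\sqrt{u}}V(y)|x-y|^{-1}dy$ by H\"older together with the reverse H\"older inequality and Lemma \ref{le-2.2} (this works precisely because $q>n$). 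Either repair yields your claimed $|\nabla_{x}e^{-uL}(V)(x)|\lesssim u^{-3/2}(\sqrt{u}/\rho(x))^{2-n/q}$ for $\sqrt{u}\leq\rho(x)$, and with that in hand the remaining computation closes and gives $t^{1/2\alpha}|\nabla_{x}e^{-tL^{\alpha}}(1)(x)|\lesssim(t^{1/2\alpha}/\rho(x))^{1+2\alpha}$ as you state.
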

\begin{proof}
We divide the proof into two cases.

{\it Case 1: $t^{1/2\alpha}>\rho(x)$.} By Proposition \ref{prop-3.3-1},  we  use  a direct computation to obtain \begin{eqnarray*}
|t^{1/2\alpha}\nabla_{x}e^{-t{L}^{\alpha}}(1)(x)|
&\lesssim&\int_{\mathbb{R}^{n}}\frac{t}{(t^{1/2\alpha}+|x-y|)^{n+2\alpha}}\Big(1+\frac{t^{1/2\alpha}}{\rho(x)}\Big)^{-N}
\Big(1+\frac{t^{1/2\alpha}}{\rho(y)}\Big)^{-N}dy\\
&\lesssim&\Big(\frac{t^{1/2\alpha}}{\rho(x)}\Big)^{-N}\int_{\mathbb{R}^{n}}\frac{t}{(t^{1/2\alpha}+|x-y|)^{n+2\alpha}}dy\\
&\lesssim&\Big(\frac{t^{1/2\alpha}}{\rho(x)}\Big)^{-N}.
\end{eqnarray*}
Because $t^{1/2\alpha}>\rho(x)$, then
$$|t^{1/2\alpha}\nabla_{x}e^{-t{L}^{\alpha}}(1)(x)|\lesssim \min\Bigg\{\Big(\frac{t^{1/2\alpha}}{\rho(x)}\Big)^{\delta},\
 \Big(\frac{t^{1/2\alpha}}{\rho(x)}\Big)^{-N}\Bigg\}.$$

{\it Case 2: $t^{1/2\alpha}\leq\rho(x)$.} It follows from (\ref{eq-sub-for-1}) that
\begin{eqnarray*}
t^{1/2\alpha}\nabla_{x}e^{-tL^{\alpha}}(1)(x)=t^{1/2\alpha}\nabla_{x}\int_{\mathbb{R}^{n}}K^{L}_{\alpha,t}(x,y)dy=I_{1}+I_{2},
\end{eqnarray*}
where
$$\left\{\begin{aligned}
I_{1}&:=t^{1/2\alpha}\int^{\infty}_{\rho^{2}(x)}\eta_{t}^{\alpha}(s)\Big(\int_{\mathbb{R}^{n}}\nabla_{x}K^{L}_{s}(x,y)dy\Big)ds;\\
I_{2}&:=t^{1/2\alpha}\int_{0}^{\rho^{2}(x)}\eta_{t}^{\alpha}(s)\Big(\int_{\mathbb{R}^{n}}\nabla_{x}K^{L}_{s}(x,y)dy\Big)ds.
\end{aligned}
\right.$$
We claim that
\begin{equation}\label{eq-3.5}
L_{s}(x):=\int_{\mathbb{R}^{n}}\nabla_{x}K^{L}_{s}(x,y)dy\lesssim\frac{1}{\sqrt{s}}.
\end{equation}
In fact, by Lemma \ref{le3.1}, we have
$L_{s}(x)\lesssim L_{s,1}(x)+L_{s,2}(x)$, where
$$\left\{\begin{aligned}
&L_{s,1}(x):=\int_{\{y: \sqrt{s}\leq |x-y|\}}\frac{1}{s^{(n+1)/2}}e^{-c|x-y|^{2}/s}
\Big(1+\frac{\sqrt{s}}{\rho(x)}+\frac{\sqrt{s}}{\rho(y)}\Big)^{-N}dy;\\
&L_{s,2}(x):= \int_{\{y: |y-x|<\sqrt{s}\}}\frac{1}{s^{n/2}|x-y|}e^{-c|x-y|^{2}/s}
\Big(1+\frac{\sqrt{s}}{\rho(x)}+\frac{\sqrt{s}}{\rho(y)}\Big)^{-N}dy.\end{aligned}\right.$$

Taking $N$ large enough, it is easy to see that
\begin{eqnarray*}
L_{s,1}(x)&\lesssim&\int_{\mathbb{R}^{n}}\frac{1}{s^{(n+1)/2}}e^{-c|x-y|^{2}/s}dy\lesssim\frac{1}{\sqrt{s}}.
\end{eqnarray*}

Similarly, a direct calculus gives, together with changing variable: $|x-y|/\sqrt{s}=u$,
\begin{eqnarray*}
L_{s,2}(x)&\lesssim&\frac{1}{\sqrt{s}}\int_{\mathbb{R}^{n}}\frac{1}{s^{n/2}|x-y|/\sqrt{s}}e^{-c|x-y|^{2}/s}dy
\lesssim\frac{1}{\sqrt{s}}\int_{0}^{\infty}u^{n-2}e^{-cu^{2}}du
\lesssim\frac{1}{\sqrt{s}}.
\end{eqnarray*}

Then we can deduce from (\ref{eq-3.5}) that
\begin{eqnarray}\label{eq-3.15}
I_{1}&\lesssim&t^{1/2\alpha}\int^{\infty}_{\rho^{2}(x)}\eta_{t}^{\alpha}(s)\frac{ds}{\sqrt{s}}
\lesssim t^{1+1/2\alpha}\int^{\infty}_{\rho^{2}(x)}\frac{1}{s^{\alpha+3/2}}ds
\lesssim \Big(\frac{t^{1/2\alpha}}{\rho(x)}\Big)^{1+2\alpha}.
\end{eqnarray}

For $I_{2}$, it follows from the formula
$$h_{u}(x-y)-K^{L}_{u}(x,y)=\int^{u}_{0}\int_{\mathbb{R}^{n}}h_{s}(x-z)V(z)K_{u-s}(z,y)dzds$$
that for $\delta=2-n/q>1$,
\begin{eqnarray*}
|\sqrt{u}\nabla_{x}e^{-uL}(1)(x)|&\lesssim&\int^{u}_{0}\sqrt{\frac{u}{s}}\Big(\frac{\sqrt{s}}{\rho(x)}\Big)^{\delta}\frac{ds}{s}
\lesssim\Big(\frac{\sqrt{u}}{\rho(x)}\Big)^{\delta}.
\end{eqnarray*}
Therefore, noting that $0<2\alpha<1-n/q$, we can use the change of variables to obtain
\begin{eqnarray}\label{eq-3.14}
I_{2}&\lesssim&t^{1/2\alpha}\int_{0}^{\rho^{2}(x)}\eta_{t}^{\alpha}(s)\frac{1}{\sqrt{s}}\Big(\sqrt{s}\int_{\mathbb{R}^{n}}\nabla_{x}K^{L}_{s}(x,y)dy\Big)ds\\
&\lesssim&t^{1/2\alpha}\int_{0}^{\rho^{2}(x)}\frac{1}{t^{1/\alpha}}\eta_{1}^{\alpha}(s/t^{1/\alpha})\frac{1}{\sqrt{s}}\Big(\frac{\sqrt{s}}{\rho(x)}\Big)^{\delta}ds\nonumber\\
&\lesssim&t^{1/2\alpha-1/\alpha}\int_{0}^{\rho(x)^{2}/t^{1/\alpha}}\eta_{1}^{\alpha}(\tau)\frac{1}{\sqrt{t^{1/\alpha}\tau}}
\Big(\frac{\sqrt{t^{1/\alpha}\tau}}{\rho(x)}\Big)^{\delta}t^{1/\alpha}d\tau\nonumber\\
&\lesssim&\Big(\frac{t^{1/2\alpha}}{\rho(x)}\Big)^{1+2\alpha}.\nonumber
\end{eqnarray}

The above estimates (\ref{eq-3.14})\ and \  (\ref{eq-3.15}) imply that
\begin{eqnarray*}
|t^{1/2\alpha}\nabla_{x}e^{-tL^{\alpha}}(1)(x)|\lesssim\Big(\frac{t^{1/2\alpha}}{\rho(x)}\Big)^{1+2\alpha}.
\end{eqnarray*}
\end{proof}

\subsection{Estimation on time-fractional derivatives}\label{sec-3.3}
In this section we give some gradient estimate for the fractional heat kernel associated with the variable $t$.
Define an operator
$$D_{\alpha,t}^{L,\beta}(f)=t^{\beta}\partial^{\beta}_{t}e^{-tL^{\alpha}}f,\ \ \alpha\in(0,1)\ \&\ \beta>0.$$
Denote by $D_{\alpha,t}^{L,\beta}(\cdot,\cdot)$ the integral kernel of $D_{\alpha,t}^{L,\beta}$. Then we can get the following proposition.
\begin{proposition}\label{prop-3.6}
Let $\alpha\in(0,1)$ and $\beta>0$. For every $N>0$, there exists a constant $C_{N}>0$ such that
\begin{equation}\label{eq-3.6}
|D_{\alpha,t}^{L,\beta}(x,y)|\leq \frac{C_{N}t^{\beta}}{(t^{1/2\alpha}+|x-y|)^{n+2\alpha\beta}}\Big(1+\frac{t^{1/2\alpha}}{\rho(x)}+\frac{t^{1/2\alpha}}{\rho(y)}\Big)^{-N}.
\end{equation}
\end{proposition}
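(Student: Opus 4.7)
The plan is to start from the definition (\ref{eq-1.4}) of the time-fractional derivative,
\begin{equation*}
\partial_{t}^{\beta}K^{L}_{\alpha,t}(x,y)=\frac{e^{-i\pi(m-\beta)}}{\Gamma(m-\beta)}\int^{\infty}_{0}\partial^{m}_{t}K^{L}_{\alpha,t+u}(x,y)u^{m-\beta-1}\,du, \qquad m=[\beta]+1,
\end{equation*}
take absolute values, and apply Proposition \ref{pro2.6} (i) to the integrand, which yields the bound
$$|\partial_{t}^{m}K^{L}_{\alpha,s}(x,y)|\leq\frac{C_{N}}{(s^{1/2\alpha}+|x-y|)^{n+2\alpha m}}\Big(1+\frac{s^{1/2\alpha}}{\rho(x)}+\frac{s^{1/2\alpha}}{\rho(y)}\Big)^{-N}$$
with $s=t+u$. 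I then multiply by $t^{\beta}$ and split the resulting integral at $u=t$ into $I_{\text{near}}:=\int_{0}^{t}$ and $I_{\text{far}}:=\int_{t}^{\infty}$.

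For $I_{\text{near}}$, since $0<u<t$ gives $t+u\asymp t$, the factors $(t+u)^{1/2\alpha}+|x-y|$ and $1+(t+u)^{1/2\alpha}/\rho(\cdot)$ are comparable to their values at $u=0$, so they may be pulled outside. The remaining $u$-integral $\int_{0}^{t}u^{m-\beta-1}\,du=t^{m-\beta}/(m-\beta)$ is finite because $m-\beta\in(0,1]$. This produces
$$t^{\beta}|I_{\text{near}}|\lesssim \frac{t^{m}}{(t^{1/2\alpha}+|x-y|)^{n+2\alpha m}}\Big(1+\frac{t^{1/2\alpha}}{\rho(x)}+\frac{t^{1/2\alpha}}{\rho(y)}\Big)^{-N},$$
and I trade the excess $t^{m-\beta}$ for spatial decay via the elementary inequality $(t^{1/2\alpha}+|x-y|)^{2\alpha(m-\beta)}\geq t^{m-\beta}$, which converts the $n+2\alpha m$ power to the desired $n+2\alpha\beta$ power.

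For $I_{\text{far}}$, the key observations are: (a) since $u\geq t$, the monotonicity $u\mapsto (1+u^{1/2\alpha}/\rho)^{-N}$ lets me replace the $\rho$-factor at $s=t+u$ by the corresponding one at $s=t$ and pull it out of the integral; (b) on $u\geq t$ we have $t+u\asymp u$. After this the integral reduces to $\int_{t}^{\infty}u^{m-\beta-1}(u^{1/2\alpha}+|x-y|)^{-(n+2\alpha m)}\,du$, which I evaluate by substituting $u=s^{2\alpha}$ to obtain
$$2\alpha\int_{t^{1/2\alpha}}^{\infty}\frac{s^{2\alpha(m-\beta)-1}}{(s+|x-y|)^{n+2\alpha m}}\,ds.$$
Splitting this last integral according to whether $s\lessgtr|x-y|$ (or, when $t^{1/2\alpha}\geq|x-y|$, directly using $s+|x-y|\asymp s$) and integrating shows it is bounded by $C\,(t^{1/2\alpha}+|x-y|)^{-(n+2\alpha\beta)}$, because the exponent $2\alpha(m-\beta)>0$ makes the contribution from $s\in[t^{1/2\alpha},|x-y|]$ converge and the exponent $-n-2\alpha\beta-1$ makes the tail converge. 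Multiplying by $t^{\beta}$ then gives the target estimate for $I_{\text{far}}$.

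The main obstacle is the $I_{\text{far}}$ piece: a naive bound using only spatial decay leaves a logarithmically divergent $\int_{t}^{\infty}du/u$, so one must simultaneously exploit the fact that $m>\beta$ (to gain the exponent $2\alpha(m-\beta)>0$ from the change of variables) and the monotonicity of the $\rho$-factor to transfer $u$ back to $t$. Once these two points are in hand, combining the bounds for $I_{\text{near}}$ and $I_{\text{far}}$ yields (\ref{eq-3.6}).
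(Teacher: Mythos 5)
Your argument is correct, and it takes a genuinely different (and more direct) route than the paper. The paper follows its standard two-bound template: from the representation $t^{\beta}\partial_{t}^{\beta}e^{-tL^{\alpha}}=c_{\beta}t^{\beta}\int_{0}^{\infty}\partial_{t}^{m}e^{-(t+s)L^{\alpha}}\,s^{m-\beta-1}ds$ it first derives the ``on-diagonal'' bound $C_{N}t^{-n/2\alpha}\big(1+t^{1/2\alpha}/\rho(x)+t^{1/2\alpha}/\rho(y)\big)^{-N}$ by discarding the spatial decay, then derives the separate ``off-diagonal'' bound $C_{N}t^{\beta}|x-y|^{-(n+2\alpha\beta)}\big(\cdots\big)^{-N}$ by going back through the subordination formula (the $\eta^{\alpha}_{1}$ representation) and the heat-kernel derivative estimates of Proposition \ref{prop-2.1}, and finally recombines the two via a minimum together with the arbitrariness of $N$ — the same scheme used in Propositions \ref{prop-3.1}, \ref{prop-3.2} and \ref{pro2.6}. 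You instead keep the full sharp estimate of Proposition \ref{pro2.6}(i) (divided by $s^{m}$) inside the fractional-derivative integral, split at $u=t$, use monotonicity of $s\mapsto\big(1+s^{1/2\alpha}/\rho\big)^{-N}$ and of the spatial factor to freeze them at $s=t$, and then do the one-variable calculus (the substitution $u=s^{2\alpha}$ and the split at $s=|x-y|$), obtaining the stated bound in a single pass, with $N$ fixed throughout and no second appeal to the subordination formula. Your approach is shorter and more self-contained given Proposition \ref{pro2.6}; the paper's buys uniformity of method across all its kernel estimates (and its min-plus-arbitrary-$N$ device is what it reuses elsewhere), at the cost of an extra pass through $\eta^{\alpha}_{1}$ and Proposition \ref{prop-2.1}. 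The key points that make your version work — the convergence of $\int_{0}^{t}u^{m-\beta-1}du$ since $m-\beta\in(0,1]$, trading the surplus $t^{m-\beta}$ for $(t^{1/2\alpha}+|x-y|)^{2\alpha(m-\beta)}$, and the positive exponent $2\alpha(m-\beta)$ controlling the range $t^{1/2\alpha}\le s\le|x-y|$ — are all handled correctly.
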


\begin{proof}
The following two cases are considered.

{\it Case 1: $\beta\in(0,1)$.}
It is easy to see that
\begin{eqnarray*}
t^{\beta}\partial_{t}^{\beta}e^{-tL^{\alpha}}&=&c_{\beta}t^{\beta}\int^{\infty}_{0}\partial_{t}e^{-(t+s)L^{\alpha}}\frac{ds}{s^{\beta}}
= c_{\beta}t^{\beta}\int^{\infty}_{0}(-L)^{\alpha}e^{-(t+s)L^{\alpha}}\frac{ds}{s^{\beta}}\\
&=&c_{\beta}t^{\beta}\int^{\infty}_{0}(t+s)(-L)^{\alpha}e^{-(t+s)L^{\alpha}}\frac{ds}{(t+s)s^{\beta}},
\end{eqnarray*}
which, together with Proposition \ref{pro2.6}, gives
\begin{eqnarray*}
\Big|D^{L,\beta}_{\alpha,t}(x,y)\Big|&\leq&C_{N}t^{\beta}\int^{\infty}_{0}\frac{1}{((t+s)^{1/2\alpha}+|x-y|)^{n+2\alpha}}
\Big(1+\frac{(t+s)^{1/2\alpha}}{\rho(x)}\Big)^{-N}\Big(1+\frac{(t+s)^{1/2\alpha}}{\rho(y)}\Big)^{-N}\frac{ds}{s^{\beta}}\\
&\leq&C_{N}t^{\beta}\int^{\infty}_{0}\frac{1}{(t+s)^{n/2\alpha+1}}
\Big(\frac{(t+s)^{1/2\alpha}}{\rho(x)}\Big)^{-N}\Big(\frac{(t+s)^{1/2\alpha}}{\rho(y)}\Big)^{-N}\frac{ds}{s^{\beta}}\\
&\leq&C_{N}t^{\beta}\rho(x)^{N}\rho(y)^{N}\int^{\infty}_{0}(t+s)^{-n/2\alpha-N/\alpha-1}s^{-\beta}ds\\
&\leq&\frac{C_{N}}{t^{n/2\alpha}}\Big(\frac{t^{1/2\alpha}}{\rho(x)}\Big)^{-N}\Big(\frac{t^{1/2\alpha}}{\rho(y)}\Big)^{-N}.
\end{eqnarray*}

One the other hand, since $e^{-tL^{\alpha}}=\int^{\infty}_{0}\eta^{\alpha}_{1}(\tau)e^{-t^{1/\alpha}\tau L}d\tau$,
\begin{eqnarray*}
 t^{\beta}\partial^{\beta}_{t}e^{-tL^{\alpha}} &\simeq&t^{\beta}\int^{\infty}_{0}\partial_{r}\Big(\int^{\infty}_{0}\eta^{\alpha}_{1}(\tau)e^{-(t+r)^{1/\alpha}\tau L}d\tau\Big)\frac{dr}{r^{\beta}}  \\
  &=&t^{\beta}\int^{\infty}_{0}\Big(\int^{\infty}_{0}\eta^{\alpha}_{1}(\tau)(t+r)^{1/\alpha-1}\tau Le^{-(t+r)^{1/\alpha}\tau L}d\tau\Big)\frac{dr}{r^{\beta}}\\
        &=&C_{\beta}t^{\beta}\int^{\infty}_{0}\Big(\int^{\infty}_{0}Q^{L}_{\sqrt{(t+r)^{1/\alpha}\tau},1}\frac{dr}{(t+r)r^{\beta}}\Big)\eta^{\alpha}_{1}(\tau)d\tau.
\end{eqnarray*}
By Proposition \ref{prop-2.1}, we can get
\begin{eqnarray*}
  \Big|D_{\alpha,t}^{L,\beta}(x,y)\Big| &\leq& C_{N}t^{\beta}\int^{\infty}_{0}\eta^{\alpha}_{1}(\tau)\Bigg\{\int^{\infty}_{0}((t+r)^{1/\alpha}\tau)^{-n/2}e^{-c|x-y|^{2}/(t+r)^{1/\alpha}\tau}\\
  &&\quad  \Big(1+\frac{(t+r)^{1/2\alpha}\sqrt{\tau}}{\rho(x)}\Big)^{-N}\Big(1+\frac{(t+r)^{1/2\alpha}\sqrt{\tau}}{\rho(y)}\Big)^{-N}
  \frac{dr}{r^{\beta}(t+r)}\Bigg\}d\tau \\
         &\leq&C_{N}\frac{t^{\beta}\rho(x)^{N}\rho(y)^{N}}{|x-y|^{n+2\alpha\beta}}\int^{\infty}_{0}\eta^{\alpha}_{1}(\tau)\tau^{\alpha\beta-N}
   \Big(\int^{\infty}_{0}(t+r)^{\beta-N/\alpha-1}r^{-\beta}dr\Big)d\tau  \\
   &\leq& \frac{C_{N}t^{\beta}}{|x-y|^{n+2\alpha\beta}}\Big(\frac{t^{1/2\alpha}}{\rho(x)}\Big)^{-N}\Big(\frac{t^{1/2\alpha}}{\rho(y)}\Big)^{-N}.
\end{eqnarray*}
By the arbitrariness of $N$, we obtain
\begin{eqnarray*}
|D_{\alpha,t}^{L,\beta}(x,y)|&\leq&C_{N}\min\Big\{\frac{1}{t^{n/2\alpha}},\ \frac{t^{\beta}}{|x-y|^{n+2\alpha\beta}}\Big\}\Big(1+\frac{t^{1/2\alpha}}{\rho(x)}\Big)^{-N}\Big(1+\frac{t^{1/2\alpha}}{\rho(y)}\Big)^{-N}\\
&\leq& \frac{C_{N}t^{\beta}}{(t^{1/2\alpha}+|x-y|)^{n+2\beta \alpha}}\Big(1+\frac{t^{1/2\alpha}}{\rho(x)}+\frac{t^{1/2\alpha}}{\rho(y)}\Big)^{-N}.\nonumber
\end{eqnarray*}
{\it Case 2: $\beta\geq 1$.} Let $m=[\beta]+1$. We can get
\begin{eqnarray*}
 t^{\beta}\partial_{t}^{\beta}e^{-tL^{\alpha}}&=&c_{\beta}t^{\beta}\int^{\infty}_{0}\partial^{m}_{t}e^{-(t+s)L^{\alpha}}\frac{ds}{s^{\beta+1-m}}\\
&=&c_{\beta}t^{\beta}\int^{\infty}_{0}(-L)^{m\alpha}e^{-(t+s)L^{\alpha}}\frac{ds}{s^{1+\beta-m}}\\
&=&c_{\beta}t^{\beta}\int^{\infty}_{0}(t+s)^{m}(-L)^{m\alpha}e^{-(t+s)L^{\alpha}}\frac{ds}{(t+s)^{m}s^{1+\beta-m}}.
\end{eqnarray*}
It follows from  Proposition \ref{pro2.6} that
\begin{eqnarray*}
\Big|D_{\alpha,t}^{L,\beta}(x,y)\Big| &\leq&C_{N}t^{\beta}\int^{\infty}_{0}\frac{(t+s)^{m}}{(t+s)^{n/2\alpha+m}}
\Big(\frac{(t+s)^{1/2\alpha}}{\rho(x)}\Big)^{-N}\Big(\frac{(t+s)^{1/2\alpha}}{\rho(y)}\Big)^{-N}\frac{ds}{(t+s)^{m}s^{1+\beta-m}}\\
&\leq&C_{N}t^{\beta}\rho(x)^{N}\rho(y)^{N}\int^{\infty}_{0}(t+s)^{-n/2\alpha-N/2\alpha-m}\frac{ds}{s^{1+\beta-m}}\\
&\leq& \frac{C_{N}}{t^{n/2\alpha}}\Big(1+\frac{t^{1/2\alpha}}{\rho(x)}\Big)^{-N}\Big(1+\frac{t^{1/2\alpha}}{\rho(y)}\Big)^{-N}.
\end{eqnarray*}
On the other hand, we obtain
\begin{eqnarray*}
  \Big|D_{\alpha,t}^{L,\beta}(x,y)\Big| &\leq& t^{\beta}\int^{\infty}_{0}\Bigg\{\int^{\infty}_{0}\Big|\partial^{m}_{r}K^{ L}_{(t+r)^{1/\alpha}\tau}(x,y)\Big|\frac{dr}{r^{\beta+1-m}}\Bigg\}\eta^{\alpha}_{1}(\tau)d\tau \\
   &\leq&C_{N}t^{\beta}\int^{\infty}_{0}\Bigg\{\int^{\infty}_{0}(t+r)^{-m}((t+r)^{1/\alpha}\tau)^{-n/2}e^{-c|x-y|^{2}/(t+r)^{1/\alpha}\tau}\\
   &&\quad
   \Big(\frac{\sqrt{\tau}(t+r)^{1/2\alpha}}{\rho(x)}\Big)^{-N}\Big(\frac{\sqrt{\tau}(t+r)^{1/2\alpha}}{\rho(y)}\Big)^{-N}\frac{dr}{r^{\beta+1-m}}
   \Bigg\}\eta^{\alpha}_{1}(\tau)d\tau  \\
     &\leq&C_{N}\frac{t^{\beta}\rho(x)^{N}\rho(y)^{N}}{|x-y|^{n+2\alpha\beta}}t^{-N/\alpha}\int^{\infty}_{0}\Big(\int^{\infty}_{0}
   (1+u)^{-m+\beta-N/\beta}u^{m-\beta-1}du\Big)\eta^{\alpha}_{1}(\tau)\tau^{\alpha\beta-N}d\tau  \\
   &\leq& \frac{C_{N}t^{\beta}}{|x-y|^{n+2\alpha\beta}}\Big(1+\frac{t^{1/2\alpha}}{\rho(x)}\Big)^{-N}\Big(1+\frac{t^{1/2\alpha}}{\rho(y)}\Big)^{-N},
     \end{eqnarray*}
which indicates (\ref{eq-3.6}) holds.
\end{proof}

In the next proposition, we give the Lipschitz continuity of $D^{L}_{\alpha,t}(\cdot,\cdot)$.
\begin{proposition}\label{prop-3.4}
Let $\alpha\in(0,1)$ and $\beta>0$. Let $0<\delta'\leq\delta=\min\{2\alpha,\delta_{0}\}$. For every $N>0$, there exists a constant $C_{N}>0$ such that for all $|h|\leq t^{1/2\alpha}$,
$$|D_{\alpha,t}^{L,\beta}(x+h,y)-D_{\alpha,t}^{L,\beta}(x,y)|\leq C_{N}\Big(\frac{|h|}{t^{1/2\alpha}}\Big)^{\delta'}\frac{t^{\beta}}{(t^{1/2\alpha}+|x-y|)^{n+2 \alpha\beta }}\Big(1+\frac{t^{1/2\alpha}}{\rho(x)}+\frac{t^{1/2\alpha}}{\rho(y)}\Big)^{-N}.$$
\end{proposition}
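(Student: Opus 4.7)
The plan is to imitate the proof of Proposition \ref{prop-3.6}, replacing the kernel bound (i) of Proposition \ref{pro2.6} with the Hölder increment bound (ii). The key observation is that the representations of $t^{\beta}\partial^{\beta}_{t}e^{-tL^{\alpha}}$ used in the previous proof express the operator as a weighted integral (over $s$) of $(t+s)^{m}\partial_{t}^{m}e^{-(t+s)L^{\alpha}}$ for suitable $m$, whose kernel is (up to a constant) $\widetilde{D}^{L,m}_{\alpha,t+s}(x,y)$. Taking the spatial increment in $x$ inside the integral reduces everything to a spatial increment of $\widetilde{D}^{L,m}_{\alpha,t+s}$, for which Proposition \ref{pro2.6}(ii) supplies precisely the gain $(|h|/(t+s)^{1/2\alpha})^{\delta'}$.

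First I would dispose of the case $\beta\in(0,1)$. Starting from
\begin{equation*}
t^{\beta}\partial_{t}^{\beta}e^{-tL^{\alpha}}=c_{\beta}t^{\beta}\int^{\infty}_{0}(t+s)L^{\alpha}e^{-(t+s)L^{\alpha}}\frac{ds}{(t+s)s^{\beta}},
\end{equation*}
I subtract the evaluations at $x+h$ and $x$ under the integral. Since the hypothesis $|h|\leq t^{1/2\alpha}\leq(t+s)^{1/2\alpha}$ is available for all $s>0$, Proposition \ref{pro2.6}(ii) with $m=1$ applies and yields
\begin{equation*}
\Big|\widetilde{D}^{L,1}_{\alpha,t+s}(x+h,y)-\widetilde{D}^{L,1}_{\alpha,t+s}(x,y)\Big|\leq C_{N}\Big(\frac{|h|}{(t+s)^{1/2\alpha}}\Big)^{\delta'}\frac{(t+s)}{((t+s)^{1/2\alpha}+|x-y|)^{n+2\alpha}}W_{N}(t+s,x,y),
\end{equation*}
where $W_{N}(\tau,x,y)=(1+\tau^{1/2\alpha}/\rho(x)+\tau^{1/2\alpha}/\rho(y))^{-N}$. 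Because $(t+s)^{1/2\alpha}\geq t^{1/2\alpha}$, the factor $(|h|/(t+s)^{1/2\alpha})^{\delta'}$ can be majorized by $(|h|/t^{1/2\alpha})^{\delta'}$ and pulled out of the integral. What is left is precisely the same two $s$-integrals that appeared in the proof of Proposition \ref{prop-3.6}: one estimated by ignoring $|x-y|$ to produce an $O(t^{-n/2\alpha})$ bound, the other estimated by using $((t+s)^{1/2\alpha}+|x-y|)^{-n-2\alpha}\leq C|x-y|^{-n-2\alpha-2N}t^{N/\alpha}$ to produce an $O(t^{\beta}/|x-y|^{n+2\alpha\beta+2N})$ bound. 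Taking the minimum and invoking the arbitrariness of $N$ gives the claimed denominator $(t^{1/2\alpha}+|x-y|)^{n+2\alpha\beta}$ together with the weight $W_{N}$.

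For $\beta\geq 1$, setting $m=[\beta]+1$, I use the analogous identity
\begin{equation*}
t^{\beta}\partial_{t}^{\beta}e^{-tL^{\alpha}}=c_{\beta}t^{\beta}\int^{\infty}_{0}(t+s)^{m}L^{m\alpha}e^{-(t+s)L^{\alpha}}\frac{ds}{(t+s)^{m}s^{1+\beta-m}},
\end{equation*}
and repeat the same procedure with Proposition \ref{pro2.6}(ii) applied at this $m$. Observe that $1+\beta-m=\beta-[\beta]\in[0,1)$, so the singularity at $s=0$ is integrable, and the $(t+s)^{\beta-m-N/\alpha}$ decay at infinity is summable once $N$ is fixed sufficiently large. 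The remainder of the computation is identical to the $\beta\in(0,1)$ case.

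I do not expect a genuine obstacle, only careful bookkeeping. The one place that requires a moment's thought is the passage from $(|h|/(t+s)^{1/2\alpha})^{\delta'}$ to $(|h|/t^{1/2\alpha})^{\delta'}$: this costs nothing because $\delta'>0$ and $(t+s)^{1/2\alpha}\geq t^{1/2\alpha}$, but one must resist the temptation to keep the sharper factor, which would not combine cleanly with the $s^{-\beta}$ (or $s^{m-\beta-1}$) weight. Once this reduction is made, the two $s$-integrations, the minimum step, and the arbitrariness of $N$ run exactly as in Proposition \ref{prop-3.6}, delivering the stated Hölder bound.
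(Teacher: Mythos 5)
Your overall scheme—differentiate under the $s$-integral in the representation $t^{\beta}\partial_{t}^{\beta}e^{-tL^{\alpha}}=c_{\beta}t^{\beta}\int_{0}^{\infty}\partial_{t}^{m}e^{-(t+s)L^{\alpha}}s^{m-\beta-1}ds$, apply Proposition \ref{pro2.6}(ii) to $\widetilde{D}^{L,m}_{\alpha,t+s}$ (legitimate, since $|h|\leq t^{1/2\alpha}\leq(t+s)^{1/2\alpha}$), majorize $(|h|/(t+s)^{1/2\alpha})^{\delta'}\leq(|h|/t^{1/2\alpha})^{\delta'}$, and then produce an on-diagonal bound $t^{-n/2\alpha}$ and an off-diagonal bound $t^{\beta}|x-y|^{-(n+2\alpha\beta)}$—is viable, and your first bound coincides with the paper's. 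But the step you propose for the off-diagonal bound fails: the inequality $((t+s)^{1/2\alpha}+|x-y|)^{-n-2\alpha}\leq C|x-y|^{-n-2\alpha-2N}t^{N/\alpha}$ is false (take $s$ with $(t+s)^{1/2\alpha}\sim|x-y|$ and $t^{1/2\alpha}\ll|x-y|$: the left side is $\sim|x-y|^{-n-2\alpha}$ while the right side is smaller by $(t^{1/2\alpha}/|x-y|)^{2N}$), and no bound of the form $O(t^{\beta}/|x-y|^{n+2\alpha\beta+2N})$ can hold for large $N$, since Proposition \ref{pro2.6}(ii) only supplies polynomial decay of the fixed order $n+2\alpha m$ and the $s$-integral genuinely decays like $|x-y|^{-(n+2\alpha\beta)}$. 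The trade of extra $|x-y|$-decay for powers of $t$, which you are importing from Propositions \ref{prop-3.1} and \ref{prop-3.6}, comes from the Gaussian factor $e^{-c|x-y|^{2}/s}$; this is precisely why the paper's proofs of Proposition \ref{prop-3.6} and of this proposition obtain their second bound not from Proposition \ref{pro2.6} but by returning to the subordination formula and the H\"older estimate for $Q^{L}_{t,m}$ in Proposition \ref{prop-2.1}(ii). So your assertion that "the same two $s$-integrals as in Proposition \ref{prop-3.6}" remain is inaccurate for the second one, and as written your second estimate does not go through.

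The gap is repairable inside your framework, and in fact more simply than in the paper: keep the weight and use monotonicity in $s$, namely $\bigl(1+\frac{(t+s)^{1/2\alpha}}{\rho(x)}+\frac{(t+s)^{1/2\alpha}}{\rho(y)}\bigr)^{-N}\leq\bigl(1+\frac{t^{1/2\alpha}}{\rho(x)}+\frac{t^{1/2\alpha}}{\rho(y)}\bigr)^{-N}$, pull it and the H\"older factor out, and estimate the remaining integral directly by splitting at $s=|x-y|^{2\alpha}$:
\begin{equation*}
t^{\beta}\int_{0}^{\infty}\frac{s^{m-\beta-1}\,ds}{((t+s)^{1/2\alpha}+|x-y|)^{n+2\alpha m}}
\lesssim t^{\beta}\Big(\frac{|x-y|^{2\alpha(m-\beta)}}{|x-y|^{n+2\alpha m}}+\int_{|x-y|^{2\alpha}}^{\infty}s^{-n/2\alpha-\beta-1}ds\Big)
\lesssim \frac{t^{\beta}}{|x-y|^{n+2\alpha\beta}}.
\end{equation*}
Combining this with your $t^{-n/2\alpha}$ bound gives $t^{\beta}(t^{1/2\alpha}+|x-y|)^{-(n+2\alpha\beta)}$ times the weight, i.e.\ the Proposition, without any juggling of $t^{N/\alpha}$ against $\rho(x)^{N}\rho(y)^{N}$; alternatively, follow the paper and keep the Gaussian at the heat-kernel level.
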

\begin{proof}
It is equivalent to verify
\begin{eqnarray}\label{eq-3.7}
&&|D_{\alpha,t}^{L,\beta}(x+h,y)-D_{\alpha,t}^{L,\beta}(x,y)|\leq C_{N}\Big(\frac{|h|}{t^{1/2\alpha}}\Big)^{\delta'}\min\Big\{\frac{1}{t^{n/2\alpha}},\ \frac{t^{\beta}}{|x-y|^{n+2\alpha\beta}}\Big\}\Big(1+\frac{t^{1/2\alpha}}{\rho(x)}+\frac{t^{1/2\alpha}}{\rho(y)}\Big)^{-N}.
\end{eqnarray}
Without loss of generality, for $m=[\beta]+1$,  it holds
\begin{eqnarray*}
  t^{\beta}\partial^{\beta}_{t}e^{-tL^{\alpha}}(x,y) = c_{\beta}t^{\beta}\int^{\infty}_{0}(t+s)^{m}(-L)^{m\alpha}e^{-(t+s)L^{\alpha}}\frac{ds}{(t+s)^{m}s^{1+\beta-m}}.
\end{eqnarray*}
By Proposition \ref{pro2.6}, we can get
\begin{eqnarray*}
  &&\Big|D_{\alpha,t}^{L,\beta}(x+h,y)-D_{\alpha,t}^{L,\beta}(x,y)\Big| \\
&&\leq C_{N} t^{\beta}\int^{\infty}_{0}\frac{(t+s)^{m}({|h|}/{(t+s)^{1/2\alpha}})^{\delta'}}{((t+s)^{1/2\alpha}+|x-y|)^{n+2\alpha m}}\Big(1+\frac{(t+s)^{1/2\alpha}}{\rho(x)}\Big)^{-N}\Big(1+\frac{(t+s)^{1/2\alpha}}{\rho(y)}\Big)^{-N}\frac{ds}{(t+s)^{m}s^{1+\beta-m}}\\
&&\leq C_{N}t^{\beta}|h|^{\delta'}\rho(x)^{N}\rho(y)^{N}\int^{\infty}_{0}(t+s)^{-(n+\delta')/2\alpha-N/2\alpha-m}\frac{ds}{s^{1+\beta-m}}\\
&&\leq C_{N}t^{-n/2\alpha}\Big(\frac{t^{1/2\alpha}}{\rho(x)}\Big)^{-N}\Big(\frac{t^{1/2\alpha}}{\rho(y)}\Big)^{-N}\Big(\frac{|h|}{t^{1/2\alpha}}\Big)^{\delta'}.
\end{eqnarray*}
On the other hand, we obtain
\begin{eqnarray*}
  &&\Big|D_{\alpha,t}^{L,\beta}(x+h,y)-D_{\alpha,t}^{L,\beta}(x,y)\Big|\\
     &&\leq C_{N}t^{\beta}\int^{\infty}_{0}\Bigg\{\int^{\infty}_{0}(t+r)^{-m-n/2\alpha}\tau^{-n/2}\Big(\frac{|h|}{\sqrt{(t+r)^{1/\alpha}\tau}}\Big)^{\delta'}\\
   &&\quad \times e^{-c|x-y|^{2}/(t+r)^{1/\alpha}\tau}
   \Big(\frac{\sqrt{\tau}(t+r)^{1/2\alpha}}{\rho(x)}\Big)^{-N}\Big(\frac{\sqrt{\tau}(t+r)^{1/2\alpha}}{\rho(y)}\Big)^{-N}\frac{dr}{r^{\beta+1-m}}
   \Bigg\}\eta^{\alpha}_{1}(\tau)d\tau  \\
      &&\leq C_{N}\frac{t^{\beta}|h|^{\delta'}\rho(x)^{N}\rho(y)^{N}}{|x-y|^{n+2\alpha\beta}}\int^{\infty}_{0}\Bigg\{\int^{\infty}_{0}
   (t+r)^{-m+\beta-N/\alpha-\delta'/2\alpha}r^{m-\beta-1}
   dr\Bigg\}\eta^{\alpha}_{1}(\tau)\tau^{\alpha\beta-N-\delta'/2}d\tau \\
    &&\leq \frac{C_{N}t^{\beta}}{|x-y|^{n+2\alpha\beta}}\Big(\frac{|h|}{t^{1/2\alpha}}\Big)^{\delta'}
   \Big(1+\frac{t^{1/2\alpha}}{\rho(x)}\Big)^{-N}\Big(1+\frac{t^{1/2\alpha}}{\rho(y)}\Big)^{-N},
   \end{eqnarray*}
which implies (\ref{eq-3.7}).

\end{proof}

\begin{proposition}\label{prop-3.5}
Let $\alpha\in(0,1)$,  $\beta>0$ and   $0<\delta'\leq\min\{2\alpha,\delta_{0}\}$. For every $N>0$, there exist constants $C_{N}>0$ such that
$$\Big|\int_{\mathbb{R}^{n}}D_{\alpha,t}^{L,\beta}(x,y)dy\Big|\leq C_{N}\frac{(t^{1/2\alpha}/\rho(x))^{\delta'}}{(1+t^{1/2\alpha}/\rho(x))^{N}}.$$
\end{proposition}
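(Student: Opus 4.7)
The plan is to reduce the fractional time derivative to an integer-order derivative and then invoke Proposition \ref{pro2.6}(iii). Set $m=[\beta]+1$. Following the representation used in the proof of Proposition \ref{prop-3.6},
\begin{equation*}
t^{\beta}\partial_{t}^{\beta}e^{-tL^{\alpha}} = c_{\beta}t^{\beta}\int_{0}^{\infty} \partial_{r}^{m} e^{-(t+r)L^{\alpha}}\, r^{m-\beta-1}\,dr,
\end{equation*}
and identifying $\partial_{r}^{m}e^{-(t+r)L^{\alpha}}$ as the operator with kernel $(-1)^{m}(t+r)^{-m}\widetilde{D}^{L,m}_{\alpha,t+r}(x,y)$, we obtain
\begin{equation*}
D^{L,\beta}_{\alpha,t}(x,y) = c_{\beta}(-1)^{m}t^{\beta}\int_{0}^{\infty} \frac{\widetilde{D}^{L,m}_{\alpha,t+r}(x,y)}{(t+r)^{m}}\, r^{m-\beta-1}\,dr.
\end{equation*}

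Integrating over $y$ by Fubini and applying Proposition \ref{pro2.6}(iii) to the inner integral, the task reduces to establishing
\begin{equation*}
t^{\beta}\int_{0}^{\infty} \frac{\left((t+r)^{1/2\alpha}/\rho(x)\right)^{\delta'}}{\left(1+(t+r)^{1/2\alpha}/\rho(x)\right)^{N}}\, \frac{r^{m-\beta-1}}{(t+r)^{m}}\,dr \lesssim \frac{(t^{1/2\alpha}/\rho(x))^{\delta'}}{(1+t^{1/2\alpha}/\rho(x))^{N}}.
\end{equation*}
I would prove this by the same dichotomy used at the end of Proposition \ref{pro2.6}(iii). When $t^{1/2\alpha}>\rho(x)$, the integrand's fraction is controlled by $C_{N}((t+r)^{1/2\alpha}/\rho(x))^{\delta'-N}$, and the substitution $r=tu$ converts the $r$-integral into a Beta-type integral in $u$ that converges once $N$ is sufficiently large, yielding the bound $(t^{1/2\alpha}/\rho(x))^{\delta'-N}$, matching the target in this regime.

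When $t^{1/2\alpha}\leq\rho(x)$, setting $\lambda=t^{1/2\alpha}/\rho(x)\leq 1$ and making the same substitution, one is reduced to showing
\begin{equation*}
\int_{0}^{\infty}\frac{(1+u)^{\delta'/2\alpha-m}}{(1+\lambda(1+u)^{1/2\alpha})^{N}}\,u^{m-\beta-1}\,du\lesssim 1
\end{equation*}
uniformly for $\lambda\in(0,1]$. The natural way to handle this is to split the $u$-range at the threshold $u\sim\lambda^{-2\alpha}$: for $u\lesssim\lambda^{-2\alpha}$ the denominator is $O(1)$ and the integrand is dominated by $u^{m-\beta-1}$ near $0$ and by $u^{\delta'/2\alpha-\beta-1}$ for large $u$, while for $u\gtrsim\lambda^{-2\alpha}$ the denominator behaves like $(\lambda u^{1/2\alpha})^{N}$, which absorbs any algebraic growth once $N$ is taken large. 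The main hurdle is precisely this second case when $\delta'/2\alpha$ is comparable to or exceeds $\beta$: the intermediate range $1\leq u\leq\lambda^{-2\alpha}$ threatens to contribute a factor polynomial in $\lambda^{-1}$, and only the decay from $(1+\lambda(1+u)^{1/2\alpha})^{-N}$ combined with the freedom to choose $N$ arbitrarily large pins the bound down uniformly. This is the key place where the present argument goes beyond the integer-order computation in Proposition \ref{pro2.6}(iii), which relied on the decay of $\eta_{1}^{\alpha}$ alone.
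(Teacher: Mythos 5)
Your reduction is exactly the paper's: with $m=[\beta]+1$ you write $t^{\beta}\partial_{t}^{\beta}e^{-tL^{\alpha}}$ through the $m$-th order derivative, integrate in $y$, apply Proposition \ref{pro2.6}(iii) to the inner integral (after Fubini), and are left with estimating
\[
t^{\beta}\int_{0}^{\infty}\frac{\bigl((t+s)^{1/2\alpha}/\rho(x)\bigr)^{\delta'}}{\bigl(1+(t+s)^{1/2\alpha}/\rho(x)\bigr)^{N}}\,\frac{s^{m-\beta-1}}{(t+s)^{m}}\,ds
\]
by the dichotomy $t^{1/2\alpha}>\rho(x)$ versus $t^{1/2\alpha}\le\rho(x)$; your first case coincides with the paper's computation. (The factor $(-1)^{m}$ in your identification of the kernel of $\partial_{r}^{m}e^{-(t+r)L^{\alpha}}$ with $(t+r)^{-m}\widetilde{D}^{L,m}_{\alpha,t+r}$ is spurious, but harmless since you take absolute values.)

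The gap is precisely in the step you single out as the main hurdle. In your notation $\lambda=t^{1/2\alpha}/\rho(x)\le 1$, on the intermediate range $1\le u\le\lambda^{-2\alpha}$ one has $\lambda(1+u)^{1/2\alpha}\le 2^{1/2\alpha}$, so $(1+\lambda(1+u)^{1/2\alpha})^{-N}\ge (1+2^{1/2\alpha})^{-N}$: the weight is comparable to $1$ there and supplies no decay in $\lambda$, and enlarging $N$ only shrinks a constant. Consequently, when $\delta'>2\alpha\beta$ (possible, since $\beta>0$ may be small while $\delta'\le\min\{2\alpha,\delta_{0}\}$) your integral is bounded below by a constant times $\int_{1}^{\lambda^{-2\alpha}}u^{\delta'/2\alpha-\beta-1}du\approx\lambda^{2\alpha\beta-\delta'}$, which blows up as $\lambda\to 0$; the uniform bound $\lesssim 1$ you need is false, and this route only yields the exponent $\min\{\delta',2\alpha\beta\}$ (with a logarithm at $\delta'=2\alpha\beta$), not $\delta'$. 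In fairness, the paper's own second case carries the same hidden restriction: it simply drops the weight and computes $\int_{0}^{\infty}(t+s)^{\delta'/2\alpha-m}s^{m-\beta-1}ds\approx t^{\delta'/2\alpha-\beta}$, which converges at infinity only when $\delta'<2\alpha\beta$. So your argument is correct exactly where the paper's is (namely $\delta'<2\alpha\beta$, which is all that is used later, e.g. in Theorem \ref{th-4.2} where $\gamma<\min\{2\alpha,2\alpha\beta\}$), but the claim that the weight together with large $N$ handles the range $\delta'\ge 2\alpha\beta$ does not hold; for that range you would have to either weaken the conclusion to the exponent $\min\{\delta',2\alpha\beta\}$ or find a genuinely different argument.
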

\begin{proof}
 Let $m=[\beta]+1$. By (iii) of Proposition \ref{pro2.6}, we change the order of integrations to obtain
\begin{eqnarray*}
\Big|\int_{\mathbb R^{n}}D^{L,\beta}_{\alpha, t}(x,y)dy\Big|&=&\Big|\int_{\mathbb R^{n}}\Big\{t^{\beta}\int^{\infty}_{0}\partial^{m}_{t}D^{L,\beta}_{\alpha, t+s}(x,y)\frac{ds}{s^{1+\beta-m}}\Big\}dy\Big|\\
&\leq&t^{\beta}\int^{\infty}_{0}\int_{\mathbb R^{n}}\Big|\partial^{m}_{t}D^{L,\beta}_{\alpha, t+s}(x,y)\Big|\frac{dyds}{s^{1+\beta-m}}\\
&\leq& C_{N}t^{\beta}\int^{\infty}_{0}\frac{((t+s)^{1/2\alpha}/\rho(x))^{\delta'}}{(1+(t+s)^{1/2\alpha}/\rho(x))^{N}}\frac{ds}{s^{1+\beta-m}(t+s)^{m}}.
\end{eqnarray*}
If $t^{1/2\alpha}>\rho(x)$, then
\begin{eqnarray*}
\Big|\int_{\mathbb R^{n}}D^{L,\beta}_{\alpha, t}(x,y)dy\Big|&\leq&C_{N}t^{\beta}\rho(x)^{N-\delta'}\int^{\infty}_{0}(t+s)^{\delta'/2\alpha-N/2\alpha-m}s^{m-\beta-1}ds\\
&\leq&C_{N}\frac{(t^{1/2\alpha}/\rho(x))^{\delta'}}{(1+t^{1/2\alpha}/\rho(x))^{N}}.
\end{eqnarray*}
If $t^{1/2\alpha}\leq\rho(x)$, then
\begin{eqnarray*}
\Big|\int_{\mathbb R^{n}}D^{L,\beta}_{\alpha, t}(x,y)dy\Big|&\leq&C_{N}t^{\beta}\int^{\infty}_{0}\Big((t+s)^{1/2\alpha}/\rho(x)\Big)^{\delta'}\frac{ds}{s^{1+\beta-m}(t+s)^{m}}\\
&\leq&C_{N}t^{\beta}\rho(x)^{\delta'}\int^{\infty}_{0}(t+s)^{\delta'/2\alpha-m}s^{m-1-\beta}ds\\
&\leq&C_{N}\Big(t^{1/2\alpha}/\rho(x)\Big)^{\delta'}\leq C_{N}\frac{(t^{1/2\alpha}/\rho(x))^{\delta'}}{(1+t^{1/2\alpha}/\rho(x))^{N}},
\end{eqnarray*}
which completes the proof of Proposition \ref{prop-3.5}.
\end{proof}

\section{Characterization of Campanato-Morrey spaces associated with $L$}\label{sec-4}

Firstly, we deduce a reproducing formula.

\begin{lemma}\label{le-4.7-add}
 Let $\alpha\in(0,1)$ and $\beta>0$.
The operator $t^{\beta}\partial_{t}^{\beta}e^{-tL^{\alpha}}$ defines an isometry from $L^{2}(\mathbb R^{n})$ into $L^{2}(\mathbb R^{n+1}_{+}, dxdt/t)$. Moreover, in the sense of $L^{2}(\mathbb R^{n})$, it holds
$$f(x)=c_{\alpha, \beta}\lim_{N\rightarrow\infty}\lim_{\epsilon\rightarrow 0}\int^{N}_{\epsilon}(t^{\beta}\partial_{t}^{\beta}e^{-tL^{\alpha}})^{2}(f)(x)\frac{dt}{t}.$$
\end{lemma}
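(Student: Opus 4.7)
The plan is to reduce both claims to scalar spectral-side computations, exploiting that $L=-\Delta+V$ with $V\ge 0$ is a nonnegative self-adjoint operator on $L^2(\mathbb R^n)$. By the spectral theorem there is a resolution of the identity $\{E_L(\lambda)\}_{\lambda\ge 0}$ such that $F(L)=\int_0^\infty F(\lambda)\,dE_L(\lambda)$ for every bounded Borel $F$, together with the Plancherel identity $\|F(L)f\|_{L^2}^2=\int_0^\infty|F(\lambda)|^2\,d\|E_L(\lambda)f\|^2$. All spatial integration will be absorbed into this formalism.

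The first step is to identify the spectral symbol of $T_t:=t^\beta\partial_t^\beta e^{-tL^\alpha}$. Writing $m=[\beta]+1$ and applying (\ref{eq-1.4}) to the scalar function $e^{-t\lambda^\alpha}$, I would evaluate
\begin{equation*}
\partial_t^\beta e^{-t\lambda^\alpha}=\frac{e^{-i\pi(m-\beta)}}{\Gamma(m-\beta)}(-\lambda^\alpha)^m e^{-t\lambda^\alpha}\int_0^\infty e^{-u\lambda^\alpha}u^{m-\beta-1}\,du=c_0\,\lambda^{\alpha\beta}e^{-t\lambda^\alpha},
\end{equation*}
where $c_0:=e^{-i\pi(m-\beta)}(-1)^m$ has modulus one and I used the gamma integral $\int_0^\infty e^{-u\lambda^\alpha}u^{m-\beta-1}\,du=\Gamma(m-\beta)\lambda^{-\alpha(m-\beta)}$. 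Lifting this identity to the operator level through the spectral integral yields $\partial_t^\beta e^{-tL^\alpha}=c_0\,L^{\alpha\beta}e^{-tL^\alpha}$, so $T_t$ has spectral symbol $\phi(t,\lambda):=c_0\,t^\beta\lambda^{\alpha\beta}e^{-t\lambda^\alpha}$ with $|\phi(t,\lambda)|^2=t^{2\beta}\lambda^{2\alpha\beta}e^{-2t\lambda^\alpha}$.

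The isometry now follows directly from Plancherel and Fubini:
\begin{equation*}
\int_0^\infty\|T_t f\|_{L^2}^2\,\frac{dt}{t}=\int_0^\infty\Big[\int_0^\infty t^{2\beta-1}\lambda^{2\alpha\beta}e^{-2t\lambda^\alpha}\,dt\Big]\,d\|E_L(\lambda)f\|^2,
\end{equation*}
and the substitution $u=2t\lambda^\alpha$ collapses the inner integral to the constant $2^{-2\beta}\Gamma(2\beta)$, independent of $\lambda$, giving $\|T_{\cdot}f\|_{L^2(\mathbb R_+^{n+1},dx\,dt/t)}^2=2^{-2\beta}\Gamma(2\beta)\|f\|_{L^2}^2$. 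For the reproducing identity, composition of spectral multipliers gives $T_t^2=c_0^2\,t^{2\beta}L^{2\alpha\beta}e^{-2tL^\alpha}$, hence
\begin{equation*}
\int_\epsilon^N T_t^2 f\,\frac{dt}{t}=\int_0^\infty\Big[c_0^2\int_\epsilon^N t^{2\beta-1}\lambda^{2\alpha\beta}e^{-2t\lambda^\alpha}\,dt\Big]\,dE_L(\lambda)f.
\end{equation*}
The bracketed multiplier converges pointwise in $\lambda>0$ to $c_0^2\cdot 2^{-2\beta}\Gamma(2\beta)$ as $\epsilon\downarrow 0$ and $N\uparrow\infty$, while being uniformly dominated in modulus by $2^{-2\beta}\Gamma(2\beta)$; dominated convergence against the finite measure $d\|E_L(\lambda)f\|^2$ then produces the $L^2$ limit, and setting $c_{\alpha,\beta}:=c_0^{-2}\cdot 2^{2\beta}/\Gamma(2\beta)$ yields the claimed reproducing formula.

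The one technical point I expect to be the main obstacle is justifying, in the first step, the interchange of the Riemann--Liouville-type integral of (\ref{eq-1.4}) with the spectral resolution: the operator identity $\partial_t^\beta e^{-tL^\alpha}=c_0 L^{\alpha\beta}e^{-tL^\alpha}$ is only formally obvious. I would handle this by first restricting $f$ to the range of the spectral projection $E_L([\delta,R])$, on which all multipliers in sight are bounded and Fubini applies routinely, and then removing the cutoff ($\delta\downarrow 0$, $R\uparrow\infty$) by dominated convergence in the spectral measure, the dominating functions being the already-established $L^\infty$ bounds on $|\phi(t,\lambda)|$ for fixed $t>0$ together with the joint integrability in $(t,\lambda)$ established in the isometry computation.
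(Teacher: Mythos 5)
Your proposal is correct and follows essentially the same route as the paper: both identify, via the spectral resolution of $L$, the spectral symbol $c\,t^{\beta}\lambda^{\alpha\beta}e^{-t\lambda^{\alpha}}$ of $t^{\beta}\partial_{t}^{\beta}e^{-tL^{\alpha}}$ and deduce the isometry from the scalar identity $\int_{0}^{\infty}t^{2\beta-1}\lambda^{2\alpha\beta}e^{-2t\lambda^{\alpha}}\,dt=2^{-2\beta}\Gamma(2\beta)$. The only (minor) difference lies in the reproducing formula, where you pass to the limit by dominated convergence applied to the spectral multipliers, whereas the paper shows the truncated integrals form a Cauchy family in $L^{2}$ and identifies the limit by pairing against arbitrary $g\in L^{2}$; both arguments are spectral-theoretic and equally valid.
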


\begin{proof}
Note that for $dE(\lambda)$ the spectral resolution of the operator $L$, it follows from
$$e^{-tL^{\alpha}}=\int^{\infty}_{0}e^{-t\lambda^{\alpha}}dE(\lambda)$$
that
\begin{eqnarray*}
t^{\beta}\partial^{\beta}_{t}e^{-tL^{\alpha}}&=&t^{\beta}\int^{\infty}_{0}\partial^{m}_{t}\Big(\int^{\infty}_{0}e^{-(t+s)\lambda^{\alpha}}dE(\lambda)\Big)
\frac{ds}{s^{1+\beta-m}}\\
&=&t^{\beta}\int^{\infty}_{0}\Big(\int^{\infty}_{0}(-1)^{m}\lambda^{\alpha m}e^{-(t+s)\lambda^{\alpha}}dE(\lambda)\Big)\frac{ds}{s^{1+\beta-m}}\\
&\approx&t^{\beta}\int^{\infty}_{0}(-1)^{m}\lambda^{m\alpha}e^{-t\lambda^{\alpha}}\lambda^{\alpha(\beta-m)}dE(\lambda)\\
&=&\int^{\infty}_{0}(t\lambda^{\alpha})^{\beta}e^{-t\lambda^{\alpha}}dE(\lambda).
\end{eqnarray*}

Then for $f\in L^{2}(\mathbb R^{n})$, we have
\begin{eqnarray*}
\|t^{\beta}\partial_{t}^{\beta}e^{-tL^{\alpha}}(f)(x)\|^{2}_{L^{2}(\mathbb R^{n+1}_{+}, dxdt/t)}&=&\int^{\infty}_{0}\Big(\int_{\mathbb R^{n}}|t^{\beta}\partial_{t}^{\beta}e^{-tL^{\alpha}}(f)(x)|^{2}dx\Big)\frac{dt}{t}\\
&=&\int^{\infty}_{0}\Big\langle t^{\beta}\partial_{t}^{\beta}e^{-tL^{\alpha}}(f),\ t^{\beta}\partial_{t}^{\beta}e^{-tL^{\alpha}}(f)\Big\rangle\frac{dt}{t}\\
&=&\int^{\infty}_{0}\Big\langle (t^{\beta}\partial_{t}^{\beta}e^{-tL^{\alpha}})^{2}(f),\ f\Big\rangle\frac{dt}{t}\\
&=&\int^{\infty}_{0}\int_{0}^{\infty}t^{2\beta}\lambda^{2\alpha\beta}e^{-2t\lambda^{\alpha}}\frac{dt}{t}dE_{f,f}(\lambda)\\
&=&C_{\alpha,\beta}\|f\|^{2}_{2}.
\end{eqnarray*}

Below we only prove that for every pair of sequences $n_{k}\uparrow\infty$ and $\epsilon_{k}\downarrow0$ as $k\rightarrow \infty$,
\begin{equation}\label{eq-4.1}
\lim_{k\rightarrow\infty}\int^{n_{k+m}}_{n_{k}}(t^{\beta}\partial^{\beta}_{t}e^{-tL^{\alpha}})^{2}f(x)\frac{dt}{t}
=\lim_{k\rightarrow\infty}\int^{\epsilon_{k+m}}_{\epsilon_{k}}(t^{\beta}\partial^{\beta}_{t}e^{-tL^{\alpha}})^{2}f(x)\frac{dt}{t}=0.
\end{equation}
If (\ref{eq-4.1}) holds, there exists a function $h\in L^{2}(\mathbb R^{n})$ such that
$$\lim_{k\rightarrow\infty}\int^{n_{k}}_{\epsilon_{k}}(t^{\beta}\partial^{\beta}_{t}e^{-tL^{\alpha}})^{2}f(x)\frac{dt}{t}=h(x),$$
which implies that for all $g\in L^{2}(\mathbb R^{n})$,
\begin{eqnarray*}
\langle h,\ g\rangle&=&\Big\langle\lim_{k\rightarrow\infty}\int^{n_{k}}_{\epsilon_{k}}(t^{\beta}\partial^{\beta}_{t}e^{-tL^{\alpha}})^{2}f\frac{dt}{t},\ g\Big\rangle\\
&=&\lim_{k\rightarrow\infty}\int^{n_{k}}_{\epsilon_{k}}\Big\langle(t^{\beta}\partial^{\beta}_{t}e^{-tL^{\alpha}})^{2}f,\ g\Big\rangle\frac{dt}{t}\\
&=&\lim_{k\rightarrow\infty}\int^{n_{k}}_{\epsilon_{k}}\Big\langle t^{\beta}\partial^{\beta}_{t}e^{-tL^{\alpha}}f,\ t^{\beta}\partial^{\beta}_{t}e^{-tL^{\alpha}}g\Big\rangle\frac{dt}{t}\\
&=&C_{\alpha,\beta}\langle f,\ g\rangle.
\end{eqnarray*}
This means $h=C_{\alpha,\beta}f$. Now we verify (\ref{eq-4.1}). As $k\rightarrow\infty$,
\begin{eqnarray*}
\Big\|\int^{n_{k+m}}_{n_{k}}(t^{\beta}\partial^{\beta}_{t}e^{-tL^{\alpha}})^{2}f(x)\frac{dt}{t}\Big\|^{2}_{L^{2}}&\lesssim&
\int^{n_{k+m}}_{n_{k}}\Big\|(t^{\beta}\partial^{\beta}_{t}e^{-tL^{\alpha}})^{2}f(x)\Big\|^{2}_{L^{2}}\frac{dt}{t}\\
&=&\int^{\infty}_{0}\int^{n_{k+m}}_{n_{k}}t^{2\beta}\lambda^{2\alpha\beta}e^{-2t\lambda^{\alpha}}\frac{dt}{t}dE_{f,f}(\lambda)\rightarrow0
\end{eqnarray*}
since
$$\lim_{k\rightarrow\infty}\Big|\int^{n_{k+m}}_{n_{k}}t^{2\beta}\lambda^{2\alpha\beta}e^{-2t\lambda^{\alpha}}\frac{dt}{t}\Big|=0.$$
The integral
$$\lim_{k\rightarrow\infty}\int^{\epsilon_{k+m}}_{\epsilon_{k}}(t^{\beta}\partial^{\beta}_{t}e^{-tL^{\alpha}})^{2}f(x)\frac{dt}{t}$$
can be dealt with similarly.

\end{proof}

The following inequality was established by   Harboure-Salinas-Viviani \cite{HSV}.
\begin{lemma} \label{le-4.3-1} {\rm (\cite[(5.3)]{HSV})}
Let $0<\gamma\leq 1$.
For any pair of measurable functions  $F$ and $G$ on $\mathbb R^{n+1}_{+}$, we have
\begin{eqnarray*}
&&\iint_{\mathbb R^{n+1}_{+}}|F(x,t)|\cdot|G(x,t)|\frac{dxdt}{t}\\
&&\leq C\sup_{B}\Bigg\{\frac{1}{|B|^{1+2\gamma/n}}\iint_{\widehat{B}}|F(x,t)|^{2}\frac{dxdt}{t}\Bigg\}^{1/2}\Bigg\{\int_{\mathbb R^{n}}\Big(\int^{\infty}_{0}\int_{|x-y|<t}|G(y,t)|^{2}\frac{dydt}{t^{n+1}}\Big)^{n/2(n+\gamma)}dx\Bigg\}^{1+\gamma/n}.
\end{eqnarray*}
\end{lemma}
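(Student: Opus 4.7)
The plan is to identify this as a Carleson--tent-space inequality and prove it via the Coifman--Meyer--Stein atomic decomposition of tent spaces. First I would rewrite the right-hand side in a more transparent form: the first factor is the $\gamma$-Carleson norm
$$\|F\|_{\mathcal{C}^\gamma}:=\sup_{B}\Big(\frac{1}{|B|^{1+2\gamma/n}}\iint_{\widehat B}|F(x,t)|^{2}\frac{dxdt}{t}\Big)^{1/2},$$
and, since $1+\gamma/n=1/p$ for $p:=n/(n+\gamma)\in(0,1]$, the second factor is precisely $\|\widetilde A(G)\|_{L^{p}(\mathbb R^{n})}$, where $\widetilde A(G)(x):=\big(\iint_{|y-x|<t}|G(y,t)|^{2}\tfrac{dydt}{t^{n+1}}\big)^{1/2}$ is the Lusin area function of $G$. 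The target inequality is therefore $\iint |F||G|\tfrac{dxdt}{t}\lesssim \|F\|_{\mathcal{C}^\gamma}\,\|\widetilde A(G)\|_{L^{p}}$, which is a Carleson--$T^{p}_{2}$ duality statement.

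Next I would invoke the atomic decomposition of the tent space $T^{p}_{2}$ for $0<p\leq 1$: one may write $G=\sum_{j}\lambda_{j}a_{j}$, where each atom $a_{j}$ is supported in a tent $\widehat{B_{j}}$ and satisfies $\big(\iint|a_{j}|^{2}\tfrac{dydt}{t}\big)^{1/2}\leq|B_{j}|^{1/2-1/p}$, while the coefficients obey $\sum_{j}|\lambda_{j}|^{p}\lesssim\|\widetilde A(G)\|_{L^{p}}^{p}$. For each atom, the Cauchy--Schwarz inequality restricted to $\widehat{B_{j}}$ yields
$$\iint_{\widehat{B_{j}}}|F||a_{j}|\frac{dydt}{t}\leq\Big(\iint_{\widehat{B_{j}}}|F|^{2}\frac{dydt}{t}\Big)^{1/2}\Big(\iint|a_{j}|^{2}\frac{dydt}{t}\Big)^{1/2}\leq\|F\|_{\mathcal{C}^\gamma}\,|B_{j}|^{1/2+\gamma/n}\cdot|B_{j}|^{1/2-1/p}.$$
The exponent identity $1/p=1+\gamma/n$ makes the two powers of $|B_{j}|$ cancel exactly, leaving the $j$-independent bound $\|F\|_{\mathcal{C}^\gamma}$.

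Summing in $j$ and using the elementary inequality $\sum_{j}|\lambda_{j}|\leq\big(\sum_{j}|\lambda_{j}|^{p}\big)^{1/p}$, which is valid precisely because $0<p\leq 1$, we arrive at
$$\iint_{\mathbb R^{n+1}_{+}}|F||G|\frac{dxdt}{t}\leq\|F\|_{\mathcal{C}^\gamma}\sum_{j}|\lambda_{j}|\lesssim\|F\|_{\mathcal{C}^\gamma}\,\|\widetilde A(G)\|_{L^{p}},$$
which is exactly the inequality asserted in the lemma. The hardest step is not the calculation above but the underlying tent-space atomic decomposition: its proof proceeds via a Whitney decomposition of the open level sets $\Omega_{k}=\{x:\widetilde A(G)(x)>2^{k}\}$, a stopping-time argument producing atoms inside tents over Whitney balls of $\Omega_{k}\setminus\Omega_{k+1}$, and the coefficient bound $\sum_{j}|\lambda_{j}|^{p}\lesssim\sum_{k}|\Omega_{k}|2^{kp}\sim\|\widetilde A(G)\|_{L^{p}}^{p}$. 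In a paper one ordinarily cites this result rather than reproving it, as \cite{HSV} effectively does.
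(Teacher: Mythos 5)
Your argument is correct: identifying the second factor as the $T^{p}_{2}$ quasi-norm $\|\widetilde A(G)\|_{L^{p}}$ with $p=n/(n+\gamma)$, invoking the Coifman--Meyer--Stein atomic decomposition, and using Cauchy--Schwarz on each tent so that the exponents $|B_j|^{1/2+\gamma/n}$ and $|B_j|^{1/2-1/p}$ cancel (together with $\ell^{p}\hookrightarrow\ell^{1}$ for $p\le 1$) is exactly the standard route to this Carleson--tent duality. Note that the paper itself offers no proof of this lemma --- it is quoted verbatim from Harboure--Salinas--Viviani \cite[(5.3)]{HSV} --- so there is nothing internal to compare against; your proof is essentially the argument underlying the cited result, and the only point worth making explicit is that the CMS decomposition gives $G=\sum_j\lambda_j a_j$ with pieces supported on essentially disjoint regions, so that $|G|\le\sum_j|\lambda_j||a_j|$ pointwise and the term-by-term estimate legitimately bounds the full integral.
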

In Lemma \ref{le-4.3-1}, letting
$$\left\{\begin{aligned}
F(x,t):=t^{2\alpha\beta}\partial^{\beta}_{s}e^{-sL^{\alpha}}\mid_{s=t^{2\alpha}}(f)(x)=Q^{L,\beta}_{\alpha,t}(f)(x);\\
G(x,t):=t^{2\alpha\beta}\partial^{\beta}_{s}e^{-sL^{\alpha}}\mid_{s=t^{2\alpha}}(g)(x)=Q^{L,\beta}_{\alpha,t}(g)(x),
\end{aligned}\right.$$
we have
\begin{eqnarray}\label{eq-4.5}
&&\iint_{\mathbb R^{n+1}_{+}}|Q^{L,\beta}_{\alpha,t}(f)(x)|\cdot|Q^{L,\beta}_{\alpha,t}(g)(x)|\frac{dxdt}{t}\\
&&\quad\leq C\sup_{B}\Big(\frac{1}{|B|^{1+2\gamma/n}}\iint_{\widehat{B}}|Q^{L,\beta}_{\alpha,t}(f)(x)|^{2}\frac{dxdt}{t}\Big)^{1/2}\nonumber\\
&&\ \quad\times\Bigg\{\int_{\mathbb R^{n}}\Big(\int^{\infty}_{0}\int_{|x-y|<t}|Q^{L,\beta}_{\alpha,t}(g)(x)|^{2}\frac{dydt}{t^{n+1}}\Big)^{n/2(n+\gamma)}dx\Bigg\}^{1+\gamma/n}.\nonumber
\end{eqnarray}
On the left-hand side of (\ref{eq-4.5}), since
$$\left\{\begin{aligned}
Q^{L,\beta}_{\alpha,t}(f)(x)=t^{2\alpha\beta}L^{\alpha\beta}e^{-t^{2\alpha}L^{\alpha}}(f);\\
Q^{L,\beta}_{\alpha,t}(g)(x)=t^{2\alpha\beta}L^{\alpha\beta}e^{-t^{2\alpha}L^{\alpha}}(g),
\end{aligned}\right.$$
we can get, via the change of variables,
\begin{eqnarray*}
&&\iint_{\mathbb R^{n+1}_{+}}|Q^{L,\beta}_{\alpha,t}(f)(x)|\cdot|Q^{L,\beta}_{\alpha,t}(g)(x)|\frac{dxdt}{t}\\
&&\quad=\iint_{\mathbb R^{n+1}_{+}}|t^{2\alpha\beta}L^{\alpha\beta}e^{-t^{2\alpha}L^{\alpha}}(f)(x)|\cdot|t^{2\alpha\beta}L^{\alpha\beta}e^{-t^{2\alpha}L^{\alpha}}(g)(x)|\frac{dxdt}{t}\\
&&\quad=\iint_{\mathbb R^{n+1}_{+}}|s^{\beta}L^{\alpha\beta}e^{-sL^{\alpha}}(f)(x)|\cdot|s^{\beta}L^{\alpha\beta}e^{-sL^{\alpha}}(g)(x)|\frac{dxds}{s}\\
&&\quad=\iint_{\mathbb R^{n+1}_{+}}|s^{\beta}\partial^{\beta}_{s}e^{-sL^{\alpha}}(f)(x)|\cdot|s^{\beta}\partial^{\beta}_{s}e^{-sL^{\alpha}}(g)(x)|\frac{dxds}{s}.
\end{eqnarray*}

On the right-hand side of (\ref{eq-4.5}), using change of variables again, we obtain
\begin{eqnarray*}
&&\sup_{B}\Big(\frac{1}{|B|^{1+2\gamma/n}}\int^{r_{B}}_{0}\int_{B}|t^{2\alpha\beta}L^{\alpha\beta}e^{-t^{2\alpha}L^{\alpha}}(f)(x)|^{2}\frac{dxdt}{t}\Big)^{1/2}\\
&&\quad\lesssim\sup_{B}\Big(\frac{1}{|B|^{1+2\gamma/n}}\int^{r_{B}^{2\alpha}}_{0}\int_{B}|s^{\beta}L^{\alpha\beta}e^{-sL^{\alpha}}(f)(x)|^{2}\frac{dxds}{s}\Big)^{1/2}\\
&&\quad=\sup_{B}\Big(\frac{1}{|B|^{1+2\gamma/n}}\int^{r_{B}^{2\alpha}}_{0}\int_{B}|s^{\beta}\partial_{s}^{\beta}e^{-sL^{\alpha}}(f)(x)|^{2}\frac{dxds}{s}\Big)^{1/2},
\end{eqnarray*}
meanwhile,
\begin{eqnarray*}
&&\Bigg\{\int_{\mathbb R^{n}}\Big(\int^{\infty}_{0}\int_{|x-y|<t}|t^{2\alpha\beta}L^{\alpha\beta}e^{-t^{2\alpha}L^{\alpha}}(g)(y)|^{2}\frac{dydt}{t^{n+1}}\Big)^{n/2(n+\gamma)}dx\Bigg\}^{1+\gamma/n}\\
&&\quad\lesssim \Bigg\{\int_{\mathbb R^{n}}\Big(\int^{\infty}_{0}\int_{|x-y|<s^{1/2\alpha}}|s^{\beta}L^{\alpha\beta}
e^{-sL^{\alpha}}(f)(y)|^{2}\frac{s^{1/2\alpha-1}dyds}{s^{(n+1)/2\alpha}}\Big)^{n/2(n+\gamma)}dx\Bigg\}^{1+\gamma/n}\\
&&\quad\lesssim \Bigg\{\int_{\mathbb R^{n}}\Big(\int^{\infty}_{0}\int_{|x-y|<s^{1/2\alpha}}|s^{\beta}L^{\alpha\beta}e^{-sL^{\alpha}}(g)(y)|^{2}\frac{s^{1/2\alpha-1}dyds}
{s^{n/2\alpha+1}}\Big)^{n/2(n+\gamma)}dx\Bigg\}^{1+\gamma/n}\\
&&\quad\lesssim \Bigg\{\int_{\mathbb R^{n}}\Big(\int^{\infty}_{0}\int_{|x-y|<s^{1/2\alpha}}|s^{\beta}\partial^{\beta}_{s}e^{-sL^{\alpha}}(g)(y)|^{2}\frac{s^{1/2\alpha-1}dyds}
{s^{n/2\alpha+1}}\Big)^{n/2(n+\gamma)}dx\Bigg\}^{1+\gamma/n}.
\end{eqnarray*}

Finally, we have
\begin{eqnarray}\label{eq-4.8}
&&\iint_{\mathbb R^{n+1}_{+}}|s^{\beta}\partial^{\beta}e^{-sL^{\alpha}}(f)(x)|\cdot|s^{\beta}\partial^{\beta}_{s}e^{-sL^{\alpha}}(g)(x)|\frac{dxds}{s}\\
&&\quad \lesssim \sup_{B}\Big(\frac{1}{|B|^{1+2\gamma/n}}\int^{r_{B}^{2\alpha}}_{0}\int_{B}|s^{\beta}\partial_{s}^{\beta}e^{-sL^{\alpha}}(f)(x)|^{2}\frac{dxds}{s}\Big)^{1/2}\nonumber\\
&&\ \quad\times \Bigg\{\int_{\mathbb R^{n}}\Big(\iint_{|x-y|<s^{1/2\alpha}}|s^{\beta}\partial^{\beta}_{s}e^{-sL^{\alpha}}(g)(y)|^{2}\frac{s^{1/2\alpha-1}dyds}
{s^{n/2\alpha+1}}\Big)^{n/2(n+\gamma)}dx\Bigg\}^{1+\gamma/n}.\nonumber
\end{eqnarray}

For $\alpha\in (0,1)$ and $\beta>0$, define  an area function $S^{L}_{\alpha,\beta}$ as follows:
$$S^{L}_{\alpha,\beta}(h)(x):=\Big(\iint_{\Gamma_{\alpha}(x)}|t^{\beta}\partial_{t}^{\beta}e^{-tL^{\alpha}}(h)(y)|^{2}\frac{dydt}{t^{n/2\alpha+1}}\Big)^{1/2},$$
where $\Gamma_{\alpha}(x)$ denotes the cone $\{(y,t):\ |x-y|<t^{1/2\alpha}\}$.

\begin{lemma}\label{le-4.4}
Let $\alpha\in (0,1)$ and $\beta>0$.
The area function $S^{L}_{\alpha,\beta}$ is bounded on $L^{2}(\mathbb R^{n})$.
\end{lemma}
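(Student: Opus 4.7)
The plan is to compute $\|S^{L}_{\alpha,\beta}(h)\|_{L^2}^{2}$ directly by Fubini and then invoke the square-function identity from Lemma \ref{le-4.7-add}. This reduces the area-function estimate to the vertical $g$-function that has already been controlled on $L^2$ via the spectral resolution of $L$.

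First, I would unfold the definition of $S^{L}_{\alpha,\beta}$ and write
\begin{equation*}
\|S^{L}_{\alpha,\beta}(h)\|_{L^{2}}^{2}=\int_{\mathbb{R}^{n}}\iint_{\Gamma_{\alpha}(x)}|t^{\beta}\partial_{t}^{\beta}e^{-tL^{\alpha}}(h)(y)|^{2}\frac{dydt}{t^{n/2\alpha+1}}\,dx.
\end{equation*}
The integrand is non-negative, so Fubini's theorem lets me swap the $dx$ integration with the $(y,t)$ integration. For each fixed $(y,t)\in\mathbb{R}^{n+1}_{+}$, the set of $x$ satisfying $(y,t)\in\Gamma_{\alpha}(x)$ is exactly the Euclidean ball $B(y,t^{1/2\alpha})$, whose volume equals $c_{n}t^{n/2\alpha}$. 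Substituting this back cancels the factor $t^{n/2\alpha}$ in the denominator.

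Carrying out this Fubini step yields
\begin{equation*}
\|S^{L}_{\alpha,\beta}(h)\|_{L^{2}}^{2}=c_{n}\iint_{\mathbb{R}^{n+1}_{+}}|t^{\beta}\partial_{t}^{\beta}e^{-tL^{\alpha}}(h)(y)|^{2}\frac{dydt}{t}.
\end{equation*}
By Lemma \ref{le-4.7-add}, the operator $t^{\beta}\partial_{t}^{\beta}e^{-tL^{\alpha}}$ is (up to a constant depending only on $\alpha$ and $\beta$) an isometry from $L^{2}(\mathbb{R}^{n})$ into $L^{2}(\mathbb{R}^{n+1}_{+},dxdt/t)$. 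Therefore the right-hand side equals $c_{n}C_{\alpha,\beta}\|h\|_{L^{2}}^{2}$, which gives the desired $L^{2}$-boundedness.

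There is essentially no obstacle here: the only delicate point is the spectral identity in Lemma \ref{le-4.7-add}, which is already established. The Fubini exchange is justified by non-negativity, and the computation of the volume of the cross-section of the cone is elementary. Hence the proof reduces to the two-line calculation sketched above.
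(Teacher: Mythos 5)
Your proof is correct and follows essentially the same route as the paper: Fubini over the cone, using that the cross-section $\{x:(y,t)\in\Gamma_{\alpha}(x)\}=B(y,t^{1/2\alpha})$ has volume $\approx t^{n/2\alpha}$, and then the vertical square-function bound. The only cosmetic difference is that you cite the isometry of Lemma \ref{le-4.7-add}, whereas the paper re-derives the same spectral estimate inside the proof via the $g$-function $g^{L}_{\alpha,\beta}$.
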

\begin{proof}
Let
$$g^{L}_{\alpha,\beta}(h)(x):=\Big(\int^{\infty}_{0}|t^{\beta}\partial^{\beta}_{t}e^{-tL^{\alpha}}h(x)|^{2}\frac{dt}{t}\Big)^{1/2}.$$
We can get
\begin{eqnarray}\label{eq-4.6}
\|g^{L}_{\alpha,\beta}(h)\|^{2}_{2}&=&\int_{\mathbb R^{n}}\Big(\int^{\infty}_{0}|t^{\beta}\partial_{t}^{\beta}e^{-tL^{\alpha}}h(x)|^{2}\frac{dt}{t}\Big)dx\\
&=&\int^{\infty}_{0}\Big(\int_{\mathbb R^{n}}|t^{\beta}\partial_{t}^{\beta}e^{-tL^{\alpha}}h(x)|^{2}dx\Big)\frac{dt}{t}\nonumber\\
&=&\int^{\infty}_{0}\Big\langle t^{\beta}\partial_{t}^{\beta}e^{-tL^{\alpha}}h,\ t^{\beta}\partial_{t}^{\beta}e^{-tL^{\alpha}}h\Big\rangle\frac{dt}{t}\nonumber\\
&=&\int^{\infty}_{0}\Big\langle (t^{\beta}\partial_{t}^{\beta}e^{-tL^{\alpha}})^{2}h,\ h\Big\rangle\frac{dt}{t}\nonumber\\
&=&\int^{\infty}_{0}\int^{\infty}_{0}t^{2\beta}\lambda^{2\alpha\beta}e^{-t\lambda^{\alpha}}dE_{h,h}(\lambda)\frac{dt}{t}\nonumber\\
&\lesssim&\|h\|_{2}^{2}.\nonumber
\end{eqnarray}
Hence, it follows from (\ref{eq-4.6}) that
\begin{eqnarray*}
\|S^L_{\alpha,\beta}h\|^{2}_{2}&=&\int_{\mathbb R^{n}}\Big(\iint_{\Gamma_{\alpha}(x)}|t^{\beta}\partial_{t}^{\beta}e^{-tL^{\alpha}}(h)(y)|^{2}\frac{dydt}{t^{\frac{n}{2\alpha}+1}}\Big)dx\\
&=&\int_{\mathbb R^{n}}\Big(\int^{\infty}_{0}\int_{\mathbb R^{n}}|t^{\beta}\partial_{t}^{\beta}e^{-tL^{\alpha}}(h)(y)|^{2}\chi_{\Gamma_{\alpha}(x)}(y)\frac{dydt}{t^{\frac{n}{2\alpha}+1}}\Big)dx\\
&\lesssim&\int^{\infty}_{0}\int_{\mathbb R^{n}}|t^{\beta}\partial_{t}^{\beta}e^{-tL^{\alpha}}(h)(y)|^{2}\Big(\int_{\mathbb R^{n}}\chi_{\Gamma_{\alpha}(y)}(x)dx\Big)\frac{dydt}{t^{\frac{n}{2\alpha}+1}}\\
&\lesssim&\int^{\infty}_{0}\int_{\mathbb R^{n}}|t^{\beta}\partial_{t}^{\beta}e^{-tL^{\alpha}}(h)(y)|^{2}\frac{dydt}{t}\lesssim \|h\|_{2}^{2}.
\end{eqnarray*}

\end{proof}

\begin{theorem}\label{th-4.1}
Assume that $\alpha\in (0,1)$, $\beta>0$ and $0<\gamma\leq \min\{2\alpha,2\alpha\beta\}$. Let $f$ be a linear combination of $H^{n/(n+\gamma)}_{L}$-atoms. There exists a constant $C$ such that
$$\|S^{L}_{\alpha,\beta}(f)\|_{L^{n/(n+\gamma)}}\leq C\|f\|_{H^{n/(n+\gamma)}_{L}}.$$
\end{theorem}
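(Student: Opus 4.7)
The plan is to use the atomic decomposition of $H^{n/(n+\gamma)}_L(\mathbb R^n)$ together with the $p$-subadditivity of $\|\cdot\|_{L^p}^p$ for $p := n/(n+\gamma)\leq 1$. Writing $f=\sum_j\lambda_j a_j$ with $\sum_j|\lambda_j|^p\lesssim \|f\|^p_{H^p_L}$, the problem is reduced to proving the uniform bound $\|S^L_{\alpha,\beta}(a)\|^p_{L^p}\leq C$ for every $H^p_L$-atom $a$ supported in a ball $B=B(x_B,r_B)$ with $r_B\leq\rho(x_B)$. Fix such an $a$ and split $\mathbb R^n=4B\cup(4B)^c$.

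For the local piece, I would apply H\"older's inequality with exponent $2/p>1$ together with the $L^2$-boundedness proved in Lemma \ref{le-4.4}:
\[
\int_{4B}|S^L_{\alpha,\beta}(a)|^p\,dx\leq |4B|^{1-p/2}\,\|S^L_{\alpha,\beta}(a)\|_{L^2}^p\lesssim |B|^{1-p/2}\|a\|_{L^2}^p\lesssim |B|^{1-p/2}\cdot|B|^{-1}\cdot|B|^{p/2}=1,
\]
where we used $\|a\|_{L^2}\leq\|a\|_\infty|B|^{1/2}\leq |B|^{1/2-1/p}$. So the local part contributes a universal constant.

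For the non-local piece, the goal is to prove the pointwise decay
\[
S^L_{\alpha,\beta}(a)(x)\leq C\,\frac{r_B^{\gamma}}{|x-x_B|^{n+\gamma'}},\qquad x\in(4B)^c,
\]
with some $\gamma'>\gamma$, which integrates to a constant against $dx$ to the power $p$. To get it, split the $t$-integration in the cone $\Gamma_\alpha(x)=\{(y,t):|x-y|<t^{1/2\alpha}\}$ at $t^{1/2\alpha}=|x-x_B|/2$. In the small-$t$ regime two sub-cases arise. If $r_B<\rho(x_B)/4$, the atom has vanishing mean, so I rewrite
\[
t^{\beta}\partial_t^{\beta}e^{-tL^{\alpha}}(a)(y)=\int_B \bigl[D^{L,\beta}_{\alpha,t}(y,z)-D^{L,\beta}_{\alpha,t}(y,x_B)\bigr]a(z)\,dz
\]
and invoke the H\"older regularity of $D^{L,\beta}_{\alpha,t}(\cdot,\cdot)$ from Proposition \ref{prop-3.4}, which supplies the gain $(r_B/t^{1/2\alpha})^{\delta'}$ with $\delta'\leq 2\alpha$. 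If instead $r_B\geq\rho(x_B)/4$, there is no cancellation but $r_B\sim\rho(x_B)$, and the auxiliary-function factor $(1+t^{1/2\alpha}/\rho(x_B))^{-N}$ in Proposition \ref{prop-3.6} delivers an analogous decay $(r_B/t^{1/2\alpha})^{N}$. In the large-$t$ regime I would use the size estimate from Proposition \ref{prop-3.6} directly, noting that then $t^{1/2\alpha}$ dominates $|x-x_B|$, so $(t^{1/2\alpha}+|y-z|)^{-(n+2\alpha\beta)}\lesssim t^{-(n+2\alpha\beta)/2\alpha}$. Integrating over $y$ in the ball of radius $t^{1/2\alpha}$ and then in $t$ yields the claimed pointwise decay. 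The restriction $\gamma\leq\min\{2\alpha,2\alpha\beta\}$ enters precisely here, ensuring convergence near $t=0$ (via the regularity order $\delta'$, bounded by $2\alpha$) and near $t=\infty$ (via the derivative order $\beta$, which supplies decay of order $t^{\beta}/t^{(n+2\alpha\beta)/2\alpha}$).

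The main obstacle is the small-$t$ regime when $r_B\geq\rho(x_B)/4$: the atom carries no cancellation, and the decay must be extracted from the auxiliary-function factor, which requires carefully matching the exponent $N$ against the scale $r_B/t^{1/2\alpha}$ and against the doubling-type growth in $|x-x_B|/\rho(x_B)$ provided by Lemma \ref{lem2.4}. Once that quantitative balance is in place, the rest of the argument is bookkeeping of integrals in $(y,t)$ over the cone.
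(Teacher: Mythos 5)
Your overall strategy is the same as the paper's (local part by H\"older plus the $L^{2}$-bound of Lemma \ref{le-4.4}; global part by kernel estimates using either the atom's cancellation or the auxiliary-function factor), but two steps fail as written. First, the target pointwise bound $S^{L}_{\alpha,\beta}(a)(x)\lesssim r_B^{\gamma}|x-x_B|^{-(n+\gamma')}$ ``with some $\gamma'>\gamma$'' has the wrong homogeneity: with $p=n/(n+\gamma)$ one gets $\int_{(4B)^c}\big(r_B^{\gamma}|x-x_B|^{-(n+\gamma')}\big)^{p}dx\simeq r_B^{\,n(2\gamma-\gamma')/(n+\gamma)}$, which is a constant independent of the atom only when $\gamma'=2\gamma$; since atoms occur at all radii (and $r_B\sim\rho(x_B)$ is not bounded for the non-cancellative family), a generic $\gamma'>\gamma$ does not suffice. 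What the kernel estimates actually yield — and what the paper proves — are bounds of the form $r_B^{\sigma-\gamma}|x-x_B|^{-(n+\sigma)}$ with $\sigma\in\{\delta',\,2\alpha\beta,\,N\}$, where the difference of the two exponents is exactly $\gamma$; that exact matching, not mere integrability, is what makes the global integral uniform over atoms.

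Second, your large-$t$ regime is genuinely incomplete: using only the size estimate of Proposition \ref{prop-3.6} with $(t^{1/2\alpha}+|y-z|)^{-(n+2\alpha\beta)}\lesssim t^{-(n+2\alpha\beta)/2\alpha}$ and $\int_B|a|\lesssim r_B^{-\gamma}$ gives, after integrating over the cone with $t^{1/2\alpha}>|x-x_B|/2$, only $S\lesssim r_B^{-\gamma}|x-x_B|^{-n}$, and $|x-x_B|^{-n^{2}/(n+\gamma)}$ is not integrable at infinity (besides carrying the wrong power of $r_B$). A gain must be retained in this regime as well: for atoms with vanishing mean, the H\"older-regularity gain $(r_B/t^{1/2\alpha})^{\delta'}$ from Proposition \ref{prop-3.4} (the paper's term $I_2$); for atoms with $r_B\sim\rho(x_B)$, the factor $\big(1+t^{1/2\alpha}/\rho(x_B)\big)^{-N}\lesssim (r_B/t^{1/2\alpha})^{N}$, which is genuinely decaying there because $t^{1/2\alpha}\gtrsim|x-x_B|\gg r_B$ (the paper's $I_5$). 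Relatedly, in the small-$t$, no-cancellation subcase the $\rho$-factor is trivial when $t^{1/2\alpha}<r_B$, so it does not ``deliver decay'' there; the paper resolves this by splitting at $t\sim r_B^{2\alpha}$, using $\int_0^{r_B^{2\alpha}}t^{2\beta-1}dt\simeq r_B^{4\alpha\beta}$ below the threshold ($I_3$) and the $\rho$-factor only on $r_B^{2\alpha}\lesssim t\lesssim |x-x_B|^{2\alpha}$ ($I_4$). Your single-exponent version can be repaired by fixing $N$ strictly between $\gamma$ and $2\alpha\beta$ so that the $t$-integral converges at $0$, but you leave this ``careful matching'' unresolved, and together with the missing gain at large $t$ this is where the proposal falls short of a proof.
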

\begin{proof}
Let $a$ be an $H^{n/(n+\gamma)}_{L}$-atom associated with a ball $B=B(x_{0}, r)$. Then
we write
$$\|S^{L}_{\alpha,\beta}(a)\|^{n/(n+\gamma)}_{L^{n/(n+\gamma)}}\leq I+II,$$
where
$$\left\{\begin{aligned}
&I:=\int_{8B}|S^{L}_{\alpha,\beta}(a)(x)|^{n/(n+\gamma)}dx;\\
&II:=\int_{(8B)^{c}}|S^{L}_{\alpha,\beta}(a)(x)|^{n/(n+\gamma)}dx.
\end{aligned}\right.$$

We use Lemma \ref{le-4.4} and H\"older's inequality to obtain
\begin{eqnarray*}
I\lesssim\Big(\int_{8B}|S^{L}_{\alpha,\beta}a(x)|^{2}dx\Big)^{n/2(n+\gamma)}|B|^{(n+2\gamma)/2(n+\gamma)}
\lesssim\|a\|^{n/(n+\gamma)}_{2}|B|^{(n+2\gamma)/2(n+\gamma)}\lesssim1.
\end{eqnarray*}
Now we deal with $II$ in the following two cases.

{\it Case 1: $r<\rho(x_{0})/4$.} For this case, $\int_{\mathbb R^{n}}a(x)dx=0$.
We write $(S^{L}_{\alpha,\beta}(a)(x))^{2}\lesssim I_{1}(x)+I_{2}(x),$
where
$$\left\{\begin{aligned}
I_{1}(x)&:=\int^{(|x-x_{0}|/2)^{2\alpha}}_{0}\int_{|x-y|<t^{1/2\alpha}}\Big\{\int_{\mathbb R^{n}}(t^{\beta}\partial_{t}^{\beta}e^{-tL^{\alpha}}(y,x')-t^{\beta}\partial_{t}^{\beta}e^{-tL^{\alpha}}(y,x_{0}))a(x')dx'\Big\}^{2}\frac{dydt}{t^{n/2\alpha+1}};\\
I_{2}(x)&:=\int_{(|x-x_{0}|/2)^{2\alpha}}^{\infty}\int_{|x-y|<t^{1/2\alpha}}\Big\{\int_{\mathbb R^{n}}(t^{\beta}\partial_{t}^{\beta}e^{-tL^{\alpha}}(y,x')-t^{\beta}\partial_{t}^{\beta}e^{-tL^{\alpha}}(y,x_{0}))a(x')dx'\Big\}^{2}\frac{dydt}{t^{n/2\alpha+1}}.
\end{aligned}\right.$$

We first estimate $I_{1}$. Since $|x-y|\leq |x-x_{0}|/2$, then $|y-x'|\sim |y-x_{0}|$ for $x'\in B(x_{0}, r)$ and $x\notin B(x_{0}, 8r)$. We can use Propositions \ref{prop-3.6}\ and \ \ref{prop-3.4} to deduce there exists $\delta'>\gamma$ such that
\begin{eqnarray*}
|I_{1}(x)|&\lesssim&\int^{(|x-x_{0}|/2)^{2\alpha}}_{0}\int_{|x-y|<t^{1/2\alpha}}\Big\{\int_{B}\Big(\frac{|x'-x_{0}|}{t^{1/2\alpha}}\Big)^{\delta'}
\frac{t^{\beta}}{(t^{1/2\alpha}+|y-x_{0}|)^{n+2\alpha\beta}}\frac{dx'}{|B|^{1+\gamma/n}}\Big\}^{2}\frac{dydt}{t^{n/2\alpha+1}}\\
&\lesssim&\int^{(|x-x_{0}|/2)^{2\alpha}}_{0}\int_{|x-y|<t^{1/2\alpha}}\Big\{\int_{B}\Big(\frac{r}{t^{1/2\alpha}}\Big)^{\delta'}
\frac{t^{\beta}}{t^{\beta+n/2\alpha}(1+|y-x_{0}|/t^{1/2\alpha})^{n+2\alpha\beta}}\frac{dx'}{|B|^{1+\gamma/n}}\Big\}^{2}\frac{dydt}{t^{n/2\alpha+1}}\\
&\lesssim&\int^{(|x-x_{0}|/2)^{2\alpha}}_{0}\int_{|x-y|<t^{1/2\alpha}}\Big(\frac{r}{t^{1/2\alpha}}\Big)^{2\delta'}
\frac{1}{t^{n/\alpha}(1+|y-x_{0}|/t^{1/2\alpha})^{2n+4\alpha\beta}}\frac{1}{|B|^{2\gamma/n}}\frac{dydt}{t^{n/2\alpha+1}}.
\end{eqnarray*}
Because $0<t<|x-x_{0}|^{2\alpha}/2^{2\alpha}$ and $|x-y|<t^{1/2\alpha}$, then $|x-y|<|x-x_{0}|/2$. This implies $|y-x_{0}|\gtrsim |x-x_{0}|/2$. We have
\begin{eqnarray*}
|I_{1}(x)|&\lesssim&\int^{(|x-x_{0}|/2)^{2\alpha}}_{0}\int_{|x-y|<t^{1/2\alpha}}\Big(\frac{r}{t^{1/2\alpha}}\Big)^{2\delta'}
\frac{1}{t^{n/\alpha}(1+|x-x_{0}|/t^{1/2\alpha})^{2n+4\alpha\beta}}\frac{1}{|B|^{2\gamma/n}}\frac{dydt}{t^{n/2\alpha+1}}\\
&\lesssim&\int^{(|x-x_{0}|/2)^{2\alpha}}_{0}\Big(\frac{r}{t^{1/2\alpha}}\Big)^{2\delta'}
\frac{1}{t^{n/\alpha}(1+|x-x_{0}|/t^{1/2\alpha})^{2n+4\alpha\beta}}\frac{1}{|B|^{2\gamma/n}}\frac{dt}{t}\\
&\lesssim&\frac{r^{2(\delta'-\gamma)}}{|x-x_{0}|^{2(n+\delta')}},
\end{eqnarray*}
which, via a direct computation gives
$$\int_{(8B)^{c}}|I_{1}(x)|^{n/2(n+\gamma)}dx\lesssim \int_{(8B)^{c}}\Big(\frac{r^{\delta'-\gamma}}{|x-x_{0}|^{n+\delta'}}\Big)^{n/(n+\gamma)}dx\lesssim C.$$

Let us continue with $I_{2}$. Similarly, it follows from Proposition \ref{prop-3.4} that
\begin{eqnarray*}
|I_{2}(x)|&\lesssim&
\int^{\infty}_{|x-x_{0}|^{2\alpha}/2^{2\alpha}}\int_{|x-y|<t^{1/2\alpha}}\Big\{\int_{B}\Big(\frac{|x'-x_{0}|}{t^{1/2\alpha}}\Big)^{\delta'}\frac{1}{t^{n/2\alpha}}
\frac{dx'}{|B|^{1+\gamma/n}}\Big\}^{2}\frac{dydt}{t^{n/2\alpha+1}}\\
&\lesssim&\int^{\infty}_{|x-x_{0}|^{2\alpha}/2^{2\alpha}}\int_{|x-y|<t^{1/2\alpha}}\Big(\frac{r}{t^{1/2\alpha}}\Big)^{2\delta'}\frac{1}{t^{n/\alpha}}
\frac{1}{|B|^{2\gamma/n}}\frac{dydt}{t^{n/2\alpha+1}}\\
&\lesssim&\frac{r^{2(\delta'-\gamma)}}{|x-x_{0}|^{2n+2\delta'}}.
\end{eqnarray*}
Hence we still have
$\int_{(8B)^{c}}|I_{2}(x)|^{n/2(n+\gamma)}dx\lesssim 1.$

{\it Case 2: $\rho(x_{0})/4<r<\rho(x_{0})$.} For this case, the atom $a$ has no canceling condition. We have
$(S_{\alpha,\beta}^{L}(a)(x))^{2}\lesssim I_{3}(x)+I_{4}(x)+I_{5}(x)$, where
$$\left\{
\begin{aligned}
I_{3}(x)&:=\int^{r^{2\alpha}/4^{\alpha}}_{0}\int_{|x-y|<t^{1/2\alpha}}\Big|\int_{\mathbb R^{n}}t^{\beta}\partial_{t}^{\beta}e^{-tL^{\alpha}}(y,x')a(x')dx'\Big|^{2}\frac{dydt}{t^{n/2\alpha+1}};\\
I_{4}(x)&:=\int_{r^{2\alpha}/4^{\alpha}}^{|x-x_{0}|^{2\alpha}/4^{2\alpha}}\int_{|x-y|<t^{1/2\alpha}}\Big|\int_{\mathbb R^{n}}t^{\beta}\partial_{t}^{\beta}e^{-tL^{\alpha}}(y,x')a(x')dx'\Big|^{2}\frac{dydt}{t^{n/2\alpha+1}};\\
I_{5}(x)&:=\int^{\infty}_{|x-x_{0}|^{2\alpha}/4^{2\alpha}}\int_{|x-y|<t^{1/2\alpha}}\Big|\int_{\mathbb R^{n}}t^{\beta}\partial_{t}^{\beta}e^{-tL^{\alpha}}(y,x')a(x')dx'\Big|^{2}\frac{dydt}{t^{n/2\alpha+1}}.
\end{aligned}
\right.$$

Because $x\in (8B)^{c}$ and $x'\in B$, then $|x'-x_{0}|<|x-x_{0}|/8$. On the other hand, for $t\in (0, r^{2\alpha}/4^{\alpha})$, $|y-x|<t^{1/2\alpha}\leq r/2<|x-x_{0}|/8$. This means that $|y-x'|\geq c|x-x_{0}|$. We can get
\begin{eqnarray*}
|I_{3}(x)|&\lesssim&\int^{r^{2\alpha}/4^{\alpha}}_{0}\int_{|x-y|<t^{1/2\alpha}}\Big(\int_{B}\frac{t^{\beta}}{(t^{1/2\alpha}+|x-x_{0}|)^{n+2\alpha\beta}}\frac{dx'}{|B|^{1+\gamma/n}}\Big)^{2}\frac{dydt}{t^{n/2\alpha+1}}\\
&\lesssim&\frac{1}{|B|^{2\gamma/n}}\int^{r^{2\alpha}/4^{\alpha}}_{0}\int_{|x-y|<t^{1/2\alpha}}\frac{t^{2\beta}}{(t^{1/2\alpha}+|x-x_{0}|)^{2n+4\alpha\beta}}
\frac{dydt}{t^{n/2\alpha+1}}\\
&\lesssim&\frac{1}{|B|^{2\gamma/n}}\int^{r^{2\alpha}/4^{\alpha}}_{0}\frac{t^{2\beta}}{(t^{1/2\alpha}+|x-x_{0}|)^{2n+4\alpha\beta}}
\frac{dt}{t}\\
&\lesssim&\frac{r^{4\alpha\beta-2\gamma}}{|x-x_{0}|^{2n+4\alpha\beta}},
\end{eqnarray*}
which indicates that
\begin{eqnarray*}
\int_{(8B)^{c}}|I_{3}(x)|^{n/2(n+\gamma)}dx&\lesssim&\int_{(8B)^{c}}\Big(\frac{r^{2\alpha\beta-\gamma}}{|x-x_{0}|^{n+2\alpha\beta}}\Big)^{n/(n+\gamma)}dx\leq C.
\end{eqnarray*}

Similarly,
\begin{eqnarray*}
|I_{4}(x)|&\lesssim&\int_{r^{2\alpha}/4^{\alpha}}^{|x-x_{0}|^{2\alpha}/4^{2\alpha}}\int_{|x-y|<t^{1/2\alpha}}\Big\{\int_{B}\frac{t^{\beta}}{(t^{1/2\alpha}+|y-x'|)^{n+2\alpha\beta}}
\frac{1}{|B|^{1+\gamma/n}}\Big(\frac{\rho(x')}{t^{1/2\alpha}}\Big)^{N}dx'
\Big\}^{2}\frac{dydt}{t^{n/2\alpha+1}}\\
&\lesssim&\int_{r^{2\alpha}/4^{\alpha}}^{|x-x_{0}|^{2\alpha}/4^{2\alpha}}\int_{|x-y|<t^{1/2\alpha}}\Big\{\int_{B}\frac{t^{\beta}}{(t^{1/2\alpha}+|y-x'|)^{n+2\alpha\beta}}
\frac{1}{|B|^{1+\gamma/n}}\Big(\frac{\rho(x_0)}{t^{1/2\alpha}}\Big)^{N}dx'
\Big\}^{2}\frac{dydt}{t^{n/2\alpha+1}}.
\end{eqnarray*}
Notice that $r/2\leq t^{1/2\alpha}\lesssim|x-x_{0}|/8$ for $t\in (r^{2\alpha}/4^{\alpha}, |x-x_{0}|^{2\alpha}/4^{2\alpha})$. It can be deduced from the triangle inequality that
$|y-x'|\sim |x-x_{0}|$. Then
\begin{eqnarray*}
|I_{4}(x)|&\lesssim&\int_{r^{2\alpha}/4^{\alpha}}^{|x-x_{0}|^{2\alpha}/4^{2\alpha}}\int_{|x-y|<t^{1/2\alpha}}\frac{1}{t^{n/\alpha}(1+|x-x_{0}|/t^{1/2\alpha})^{2n+4\alpha\beta}}
\frac{1}{|B|^{2\gamma/n}}\Big(\frac{\rho(x_0)}{t^{1/2\alpha}}\Big)^{2N}\frac{dydt}{t^{n/2\alpha+1}}\\
&\lesssim&\int_{r^{2\alpha}/4^{\alpha}}^{|x-x_{0}|^{2\alpha}/4^{2\alpha}}\Big(\frac{t^{n/2\alpha+\beta}}{t^{n/2\alpha}|x-x_{0}|^{n+2\alpha\beta}}
\frac{r^{N}}{t^{N/2\alpha}r^{\gamma}}\Big)^{2}\frac{dt}{t}\\
&\lesssim&\frac{r^{4\alpha\beta-2\gamma}}{|x-x_{0}|^{2n+4\alpha\beta}}.
\end{eqnarray*}

The estimate for $I_{5}$ is similar to that of $I_{4}$. In fact, due to $r\sim \rho(x_{0})$,
\begin{eqnarray*}
|I_{5}(x)|&\lesssim&\int^{\infty}_{|x-x_{0}|^{2\alpha}/4^{2\alpha}}\int_{|x-y|<t^{1/2\alpha}}\Bigg\{\int_{B}\frac{t^{\beta}}{(t^{1/2\alpha}+|y-x'|)^{n+2\alpha\beta}}
\frac{dx'}{r^{\gamma+n}}\Big(\frac{\rho(x_0)}{t^{1/2\alpha}}\Big)^{N}
\Bigg\}^{2}\frac{dydt}{t^{n/2\alpha+1}}\\
&\lesssim&\int^{\infty}_{|x-x_{0}|^{2\alpha}/4^{2\alpha}}\int_{|x-y|<t^{1/2\alpha}}\Big(\frac{r}{t^{1/2\alpha}}\Big)^{2N}\frac{1}{r^{2\gamma}}\frac{dydt}{t^{3n/2\alpha+1}}\\
&\lesssim&\frac{r^{2N-2\gamma}}{|x-x_{0}|^{2\alpha(n/\alpha+N)}}.
\end{eqnarray*}
The estimates for $I_{4}$ and $ I_{5}$ indicate that
$$\int_{(8B)^{c}}|I_{4}(x)+I_{5}(x)|^{n/2(n+\gamma)}dx\lesssim 1.$$

\end{proof}

\begin{lemma}\label{le-4.6-add}
Let $\alpha\in(0,1)$, $q_{t}(\cdot,\cdot)$ be a function of $x, y\in\mathbb R^{n}$ and $t>0$. Assume that for each $N>0$, there exists a constant $C_{N}$
such that for $\theta>\gamma$,
\begin{equation*}\label{eq-4.10}
|q_{t}(x,y)|\leq C_{N}\Big(1+\frac{t^{1/2\alpha}}{\rho(x)}+\frac{t^{1/2\alpha}}{\rho(x)}\Big)^{-N}t^{-n/2\alpha}\Big(1+
\frac{|x-y|}{t^{1/2\alpha}}\Big)^{-(n+\theta)}.
\end{equation*}
Then for any $H_{L}^{n/(n+\gamma)}$-atom $a$ supported on $B(x_{0}, r)$, there exists a constant $C_{x_{0}, r}$
such that
$$\sup_{t>0}\Big|\int_{\mathbb R^{n}}q_{t}(x,y)a(y)dy\Big|\leq C_{N,x_{0},r}(1+|x|)^{-n-\theta}, x\in\mathbb R^{n}.$$
\end{lemma}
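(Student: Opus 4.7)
My plan is to bypass atom cancellation—unavailable here since the hypothesis provides no regularity in $y$—and instead extract $t$-uniform pointwise decay from the $\rho$-factor in the size estimate. Setting $s := t^{1/2\alpha}$, $a := |x-y|$ and $b := \rho(y)$, the hypothesis reads
\begin{equation*}
|q_t(x,y)| \leq C_N \frac{s^\theta}{(s+a)^{n+\theta}} \Bigl(1 + \frac{s}{b}\Bigr)^{-N}.
\end{equation*}
The first step is to establish the envelope bound
\begin{equation*}
\sup_{t>0}|q_t(x,y)| \leq C \frac{\rho(y)^\theta}{|x-y|^{n+\theta}} \qquad \text{whenever } |x-y| \geq \rho(y).
\end{equation*}
Fixing $N \geq \theta$, I would split the $s$-supremum into the three regimes $s \in (0,b]$, $s \in (b,a]$, and $s \in (a,\infty)$. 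On the first, $s^\theta \leq b^\theta$ and $(s+a)^{n+\theta} \geq a^{n+\theta}$. On the second, $(1+s/b)^{-N} \leq (b/s)^N$ combined with $s^{\theta-N} \leq b^{\theta-N}$ (valid since $s>b$ and $N\geq\theta$) yields $s^\theta(1+s/b)^{-N} \leq b^\theta$, and again $(s+a)^{n+\theta} \geq a^{n+\theta}$. On the third, $s^\theta/(s+a)^{n+\theta} \leq s^{-n}$ and $(1+s/b)^{-N} \leq (b/s)^N$ give the product $\leq b^N/s^{n+N} \leq b^N/a^{n+N} \leq b^\theta/a^{n+\theta}$ (using $b\leq a$).

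The second step feeds this envelope into the atomic integral. I would split $\mathbb{R}^n = \Omega_1 \cup \Omega_2$ with $\Omega_1 := \{x : |x| \geq 2(|x_0|+\rho(x_0)+r)\}$. For $x \in \Omega_1$ and $y \in B(x_0,r)$, Lemma \ref{lem2.4} gives $\rho(y) \sim \rho(x_0)$ (recall $r \leq \rho(x_0)$), while $|x-y| \sim |x| \geq 2\rho(x_0) \gtrsim \rho(y)$, so the envelope of Step 1 applies and yields $\sup_t|q_t(x,y)| \leq C\rho(x_0)^\theta|x|^{-(n+\theta)}$. Combined with $\|a\|_{L^1} \leq |B(x_0,r)|^{1-(n+\gamma)/n} \lesssim r^{-\gamma}$, this delivers the target bound on $\Omega_1$. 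For $x \in \Omega_2$, the quantity $(1+|x|)^{-(n+\theta)}$ is bounded below by a positive constant depending only on $x_0$ and $r$, so it suffices to dominate $|\int q_t a|$ by such a constant; I would use the size hypothesis directly with $N=0$, together with the standard estimate $\int_{\mathbb{R}^n}(s+|x-y|)^{-(n+\theta)}\,dy \lesssim s^{-\theta}$, to get $|\int q_t(x,y)a(y)\,dy| \lesssim \|a\|_{L^\infty} \lesssim r^{-(n+\gamma)}$ uniformly in $t$.

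The main obstacle is the envelope bound of Step 1: the three competing scales $s$, $\rho(y)$ and $|x-y|$ must be reconciled simultaneously, and the argument relies crucially on the freedom to choose $N$ arbitrarily large in the hypothesis (so that $N \geq \theta$ is permissible) and on the geometric condition $|x-y| \geq \rho(y)$, which is automatic in the far-field regime $\Omega_1$ once $|x|$ exceeds the natural scales of the atom. Once this envelope is in hand, the remainder reduces to routine bookkeeping with Lemma \ref{lem2.4} and the $L^\infty$ size of the atom.
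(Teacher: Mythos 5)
Your proposal is correct and follows essentially the same route as the paper's own proof: no appeal to atom cancellation, a near/far decomposition in $x$, the bound $\|a\|_{\infty}\lesssim r^{-n-\gamma}$ together with $\int_{\mathbb R^{n}}t^{-n/2\alpha}(1+|x-y|/t^{1/2\alpha})^{-(n+\theta)}dy\lesssim 1$ in the bounded region, and in the far region the choice $N\geq\theta$ plus $\rho(y)\sim\rho(x_{0})$ so that the $\rho$-factor cancels the powers of $t$ and yields the $t$-uniform envelope $\rho(x_{0})^{\theta}r^{-\gamma}|x-x_{0}|^{-(n+\theta)}$ (both you and the paper read the hypothesis' repeated $\rho(x)$ as the intended $\rho(x),\rho(y)$). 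The only cosmetic differences are that the paper splits at $B(x_{0},2r)$ versus its complement rather than at $|x|\sim|x_{0}|+\rho(x_{0})+r$, and it gets your Step-1 envelope in one line: since $(1+s/b)^{-N}\leq(b/s)^{\theta}$ and $(s+a)^{n+\theta}\geq a^{n+\theta}$ for all $s>0$, neither your three-regime analysis nor the restriction $|x-y|\geq\rho(y)$ is actually needed.
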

\begin{proof}
If $x\in B(x_{0}, 2r)$, then  $1+|x|\leq1+|x-x_{0}|+|x_{0}|\leq 1+2r+|x_{0}|$. It follows from the condition $\|a\|_{\infty}\leq |B(x_{0}, r)|^{-1-\gamma/n}$ that
\begin{eqnarray*}
\Big|\int_{\mathbb R^{n}}q_{t}(x,y)a(y)dy\Big|&\lesssim&\int_{B(x_{0}, r)}|q_{t}(x,y)||a(y)|dy\\
&\lesssim&\int_{B(x_{0}, r)}t^{-n/2\alpha}\Big(1+\frac{|x-y|}{t^{1/2\alpha}}\Big)^{-n-\theta}r^{-n-\gamma}dy\\
&\lesssim&r^{-n-\gamma}\frac{(1+2r+|x_{0}|)^{n+\theta}}{(1+2r+|x_{0}|)^{n+\theta}}\\
&\lesssim&C_{N,x_{0},r}(1+|x|)^{-n-\theta}.
\end{eqnarray*}

If $x\notin B(x_{0}, 2r)$, then for any $y\in B(x_{0}, r)$, $|x-y|\sim |x-x_{0}|$. On the other hand,  $\rho(y)\sim\rho(x_{0})$ since $r<\rho(x_{0})$ and $|y-x_{0}|<r$. By Proposition \ref{prop-3.6}, we have
\begin{eqnarray*}
|q_{t}(x,y)|&\lesssim&t^{-n/2\alpha}\Big(1+\frac{|x-y|}{t^{1/2\alpha}}\Big)^{-(n+\theta)}\Big(1+\frac{t^{1/2\alpha}}{\rho(x_{0})}\Big)^{-N}\\
&\lesssim&t^{-n/2\alpha}\Big(1+\frac{|x-y|}{t^{1/2\alpha}}\Big)^{-(n+\theta)}\Big(\frac{t^{1/2\alpha}}{\rho(x_{0})}\Big)^{-\theta},
\end{eqnarray*}
which implies that
\begin{eqnarray*}
\Big|\int_{\mathbb R^{n}}q_{t}(x,y)a(y)dy\Big|&\lesssim&\Big(\frac{t^{1/2\alpha}}{\rho(x_{0})}\Big)^{-\theta}t^{-n/2\alpha}
\Big(\frac{|x-x_{0}|}{t^{1/2\alpha}}\Big)^{-(n+\theta)}\int_{\mathbb R^{n}}|a(y)|dy\\
&\lesssim&(\rho(x_{0}))^{\theta}r^{-\gamma}|x-x_{0}|^{-(n+\theta)}:=C_{\gamma, x_{0}, r}|x-x_{0}|^{-(n+2\alpha\beta)}.
\end{eqnarray*}
Because $x\notin B(x_{0}, 2r)$, set $x=x_{0}+2rz$, where $|z|\geq 1$. Then $1+|x|\leq 1+|x_{0}|+2r|z|$ and
$$\frac{1+|x_{0}|+2r}{2r}|x-x_{0}|=(1+|x_{0}|+2r)|z|\geq 1+|x_{0}|+2r|z|,$$
which implies that $|x_{0}-x|\geq(1+|x|)/{C_{x_{0}, r}}$. This completes the proof of Lemma \ref{le-4.6-add}.
\end{proof}

\begin{lemma}\label{le-4.3}
Given $\alpha\in (0,1)$, $\beta>0$ and $0<\gamma\leq \min\{2\alpha,2\alpha\beta\}$. Let $f\in L^{1}(\mathbb R^{n},\ (1+|x|)^{-(n+\gamma+\epsilon)}dx)$ for any $\epsilon>0$ and let $a$ be an $H^{n/(n+\gamma)}_{L}$-atom. Then
for
$$\begin{cases}
F(x,t):=t^{\beta}\partial^{\beta}_{t}e^{-tL^{\alpha}}(f)(x);\\
G(x,t):=t^{\beta}\partial^{\beta}_{t}e^{-tL^{\alpha}}(a)(x),
\end{cases}$$
there exists a constant $C_{\alpha,\beta}$ such that
$$C_{\alpha,\beta}\int_{\mathbb R^{n}}f(x)\overline{a(x)}dx=\iint_{\mathbb R^{n+1}_{+}}F(x,t)\overline{G(x,t)}\frac{dt}{t}.$$
\end{lemma}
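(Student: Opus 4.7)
The plan is to deduce the identity from the $L^2$-reproducing formula of Lemma \ref{le-4.7-add} applied to the atom $a$, by transferring one factor of $t^\beta \partial_t^\beta e^{-tL^\alpha}$ from $a$ onto $f$ via self-adjointness, and then invoking Fubini in the $(x,t)$ variables. The operator $t^\beta \partial_t^\beta e^{-tL^\alpha}$ is self-adjoint on $L^2(\mathbb{R}^n)$ since its kernel $D^{L,\beta}_{\alpha,t}(\cdot,\cdot)$ is real and symmetric (inherited from $K^L_t$), so for $g_1,g_2 \in L^2(\mathbb{R}^n)$ the pairing identity $\langle (t^\beta \partial_t^\beta e^{-tL^\alpha})^2(g_1),g_2\rangle = \langle t^\beta \partial_t^\beta e^{-tL^\alpha}(g_1),t^\beta\partial_t^\beta e^{-tL^\alpha}(g_2)\rangle$ is immediate. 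The remaining task is technical: justifying that the analogous identity survives when $g_2$ is replaced by the non-$L^2$ function $f$, which will be done using the pointwise kernel bounds of Proposition \ref{prop-3.6} and the atomic decay estimate of Lemma \ref{le-4.6-add}.

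I would work first with the truncated integral $\int_\epsilon^N \ldots \frac{dt}{t}$ (as in Lemma \ref{le-4.7-add}) so that everything takes place in $L^2$, then pass to the limit. Since an $H_L^{n/(n+\gamma)}$-atom $a$ is bounded with compact support, $a\in L^2(\mathbb{R}^n)$, and Lemma \ref{le-4.7-add} yields that $\int_\epsilon^N (t^\beta\partial_t^\beta e^{-tL^\alpha})^2(a)\frac{dt}{t}$ converges to $C_{\alpha,\beta}^{-1}a$ in $L^2$ as $\epsilon\to 0$, $N\to\infty$. Pairing both sides against $f$ and invoking self-adjointness inside the integral yields, for each truncation,
$$\int_\epsilon^N\!\!\int_{\mathbb{R}^n} F(x,t)\overline{G(x,t)}\,\frac{dx\,dt}{t} = \int_\epsilon^N\!\!\int_{\mathbb{R}^n}f(x)\overline{(t^\beta\partial_t^\beta e^{-tL^\alpha})^2(a)(x)}\,\frac{dx\,dt}{t}.$$
That $F(x,t)$ is well-defined as an absolutely convergent integral already requires pairing the bound $|D^{L,\beta}_{\alpha,t}(x,y)|\lesssim t^\beta(t^{1/2\alpha}+|x-y|)^{-n-2\alpha\beta}$ from Proposition \ref{prop-3.6} against the weight $(1+|x|)^{-(n+\gamma+\epsilon)}$, which is legal provided $\epsilon$ is small enough that $\gamma+\epsilon<2\alpha\beta$ (allowed by the hypothesis $\gamma\leq \min\{2\alpha,2\alpha\beta\}$). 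By Fubini in $(x,t)$ the right-hand side equals $\int_{\mathbb{R}^n}f(x)\overline{\int_\epsilon^N(\ldots)\frac{dt}{t}}\,dx$, and the desired identity will follow upon passing to the double limit.

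The main obstacle is this final limit: the $L^2$-convergence supplied by Lemma \ref{le-4.7-add} is not by itself enough to pass under an integral against the non-$L^2$ function $f$. I plan to handle this by producing, from iterated use of Proposition \ref{prop-3.6} together with Lemma \ref{le-4.6-add} applied to $a$, an explicit majorant for $|(t^\beta\partial_t^\beta e^{-tL^\alpha})^2(a)(x)|$ that is integrable in $t$ on $(0,\infty)$ and decays in $x$ like $(1+|x|)^{-n-\theta}$ for some $\theta>\gamma$. The decay as $t\to\infty$ follows from the $t^{-n/2\alpha}$ factor in the kernel bound; the delicate point is the behaviour as $t\to 0^+$, where one must exploit the $t^\beta$ prefactor in $D^{L,\beta}_{\alpha,t}$ (reflecting the cancellation encoded by $\partial_t^\beta$) to guarantee integrability near zero of the composition of two such kernels. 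Once this uniform-in-$(\epsilon,N)$ dominating function is in hand, dominated convergence closes the argument and produces the constant $C_{\alpha,\beta}$ via Lemma \ref{le-4.7-add}.
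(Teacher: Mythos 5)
Your overall skeleton (truncate in $t$, move one factor of $t^{\beta}\partial_{t}^{\beta}e^{-tL^{\alpha}}$ onto $f$ by symmetry of the kernel, Fubini, then identify the limit through the reproducing formula of Lemma \ref{le-4.7-add}) is the same as the paper's, and your remarks about absolute convergence of the inner $x$-integral are in the right direction. The gap is in the dominating function you propose for the final limit. You claim that ``iterated use of Proposition \ref{prop-3.6} together with Lemma \ref{le-4.6-add}'' yields a majorant of $|(t^{\beta}\partial_{t}^{\beta}e^{-tL^{\alpha}})^{2}(a)(x)|$ that is integrable in $dt/t$ and decays like $(1+|x|)^{-n-\theta}$ with $\theta>\gamma$. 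It cannot: the composed operator is (up to constants) $t^{2\beta}\partial_{t}^{2\beta}e^{-2tL^{\alpha}}$, whose kernel bound from Proposition \ref{prop-3.6} is $t^{2\beta}(t^{1/2\alpha}+|x-y|)^{-(n+4\alpha\beta)}$, and
$$\int_{0}^{\infty}\frac{t^{2\beta}}{(t^{1/2\alpha}+|x-x_{0}|)^{n+4\alpha\beta}}\,\frac{dt}{t}\simeq |x-x_{0}|^{-n},$$
so the $t$-integrated ``naive'' majorant decays only like $(1+|x|)^{-n}$. That is not summable against a general $f\in L^{1}\big((1+|x|)^{-(n+\gamma+\epsilon)}dx\big)$ (take $f(x)=(1+|x|)^{\gamma}$), and Lemma \ref{le-4.6-add} is of no help here because it only gives a bound uniform in $t$, which is never integrable against $dt/t$. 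So the two ingredients you cite cannot simultaneously deliver $dt/t$-integrability and decay beyond $|x|^{-n}$; you have also mislocated the delicate point: integrability as $t\to0^{+}$ is automatic from the $t^{2\beta}$ prefactor, whereas the real obstruction is the spatial decay of the $t$-integrated bound. Extra cancellation input (the atom's vanishing moment combined with the H\"older regularity of Proposition \ref{prop-3.4}, or the $\rho$-decay factors) is indispensable, and your proposal never invokes it.

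The paper resolves this differently: instead of majorizing the $t$-integrand pointwise, it keeps the truncation and integrates by parts in $t$ (separately for $2\beta<1$, $2\beta=1$, $2\beta>1$), exploiting the cancellation in $\partial_{t}^{2\beta}$ to show that the truncated kernel integral satisfies, uniformly in $\epsilon$,
$$\Big|\int_{2\epsilon}^{\infty}t^{2\beta}\partial_{t}^{2\beta}e^{-tL^{\alpha}}(x,y)\frac{dt}{t}\Big|
\lesssim \frac{\epsilon^{-n/2\alpha}}{(1+|x-y|/\epsilon^{1/2\alpha})^{n+4\alpha\beta}}
\Big(1+\frac{\epsilon^{1/2\alpha}}{\rho(x)}+\frac{\epsilon^{1/2\alpha}}{\rho(y)}\Big)^{-N},$$
i.e.\ the whole truncated $t$-integral behaves like a single approximate-identity kernel at scale $\epsilon$ with decay exponent $n+4\alpha\beta$. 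Only then is Lemma \ref{le-4.6-add} applied, to this family of kernels, producing the uniform bound $C_{x_{0},r}(1+|y|)^{-n-\theta}$ with $\theta=4\alpha\beta>\gamma$ for $\sup_{\epsilon}\big|\int_{\epsilon}^{1/\epsilon}t^{2\beta}\partial_{t}^{2\beta}e^{-2tL^{\alpha}}(a)(y)\frac{dt}{t}\big|$; dominated convergence against $f$ and Lemma \ref{le-4.7-add} then finish the proof. Your plan could in principle be salvaged by redoing, in pointwise form, the cancellation estimates used for Theorem \ref{th-4.1} (splitting the cancellative and non-cancellative atom cases), but that machinery is absent from the proposal as written, so as it stands the key step is unproved.
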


\begin{proof}
Assume that $a$ is an $H^{n/(n+\gamma)}_{L}$-atom associated  to a ball $B(x_{0}, r)$. By Lemma \ref{le-4.3-1} and Theorem \ref{th-4.1}, we get
\begin{eqnarray*}
I&=&\iint_{\mathbb R^{n+1}_{+}}F(x,t)\overline{G(x,t)}\frac{dt}{t}\\
&=&\lim_{\epsilon\rightarrow0}\int^{1/\epsilon}_{\epsilon}\int_{\mathbb R^{n}}
t^{\beta}\partial_{t}^{\beta}e^{-tL^{\alpha}}(f)(x)\overline{t^{\beta}\partial_{t}^{\beta}e^{-tL^{\alpha}}(a)(x)}\frac{dxdt}{t}\\
&=&\lim_{\epsilon\rightarrow0}\int^{1/\epsilon}_{\epsilon}\int_{\mathbb R^{n}}D^{L,\beta}_{\alpha,t}(f)(x)\overline{D^{L,\beta}_{\alpha,t}(a)(x)}\frac{dxdt}{t}.
\end{eqnarray*}
The inner integration satisfies
\begin{eqnarray*}
\Big|\int_{\mathbb R^{n}}D^{L,\beta}_{\alpha,t}(f)(x)\overline{D^{L,\beta}_{\alpha,t}(a)(x)}dx\Big|&\leq&\int_{\mathbb R^{n}}\Big|D^{L,\beta}_{\alpha,t}(f)(x)\Big|\cdot\Big|\overline{D^{L,\beta}_{\alpha,t}(a)(x)}\Big|dx\\
&\leq&\Bigg\{\sup_{t>0}\Big|\overline{D^{L,\beta}_{\alpha,t}(a)(x)}\Big|\Bigg\}\Bigg\{\int_{\mathbb R^{n}}\Big|D^{L,\beta}_{\alpha,t}(f)(x)\Big|dx\Bigg\}.
\end{eqnarray*}

By Proposition \ref{prop-3.6}, we can see that
\begin{eqnarray*}
|D^{L,\beta}_{\alpha,t}(x,y)|&\lesssim&\frac{t^{\beta}}{(t^{1/2\alpha}+|x-y|)^{n+2\alpha\beta}}
\Big(1+\frac{t^{1/2\alpha}}{\rho(x)}+\frac{t^{1/2\alpha}}{\rho(y)}\Big)^{-N}\\
&\lesssim&t^{-n/2\alpha}\frac{1}{(1+|x-y|/t^{1/2\alpha})^{n+2\alpha\beta}}
\Big(1+\frac{t^{1/2\alpha}}{\rho(x)}+\frac{t^{1/2\alpha}}{\rho(y)}\Big)^{-N}.
\end{eqnarray*}
If $x\in B(x_{0}, 2r)$, then  $1+|x|\leq1+|x-x_{0}|+|x_{0}|\leq 1+2r+|x_{0}|$. It follows from the condition $\|a\|_{\infty}\leq |B(x_{0}, r)|^{-1-\gamma/n}$ that
\begin{eqnarray*}
\Big|\int_{\mathbb R^{n}}D^{L,\beta}_{\alpha,t}(x,y)a(y)dy\Big|&\lesssim&\int_{B(x_{0}, r)}|D^{L,\beta}_{\alpha,t}(x,y)||a(y)|dy\\
&\lesssim&\int_{B(x_{0}, r)}t^{-n/2\alpha}\Big(1+\frac{|x-y|}{t^{1/2\alpha}}\Big)^{-n-2\alpha\beta}r^{-n-\gamma}dy\\
&\lesssim&r^{-n-\gamma}\frac{(1+2r+|x_{0}|)^{n+2\alpha\beta}}{(1+2r+|x_{0}|)^{n+2\alpha\beta}}\\
&\lesssim&C_{N,x_{0},\gamma}(1+|x|)^{-n-2\alpha\beta}.
\end{eqnarray*}

If $x\notin B(x_{0}, 2r)$, then for any $y\in B(x_{0}, r)$, $|x-y|\sim |x-x_{0}|$. On the other hand,  $\rho(y)\sim\rho(x_{0})$ since $r<\rho(x_{0})$ and $|y-x_{0}|<r$. By Proposition \ref{prop-3.6}, we have
\begin{eqnarray*}
|D^{L,\beta}_{\alpha,t}(x,y)|&\lesssim&\frac{t^{\beta}}{(t^{1/2\alpha}+|x-y|)^{n+2\alpha\beta}}
\Big(1+\frac{t^{1/2\alpha}}{\rho(x)}+\frac{t^{1/2\alpha}}{\rho(y)}\Big)^{-N}\\
&\lesssim&t^{-n/2\alpha}\Big(1+\frac{|x-y|}{t^{1/2\alpha}}\Big)^{-(n+2\alpha\beta)}\Big(1+\frac{t^{1/2\alpha}}{\rho(x_{0})}\Big)^{-N}\\
&\lesssim&t^{-n/2\alpha}\Big(1+\frac{|x-y|}{t^{1/2\alpha}}\Big)^{-(n+2\alpha\beta)}\Big(\frac{t^{1/2\alpha}}{\rho(x_{0})}\Big)^{-2\alpha\beta},
\end{eqnarray*}
which implies that
\begin{eqnarray*}
\Big|\int_{\mathbb R^{n}}D^{L,\beta}_{\alpha,t}(x,y)a(y)dy\Big|&\lesssim&\Big(\frac{t^{1/2\alpha}}{\rho(x_{0})}\Big)^{-2\alpha\beta}t^{-n/2\alpha}
\Big(\frac{|x-x_{0}|}{t^{1/2\alpha}}\Big)^{-(n+2\alpha\beta)}\int_{\mathbb R^{n}}|a(y)|dy\\
&\lesssim&(\rho(x_{0}))^{2\alpha\beta}r^{-\gamma}|x-x_{0}|^{-(n+2\alpha\beta)}:=C_{\gamma, x_{0}, r}|x-x_{0}|^{-(n+2\alpha\beta)}.
\end{eqnarray*}
Because $x\notin B(x_{0}, 2r)$, set $x=x_{0}+2rz$, where $|z|\geq 1$. Then $1+|x|\leq 1+|x_{0}|+2r|z|$ and
$$\frac{1+|x_{0}|+2r}{2r}|x-x_{0}|=(1+|x_{0}|+2r)|z|\geq 1+|x_{0}|+2r|z|,$$
which implies that $|x_{0}-x|\geq(1+|x|)/{C_{x_{0}, r}}$. The above estimate indicate that $D^{L.\beta}_{\alpha,t}(\cdot,\cdot)$ satisfies (\ref{eq-4.10}) with $\theta=2\alpha\beta$. On the other hand,
\begin{eqnarray*}
\int_{\mathbb R^{n}}|D^{L,\beta}_{\alpha,t}(f)(x)||D^{L,\beta}_{\alpha,t}(a)(x)|dx
&\lesssim&\int_{\mathbb R^{n}}\Big(\int_{\mathbb R^{n}}\frac{|f(y)|t^{\beta}}{(|x-y|+t^{1/2\alpha})^{n+2\alpha\beta}}dy\Big)\frac{dx}{(1+|x|)^{n+2\alpha\beta}}
\lesssim I_{1}+I_{2},
\end{eqnarray*}
where
$$\left\{\begin{aligned}
I_{1}:=\iint_{|x-y|>|y|/2}\frac{|f(y)|t^{\beta}}{(|x-y|+t^{1/2\alpha})^{n+2\alpha\beta}}\frac{dxdy}{(1+|x|)^{n+2\alpha\beta}};\\
I_{2}:=\iint_{|x-y|\leq|y|/2}\frac{|f(y)|t^{\beta}}{(|x-y|+t^{1/2\alpha})^{n+2\alpha\beta}}\frac{dxdy}{(1+|x|)^{n+2\alpha\beta}}.
\end{aligned}\right.$$

If $|x-y|<|y|/2$, then $|y|\leq |x-y|+|x|\leq |y|/2+|x|$, i.e., $|y|\leq 2|x|$.
\begin{eqnarray*}
I_{2}&\lesssim&\int_{\mathbb R^{n}}\Big(\int_{\mathbb R^{n}}\frac{t^{\beta}}{(|x-y|+t^{1/2\alpha})^{n+2\alpha\beta}}dx\Big)\frac{|f(y)|}{(1+|y|)^{n+2\alpha\beta}}dy<\infty.
\end{eqnarray*}
For $I_{1}$, we have $|x|\geq |y|/2$ since $|x-y|\leq |y|/2$.
\begin{eqnarray*}
I_{1}&\lesssim& \iint_{\mathbb R^{n}\times\mathbb R^{n}}\frac{|f(y)|t^{\beta}}{(|x-y|+t^{1/2\alpha})^{n+2\alpha\beta}}\frac{dxdy}{(1+|y|)^{n+2\alpha\beta}}\\
&\lesssim&C_{t}\int_{\mathbb R^{n}}\Big(\int_{\mathbb R^{n}}\frac{|f(y)|}{(|y|+1)^{n+2\alpha\beta}}dy\Big)\frac{dx}{(1+|x|)^{n+2\alpha\beta}}<\infty.
\end{eqnarray*}
Notice that
\begin{eqnarray*}
\int_{\mathbb R^{n}}D^{L,\beta}_{\alpha,t}(f)(x)\overline{D^{L,\beta}_{\alpha,t}(a)(x)}dx&=&\int_{\mathbb R^{n}}f(x)\overline{(D^{L,\beta}_{\alpha,t})^{2}(a)(x)}dx\\
&=&\int_{\mathbb R^{n}}f(x)\overline{t^{2\beta}\partial_{t}^{2\beta}e^{-2tL^{\alpha}}(a)(x)}dx,
\end{eqnarray*}
which, together with the Fubini theorem, indicates that
\begin{eqnarray}\label{eq-4.10}
I&=&\lim_{\epsilon\rightarrow 0}\int^{1/\epsilon}_{\epsilon}\Big\{\int_{\mathbb R^{n}}f(y)\overline{t^{2\beta}\partial_{t}^{2\beta}e^{-2tL^{\alpha}}(a)(y)}dy\Big\}\frac{dt}{t}\\
&=&\lim_{\epsilon\rightarrow 0}\int_{\mathbb R^{n}}f(y)\Big\{\int^{1/\epsilon}_{\epsilon}\overline{t^{2\beta}\partial_{t}^{2\beta}e^{-2tL^{\alpha}}(a)(y)}
\frac{dt}{t}\Big\}dy.\nonumber
\end{eqnarray}
For the term
$$\int^{1/\epsilon}_{\epsilon}\overline{t^{2\beta}\partial_{t}^{2\beta}e^{-2tL^{\alpha}}(a)(y)}
\frac{dt}{t},$$
we can see that
\begin{eqnarray*}
\Big|\int^{1/\epsilon}_{\epsilon}{t^{2\beta}\partial_{t}^{2\beta}e^{-2tL^{\alpha}}(a)(y)}
\frac{dt}{t}\Big|&\leq&\Big|\int^{\infty}_{\epsilon}{t^{2\beta}\partial_{t}^{2\beta}e^{-2tL^{\alpha}}(a)(y)}
\frac{dt}{t}\Big|+\Big|\int_{1/\epsilon}^{\infty}{t^{2\beta}\partial_{t}^{2\beta}e^{-2tL^{\alpha}}(a)(y)}
\frac{dt}{t}\Big|\\
&=&\Big|\int_{\mathbb R^{n}}\int^{\infty}_{\epsilon}t^{2\beta}\partial^{2\beta}_{t}e^{-2tL^{\alpha}}(x,y)\frac{dt}{t}a(x)dx\Big|\\
&&+\Big|\int_{\mathbb R^{n}}\int^{\infty}_{1/\epsilon}t^{2\beta}\partial^{2\beta}_{t}e^{-2tL^{\alpha}}(x,y)\frac{dt}{t}a(x)dx\Big|.
\end{eqnarray*}
By the change of variables, we obtain
\begin{eqnarray*}
\Big|\int^{\infty}_{\epsilon}t^{2\beta}\partial^{2\beta}_{t}e^{-2tL^{\alpha}}(x,y)\frac{dt}{t}\Big|
&=&\Big|C_{\beta}\int^{\infty}_{\epsilon}t^{2\beta}\int^{\infty}_{0}\partial_{t}^{m}e^{-(2t+s)L^{\alpha}}(x,y)\frac{ds}{s^{1+2\beta-m}}\frac{dt}{t}\Big|\\
&=&\Big|C_{\beta}\int^{\infty}_{\epsilon}t^{2\beta}\int^{\infty}_{0}L^{\alpha m}e^{-(2t+s)L^{\alpha}}(x,y)\frac{ds}{s^{1+2\beta-m}}\frac{dt}{t}\Big|\\
&\simeq&\Big|C_{\beta}\int^{\infty}_{2\epsilon}t^{2\beta}\int^{\infty}_{0}\partial_{t}^{m}e^{-(t+s)L^{\alpha}}(x,y)\frac{ds}{s^{1+2\beta-m}}\frac{dt}{t}\Big|\\
&\simeq&\Big|\int^{\infty}_{2\epsilon}t^{2\beta}\partial^{2\beta}_{t}e^{-tL^{\alpha}}(x,y)\frac{dt}{t}\Big|.
\end{eqnarray*}
The rest of the proof is divided into three cases.

{\it Case 1: $2\beta<1$.} For this case, $[2\beta]+1=1$. Then a change of variable reaches
\begin{eqnarray*}
\int^{\infty}_{2\epsilon}t^{2\beta}\partial^{2\beta}_{t}e^{-tL^{\alpha}}(x,y)\frac{dt}{t}
&=&\int^{\infty}_{2\epsilon}t^{2\beta}\int^{\infty}_{0}\partial_{t}e^{-(t+s)L^{\alpha}}(x,y)\frac{ds}{s^{2\beta}}\frac{dt}{t}\\
&=&\int^{\infty}_{2\epsilon}t^{2\beta}\int^{\infty}_{t}\partial_{u}e^{-uL^{\alpha}}(x,y)\frac{du}{(u-t)^{2\beta}}\frac{dt}{t}\\
&=&\int^{\infty}_{2\epsilon}\partial_{u}e^{-uL^{\alpha}}(x,y)\Big(\int^{u}_{2\epsilon}\Big(\frac{t}{u-t}\Big)^{2\beta}\frac{dt}{t}\Big)du\\
&=&\int^{\infty}_{2\epsilon}\partial_{u}e^{-uL^{\alpha}}(x,y)\Big(\int^{1}_{2\epsilon/u}\Big(\frac{w}{1-w}\Big)^{2\beta}\frac{dw}{w}\Big)du.
\end{eqnarray*}
Notice that
$$e^{-L^{\alpha}}(x,y)\int^{1}_{2\epsilon/2\epsilon}\Big(\frac{w}{1-w}\Big)^{2\beta}\frac{dw}{w}=0,$$
and as $u\rightarrow\infty$,
$$|e^{-uL^{\alpha}}(x,y)|\leq\frac{u}{(u^{1/2\alpha}+|x-y|)^{n+2\alpha}}\leq \frac{1}{u^{n/2\alpha}}\rightarrow 0.$$
An application of integration by parts gives
\begin{eqnarray*}
\int^{\infty}_{2\epsilon}t^{2\beta}\partial^{2\beta}_{t}e^{-tL^{\alpha}}(x,y)\frac{dt}{t}&=&-\int^{\infty}_{2\epsilon}e^{-uL^{\alpha}}(x,y)\frac{\partial}{\partial u}\Big(\int^{1}_{2\epsilon/u}\Big(\frac{w}{1-w}\Big)^{2\beta}\frac{dw}{w}\Big)du\\
&=&\int^{\infty}_{2\epsilon}e^{-uL^{\alpha}}(x,y)\Big(\frac{2\epsilon}{u-2\epsilon}\Big)^{2\beta}\frac{du}{u}\\
&=&I+II,
\end{eqnarray*}
where
$$\left\{\begin{aligned}
I&:=\int^{\infty}_{2\epsilon}e^{-uL^{\alpha}}(x,y)\Big(\frac{2\epsilon}{u-2\epsilon}\Big)^{2\beta}\chi_{A}(u)\frac{du}{u};\\
II&:=\int^{\infty}_{2\epsilon}e^{-uL^{\alpha}}(x,y)\Big(\frac{2\epsilon}{u-2\epsilon}\Big)^{2\beta}\chi_{A^{c}}(u)\frac{du}{u},
\end{aligned}\right.$$
where $A:=\{u:\ u-2\epsilon\leq \epsilon+|x-y|^{2\alpha}\}$. By Proposition \ref{prop-3.1},
\begin{eqnarray*}
|I|&\lesssim&\int^{\infty}_{2\epsilon}\frac{u}{(u^{1/2\alpha}+|x-y|)^{n+2\alpha}}\Big(1+\frac{u^{1/2\alpha}}{\rho(x)}+\frac{u^{1/2\alpha}}{\rho(y)}\Big)^{-N}
\Big(\frac{2\epsilon}{u-2\epsilon}\Big)^{2\beta}\chi_{A}(u)\frac{du}{u}\\
&\lesssim&\frac{\epsilon^{2\beta}}{(\epsilon^{1/2\alpha}+|x-y|)^{n+2\alpha}}
\Big(1+\frac{\epsilon^{1/2\alpha}}{\rho(x)}+\frac{\epsilon^{1/2\alpha}}{\rho(y)}\Big)^{-N}\int^{3\epsilon+|x-y|^{2\alpha}}_{2\epsilon}(u-2\epsilon)^{-2\beta}du\\
&\lesssim&\frac{1}{\epsilon^{n/2\alpha}}\frac{1}{(1+|x-y|/\epsilon^{1/2\alpha})^{n+4\alpha\beta}}
\Big(1+\frac{\epsilon^{1/2\alpha}}{\rho(x)}+\frac{\epsilon^{1/2\alpha}}{\rho(y)}\Big)^{-N}.
\end{eqnarray*}

For $II$,
\begin{eqnarray*}
|II|&\lesssim&\int^{\infty}_{2\epsilon}\frac{u}{(u^{1/2\alpha}+|x-y|)^{n+2\alpha}}
\Big(1+\frac{\epsilon^{1/2\alpha}}{\rho(x)}+\frac{\epsilon^{1/2\alpha}}{\rho(y)}\Big)^{-N}\Big(\frac{2\epsilon}{u-2\epsilon}\Big)^{2\beta}
\chi_{A^{c}}(u)\frac{du}{u}\\
&\lesssim&\int^{\infty}_{3\epsilon+|x-y|^{2\alpha}}\frac{u}{(u^{1/2\alpha}+|x-y|)^{n+2\alpha}}
\Big(1+\frac{\epsilon^{1/2\alpha}}{\rho(x)}+\frac{\epsilon^{1/2\alpha}}{\rho(y)}\Big)^{-N}\Big(\frac{2\epsilon}{\epsilon+|x-y|^{2\alpha}}\Big)^{2\beta}
\frac{du}{u}\\
&\lesssim&\Big(\frac{2\epsilon}{\epsilon+|x-y|^{2\alpha}}\Big)^{2\beta}
\Big(1+\frac{\epsilon^{1/2\alpha}}{\rho(x)}+\frac{\epsilon^{1/2\alpha}}{\rho(y)}\Big)^{-N}\int^{\infty}_{3\epsilon+|x-y|^{2\alpha}}(u^{1/2\alpha}+|x-y|)^{-n-2\alpha}
du\\
&\lesssim&\frac{1}{\epsilon^{n/2\alpha}}\frac{1}{(1+|x-y|/\epsilon^{1/2\alpha})^{n+4\alpha\beta}}
\Big(1+\frac{\epsilon^{1/2\alpha}}{\rho(x)}+\frac{\epsilon^{1/2\alpha}}{\rho(y)}\Big)^{-N}.
\end{eqnarray*}

{\it Case 2: $2\beta=1$.} A direct computation gives
\begin{eqnarray*}
\Big|\int^{\infty}_{2\epsilon}t\partial_{t}e^{-tL^{\alpha}}(x,y)\frac{dt}{t}\Big|
&\lesssim&|e^{-\epsilon L^{\alpha}}(x,y)|\\
&\lesssim&\frac{1}{\epsilon^{n/2\alpha}}\frac{1}{(1+ {|x-y|}/{\epsilon^{1/2\alpha}})^{n+2\alpha}}\Big(1+\frac{\epsilon^{1/2\alpha}}{\rho(x)}+\frac{\epsilon^{1/2\alpha}}{\rho(y)}\Big)^{-N}.
\end{eqnarray*}

{\it Case 3: $2\beta>1$.} Let $k\in\mathbb Z_{+}$ such that $k-1<2\beta\leq k$, $k\ge 2$.
We obtain
\begin{eqnarray*}
M&=&\int^{\infty}_{2\epsilon}t^{2\beta}\partial^{2\beta}_{t}e^{-tL^{\alpha}}(x,y)\frac{dt}{t}\\
&=&\int^{\infty}_{2\epsilon}t^{2\beta}\Big(\int^{\infty}_{0}\partial^{k}_{t}e^{-(t+s)L^{\alpha}}(x,y)\frac{ds}{s^{1+2\beta-k}}\Big)\frac{dt}{t}\\
&=&\int^{\infty}_{2\epsilon}t^{2\beta}L^{\alpha k}\Big(\int^{\infty}_{0}e^{-(t+s)L^{\alpha}}(x,y)\frac{ds}{s^{1+2\beta-k}}\Big)\frac{dt}{t}\\
&=&\int^{\infty}_{2\epsilon}t^{2\beta}\Big(\int^{\infty}_{t}L^{\alpha k}e^{-uL^{\alpha}}(x,y)\frac{du}{(u-t)^{1+2\beta-k}}\Big)\frac{dt}{t}\\
&=&\int^{\infty}_{2\epsilon}t^{2\beta}\Big(\int^{\infty}_{t}\partial_{u}^{k}e^{-uL^{\alpha}}(x,y)\frac{du}{(u-t)^{1+2\beta-k}}\Big)\frac{dt}{t}\\
&=&\int^{\infty}_{2\epsilon}\partial_{u}^{k}e^{-uL^{\alpha}}(x,y)\Big(\int^{u}_{2\epsilon}t^{2\beta}(u-t)^{k-2\beta-1}\frac{dt}{t}\Big)du\\
&=&\int^{\infty}_{2\epsilon}u^{k-1}\partial_{u}^{k}e^{-uL^{\alpha}}(x,y)\Big(\int^{1}_{2\epsilon/u}w^{2\beta}(1-w)^{k-2\beta-1}\frac{dw}{w}\Big)du,
\end{eqnarray*}
where in the last step we have used the change of variables: $w=t/u$. Notice that
\begin{eqnarray*}
u^{k-1}\partial_{u}^{k}e^{-uL^{\alpha}}(x,y)
&=&\partial_{u}(u^{k-1}\partial_{u}^{k-1}e^{-uL^{\alpha}}(x,y))-(k-1)\partial_{u}(u^{k-2}\partial_{u}^{k-2}e^{-uL^{\alpha}}(x,y))\\
&&+\cdots+(-1)^{k-1}(k-1)!e^{-uL^{\alpha}}(x,y).
\end{eqnarray*}
Then the integration by parts yields $M= \sum^{k-1}_{m=1}C_{m}I_{m}$,  where $C_{m}=(-1)^{m-1}(k-1)!/(k-m+1)!$ and
$$I_{m}:=\int^{\infty}_{2\epsilon}u^{m}\partial_{u}^{m}e^{-uL^{\alpha}}(x,y)\frac{(2\epsilon)^{2\beta}u^{1-k}}{(u-2\epsilon)^{1+2\beta-k}}\frac{du}{u}.$$
We obtain
\begin{eqnarray*}
|I_{m}|\lesssim\int^{\infty}_{2\epsilon}\frac{u^{m}}{(u^{1/2\alpha}+|x-y|)^{n+2\alpha m}}\Big(1+\frac{u^{1/2\alpha}}{\rho(x)}+\frac{u^{1/2\alpha}}{\rho(y)}\Big)^{-N}\frac{(2\epsilon)^{2\beta}u^{1-k}}{(u-2\epsilon)^{1+2\beta-k}}\frac{du}{u}
\lesssim I_{m}^{(1)}+I^{(2)}_{m},
\end{eqnarray*}
where
$$\left\{\begin{aligned}
I_{m}^{(1)}&:=&\frac{\epsilon^{2\beta}}{(\epsilon^{1/2\alpha}+|x-y|)^{n+2\alpha m}}\Big(1+\frac{\epsilon^{1/2\alpha}}{\rho(x)}+\frac{\epsilon^{1/2\alpha}}{\rho(y)}\Big)^{-N}
\int^{3\epsilon}_{2\epsilon}\frac{1}{(u-2\epsilon)^{1+2\beta-k}}\frac{du}{u^{k-m}};\\
I_{m}^{(2)}&:=&\frac{\epsilon^{2\beta}}{(\epsilon^{1/2\alpha}+|x-y|)^{n+2\alpha m}}\Big(1+\frac{\epsilon^{1/2\alpha}}{\rho(x)}+\frac{\epsilon^{1/2\alpha}}{\rho(y)}\Big)^{-N}
\int^{\infty}_{3\epsilon}\frac{1}{(u-2\epsilon)^{1+2\beta-k}}\frac{du}{u^{k-m}}.
\end{aligned}\right.$$

For $I^{(1)}_{m}$, since $2\epsilon<u<3\epsilon$, we get
\begin{eqnarray*}
I_{m}^{(1)}&\lesssim&\frac{\epsilon^{2\beta}}{(\epsilon^{1/2\alpha}+|x-y|)^{n+2\alpha m}}\Big(1+\frac{\epsilon^{1/2\alpha}}{\rho(x)}+\frac{\epsilon^{1/2\alpha}}{\rho(y)}\Big)^{-N}\frac{1}{\epsilon^{k-m}}
\int^{3\epsilon}_{2\epsilon}\frac{du}{(u-2\epsilon)^{1+2\beta-k}}\\
&\lesssim&\frac{\epsilon^{2\beta}}{(\epsilon^{1/2\alpha}+|x-y|)^{n+2\alpha m}}\Big(1+\frac{\epsilon^{1/2\alpha}}{\rho(x)}+\frac{\epsilon^{1/2\alpha}}{\rho(y)}\Big)^{-N}\frac{1}{\epsilon^{k-m}}
\epsilon^{k-2\beta}\\
&\lesssim&\frac{1}{\epsilon^{n/2\alpha}}\frac{1}{(1+|x-y|/\epsilon^{1/2\alpha})^{n+2\alpha m}}\Big(1+\frac{\epsilon^{1/2\alpha}}{\rho(x)}+\frac{\epsilon^{1/2\alpha}}{\rho(y)}\Big)^{-N}.
\end{eqnarray*}
Similarly, for $I^{(2)}_{m}$, because $u\in (3\epsilon, \infty)$, then $1/u\lesssim 1/(u-2\epsilon)$. Noticing that $m<2\beta$, we obtain
\begin{eqnarray*}
I^{(2)}_{m}&\lesssim&\frac{\epsilon^{2\beta}}{(\epsilon^{1/2\alpha}+|x-y|)^{n+2\alpha m}}\Big(1+\frac{\epsilon^{1/2\alpha}}{\rho(x)}+\frac{\epsilon^{1/2\alpha}}{\rho(y)}\Big)^{-N}
\int_{3\epsilon}^{\infty}\frac{du}{(u-2\epsilon)^{1+2\beta-m}}\\
&\lesssim&\frac{\epsilon^{2\beta}}{(\epsilon^{1/2\alpha}+|x-y|)^{n+2\alpha m}}\Big(1+\frac{\epsilon^{1/2\alpha}}{\rho(x)}+\frac{\epsilon^{1/2\alpha}}{\rho(y)}\Big)^{-N}\epsilon^{m-2\beta}\\
&\lesssim&\frac{1}{\epsilon^{n/2\alpha}}\frac{1}{(1+|x-y|/\epsilon^{1/2\alpha})^{n+2\alpha m}}\Big(1+\frac{\epsilon^{1/2\alpha}}{\rho(x)}+\frac{\epsilon^{1/2\alpha}}{\rho(y)}\Big)^{-N}.
\end{eqnarray*}
By Lemma \ref{le-4.6-add}, the above estimates in Cases 1-3 indciate that
$$\sup_{\epsilon>0}\Big|\int^{1/\epsilon}_{\epsilon}{t^{2\beta}\partial_{t}^{2\beta}e^{-2tL^{\alpha}}(a)(y)}
\frac{dt}{t}\Big|\lesssim (1+|y|)^{-(n+\gamma+\epsilon)},$$
Therefore we can use Lemma \ref{le-4.7-add} completes the proof.
\end{proof}

Finally, we can obtain the following characterization of $BMO^{\gamma}_{L}(\mathbb R^{n})$ corresponding to the time-fractional derivative.

\begin{theorem}\label{th-4.2}
Let $V\in B_{q}, q>n$. Assume that $\alpha\in(0,1)$, $\beta>0$, $0<\gamma\leq 1$ with
$$0<\gamma<\min\{2\alpha, 2\beta\alpha\}.$$
 Let $f$ be a function such that
\begin{equation}\label{eq-4.5-1}
\int_{\mathbb R^{n}}\frac{|f(x)|}{(1+|x|)^{n+\gamma+\epsilon}}dx<\infty
\end{equation}
for some $\epsilon>0$. The following statements are equivalent:
\item{\rm (i)} $f\in BMO^{\gamma}_{L}(\mathbb R^{n})$;
\item{\rm (ii)} There exists $C_{\alpha,\beta}$ such that $\|D^{L,\beta}_{\alpha,t}(f)\|_{\infty}\leq C_{\alpha,\beta} t^{\gamma/2\alpha}$;
\item{\rm (iii)} For all $B=B(x_{B}, r_B)\subset \mathbb R^{n}$,
\begin{equation}\label{eq-4.7}
\Big(\frac{1}{|B|}\int^{r_B^{2\alpha}}_{0}\int_{B}|D^{L,\beta}_{\alpha,t}(f)(x)|^{2}\frac{dxdt}{t}\Big)^{1/2}\lesssim |B|^{\gamma/n}.
\end{equation}
\end{theorem}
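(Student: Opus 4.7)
I would establish the cycle (ii)$\Rightarrow$(iii)$\Rightarrow$(i)$\Rightarrow$(ii). The step (ii)$\Rightarrow$(iii) is immediate: the uniform pointwise bound integrates trivially, giving
$$\frac{1}{|B|}\int_{0}^{r_B^{2\alpha}}\!\!\int_{B}|D^{L,\beta}_{\alpha,t}(f)(x)|^{2}\frac{dxdt}{t}\lesssim \int_{0}^{r_B^{2\alpha}}t^{\gamma/\alpha}\,\frac{dt}{t}\lesssim r_B^{2\gamma}=|B|^{2\gamma/n}.$$

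For (iii)$\Rightarrow$(i) I would invoke the duality $(H^{n/(n+\gamma)}_{L})^{\ast}=BMO^{\gamma}_{L}$ recorded in Section 2.3, so that it suffices to control $|\langle f,g\rangle|$ for $g$ a finite linear combination of $H^{n/(n+\gamma)}_{L}$-atoms. The decay assumption (\ref{eq-4.5-1}) is exactly what Lemma \ref{le-4.3} requires in order to legitimately convert the pairing into
$$C_{\alpha,\beta}\langle f,g\rangle=\iint_{\mathbb R^{n+1}_{+}}D^{L,\beta}_{\alpha,t}(f)(x)\,\overline{D^{L,\beta}_{\alpha,t}(g)(x)}\,\frac{dxdt}{t}.$$
Feeding this into the Harboure--Salinas--Viviani-type inequality (\ref{eq-4.8}) and then applying the area-function bound $\|S^{L}_{\alpha,\beta}(g)\|_{L^{n/(n+\gamma)}}\lesssim \|g\|_{H^{n/(n+\gamma)}_{L}}$ from Theorem \ref{th-4.1} produces
$$|\langle f,g\rangle|\lesssim \Bigl(\sup_{B}\frac{1}{|B|^{1+2\gamma/n}}\int_{0}^{r_B^{2\alpha}}\!\!\int_{B}|D^{L,\beta}_{\alpha,t}(f)(x)|^{2}\frac{dxdt}{t}\Bigr)^{1/2}\|g\|_{H^{n/(n+\gamma)}_{L}}.$$
Hypothesis (iii) controls the Carleson factor, so $f$ represents a bounded linear functional on $H^{n/(n+\gamma)}_{L}$ with the expected norm.

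For (i)$\Rightarrow$(ii) I would fix $x_{0}\in\mathbb R^{n}$ and $t>0$, set $B_{0}=B(x_{0},t^{1/2\alpha})$, and separate the two regimes. When $t^{1/2\alpha}\leq \rho(x_{0})$, decompose
$$D^{L,\beta}_{\alpha,t}(f)(x_{0})=\int_{\mathbb R^{n}}(f(y)-f_{B_{0}})D^{L,\beta}_{\alpha,t}(x_{0},y)\,dy+f_{B_{0}}\int_{\mathbb R^{n}}D^{L,\beta}_{\alpha,t}(x_{0},y)\,dy.$$
The first integral is handled annularly on $\{2^{k}B_{0}\setminus 2^{k-1}B_{0}\}_{k\geq 0}$: Proposition \ref{prop-3.6} gives spatial decay $2^{-k(n+2\alpha\beta)}t^{\gamma/2\alpha}$, while the Campanato estimate $|f_{2^{k}B_{0}}-f_{B_{0}}|\lesssim 2^{k\gamma}(t^{1/2\alpha})^{\gamma}\|f\|_{BMO^{\gamma}_{L}}$ controls the oscillation, and the resulting geometric series converges because $\gamma<2\alpha\beta$. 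The second term is absorbed by Proposition \ref{prop-3.5} (delivering the factor $(t^{1/2\alpha}/\rho(x_{0}))^{\delta'}$ for some $\delta'>\gamma$) combined with $|f_{B_{0}}|\lesssim \rho(x_{0})^{\gamma}\|f\|_{BMO^{\gamma}_{L}}$. When $t^{1/2\alpha}>\rho(x_{0})$ the definition yields $f(B_{0},V)=0$, so no subtraction is performed and the full integral is bounded directly through the same dyadic decomposition together with Proposition \ref{prop-3.5}.

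The main obstacle will be the telescoping in (i)$\Rightarrow$(ii) across the critical annulus where $2^{k}t^{1/2\alpha}$ crosses $\rho(x_{0})$: the $BMO^{\gamma}_{L}$-condition controls $f_{B}-\text{average}$ only when $r_{B}<\rho(x_{B})$ and imposes $f(B,V)=0$ beyond, so the transition between these two regimes must be executed carefully, using the polynomial comparability of $m(\cdot,V)$ from Lemma \ref{lem2.4} to ensure $\rho(x)\sim\rho(x_{0})$ over neighboring annuli. The strict inequalities $\gamma<2\alpha$ and $\gamma<2\alpha\beta$ are exactly what guarantees that the exponents $\delta'$ in Propositions \ref{prop-3.4}--\ref{prop-3.5} can be chosen strictly larger than $\gamma$, keeping both the annular sum and the cancellation term absolutely convergent.
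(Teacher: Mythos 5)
Your cycle (ii)$\Rightarrow$(iii)$\Rightarrow$(i)$\Rightarrow$(ii) is sound, and two of the three legs coincide with the paper: (ii)$\Rightarrow$(iii) is the same trivial integration, and (iii)$\Rightarrow$(i) is exactly the paper's argument (Lemma \ref{le-4.3} to polarize the pairing, the Harboure--Salinas--Viviani estimate (\ref{eq-4.8}), the area-function bound of Theorem \ref{th-4.1}, then the duality $(H^{n/(n+\gamma)}_{L})^{*}=BMO^{\gamma}_{L}$). Where you genuinely diverge is (i)$\Rightarrow$(ii): the paper does not subtract the ball average $f_{B_{0}}$ and telescope over dyadic annuli; it invokes the identification $BMO^{\gamma}_{L}=C^{0,\gamma}_{L}$ (Proposition 2.12, from \cite{MSTZ}) and splits off $f(x)$ at the evaluation point, so the oscillation term becomes $\int |f(y)-f(x)|\,|D^{L,\beta}_{\alpha,t}(x,y)|\,dy\lesssim \|f\|_{BMO^{\gamma}_{L}}\int |x-y|^{\gamma}\,|D^{L,\beta}_{\alpha,t}(x,y)|\,dy\lesssim t^{\gamma/2\alpha}\|f\|_{BMO^{\gamma}_{L}}$ in one stroke, with no case distinction at the scale $\rho(x)$ for that term; only the cancellation term $f(x)\int D^{L,\beta}_{\alpha,t}(x,y)\,dy$ is split according to $\rho(x)\lessgtr t^{1/2\alpha}$, using $|f(x)|\lesssim\rho(x)^{\gamma}$ and Proposition \ref{prop-3.5}. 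Your mean-oscillation route works too and avoids the $C^{0,\gamma}_{L}$ equivalence, at the cost of the more delicate annular bookkeeping you flag: note that the crossing of $\rho(x_{0})$ is actually benign, because for balls $2^{k}B_{0}$ with radius $\geq\rho(x_{0})$ the defining condition has $f(2^{k}B_{0},V)=0$ and hence directly controls $\int_{2^{k}B_{0}}|f|\lesssim|2^{k}B_{0}|^{1+\gamma/n}\|f\|_{BMO^{\gamma}_{L}}$, so in the regime $t^{1/2\alpha}>\rho(x_{0})$ you do not need Proposition \ref{prop-3.5} at all (there is no cancellation term), only the kernel decay of Proposition \ref{prop-3.6} and $\gamma<2\alpha\beta$. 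Two small touch-ups: the annular kernel bound is $2^{-k(n+2\alpha\beta)}t^{-n/2\alpha}$ (the factor $t^{\gamma/2\alpha}$ only appears after pairing with the oscillation), and ``$\delta'$ strictly larger than $\gamma$'' is not always available when $\gamma=1$, but $\delta'\geq\gamma$ suffices since in that case $t^{1/2\alpha}\leq\rho(x_{0})$ makes $(t^{1/2\alpha}/\rho(x_{0}))^{\delta'-\gamma}\leq1$ (the paper's own wording has the same slack).
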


\begin{proof}
(i)$\Longrightarrow$(ii). If $f\in BMO^{\gamma}_{L}(\mathbb R^{n})$, then $|t^{\beta}\partial^{\beta}_{t}e^{-tL^{\alpha}}f(x)|\lesssim I+II$, where
$$\left\{\begin{aligned}
&I:=\Big|\int_{\mathbb R^{n}}D^{L,\beta}_{\alpha,t}(x,y)(f(y)-f(x))dy\Big|;\\
&II:=\Big|f(x)\int_{\mathbb R^{n}}D^{L,\beta}_{\alpha,t}(x,y)dy\Big|.
\end{aligned}
\right.$$

For $I$, we have
\begin{eqnarray*}
I&\lesssim&\|f\|_{BMO^{\gamma}_{L}}\int_{\mathbb R^{n}}\frac{t^{\beta}|x-y|^{\gamma}}{(t^{1/2\alpha}+|x-y|)^{n+2\alpha\beta}}dy\lesssim t^{\gamma/2\alpha}\|f\|_{BMO^{\gamma}_{L}}.
\end{eqnarray*}

We further divide the estimation of $II$ into the following two cases.

{\it Case 1: $\rho(x)\leq t^{1/2\alpha}$.} By Proposition \ref{prop-3.6},
\begin{eqnarray*}
II&\lesssim&\|f\|_{BMO^{\gamma}_{L}}\rho(x)^{\gamma}\Big|\int_{\mathbb R^{n}}D^{L,\beta}_{\alpha,t}(x,y)dy\Big|\\
&\lesssim&\|f\|_{BMO^{\gamma}_{L}}t^{\gamma/2\alpha}\int_{\mathbb R^{n}}\frac{t^{\beta}}{(t^{1/2\alpha}+|x-y|)^{n+2\alpha\beta}}dy\\
&\lesssim&\|f\|_{BMO^{\gamma}_{L}}t^{\gamma/2\alpha}.
\end{eqnarray*}

{\it Case 2: $\rho(x)> t^{1/2\alpha}$.} We use Proposition \ref{prop-3.5} to obtain that, there exists $\delta'>\gamma$ such that
\begin{eqnarray*}
II&\lesssim&\|f\|_{BMO^{\gamma}_{L}}\rho(x)^{\gamma}\Big|\int_{\mathbb R^{n}}D^{L,\beta}_{\alpha,t}(x,y)dy\Big|\\
&\lesssim&\|f\|_{BMO^{\gamma}_{L}}\rho(x)^{\gamma}\frac{(t^{1/2\alpha}/\rho(x))^{\delta'}}
{(1+t^{1/2\alpha}/\rho(x))^{N}}\\
&\lesssim&\|f\|_{BMO^{\gamma}_{L}}t^{\gamma/2\alpha}\frac{(t^{1/2\alpha}/\rho(x))^{\delta'-\gamma}}
{(1+t^{1/2\alpha}/\rho(x))^{N}}\\
&\lesssim&\|f\|_{BMO^{\gamma}_{L}}t^{\gamma/2\alpha}.
\end{eqnarray*}

(ii)$\Longrightarrow$(iii). Assume that (ii) holds.
Then
\begin{eqnarray*}
\Big(\frac{1}{|B|}\int^{r_B^{2\alpha}}_{0}\int_{B}|D^{L,\beta}_{\alpha,t}(f)(x)|^{2}\frac{dxdt}{t}\Big)^{1/2}
\lesssim \|f\|_{BMO^{\gamma}_{L}}\Big(\frac{1}{|B|}\int^{r_B^{2\alpha}}_{0}\int_{B}t^{\gamma/\alpha}\frac{dxdt}{t}\Big)^{1/2}
\lesssim \|f\|_{BMO^{\gamma}_{L}}|B|^{\gamma/n}.
\end{eqnarray*}

(iii)$\Longrightarrow$(i). Assume that (\ref{eq-4.7}) holds. Let $a$ be an $H^{n/(n+\gamma)}_{L}$-atom associated with $B=B(x_{B}, r_B)$. Then by Lemma \ref{le-4.3},
$$\int_{\mathbb R^{n}}f(x)\overline{a(x)}dx\simeq\iint_{\mathbb R^{n+1}_{+}}t^{\beta}\partial^{\beta}_{t}e^{-tL^{\alpha}}(f)(x)\overline{t^{\beta}\partial^{\beta}_{t}e^{-tL^{\alpha}}(a)(x)}\frac{dt}{t},$$
which, together with (\ref{eq-4.8}) and Theorem \ref{th-4.1}, gives
\begin{eqnarray*}
\Big|\int_{\mathbb R^{n}}f(x)\overline{a(x)}dx\Big|&\lesssim&\sup_{B}\Big(\frac{1}{|B|^{1+2\gamma/n}}\int^{r_{B}^{2\alpha}}_{0}\int_{B}
|t^{\beta}\partial_{t}^{\beta}e^{-tL^{\alpha}}(f)(x)|^{2}\frac{dxdt}{t}\Big)^{1/2}\\
&&\quad\times \Bigg\{\int_{\mathbb R^{n}}\Big(\iint_{|x-y|<s^{1/2\alpha}}|t^{\beta}\partial^{\beta}e^{-tL^{\alpha}}(a)(y)|^{2}\frac{t^{1/2\alpha-1}dyds}
{t^{n/2\alpha+1}}\Big)^{n/2(n+\gamma)}dx\Bigg\}^{1+\gamma/n}\\
&\lesssim&\|S^{L}_{\alpha,\beta}(a)\|_{L^{n/(n+\gamma)}}\sup_{B}\Big(\frac{1}{|B|^{1+2\gamma/n}}\int^{r_{B}^{2\alpha}}_{0}\int_{B}
|t^{\beta}\partial_{t}^{\beta}e^{-tL^{\alpha}}(f)(x)|^{2}\frac{dxdt}{t}\Big)^{1/2}\\
&\lesssim&\|a\|_{H^{n/(n+\gamma)}_{L}}.
\end{eqnarray*}
Hence
$$T(g):=\int_{\mathbb R^{n}}f(x)\overline{g(x)}dx, \ g\in H^{n/(n+\gamma)}_{L}(\mathbb R^{n}),$$
is a bounded  linear functional on $H^{n/(n+\gamma)}_{L}(\mathbb R^{n})$, equivalently, $f\in \big(H_L^{n/( n+\gamma)}(\mathbb R^n)\big)^*= BMO^{\gamma}_{L}(\mathbb R^{n})$.

\end{proof}



Below we consider the characterization of $BMO^{\gamma}_{L}(\mathbb R^{n})$ via the the spatial gradient. Define a general gradient as $\nabla_{\alpha}:=(\nabla_{x}, \partial_{t}^{1/2\alpha})$.



\begin{theorem}\label{th-4.3}
Let $V\in B_{q}, q>n$. Assume that $\alpha\in(0,1/2-n/2q)$, $\beta>0$ and $0<\gamma\leq 1$ with
$$0<\gamma<\min\{2\alpha, 2\alpha\beta\}.$$ Let $f$ be a function satisfying (\ref{eq-4.5-1}). The following statements are equivalent:
\item{{\rm (i)}} $f\in BMO^{\gamma}_{L}(\mathbb R^{n})$;
\item{{\rm (ii)}} There exists a constant $C>0$ such that
$$\|t^{1/2\alpha}\nabla_{\alpha}e^{-tL^{\alpha}}f\|_{\infty}\leq Ct^{\gamma/2\alpha};$$
\item{{\rm (iii)}} $u(x,t)=e^{-tL^{\alpha}}f(x)$ satisfies that, for any balls $B=B(x_B, r_
B),$
\begin{equation}\label{eq-4.9}
 {1\over {|B|}^{1+2\gamma/n}}\int^{r^{2\alpha}_{B}}_{0}\int_{B}|t^{1/2\alpha}\nabla_{\alpha}e^{-tL^\alpha}(f)(x)|^{2}
 \frac{dxdt}{t}\le  C.
\end{equation}
\end{theorem}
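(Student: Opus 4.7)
The plan is to mimic the three-way chain of Theorem \ref{th-4.2}, exploiting that $\nabla_\alpha=(\nabla_x,\partial_t^{1/2\alpha})$ has a time-fractional component of order $1/2\alpha$ which puts us within the scope of the already-established Theorem \ref{th-4.2} (applied with $\beta=1/2\alpha$, so that $2\alpha\beta=1$ and the assumption $0<\gamma<\min\{2\alpha,1\}$ reduces to $\gamma<2\alpha$, which is part of our hypothesis). Given this, the implication (iii)$\Longrightarrow$(i) is almost immediate: since
\[
|t^{1/2\alpha}\partial_t^{1/2\alpha}e^{-tL^\alpha}f(x)|^2\le |t^{1/2\alpha}\nabla_\alpha e^{-tL^\alpha}f(x)|^2,
\]
the Carleson bound (\ref{eq-4.9}) implies (iii) of Theorem \ref{th-4.2} with $\beta=1/2\alpha$, hence $f\in BMO^\gamma_L(\mathbb R^n)$. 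Likewise (ii)$\Longrightarrow$(iii) is a one-line integration: $\|t^{1/2\alpha}\nabla_\alpha e^{-tL^\alpha}f\|_\infty^2\lesssim t^{\gamma/\alpha}$ gives
\[
\frac{1}{|B|^{1+2\gamma/n}}\int_0^{r_B^{2\alpha}}\int_B |t^{1/2\alpha}\nabla_\alpha e^{-tL^\alpha}f(x)|^2\,\frac{dx\,dt}{t}\lesssim \frac{r_B^{n+2\gamma}}{|B|^{1+2\gamma/n}}\lesssim 1.
\]

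The real work lies in (i)$\Longrightarrow$(ii). I will split $\nabla_\alpha$ into its two pieces. The time component $t^{1/2\alpha}\partial_t^{1/2\alpha}e^{-tL^\alpha}f=D^{L,1/2\alpha}_{\alpha,t}f$ is dispatched directly by (i)$\Longrightarrow$(ii) of Theorem \ref{th-4.2} with $\beta=1/2\alpha$, yielding the required bound $t^{\gamma/2\alpha}$. For the spatial component I will use the Lipschitz description $BMO^\gamma_L=C^{0,\gamma}_L$ together with the ``add and subtract $f(x)$'' identity
\[
t^{1/2\alpha}\nabla_x e^{-tL^\alpha}f(x)=\int_{\mathbb R^n}t^{1/2\alpha}\nabla_x K^L_{\alpha,t}(x,y)(f(y)-f(x))\,dy+f(x)\,t^{1/2\alpha}\nabla_x e^{-tL^\alpha}(1)(x).
\]
The first term is controlled by Proposition \ref{prop-3.3-1} and the pointwise Hölder estimate $|f(y)-f(x)|\leq\|f\|_{BMO^\gamma_L}|x-y|^\gamma$, giving
\[
\|f\|_{BMO^\gamma_L}\int_{\mathbb R^n}\frac{t\,|x-y|^\gamma}{(t^{1/2\alpha}+|x-y|)^{n+2\alpha}}\,dy\lesssim \|f\|_{BMO^\gamma_L}\,t^{\gamma/2\alpha},
\]
which is standard since $\gamma<2\alpha$.

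The delicate piece is the second term, where I need the pointwise bound $|f(x)|\lesssim\rho(x)^\gamma\|f\|_{BMO^\gamma_L}$ (available from the Lipschitz characterization) together with the two-sided estimate
\[
|t^{1/2\alpha}\nabla_x e^{-tL^\alpha}(1)(x)|\lesssim \min\bigl\{(t^{1/2\alpha}/\rho(x))^{1+2\alpha},\,(t^{1/2\alpha}/\rho(x))^{-N}\bigr\}
\]
from Proposition \ref{prop-3.3-2} (whose validity requires $\alpha<1/2-n/2q$, exactly our hypothesis). The case split is
\[
\rho(x)^\gamma\cdot\min\bigl\{(t^{1/2\alpha}/\rho(x))^{1+2\alpha},\,(t^{1/2\alpha}/\rho(x))^{-N}\bigr\}\lesssim t^{\gamma/2\alpha},
\]
obtained by using the first factor when $t^{1/2\alpha}\le\rho(x)$ (since $1+2\alpha-\gamma>0$) and the second with $N$ sufficiently large when $t^{1/2\alpha}>\rho(x)$. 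This case analysis is the main obstacle, because it is the unique point where the restriction $\alpha<1/2-n/2q$ is needed and where the ``non-cancellation'' $\nabla_x e^{-tL^\alpha}(1)\ne 0$ (in contrast to the classical heat setting with $V=0$) forces a matching of $\rho(x)$-weights against the Hölder growth of $f$. Once this is in place, combining the two components closes (i)$\Longrightarrow$(ii), and the theorem is proved.
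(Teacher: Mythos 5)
Your proposal follows essentially the same route as the paper's proof: the time-fractional component and the implication (iii)$\Rightarrow$(i) are handled by invoking Theorem \ref{th-4.2} with $\beta=1/2\alpha$, while (i)$\Rightarrow$(ii) for the spatial gradient uses the same decomposition into $\int t^{1/2\alpha}\nabla_x K^L_{\alpha,t}(x,y)(f(y)-f(x))\,dy$ plus $f(x)\,t^{1/2\alpha}\nabla_x e^{-tL^\alpha}(1)(x)$, controlled by Propositions \ref{prop-3.3-1} and \ref{prop-3.3-2} with the identical case split in $t^{1/2\alpha}$ versus $\rho(x)$. The argument is correct as stated.
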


\begin{proof}
(i) $\Longrightarrow$ (ii). Let $f\in BMO^{\gamma}_{L}(\mathbb{R}^{n})$. By Theorem \ref{th-4.2},
$\|t^{1/2\alpha}\partial^{1/2\alpha}_{t}e^{-tL^{\alpha}}(f)\|_{\infty}\leq C_{\alpha,\beta} t^{\gamma/2\alpha}.$
One writes
\begin{eqnarray*}
t^{1/2\alpha}\nabla_{x}e^{-tL^{\alpha}}f(x)&=&\int_{\mathbb R^{n}}t^{1/2\alpha}\nabla_{x}K^{L}_{\alpha,t}\Big(f(z)-f(x)\Big)dz
+f(x)t^{1/2\alpha}\nabla_{x}e^{-tL^{\alpha}}(1)(x):=I(x)+II(x).
\end{eqnarray*}

We first estimate the term $I$. Because $f\in BMO^{\gamma}_{L}(\mathbb R^{n})$, then
$|f(x)-f(z)|\leq \|f\|_{BMO^{\gamma}_{L}}|x-z|^{\gamma}.$ Since
$$|t^{1/2\alpha}\nabla_{x}K^{L}_{\alpha,t}(x,z)|\lesssim\frac{t}{(t^{1/2\alpha}+|x-z|)^{n+2\alpha}},$$
a direct computation gives
\begin{eqnarray*}
|I(x)|&\lesssim&\|f\|_{BMO^{\gamma}_{L}}\int_{\mathbb R^{n}}|t^{1/2\alpha}\nabla_{x}K^{L}_{\alpha,t}(x,z)|\cdot|x-z|^{\gamma}dz\\
&\lesssim&\|f\|_{BMO^{\gamma}_{L}}\int_{\mathbb R^{n}}\frac{t|x-z|^{\gamma}}{(t^{1/2\alpha}+|x-z|)^{n+2\alpha}}dz\\
&\lesssim&t^{\gamma/2\alpha}\|f\|_{BMO^{\gamma}_{L}}.
\end{eqnarray*}
By Proposition \ref{prop-3.3-2}, we have  $$|t^{1/2\alpha}\nabla_{x}e^{-tL^{\alpha}}(1)|\lesssim \min\Big\{(t^{1/2\alpha}/\rho(x))^{1+2\alpha},\ (t^{1/2\alpha}/\rho(x))^{-N}\Big\}.$$
The estimate of $II$ is divided into two cases.

{\it Case 1: $\rho(x)\leq t^{1/2\alpha}$}.
$f\in BMO^{\gamma}_{L}(\mathbb R^{n})$ implies that
$|f(x)|\lesssim \rho^{\gamma}(x)\|f\|_{BMO^{\gamma}_{L}}$. Then
\begin{eqnarray*}
II(x)&\lesssim&\|f\|_{BMO^{\gamma}_{L}}\rho^{\gamma}(x)|t^{1/2\alpha}\nabla_{x}e^{-tL^{\alpha}}(1)(x)|\\
&\lesssim&\|f\|_{BMO^{\gamma}_{L}}\rho^{\gamma}(x)\Big(\frac{t^{1/2\alpha}}{\rho(x)}\Big)^{-N}\\
&\lesssim&\|f\|_{BMO^{\gamma}_{L}}t^{\gamma/2\alpha}\Big(\frac{t^{1/2\alpha}}{\rho(x)}\Big)^{-N-\gamma}\\
&\lesssim&\|f\|_{BMO^{\gamma}_{L}}t^{\gamma/2\alpha}\|f\|_{BMO^{\gamma}_{L}}.
\end{eqnarray*}

{\it Case 2: $\rho(x)>t$.} We can get
\begin{eqnarray*}
II(x)&\lesssim&\rho^{\gamma}(x)\|f\|_{BMO^{\gamma}_{L}}|t^{1/2\alpha}\nabla_{x}e^{-tL^{\alpha}}(1)(x)|\\
&\lesssim&\rho^{\gamma}(x)\|f\|_{BMO^{\gamma}_{L}}\Big(\frac{t^{1/2\alpha}}{\rho(x)}\Big)^{1+2\alpha}\\
&\lesssim&t^{\gamma/2\alpha}\|f\|_{BMO^{\gamma}_{L}}\Big(\frac{t^{1/2\alpha}}{\rho(x)}\Big)^{1+2\alpha-\gamma}\\
&\lesssim&t^{\gamma/2\alpha}\|f\|_{BMO^{\gamma}_{L}}.
\end{eqnarray*}

(ii)$\Longrightarrow$(iii). For every ball $B=B(x_{B}, r_{B})$,
\begin{eqnarray*}
\int^{r^{2\alpha}_{B}}_{0}\int_{B}\Big|t^{1/2\alpha}\nabla_{\alpha}e^{-tL^{\alpha}}f(x)\Big|^{2}\frac{dxdt}{t}
&\lesssim& \int^{r^{2\alpha}_{B}}_{0}\int_{B}t^{\gamma/\alpha}\frac{dxdt}{t}\lesssim r_B^{n+2\gamma},
\end{eqnarray*}
which implies that (\ref{eq-4.9}) holds.

(iii)$\Longrightarrow$(i). Assume that (\ref{eq-4.9}) holds. For any ball $B=B(x_{B}, r_{B})$, it holds
$$\sup_{B}r_{B}^{-(n+2\gamma)}\int^{r^{2\alpha}_{B}}_{0}\int_{B(x_{B}, r_{B})}|t^{1/2\alpha}\partial_{t}^{1/2\alpha}e^{-tL^{\alpha}}(f)(x)|^{2}
 \frac{dxdt}{t}<\infty.$$
 It is a corollary of Theorem \ref{th-4.2} that $f\in BMO^{\gamma}_{L}(\mathbb R^{n})$ with
 $$\|f\|_{BMO^{\gamma}_{L}}\lesssim \sup_{B}r_{B}^{-(n+2\gamma)}\int^{r^{2\alpha}_{B}}_{0}\int_{B(x_{B}, r_{B})}|t^{1/2\alpha}\partial_{t}^{1/2\alpha}e^{-tL^{\alpha}}(f)(x)|^{2}
 \frac{dxdt}{t}<\infty.$$

\end{proof}

A positive measure $\nu$ on $\mathbb R^{n+1}_{+}$ is called a $\kappa$-Carleson measure if
$$\|\nu\|_\mathcal{C}:=\sup_{x\in\mathbb R^{n}, r>0}\frac{\nu(B(x,r)\times (0,r))}{|B(x,r)|^{\kappa}}<\infty.$$

The following result can be deduced from Theorem \ref{th-4.3} immediately.

\begin{theorem}\label{th-4.4}
Let $V\in B_{q}, q>n$. Assume that $\alpha\in(0,1/2-n/2q)$, $\beta>0$ and $0<\gamma\leq 1$ with
$$0<\gamma<\min\{2\alpha, 2\alpha\beta\}.$$ Let $d\nu_{\alpha}$ be a measure defined by
$$d\nu_{\alpha}(x,t):=|t\nabla e^{-t^{2\alpha}L^{\alpha}}(f)(x)|^{2}dxdt/t,\ (x,t)\in\mathbb R^{n+1}_{+}.$$

\item{\rm (i)}\ If $f\in BMO^{\gamma}_{L}(\mathbb R^{n})$, then $d\nu_{\alpha}$ is a $(1+2\gamma/n)$-Carleson measure;
\item{\rm (ii)}\ Conversely, if $f\in L^{1}((1+|x|)^{-n-1}dx)$ and $d\nu_{\alpha}$ is a $(1+2\gamma/n)$-Carleson measure, then $f\in BMO^{\gamma}_{L}(\mathbb R^{n})$.

Moreover, in any case, there exists  a constant $C>0$ such that
$$C^{-1}\|f\|^{2}_{BMO^{\gamma}_{L}}\leq\|d\nu_{\alpha}\|_{\mathcal C}\leq C\|f\|^{2}_{BMO^{\gamma}_{L}}.$$
\end{theorem}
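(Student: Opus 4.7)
The plan is to reduce Theorem \ref{th-4.4} to Theorem \ref{th-4.3} by the change of variables $s=t^{2\alpha}$, which satisfies $t=s^{1/2\alpha}$ and $dt/t = ds/(2\alpha s)$. Under this substitution, the density $|t\nabla e^{-t^{2\alpha}L^{\alpha}}(f)(x)|^{2}\,dxdt/t$ of $d\nu_{\alpha}$ transforms (up to the constant $1/(2\alpha)$) into $|s^{1/2\alpha}\nabla_{x} e^{-sL^{\alpha}}(f)(x)|^{2}\,dxds/s$, while the Carleson cylinder $B\times(0,r_{B})$ maps to $B\times(0,r_{B}^{2\alpha})$. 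Hence the $(1+2\gamma/n)$-Carleson condition for $d\nu_{\alpha}$ is, up to constants, the spatial-gradient component of the condition (\ref{eq-4.9}) appearing in Theorem \ref{th-4.3}(iii).

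For direction (i), I would start from Theorem \ref{th-4.3}(i)$\Rightarrow$(ii), which in particular yields the pointwise bound $\|s^{1/2\alpha}\nabla_{x}e^{-sL^{\alpha}}f\|_{\infty}\le C\|f\|_{BMO^{\gamma}_{L}}s^{\gamma/2\alpha}$. Substituting $s=t^{2\alpha}$ gives $|t\nabla e^{-t^{2\alpha}L^{\alpha}}f(x)|\le C\|f\|_{BMO^{\gamma}_{L}}t^{\gamma}$. Integrating the square over $B\times(0,r_{B})$ with respect to $dxdt/t$ produces the bound $C\|f\|_{BMO^{\gamma}_{L}}^{2}|B|r_{B}^{2\gamma}$, and since $|B|r_{B}^{2\gamma}\sim|B|^{1+2\gamma/n}$ this is exactly the $(1+2\gamma/n)$-Carleson estimate $\|d\nu_{\alpha}\|_{\mathcal{C}}\le C\|f\|_{BMO^{\gamma}_{L}}^{2}$. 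For direction (ii), the change of variables above converts the hypothesis into the spatial-gradient version of (\ref{eq-4.9}), and Theorem \ref{th-4.3}(iii)$\Rightarrow$(i) then delivers $f\in BMO^{\gamma}_{L}(\mathbb{R}^{n})$ with $\|f\|_{BMO^{\gamma}_{L}}^{2}\lesssim\|d\nu_{\alpha}\|_{\mathcal{C}}$; the growth condition $f\in L^{1}((1+|x|)^{-n-1}dx)$ is the standard hypothesis needed to make $e^{-tL^{\alpha}}f$ well-defined and to invoke the reproducing formula of Lemma \ref{le-4.7-add}.

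The main subtlety I anticipate is that Theorem \ref{th-4.3}(iii) involves the full generalized gradient $\nabla_{\alpha}=(\nabla_{x},\partial_{t}^{1/2\alpha})$, whereas $d\nu_{\alpha}$ in Theorem \ref{th-4.4} is defined using only the spatial gradient. In the forward direction this is harmless since we only need an upper bound on the spatial component. In the converse direction one must verify that the spatial-gradient Carleson condition alone suffices; inspecting the proof of Theorem \ref{th-4.3}(iii)$\Rightarrow$(i), the only ingredient actually invoked is Theorem \ref{th-4.2} applied with a suitable $\beta$, whose tent-space pairing argument via Lemma \ref{le-4.3-1} works equally well for any operator satisfying Propositions \ref{prop-3.3-1} and \ref{prop-3.8}, in particular for $t\nabla_{x}e^{-t^{2\alpha}L^{\alpha}}$. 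Thus the spatial-only converse follows by the same scheme, using the gradient estimates of Section \ref{sec-3.1} in place of the time-fractional derivative estimates, which is the technical step requiring the most care.
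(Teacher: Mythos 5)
Your forward direction is fine and, for the spatial component, is essentially the paper's own argument (the pointwise bound $\|s^{1/2\alpha}\nabla_{x}e^{-sL^{\alpha}}f\|_{\infty}\lesssim s^{\gamma/2\alpha}\|f\|_{BMO^{\gamma}_{L}}$ followed by the substitution $s=t^{2\alpha}$ and integration over the Carleson box). However, you have misread what $\nabla$ means in $d\nu_{\alpha}$: in the paper's proof it is the full space--time gradient $(\nabla_{x},\partial_{t})$ applied to $u(x,t)=e^{-t^{2\alpha}L^{\alpha}}f(x)$. This is exactly why, in part (ii), the paper can pass from the Carleson condition for $t\nabla$ to the one for $t\partial_{t}e^{-t^{2\alpha}L^{\alpha}}f=2\alpha\,s\partial_{s}e^{-sL^{\alpha}}f\big|_{s=t^{2\alpha}}$ and then conclude via Theorem \ref{th-4.3} (ultimately Theorem \ref{th-4.2} with $\beta=1$); no spatial-gradient converse is ever needed. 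Under that reading your part (i) is also missing the time-derivative component of the measure, but that is a small omission: it follows from Theorem \ref{th-4.2} with $\beta=1$ together with the same change of variables, which is what the paper does.

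The genuine gap is in your converse under the spatial-only reading. You claim the proof of Theorem \ref{th-4.2}(iii)$\Rightarrow$(i) ``works equally well'' for $t\nabla_{x}e^{-t^{2\alpha}L^{\alpha}}$ because it only needs kernel bounds of the type in Propositions \ref{prop-3.3-1} and \ref{prop-3.8}. It does not: that direction rests on Lemma \ref{le-4.3} and Lemma \ref{le-4.7-add}, i.e.\ on the polarized Calder\'on reproducing formula for $t^{\beta}\partial_{t}^{\beta}e^{-tL^{\alpha}}=(tL^{\alpha})^{\beta}e^{-tL^{\alpha}}$, which is available precisely because this operator is a spectral function of $L$. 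The operator $\nabla_{x}e^{-tL^{\alpha}}$ is not a function of $L$; instead
$(t^{1/2\alpha}\nabla_{x}e^{-tL^{\alpha}})^{*}(t^{1/2\alpha}\nabla_{x}e^{-tL^{\alpha}})=t^{1/\alpha}e^{-tL^{\alpha}}(L-V)e^{-tL^{\alpha}}$,
so $\int_{0}^{\infty}(\cdot)\,\frac{dt}{t}$ is not a constant multiple of the identity: the $L$-part does reproduce $f$ up to a constant, but a nontrivial term involving the potential $V$ survives and must be controlled, and neither your sketch nor the paper's machinery does this. So either adopt the full-gradient interpretation, in which case the converse is immediate from the time component as in the paper, or the spatial-only converse requires a genuinely new argument (a perturbed reproducing formula handling the $V$-term), which is the step your proposal leaves unproved.
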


\begin{proof}
(i). In Theorem \ref{th-4.2}, letting $\beta=1$, we obtain for $f\in BMO^{\gamma}_{L}(\mathbb R^{n})$,
$$\frac{1}{|B|^{1+2\gamma/n}}\int^{r^{2\alpha}}_{0}\int_{B}|t\partial_{t}e^{-tL^{\alpha}}(f)(x)|^{2}\frac{dxdt}{t}\lesssim \|f\|^{2}_{BMO^{\gamma}_{L}},$$
which, together with a change of variable, gives
\begin{multline*}
\frac{1}{|B|^{1+2\gamma/n}}\int^{r}_{0}\int_{B}|t\partial_{t}e^{-t^{2\alpha}L^{\alpha}}(f)(x)|^{2}
\frac{dxdt}{t}=
\frac{1}{|B|^{1+2\gamma/n}}\int^{r}_{0}\int_{B}|t^{2\alpha}L^{\alpha}e^{-t^{2\alpha}L^{\alpha}}(f)(x)|^{2}
\frac{dxdt}{t}\\
=\frac{1}{|B|^{1+2\gamma/n}}\int^{r^{2\alpha}}_{0}\int_{B}|t\partial_t e^{-tL^{\alpha}}(f)(x)|^{2}
\frac{dxdt}{t}
\lesssim \|f\|^{2}_{BMO^{\gamma}_{L}}.
\end{multline*}
The estimation
$$\frac{1}{|B|^{1+2\gamma/n}}\int^{r_{B}}_{0}\int_{B}|t\nabla_{x}e^{-t^{2\alpha}L^{\alpha}}f(x)|^{2}\frac{dxdt}{t}\lesssim \|f\|^{2}_{BMO^{\gamma}_{L}}$$
can be obtained in the manner of Theorem \ref{th-4.2}.

(ii). Assume that $d\nu_{\alpha}$ is a $(1+2\gamma/n)$-Carleson measure, i.e., for any ball $B(x,r)$,
$$\sup_{B}\frac{1}{|B|^{1+2\gamma/n}}\int^{r_{B}}_{0}\int_{B}|t\nabla e^{-t^{2\alpha}L^{\alpha}}(f)(x)|^{2}\frac{dxdt}{t}<\infty.$$
Subsequently,
$$\sup_{B}\frac{1}{|B|^{1+2\gamma/n}}\int^{r_{B}}_{0}\int_{B}|t\partial_{t} e^{-t^{2\alpha}L^{\alpha}}(f)(x)|^{2}\frac{dxdt}{t}<\infty.$$
It can be deduced from Theorem \ref{th-4.3} that $f\in BMO^{\gamma}_{L}(\mathbb R^{n})$.
\end{proof}

\begin{thebibliography}\\


\bibitem{chang}{D.  Chang and J. Xiao},  \textit{$L^q$-extensions of $L^p$-spaces by fractional diffusion equations}, Discrete Contin. Dyn. Syst.  {\bf 35}
(2015),  1905-1920.

\bibitem{CMS} R. Coifman, Y. Meyer and E.M. Stein, \textit{Some new function spaces and their applications to harmonic analysis}, J. Funct. Anal. \textbf{62} (1985), 304-335.




\bibitem{DY1} X. Duong and L. Yan, \textit{Duality of Hardy and BMO spaces associated with operators with heat kernel bounds}, {J. Amer. Math. Soc.} \textbf{18} (2005),  943-973.

\bibitem{DY2} X. Duong and L. Yan, \textit{New function spaces of BMO type, the John-Nirenberg inequality, interpolation, and applications}, {Comm. Pure Appl. Math.} \textbf{58} (2010), 1375-1420.

\bibitem{DYZ} X. Duong, L. Yan and C. Zhang, \textit{On characterization of Poisson integrals of Schr\"odinger
operators with BMO traces}, {J. Funct. Anal.} \textbf{266} (2014), 2053-2085.

\bibitem{Dz2} J. Dziuba\'nski, Note on $H^{1}$ spaces related to degenerate Schr\"odinger operators, Ill.  J.   Math.  49(2005), 1271-1297.


\bibitem{DZ1} J. Dziuba\'{n}ski and J. Zienkiewicz, \textit{Hardy space $ H^ 1$ associated to Schr\"{o}dinger operator with
potential satisfying reverse H\"{o}lder inequality},  {Rev. Mat. Iberoam} \textbf{15} (1999), 279-296.



\bibitem{dz2} J. Dziuba\'nski and J. Zienkiewicz, $H^{p}$ spaces associated with Schr\"odinger operators with potentials from reverse H\"older classes, Colloq. Math. \textbf{98} (2003), 5-38.



 \bibitem{DGMTZ} J. Dziuba\'{n}ski , G. Garrig\'os , T. Mart\'inez, J. Torrea, J. Zienkiewicz, \textit{ BMO spaces related to Schr\"{o}dinger
 operators with potentials satisfying a reverse H\"{o}lder inequality}, {Math. Z.} \textbf{249} (2005), 329-356.




\bibitem{FS1} C. Fefferman and E.M. Stein, \textit{$H^{p}$ spaces of several variables}, Acta Math. \textbf{129} (1972), 137-193.

\bibitem{GLP} Z. Guo, P. Li and L. Peng, \textit{Lp boundedness of commutator of Riesz transform associated toSchrodinger operator}, J. Math. Anal.  Appl. \textbf{341} (2008), 421-432.

\bibitem{Gri}{A. Grigor'yan, \textit{Heat kernels and function theory on metric measure spaces}, Contemp. Math. \textbf{338} (2002), 143-172.}

\bibitem{HSV}  E. Harboure, O. Salinas and B. Viviani, \textit{A look at $BMO_{\varphi}(\omega)$ through Carleson measures}, J. Fourier Anal. Appl. \textbf{13} (2007) 267-284.


\bibitem{Hebisch}{W. Hebisch and  L. Saloff-Coste, \textit{On the relation between elliptic and parabolic Harnack
inequalities}, Ann. Inst. Fourier (Grenoble) \textbf{51} (2001), 1437-1481.}



\bibitem{hofmann} S. Hofmann,  G.  Lu,  D. Mitrea, M. Mitrea and L.
Yan, \textit{Hardy spaces associated to non-negative self-adjoint operators
satisfying Davies-Gaffney estimates}, Mem. Amer. Math. Soc. \textbf{214}
(2011), no. 1007.



\bibitem{HLL-2} J. Huang, P. Li and Y. Liu, \textit{Regularity properties of the heat kernel and area integral
characterization of Hardy space $H^1_L$ related to degenerate
Schr\"odinger operators}, {J. Math. Anal. Appl.} \textbf{466} (2018), 447-470.

\bibitem{HZ} Q. Huang and C. Zhang, \textit{Characterization of temperatures associated to Schr\"odinger operators with initial data in Morrey spaces,}
Taiwanese J. Math. \textbf{23} (2019),  1133-1151.

\bibitem{jiangrenjin2}{R. Jiang, J. Xiao, Da. Yang and Z. Zhai}, \textit{Regularity and capacity for the fractional dissipative operator},
J. Diff. Equa. \textbf{259} (2015), 3495-3519.


\bibitem{jiangrenjin}{R. Jiang and B. Li}, \textit{On the Dirichlet problem for the Schr\"odinger equation with boundary value in BMO space}, arXiv:2006.05248.





\bibitem{Kurata}{K. Kurata and S. Sugano, \textit{Fundamental solution, eigenvalue asymptotics and eigenfunctions
of degenerate elliptic operators with positive potentials}, Studia
Math. \textbf{138} (2000), 101-119.}





\bibitem{LL} C. Lin and H. Liu, \textit{$BMO_{\mathcal{L}}(H^{n})$ spaces and Carleson measures for Schr\"odinger operators}, Adv. Math. \textbf{228} (2011), 16319-1688.




\bibitem{LTZ} H. Liu, L. Tang and H. Zhu,
\textit{Weighted Hardy spaces and BMO spaces associated with Schr\"odinger operators}, {Math. Nachr.} \textbf{285} (2012), 2173-2207.









\bibitem{MSTZ} T. Ma, P. Stinga, J. Torrea and C. Zhang, \textit{Regularity properties of Schr\"odinger operators},
{J. Math. Anal. Appl.} \textbf{388} (2012), 817-837.




\bibitem{miao}{C. Miao, B. Yuan and B. Zhang, \textit{Well-posedness of the Cauchy problem for the fractional power dissipative equations}, Nonlinear Anal.
\textbf{68} (2008), 461-484.}







\bibitem{Shen-2} Z. Shen, \textit{$L^{p}$ estimates for Schr\"odinger operators with certain potentials}, {Ann. Inst. Fourier} \textbf{45} (1995), 513-546.



\bibitem{St1} E.M. Stein, \textit{Harmonic Analysis: Real variable methods, orthogonality and oscillatory integrals},   Princeton University Press, 1993.





\bibitem{Smith} W. Smith, \textit{BMO($\rho$) and Carleson measures}, {Trans. Amer. Math. Soc.} \textbf{287} (1985),  107-126.


\bibitem{Songliang} L. Song, X. Tian and L. Yan, \textit{ On characterization of Poisson integrals of Schr?dinger operators with Morrey traces.} Acta Math. Sin. (Engl. Ser.)  \textbf{34}  (2018),  787-800.

\bibitem{WY} L. Wu and L. Yan, \textit{ Heat kernels, upper bounds and Hardy spaces associated to the generalized Schr\"{o}dinger operators}, { J. Funct. Anal.}
 \textbf{270} (2016),  3709-3749.

\bibitem{yyz2}{Da. Yang, Do. Yang and Y. Zhou, \textit{Localized BMO and BLO spaces on RD-spaces and applications to Schr\"odinger operators},
Commu. Pure  Appl. Anal. \textbf{9} (2010), 779-812.}

\bibitem{yyz1}{Da. Yang, Do. Yang and  Y. Zhou, \textit{Localized Campanato-type spaces related to admissible functions
on RD-spaces and applications to Schr\"odinger operators}, Nagoya
Math. J. \textbf{198} (2010), 77-119.}






\bibitem{YZhang} M. Yang and C. Zhang, \textit{Characterization of temperatures associated to Schr\"odinger operators with initial data in BMO spaces}, to appear in Math. Nachr. arxiv: 1710.01160

\bibitem{Zhai}
Z. Zhai,   \textit{Strichartz type estimates for fractional heat
equations},   J. Math. Anal. Appl.  \textbf{356} (2009), 642-658.
\end{thebibliography}
\end{document}